\def\rr{{\mathbb R}}
\def\rn{{{\rr}^n}}
\def\rrm{{{\rr}^m}}
\def\nn{{\mathbb N}}
\def\cS{{\mathcal S}}
\def\csgv{\mathcal {S}_{g;V}}
\def\csgu{\mathcal {S}_{g;U}}
\def\bi{{\bf I}}
\def\odl{\overline {d}_{\lambda}}
\def\cf{{\mathcal F}}
\def\dcc{{d_{CC}}}
\def\dl{d_\lambda}
\def\dlu{d^{U}_\lambda}
\def\rl{\rho_\lambda}
\def\fz{\infty}
\def\az{\alpha}
\def\dist{{\mathop\mathrm{\,dist\,}}}
\def\loc{{\mathop\mathrm{\,loc\,}}}
\def\lip{{\mathop\mathrm{\,Lip}}}
\def\lz{\lambda}
\def\dz{\delta}
\def\ez{\epsilon}
\def\gz{{\gamma}}
\def\Oz{{\Omega}}
\def\tz{\theta}
\def\pa{\partial}
\def\pau{\partial{U}}
\def\wz{\widetilde}
\def\bl{\bigg (}
\def\br{\bigg )}
\def\bbbl{\bigg \{}
\def\bbbr{\bigg \}}
\def\bint{{\ifinner\rlap{\bf\kern.35em--}
\int\else\rlap{\bf\kern.45em--}\int\fi}\ignorespaces}
\def\bbint{{\ifinner\rlap{\bf\kern.35em--}
\hspace{0.078cm}\int\else\rlap{\bf\kern.45em--}\int\fi}\ignorespaces}
\def\esup{\mathop\mathrm{\,esssup\,}}
\def\Lip{\mathop\mathrm{Lip}}
\def\r{\right}
\def\lf{\left}
\def\la{\langle}
\def\ra{\rangle}
\newtheorem{thm}{Theorem}[section]
\newtheorem{lem}[thm]{Lemma}
\newtheorem{prop}[thm]{Proposition}
\newtheorem{rem}[thm]{Remark}
\newtheorem{cor}[thm]{Corollary}
\newtheorem{defn}[thm]{Definition}
\newtheorem{ques}[thm]{Problem}
\numberwithin{equation}{section}
\newtheorem{Assumption}[]{Assumption}
\begin{document}

\arraycolsep=1pt

\title{\Large\bf
A Rademacher type theorem for Hamiltonians $H(x,p)$ and {\color{black} an } application to absolute minimizers
 \footnotetext{\hspace{-0.35cm}
\endgraf
 2020 {\it Mathematics Subject Classification:} Primary 49J10 $\cdot$ 49J52; Secondary 35F21 $\cdot$ 51K05.
\endgraf The first author is supported by the Academy of Finland via the projects: Quantitative rectifiability in Euclidean and non-Euclidean spaces, Grant No. 314172, and Singular integrals, harmonic functions, and boundary regularity in Heisenberg groups,
Grant No. 328846.
 The second author is supported by the National Natural Science Foundation of China (No.  12025102 \& No. 11871088)  and   by the Fundamental Research Funds for the Central Universities.
\endgraf Data sharing not applicable to this article as no datasets were generated or analysed during the current study.}
}
\author{Jiayin Liu, Yuan Zhou}
\date{ }
\maketitle

\begin{center}
\begin{minipage}{13.5cm}\small
{\noindent{\bf Abstract.}\quad
We establish  a Rademacher type theorem  involving Hamiltonians $H(x,p)$ under  very weak conditions in both of Euclidean and Carnot-Carath\'eodory spaces.
In particular,  $H(x,p)$ is assumed to be only  measurable in the variable $x$,
and  to be quasiconvex and lower-semicontinuous in the variable $p$. {\color{black}Without the lower-semicontinuity in the variable $p$, we provide a counter example showing the failure of such   a Rademacher type theorem.}
Moreover, {\color{black} by applying such a Rademacher type theorem}  we  build up an  existence result of absolute minimizers for {\color{black}the} corresponding $L^\infty$-functional.   These improve or extend several known results in the literature.

{\it Keywords:} $L^\infty$-functional, absolute minimizer, Carnot-Carath\'{e}odory metric space, Rademacher theorem
}
\end{minipage}
\end{center}

 \tableofcontents
 \contentsline{section}{References\numberline{ }}{50}

\section{Introduction}\label{s1}

Let $ \Omega\subset \rn$ be a   domain (that is, an open and connected {\color{black}subset}) of $\rn$  with $n\ge2$.     {\color{black}We first recall  Rademacher's theorem in Euclidean spaces. See Appendix for some of its consequence related to Sobolev and Lipschitz spaces. }
\begin{thm} \label{rade}
  If $u:{\color{black}\Omega}\to\rr$ is a Lipschitz function,
that is,
 \begin{equation}\label{lip}|u(x)-u(y)|\le \lz|x-y|\quad\forall x,y\in \Omega \quad \mbox{for some $0\le \lz<\fz$,}
 \end{equation}
 then,   at almost all $x\in\Omega$, $u$ is differentiable  and
 $|\nabla u(x)| =\lip u(x)$.  Here $|\nabla u(x)|$ is the Euclidean length of the derivative $\nabla u(x)$ at $x$, and $\lip u (x)$ is  the pointwise Lipschitz {\color{black}constant   at $x$ defined by
 }
 \begin{equation}\label{pointlip} \lip u(x):=\limsup_{y\to x}\frac{|u(y)-u(x)|}{|y-x|} . \end{equation}
\end{thm}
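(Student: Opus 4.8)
The plan is to follow the classical three-step route to Rademacher's theorem---from one-dimensional differentiability to directional derivatives, then to the weak gradient, and finally to total differentiability---paying particular attention to the last step, since that is precisely where the Lipschitz hypothesis \eqref{lip} is indispensable.

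First I would fix a unit vector $v\in\rn$. Because $t\mapsto u(x+tv)$ is Lipschitz on every line segment contained in $\Omega$, it is absolutely continuous, hence differentiable for a.e.\ $t$; Fubini's theorem then shows that the directional derivative
\[
D_v u(x):=\lim_{t\to0}\frac{u(x+tv)-u(x)}{t}
\]
exists at a.e.\ $x\in\Omega$. Applying the same difference-quotient bound to the coordinate directions shows that $u$ has a weak gradient $\nabla u\in L^\infty(\Omega;\rn)$ with $|\nabla u|\le\lz$ a.e. Testing the relation defining $D_v u$ against an arbitrary $\varphi\in C_c^\infty(\Omega)$ and passing to the limit by dominated convergence then yields $D_v u(x)=\nabla u(x)\cdot v$ at a.e.\ $x$.

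Next I would fix a countable dense subset $\{v_k\}_{k\in\nn}$ of the unit sphere and let $A\subset\Omega$ be the set of points $x$ with $|\nabla u(x)|<\infty$ at which $D_{v_k}u(x)$ exists and equals $\nabla u(x)\cdot v_k$ for every $k$; then $|\Omega\setminus A|=0$. The heart of the matter is to prove that $u$ is totally differentiable at every $x\in A$. Fixing such an $x$, put
\[
Q(v,t):=\frac{u(x+tv)-u(x)}{t}-\nabla u(x)\cdot v ,\qquad |v|=1,\ t\neq0 .
\]
By the choice of $x$, $Q(v_k,t)\to0$ as $t\to0$ for each fixed $k$; and \eqref{lip} together with Cauchy--Schwarz gives the equicontinuity estimate $|Q(v,t)-Q(v',t)|\le(\lz+|\nabla u(x)|)\,|v-v'|$, uniformly in $t$. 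Given $\varepsilon>0$, compactness of the unit sphere furnishes a finite $\varepsilon$-dense subset of the $v_k$; combining the equicontinuity with the convergence $Q(v_k,t)\to0$ over these finitely many indices yields $\limsup_{t\to0}\sup_{|v|=1}|Q(v,t)|\le(\lz+|\nabla u(x)|)\,\varepsilon$, and letting $\varepsilon\to0$ gives $\sup_{|v|=1}|Q(v,t)|\to0$ as $t\to0$, which is exactly the total differentiability of $u$ at $x$.

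Finally, at any $x\in A$ we have $u(y)-u(x)=\nabla u(x)\cdot(y-x)+o(|y-x|)$ as $y\to x$, whence
\[
\lip u(x)=\limsup_{y\to x}\frac{|u(y)-u(x)|}{|y-x|}=\sup_{|v|=1}|\nabla u(x)\cdot v|=|\nabla u(x)| ,
\]
the middle supremum being $|\nabla u(x)|$ by Cauchy--Schwarz (attained along $v=\nabla u(x)/|\nabla u(x)|$ when $\nabla u(x)\neq0$, and trivially when $\nabla u(x)=0$). I expect the only genuine obstacle to be the passage from ``every directional derivative exists and depends linearly on the direction'' to ``$u$ is differentiable'': pointwise existence of all directional derivatives does not by itself imply differentiability, and one must exploit the \emph{global} Lipschitz bound \eqref{lip} to get the equicontinuity of $v\mapsto Q(v,t)$ that is uniform in $t$, which is what powers the compactness argument above.
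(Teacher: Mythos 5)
Your proof is correct: it is the standard classical argument for Rademacher's theorem (a.e.\ existence of directional derivatives via one-dimensional absolute continuity and Fubini, identification $D_vu=\nabla u\cdot v$ a.e.\ through integration by parts, and the upgrade to total differentiability at a.e.\ point by combining a countable dense set of directions with the Lipschitz equicontinuity bound $|Q(v,t)-Q(v',t)|\le(\lz+|\nabla u(x)|)|v-v'|$, which is indeed the only delicate step), and the final identity $\lip u(x)=|\nabla u(x)|$ at points of differentiability follows exactly as you say. Note that the paper does not prove this statement at all: Theorem \ref{rade} is quoted in the introduction as the classical Rademacher theorem (the Appendix only derives consequences of it for Sobolev and Lipschitz classes), so there is no proof in the paper to compare against; your argument is the standard one and fills that role correctly.
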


{\color{black}The above  Rademacher's theorem was extended to      Carnot-Carath\'{e}odory spaces $(\Omega, X)$}, where $X$ is  a family  of smooth vector fields in $\Oz$ satisfying the H\"ormander condition (See Section 2).  {\color{black} Denote by  $Xu$  the distributional horizontal derivative of $u\in L^1_\loc(\Omega)$. Write   $\dcc$  as the Carnot-Carath\'{e}odory distance with respect to $X$.
One then has the following; see \cite{gn,FSS97,fhk99,ksz} and, for the better result in
Carnot group and Carnot type vector field, see \cite{monti,p89}.
\begin{thm} \label{radcc}
  If $u:\Omega\to\rr$ is a Lipschitz function with respect to $\dcc$,
that is,
 \begin{equation}\label{lip01}|u(x)-u(y)|\le \lz \dcc(x,y)\quad\forall x,y\in \Omega \quad \mbox{for some $0\le \lz<\fz$,}
 \end{equation}
 then, $Xu \in L^\fz(\Omega,\rr^m)$ and, for almost all $x\in\Omega$, the length
 $|X u(x)| \le \lz$.

 Under the additional assumption that   $X$ is a Carnot type vector field in $\Omega$, 
or in particular,
$(\Omega, X)$ is a domain in some Carnot group, 
one further has  $|X u(x)| = \lip_{d_{CC}}u(x)
$ for almost all $x\in\Omega$, where $\lip_\dcc u(x) $   is
 defined by \eqref{pointlip} with $|y-x|$ replaced by $d_{CC}(y,x)$.
\end{thm}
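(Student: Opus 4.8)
\emph{Proof strategy.} The plan is to reduce everything to the one–dimensional Rademacher theorem along smooth horizontal flows, and --- for the equality $|Xu|=\lip_{\dcc}u$ --- to feed in a Pansu-type differentiability theorem.

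\emph{Step 1 ($u$ in the horizontal Sobolev class and $|Xu|\le\lz$ a.e.).} This part needs only the H\"ormander condition; cf.\ \cite{gn,FSS97,fhk99,ksz}. I would fix $a=(a_1,\dots,a_m)\in\rrm$ with $|a|\le1$, set $Y:=\sum_{i=1}^m a_iX_i$, and consider its flow $\Phi^Y_t$. The curve $t\mapsto\Phi^Y_t(x)$ is admissible with horizontal velocity of Euclidean length $\le1$, so $\dcc(\Phi^Y_t(x),x)\le|t|$ on compact subsets of $\Omega$; hence \eqref{lip01} forces $t\mapsto u(\Phi^Y_t(x))$ to be $\lz$-Lipschitz. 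Passing to flow-box coordinates for $Y$, in which $\mathcal L^n$ has a density bounded above and below (the Jacobian of $\Phi^Y_t$ being $e^{\int_0^t(\mathrm{div}\,Y)\circ\Phi^Y_s\,ds}$ since $Y$ is smooth), I would then run a Fubini argument along the flow lines to conclude that the distributional derivative $Yu$ exists, lies in $L^\fz_{\mathrm{loc}}(\Omega)$, equals $\frac{d}{dt}\big|_{t=0}u(\Phi^Y_t(x))$ for a.e.\ $x$, and satisfies $|Yu|\le\lz$ a.e. Taking $a$ to be each coordinate vector gives $X_iu\in L^\fz_{\mathrm{loc}}(\Omega)$, hence $Xu=(X_1u,\dots,X_mu)$; for general $a$ one has $Yu=\sum_ia_iX_iu$ a.e.\ by linearity of the distributional derivative. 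Finally, I would fix a countable dense set $D$ in the closed Euclidean unit ball of $\rrm$ and delete a null set, so that for a.e.\ $x$ one has $|\sum_ia_iX_iu(x)|\le\lz$ for every $a\in D$, hence for all $|a|\le1$; the supremum over such $a$ gives $|Xu(x)|\le\lz$ a.e. Since the bound is uniform, $Xu\in L^\fz(\Omega,\rrm)$, which is the first assertion.

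\emph{Step 2 ($|Xu(x)|\le\lip_{\dcc}u(x)$ a.e., still only under the H\"ormander condition).} Keeping $Y$ as above, I would note that for a.e.\ $x$ the map $t\mapsto u(\Phi^Y_t(x))$ is differentiable at $0$ with derivative $Yu(x)$, and, since $\Phi^Y_t(x)\to x$ with $\dcc(\Phi^Y_t(x),x)\le|t|$,
\begin{equation*}
|Yu(x)|=\lim_{t\to0}\frac{|u(\Phi^Y_t(x))-u(x)|}{|t|}\le\limsup_{t\to0}\frac{|u(\Phi^Y_t(x))-u(x)|}{\dcc(\Phi^Y_t(x),x)}\le\lip_{\dcc}u(x).
\end{equation*}
Running $a$ over $D$ exactly as in Step 1, with $\lz$ replaced by the measurable function $\lip_{\dcc}u$, then gives $|Xu(x)|\le\lip_{\dcc}u(x)$ for a.e.\ $x$.

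\emph{Step 3 (the reverse inequality under the Carnot type hypothesis --- the main obstacle).} This is the only step using the extra structure, and I expect it to be the hard part; it rests on the deep differentiability results of \cite{monti,p89} (Pansu's theorem in the Carnot group case), which I would invoke rather than reprove. Under the Carnot type assumption, $(\Omega,\dcc)$ has at a.e.\ $x$ a tangent cone isometric to a Carnot group $\mathbb G_x$ with dilations $\{\delta^x_r\}$, and $u$ is Pansu differentiable at a.e.\ $x$: there is a $\delta^x_r$-homogeneous homomorphism $L_x\colon\mathbb G_x\to(\rr,+)$ with $u(y)=u(x)+L_x(\Theta_x(y))+o(\dcc(x,y))$ as $y\to x$, where $\Theta_x(y)\in\mathbb G_x$ is the intrinsic difference of $y$ and $x$; moreover the restriction of $L_x$ to the horizontal (first) layer is $v\mapsto\la Xu(x),v\ra$, while any homomorphism $\mathbb G_x\to\rr$ annihilates the higher layers. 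Homogeneity of $L_x$ and of the Carnot metric of $\mathbb G_x$ then gives that the limsup defining $\lip_{\dcc}u(x)$ equals the operator norm $\|L_x\|=\sup\{|L_x(\xi)|:\xi\in\mathbb G_x,\ |\xi|_{\dcc}\le1\}$. Lastly, the projection of the unit $\dcc$-ball of $\mathbb G_x$ onto the horizontal layer is exactly the Euclidean unit ball of $\rrm$ --- the first-layer part of a point reached by a horizontal curve of length $\le1$ has Euclidean length $\le1$, and every such $v$ is realized by a straight horizontal segment --- so $\|L_x\|=\sup_{|v|\le1}|\la Xu(x),v\ra|=|Xu(x)|$. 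Combined with Step 2 this yields $|Xu(x)|=\lip_{\dcc}u(x)$ a.e. The genuinely hard ingredients are the existence of the homogeneous-homomorphism blow-up at almost every point and the identification of its operator norm with the Euclidean horizontal length $|Xu(x)|$; both are precisely what \cite{monti,p89} supply, whereas Steps 1 and 2 are comparatively soft.
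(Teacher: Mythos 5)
The paper never proves Theorem \ref{radcc}: it is quoted as a known result, with \cite{gn,FSS97,fhk99,ksz} cited for the first assertion ($Xu\in L^\fz(\Omega,\rr^m)$ with $|Xu|\le\lz$ a.e.) and \cite{monti,p89} for the equality $|Xu|=\lip_{d_{CC}}u$ under the Carnot-type hypothesis. Measured against that, your outline is sound and is essentially the standard argument behind those citations: the flow/Fubini argument in Step 1 is how the horizontal Sobolev membership and the bound are obtained in the references; Step 2 is the easy half of the equality and needs nothing beyond Step 1; and Step 3 correctly isolates the genuinely hard ingredient (the Pansu/Monti--Serra Cassano blow-up differentiability), invokes it as a black box exactly as the paper does, and the finishing observation is right --- a homogeneous homomorphism into $\rr$ kills the higher layers, and the horizontal projection of the unit $\dcc$-ball of the tangent group is the Euclidean unit ball of $\rrm$, so the operator norm of the blow-up map is $|Xu(x)|$.

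One technical point in Step 1 deserves tightening. Flow-box coordinates exist only where $Y=\sum_i a_iX_i$ is nonvanishing, and for a H\"ormander family a fixed combination $Y$ may degenerate on a set that is neither open nor null; there your identification of the distributional derivative $Yu$ with the a.e.\ derivative along flow lines is not justified as stated. The standard way around this is to drop the flow boxes: the difference quotients $t^{-1}\bigl(u\circ\Phi^Y_t-u\bigr)$ are bounded by $\lz$ in $L^\fz$ on compact subsets (by \eqref{lip01} together with $\dcc(\Phi^Y_t(x),x)\le|t|$), and a change of variables along the smooth flow shows they converge distributionally to $Yu$, so weak-$\ast$ compactness gives $\|Yu\|_{L^\fz}\le\lz$ with no nondegeneracy assumption; your countable dense set of directions $a$ then yields $|Xu|\le\lz$ a.e.\ exactly as you wrote, and the same remark repairs the pointwise identification used in Step 2.
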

}

This paper aims to build up some Rademacher type theorem involving Hamiltonians $H(x,p)$ in both of Euclidean and Carnot-Carath\'eodory spaces. {\color{black}  Throughout this paper, the following assumptions  are always held for $H(x,p)$.}

\begin{Assumption} \label{ham}
Suppose that $H:\Omega\times\rr^m\to{\color{black}[0,+\fz)}$ is measurable  and satisfies
{\color{black}  \begin{enumerate}
  \vspace{-0.15cm}\item [(H1)]   For each $x\in\Omega$,
  $H(x,\cdot)$  is quasi-convex,  that is,
 \begin{equation*}
 H(x,t p + (1 - t)q) \le \max \{H(x,p), H(x,q)\}, \quad \forall p, q \in \rr^m, \ \forall \, t\in[0,1] \ \text{and} \  \forall x\in\Omega.
\end{equation*}
\item [(H2)] \vspace{-0.15cm} For each $  x\in \Omega$, $H(x,0)=\min_{p\in\rrm}H(x,p)=0$.
 \vspace{-0.15cm} \item [(H3)] It holds that
  $R_\lz < \fz$ for all $\lz \ge 0$, and $\lim_{\lz \to \fz}R_\lz' = \fz$, where and in below,
 $$R_\lz:= \sup \{|p| \ | \ (x,p)\in \Omega \times \rr^{m},H(x,p)\le \lz \} $$
   and
  $$ R'_\lz:= \inf \{|p| \ | \ (x,p)\in \Omega \times \rr^{m},H(x,p)\ge \lz \}.$$
  \end{enumerate}}
\end{Assumption}

For any $\lz\ge 0$, we define
\begin{equation}\label{d311}
d _{\lambda}  (x,y):= \sup\{u(y)-u(x) \ | \ u \in   \dot W^{1,\infty}_{X }(\Omega) \
\mbox{ with    }\  \|H(\cdot, Xu)\|_{L^\infty(\Omega)} \le \lz
  \}\quad\forall \mbox{$x,y \in \Omega$}.
\end{equation}
Recall that $\dot W^{1,\fz}_{X }(\Omega)$ denotes the set of
 all functions $u\in L^\fz (\Omega)$ whose
 distributional horizontal derivatives $Xu\in L^\fz(\Omega;\rr^m)$.  {\color{black}
 It was known that any function $ u\in \dot W^{1,\fz}_{X }(\Omega)$   admits a  continuous representative $\wz u$; see \cite[Theorem 1.4]{FSS97} and also Theorem \ref{holder} and Remark \ref{conrep} below. In this paper, in particular, in \eqref{d311} above,  we  always
identify functions in $\dot W^{1,\fz}_X(\Omega)$ as their continuous representatives. We remark that $d_\lz$ is  not a distance  necessarily, but    Lemma \ref{lem311}   says that $d_\lz$ is  always a pseudo-distance  as defined in Definition \ref{pseudo} below.}

Given any $\lz\ge 0$, by the definition \eqref{d311},
 if  $u\in  \dot W_{X}^{1,\fz}(U)$ and $\|H(\cdot,Xu)\|_{L^\fz(U)}\le \lz$,
 then  $
u(y)-u(x)\leq d_{\lambda}(x,y) \ \forall\, x,y\in \Omega.$
It is natural to ask whether the converse is true or not.
However, such converse is not necessarily true as  witted by   the Hamiltonian
\begin{equation}\label{[p]}
\lfloor|p|\rfloor =\max\{t\in \nn \ | \  |p|-t\ge 0\}\quad \forall x\in \Omega,\, p\in\rrm;
\end{equation}
 for details see Remark \ref{eg} below. The point is that $ \lfloor |p|\rfloor$  is not lower-semicontinuous.
Below, the converse is shown to be true if $H(x,p)$ is assumed additionally to be  lower-semicontinuous in the variable $p$, that is,
\begin{enumerate}\vspace{-0.15cm}
\em\item[(H0)] For almost all $x\in\Omega$, 
$H(x,p)\le\liminf_{q\to p}H(x,q) \quad\forall p\in\rr^m.$
\end{enumerate}

\begin{thm} \label{rad}
Suppose that $H$ satisfies (H0)-(H3).
Given  any $\lz \ge 0 $ and any function $u:\Omega\to \rr$,
 the following are equivalent:

  \begin{enumerate}
  \vspace{-0.15cm}
  \item[(i)] $u\in \dot W_{X}^{1,\fz}(\Omega) \ \text{and}  \ \|H(\cdot,Xu)\|_{L^\fz(\Omega)}\le\lz$;
  \vspace{-0.15cm}
  \item[(ii)]
$u(y)-u(x)\le d_\lz (x,y)$   $\forall x,y\in \Omega$;
\vspace{-0.15cm}
\item[(iii)]
For any $x\in\Omega$, there exists a neighborhood $N(x)\subset\Omega$ such that
$$u(y)-u(z)\le  d_\lz (z,y)\quad \forall y,z \in N(x).$$
\end{enumerate}

\noindent In particular, {\color{black}if $u:\Omega\to \rr$ satisfies any one of (i)-(iii), then}
\begin{align}\label{ident}\|H(\cdot,Xu)\|_{L^\fz(\Omega)}&={\color{black}\inf}\{\lz \ge 0 \ | \ \lz \ \mbox{satisfies  (ii)}\}={\color{black}\inf}\{\lz \ge 0 \ | \ \lz \ \mbox{satisfies   (iii)}\}.
\end{align}
\end{thm}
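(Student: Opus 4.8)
The plan is to prove the cycle of implications (i)$\Rightarrow$(ii)$\Rightarrow$(iii)$\Rightarrow$(i) and then to read off \eqref{ident} by a soft argument. Two of the three implications cost essentially nothing. The implication (i)$\Rightarrow$(ii) is immediate from the definition \eqref{d311}: under (i) the function $u$, identified with its continuous representative, is admissible in the supremum defining $d_\lz(x,y)$, so $u(y)-u(x)\le d_\lz(x,y)$ for all $x,y$. The implication (ii)$\Rightarrow$(iii) is trivial, taking $N(x)=\Omega$. Granting the equivalence, \eqref{ident} follows by monotonicity: if $u$ satisfies one --- hence all --- of (i)--(iii) for some exponent, then (i) gives $u\in\dot W^{1,\fz}_X(\Omega)$ with $L:=\|H(\cdot,Xu)\|_{L^\fz(\Omega)}<\fz$; since ``(i) holds for $\lz$'' then simply means $\|H(\cdot,Xu)\|_{L^\fz(\Omega)}\le\lz$, the equivalence yields $\{\lz\ge0:\ \lz\text{ satisfies (ii)}\}=\{\lz\ge0:\ \lz\text{ satisfies (iii)}\}=[L,\fz)$, so both infima equal $L$.

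For the substantive implication (iii)$\Rightarrow$(i) I would first use (H3) to compare $d_\lz$ with $\dcc$. Any competitor $v$ in \eqref{d311} satisfies $|Xv|\le R_\lz<\fz$ a.e., hence, by the Morrey--Sobolev type estimate underlying Theorem \ref{holder}, is $R_\lz$-Lipschitz with respect to $\dcc$; taking the supremum over competitors gives $d_\lz(x,y)\le R_\lz\,\dcc(x,y)$ for all $x,y\in\Omega$. Inserting this into (iii) shows that $u$ is locally $\dcc$-Lipschitz, so Theorem \ref{radcc} applies and the continuous representative of $u$ has $Xu\in L^\fz(\Omega;\rrm)$ with $|Xu|\le R_\lz$ a.e.; in particular $Xu(x)$ exists for a.e.\ $x$. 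It remains to sharpen this to $\|H(\cdot,Xu)\|_{L^\fz(\Omega)}\le\lz$.

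This last sharpening is the crux. Set $\Gamma_\lz(x):=\{p\in\rrm:\ H(x,p)\le\lz\}$; by (H1) it is convex, by (H3) bounded, and by (H0) closed for a.e.\ $x$, so $\Gamma_\lz(x)$ is compact and convex for a.e.\ $x$ and equals the intersection over $a\in\rrm$ of the half-spaces $\{p:\ \langle p,a\rangle\le\sigma_{\Gamma_\lz(x)}(a)\}$, where $\sigma_{\Gamma_\lz(x)}$ denotes the support function. Thus it suffices to prove, for each $a$ in a fixed countable dense subset of $\rrm$ and a.e.\ $x_0\in\Omega$, that $\langle Xu(x_0),a\rangle\le\sigma_{\Gamma_\lz(x_0)}(a)$; convexity and closedness of $\Gamma_\lz(x_0)$ then force $Xu(x_0)\in\Gamma_\lz(x_0)$, i.e.\ $H(x_0,Xu(x_0))\le\lz$. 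I would obtain that inequality by differentiating along the horizontal curve $\gz_s:=\exp\!\big(s\sum_i a_iX_i\big)(x_0)$. On one side, normalising each competitor by its value at $x_0$, the admissible class is equi-$\dcc$-Lipschitz, hence relatively compact --- so separable --- in $C_{\loc}(\Omega)$; since uniform limits of competitors are again competitors (a consequence of (H0) and (H1)), $d_\lz(x_0,\cdot)$ is a countable supremum of competitors, and combining this with the absolute continuity of $\dot W^{1,\fz}_X$-functions along a.e.\ integral curve of $\sum_i a_iX_i$, with the chain rule $\frac{d}{ds}v(\gz_s)=\langle Xv(\gz_s),a\rangle$, and with $Xv(\gz_s)\in\Gamma_\lz(\gz_s)$ for a.e.\ $s$, one obtains $d_\lz(x_0,\gz_s)\le\int_0^s\sigma_{\Gamma_\lz(\gz_r)}(a)\,dr$ for a.e.\ $x_0$. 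On the other side, (iii) gives $u(\gz_s)-u(x_0)\le d_\lz(x_0,\gz_s)$ for small $s>0$, while $\frac1s\big(u(\gz_s)-u(x_0)\big)\to\langle Xu(x_0),a\rangle$ as $s\to0^+$ at a.e.\ $x_0$ --- namely at any $x_0$ that is simultaneously a Lebesgue point of $Xu$, a Lebesgue point of $r\mapsto\sigma_{\Gamma_\lz(\gz_r)}(a)$ along its leaf, and a point of absolute continuity of $u$ along that leaf, all of which fail only on a null set by Fubini over the foliation by integral curves of $\sum_i a_iX_i$. Dividing by $s$ and letting $s\to0^+$ gives $\langle Xu(x_0),a\rangle\le\sigma_{\Gamma_\lz(x_0)}(a)$.

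I expect the crux, and within it the a.e.-curve bookkeeping, to be the main obstacle: one must run the chain-rule and Fubini argument uniformly over the entire admissible class (dealt with by the separability reduction and the stability of the class under suprema) while $H$ is merely measurable in $x$ (dealt with by measurability of $x\mapsto\sigma_{\Gamma_\lz(x)}(a)$). This is exactly where (H0) and (H1) are indispensable: convexity of the sublevel sets (H1) both makes the half-space description available and ensures that suprema of competitors remain competitors, whereas their closedness --- that is, the lower semicontinuity (H0) --- is precisely what converts ``$\langle Xu(x_0),a\rangle\le\sigma_{\Gamma_\lz(x_0)}(a)$ for all $a$'' into the membership $Xu(x_0)\in\Gamma_\lz(x_0)$. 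Dropping (H0) destroys this final conversion, consistently with the counterexample $H(x,p)=\lfloor|p|\rfloor$ discussed in Remark \ref{eg}.
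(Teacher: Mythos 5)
Your treatment of (i)$\Rightarrow$(ii), (ii)$\Rightarrow$(iii) and of \eqref{ident} is fine and agrees with the paper. For the substantive implication (iii)$\Rightarrow$(i) you take a genuinely different route: the paper approximates $u$ by the functions $u_k=\min_l\{l/k+d_{\lz,F_{k,l}}\}$ built from distance functions to sublevel sets and closes the argument with its Lemma \ref{l6.1} (Mazur's theorem plus (H0)--(H1)), whereas you dualize the sublevel sets $\Gamma_\lz(x)=\{p:H(x,p)\le\lz\}$ and differentiate along integral curves of the fixed smooth fields $\sum_i a_iX_i$, combining a countable reduction of the competitor class with Fubini over the induced foliation. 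Two remarks on that reduction: since $C_{\loc}(\Omega)$ is separable and any subset of a separable metrizable space is separable, the countable-supremum representation of $d_\lz(x_0,\cdot)$ does not actually require that limits of competitors are competitors; as written, however, your parenthetical ``uniform limits of competitors are again competitors'' is precisely the paper's Lemma \ref{l6.1}, so you have relocated rather than avoided the Mazur step. A merit of your scheme is that it only integrates along integral curves of fixed smooth vector fields, so it does not collide with the obstruction for arbitrary horizontal curves discussed in Section 7.

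There is, however, a concrete gap: the foliation/Fubini argument says nothing on the degenerate set $Z_a=\{x\in\Omega:\ \sum_i a_iX_i(x)=0\}$, and for a fixed $a$ this set can have positive Lebesgue measure (the coefficients $\sum_i a_ib_{il}$ are smooth but may vanish on a set of positive measure; the H\"ormander condition only prevents all of them from vanishing simultaneously for every $a$). On $Z_a$ the curve $\gz_s\equiv x_0$ is constant, the difference quotient $s^{-1}(u(\gz_s)-u(x_0))$ tends to $0$ rather than to $\langle Xu(x_0),a\rangle$, and there is no foliation to apply Fubini to, so your argument as written does not yield $\langle Xu(x_0),a\rangle\le\sigma_{\Gamma_\lz(x_0)}(a)$ there. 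The hole is fixable but needs a separate lemma: show $\sum_i a_iX_iu=0$ a.e.\ on $Z_a$ (for instance via Friedrichs' commutator lemma, since $X_i(u\ast\eta_\ez)\to X_iu$ in $L^2_{\loc}$ while $\sum_i a_iX_i(u\ast\eta_\ez)$ vanishes pointwise on $Z_a$), and observe that $\sigma_{\Gamma_\lz(x)}(a)\ge0$ because $0\in\Gamma_\lz(x)$ by (H2). In addition, several load-bearing assertions need proofs rather than citations: the absolute continuity of $\dot W^{1,\fz}_X$-functions along a.e.\ integral curve with leafwise derivative $\langle Xv(\gz_r),a\rangle$ (flow-box straightening where $\sum_i a_iX_i\ne0$, the one-dimensional ACL property, and again Friedrichs' lemma), the measurability of $x\mapsto\sigma_{\Gamma_\lz(x)}(a)$ for merely measurable $H$, and the passage from a countable dense set of directions $a$ to all $a$, which uses that $\sigma_{\Gamma_\lz(x_0)}$ is Lipschitz because $\Gamma_\lz(x_0)$ is bounded by (H3). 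With these points supplied your argument appears to go through, but the degenerate-set case and the leafwise chain rule are exactly where the present write-up would fail if taken literally.
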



Using Theorem \ref{rad}, when $\lz\ge\lz_H$ we prove that $d_\lz$ has a pseudo-length property, which allows us to get the following.
Here and below, define
\begin{equation}\label{lh}
  \lz_H:=\inf\{\lz\ge0 \ |  \ R'_\lz>0\}.
\end{equation}
Since $R_\lz'$ defined in (H3) of Assumption \ref{ham} is always  nonnegative and increasing in $\lz\ge0$ and tends to $\fz $ as $\lz\to\fz$, we  know that $0\le \lz_H<\fz$, and moreover,
$\lz>\lz_H$ if and only if $R'_\lz>0$.

\begin{thm} \label{radiv}
Suppose that $H$ satisfies (H0)-(H3).
Given  any $\lz \ge \lz_H$ and any function $u:\Omega\to \rr$,
  the statement (i)  in Theorem \ref{rad} is equivalent to the following
  \begin{enumerate}
\item[(iv)]
For any $x\in\Omega$, there exists a neighborhood $N(x)\subset\Omega$ such that
$$u(y)-u(x)\le  d_\lz (x,y)\quad \forall y  \in N(x).$$
\end{enumerate}

\noindent In particular, {\color{black}if $u:\Omega\to \rr$ satisfies   (iv), then}
\begin{align}\label{ident2}\max\{\lz_H,\|H(\cdot,Xu)\|_{L^\fz(\Omega)}\}
 =\min\{\lz \ge \lz_H \ | \ \lz \ \mbox{satisfies   (iv)}\}.
 \end{align}
\end{thm}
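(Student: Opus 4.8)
The plan is to deduce Theorem \ref{radiv} from Theorem \ref{rad} together with the pseudo-length property of $d_\lz$ that holds for $\lz\ge\lz_H$. The implication (i)$\Rightarrow$(iv) is immediate: if (i) holds, then by the definition \eqref{d311} one has $u(y)-u(x)\le d_\lz(x,y)$ for \emph{all} $x,y\in\Omega$, so (iv) holds with $N(x)=\Omega$. The substance is the converse (iv)$\Rightarrow$(i); in view of Theorem \ref{rad} it suffices to upgrade the one-sided, one-centre local inequality in (iv) to the two-variable local inequality in statement (iii) of Theorem \ref{rad}. So the heart of the matter is: assuming $\lz\ge\lz_H$, show that if for every $x\in\Omega$ there is a neighbourhood $N(x)$ with $u(y)-u(x)\le d_\lz(x,y)$ for all $y\in N(x)$, then locally $u(y)-u(z)\le d_\lz(z,y)$ for all $y,z$ in a (possibly smaller) neighbourhood.

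The key tool here is the pseudo-length property of $d_\lz$ for $\lz\ge\lz_H$, which the paper says is obtained via Theorem \ref{rad}: roughly, $d_\lz(z,y)$ can be recovered (or at least estimated from below) by chaining increments of $d_\lz$ along points of a curve from $z$ to $y$, i.e. $d_\lz(z,y)\ge\sum_i d_\lz(x_{i-1},x_i)$ for chains $z=x_0,x_1,\dots,x_N=y$ that are ``fine'' in an appropriate sense, where one needs $R'_\lz>0$ (equivalently $\lz>\lz_H$, with the boundary case $\lz=\lz_H$ handled by a limiting/monotonicity argument) to guarantee $d_\lz$ is comparable to $\dcc$ at small scales and hence that such chains exist and the sum behaves well. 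Granting this, I would fix $x\in\Omega$, choose a ball $B=B_{CC}(x,r)\subset N(x)$ small enough that $B$ is ``quasiconvex'' for $d_\lz$ (any two points of $B$ joined by a $d_\lz$-almost-geodesic chain staying in $N(x)$), and then for $y,z\in B$ chain along such a path: $u(y)-u(z)=\sum_i\big(u(x_i)-u(x_{i-1})\big)\le\sum_i d_\lz(x_{i-1},x_i)$, applying the hypothesis (iv) at the \emph{fixed centre $x$} is not quite what we want — instead one applies (iv) at each $x_{i-1}$, using that $x_{i-1}\in N(x_{i-1})$-neighbourhoods can be taken uniformly on the compact chain, so $u(x_i)-u(x_{i-1})\le d_\lz(x_{i-1},x_i)$ — and then the pseudo-length property collapses the sum to $d_\lz(z,y)$. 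This gives (iii), and Theorem \ref{rad} then yields (i).

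For the identity \eqref{ident2}: write $M:=\max\{\lz_H,\|H(\cdot,Xu)\|_{L^\fz(\Omega)}\}$ and $m:=\inf\{\lz\ge\lz_H\mid\lz\text{ satisfies (iv)}\}$. If $u$ satisfies (iv) for some $\lz\ge\lz_H$, then by the equivalence (iv)$\Leftrightarrow$(i) we get $u\in\dot W^{1,\fz}_X(\Omega)$ with $\|H(\cdot,Xu)\|_{L^\fz(\Omega)}\le\lz$; taking the infimum over such $\lz$ gives $\|H(\cdot,Xu)\|_{L^\fz(\Omega)}\le m$, and since trivially $m\ge\lz_H$ we get $M\le m$. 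Conversely, for any $\lz\ge M$ we have $\lz\ge\lz_H$ and $\|H(\cdot,Xu)\|_{L^\fz(\Omega)}\le\lz$, i.e. (i) holds, hence (iv) holds; therefore every $\lz\ge M$ satisfies (iv), so $m\le M$. (One should note $d_\lz$ is monotone nondecreasing in $\lz$, so the set of admissible $\lz$ in (iv) is an interval $[m,\fz)$ or $(m,\fz)$, and the same argument shows the infimum is attained, giving $\min$ rather than $\inf$ in \eqref{ident2}.) Combining, $m=M$.

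The main obstacle I expect is the boundary case $\lz=\lz_H$, where $R'_\lz$ may equal $0$ and $d_\lz$ need not be locally comparable to $\dcc$: there the pseudo-length/quasiconvexity argument for $d_\lz$ must be justified by approximation, e.g. by exploiting monotonicity of $d_\lz$ in $\lz$ and the fact that $d_{\lz_H}=\lim_{\lz\downarrow\lz_H}d_\lz$ in a suitable sense, or by a direct argument that the chains built at level $\lz>\lz_H$ continue to serve at level $\lz_H$; the bookkeeping needed to make the chaining uniform (choosing one $r$ that works for all the finitely many neighbourhoods $N(x_{i-1})$ along the chain) is the other routine-but-delicate point.
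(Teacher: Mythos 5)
Your argument is essentially the paper's own proof: (i)$\Rightarrow$(iv) directly from the definition of $d_\lz$, and (iv)$\Rightarrow$(i) by invoking the pseudo-length property of $d_\lz$ for $\lz\ge\lz_H$ (Proposition \ref{len}, with the endpoint case $\lz=\lz_H$ handled via monotonicity and right-continuity as in Lemma \ref{dpro}), choosing an almost-length-minimizing curve, partitioning it finely by compactness so that each successive point lies in the (iv)-neighbourhood of its predecessor, summing the increments to recover statement (iii) of Theorem \ref{rad}, and then deducing \eqref{ident2} from the equivalence exactly as the paper does. The minor cosmetic deviations (restricting to a small ball $B\subset N(x)$ and the informal ``uniform neighbourhoods'' phrasing) correspond to the paper's equally brief compactness step, so nothing substantive differs.
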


As a consequence of Theorem \ref{rad} and Theorem \ref{radiv}, we have
the following Corollary \ref{global}.
Associated to Hamiltonian $H(x,p)$, we introduce some  notion and notations.
Denote by $\dot W^{1,\fz}_H(\Omega)$ the collection of all
$u\in \dot W^{1,\fz}_X(\Omega)$ with $\|H(\cdot,Xu)\|_{L^\fz(\Omega)}<\fz$.
Denote by $\lip_H(\Omega,X)$  the class of functions $u:\Omega\to\fz$ {\color{black}satisfying (ii) for some $\lz>0$} equipped with (semi-)norms
\begin{equation}\label{liph}
  \lip_H(u,\Omega)={\color{black}\inf}\{\lz \ge \lz_H \ | \ \lz \ \mbox{satisfies  (ii)}\}.
\end{equation}
Denote by $\lip_H^\ast( \Omega)$ the collection of all functions $u$ with
$$\lip_H^\ast(u,\Omega)=\sup_{x\in\Omega}\lip_Hu(x)<\fz,$$
where  we write the  pointwise  ``Lipschitz" constant
\begin{equation}\label{ptliph}
  \lip_Hu(x)={\color{black}\inf}\{\lz\ge\lz_H \ | \ \lz \ \mbox{satisfies   (iv)}\}.
\end{equation}
{\color{black}Thanks to the right continuity of the map $\lz\in[\lz_H,\fz)\mapsto d_\lz(x,y)$ as given in Lemma \ref{dpro},
the infima in \eqref{liph} and \eqref{ptliph} are actually minima.}

\begin{cor} \label{global}
Suppose that $H$ satisfies (H0)-(H3) with $\lz_H=0$.
Then $\dot W^{1,\fz}_H(\Omega)=\lip_H(\Omega)=\lip_H^\ast( \Omega)$ and
 $$\|H(\cdot,Xu)\|_{L^\fz(\Omega)}=\lip_H(u,\Omega)=\lip_H^\ast(u, \Omega).$$
\end{cor}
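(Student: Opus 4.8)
The plan is to deduce the corollary directly from Theorem \ref{rad} and Theorem \ref{radiv} once we have observed that, under the hypothesis $\lz_H=0$, the three semi-norm quantities in play are precisely the three infima appearing in \eqref{ident} and \eqref{ident2}. First I would unwind the definitions: for $u\in\dot W^{1,\fz}_H(\Omega)$ we have $\|H(\cdot,Xu)\|_{L^\fz(\Omega)}<\fz$, so setting $\lz:=\|H(\cdot,Xu)\|_{L^\fz(\Omega)}$ Theorem \ref{rad} (i)$\Rightarrow$(ii) shows $u$ satisfies (ii) for this $\lz$, hence $u\in\lip_H(\Omega)$ with $\lip_H(u,\Omega)\le\|H(\cdot,Xu)\|_{L^\fz(\Omega)}$; similarly (i)$\Rightarrow$(iv) of Theorem \ref{radiv} gives $u\in\lip_H^\ast(\Omega)$ with $\lip_H^\ast(u,\Omega)=\sup_x\lip_Hu(x)\le\|H(\cdot,Xu)\|_{L^\fz(\Omega)}$, using that (iv) holds globally with the same $\lz$ at every point $x$. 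This yields the inclusions $\dot W^{1,\fz}_H(\Omega)\subset\lip_H(\Omega)$ and $\dot W^{1,\fz}_H(\Omega)\subset\lip_H^\ast(\Omega)$ together with the two inequalities ``$\ge$'' in the asserted identity.

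For the reverse inclusions and inequalities, suppose $u\in\lip_H(\Omega)$, i.e. $u$ satisfies (ii) for some finite $\lz\ge\lz_H=0$. By Theorem \ref{rad} (ii)$\Rightarrow$(i), $u\in\dot W^{1,\fz}_X(\Omega)$ with $\|H(\cdot,Xu)\|_{L^\fz(\Omega)}\le\lz$, so $u\in\dot W^{1,\fz}_H(\Omega)$ and, taking the infimum over all admissible $\lz$ and invoking \eqref{ident}, $\|H(\cdot,Xu)\|_{L^\fz(\Omega)}\le\lip_H(u,\Omega)$; combined with the previous paragraph this gives $\|H(\cdot,Xu)\|_{L^\fz(\Omega)}=\lip_H(u,\Omega)$ and $\dot W^{1,\fz}_H(\Omega)=\lip_H(\Omega)$. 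Similarly, if $u\in\lip_H^\ast(\Omega)$ then for every $x\in\Omega$ the quantity $\lip_Hu(x)\le\lip_H^\ast(u,\Omega)=:\mu<\fz$, so—since by Lemma \ref{dpro} the infimum defining $\lip_Hu(x)$ in \eqref{ptliph} is attained and $\lz\mapsto d_\lz(x,y)$ is right-continuous—$u$ satisfies (iv) at $x$ with the single value $\lz=\mu$ (valid simultaneously at all $x$). Then Theorem \ref{radiv} (iv)$\Rightarrow$(i) gives $u\in\dot W^{1,\fz}_H(\Omega)$ with $\max\{0,\|H(\cdot,Xu)\|_{L^\fz(\Omega)}\}\le\mu$ via \eqref{ident2}, hence $\|H(\cdot,Xu)\|_{L^\fz(\Omega)}\le\lip_H^\ast(u,\Omega)$, and equality follows.

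Chaining the three equalities $\dot W^{1,\fz}_H(\Omega)=\lip_H(\Omega)=\lip_H^\ast(\Omega)$ and $\|H(\cdot,Xu)\|_{L^\fz(\Omega)}=\lip_H(u,\Omega)=\lip_H^\ast(u,\Omega)$ completes the proof. The one point requiring a little care—the main (minor) obstacle—is the passage from the pointwise condition ``for each $x$, (iv) holds for some $\lz_x$'' to the uniform statement that a single $\lz$ works at all points: here the hypothesis $\lip_H^\ast(u,\Omega)<\fz$ gives a uniform bound $\mu$ on $\lip_Hu(x)$, and one must check that $u$ actually satisfies (iv) with $\lz=\mu$ rather than merely with values approaching $\mu$ from above; this is exactly where the right-continuity of $\lz\mapsto d_\lz(x,y)$ from Lemma \ref{dpro} (so that the infima in \eqref{liph} and \eqref{ptliph} are minima) is used. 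Everything else is a formal bookkeeping of the equivalences and the identities \eqref{ident}, \eqref{ident2}, noting that the condition $\lz_H=0$ removes the truncation $\max\{\lz_H,\cdot\}$ and the constraint $\lz\ge\lz_H$ becomes simply $\lz\ge0$.
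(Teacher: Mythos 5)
Your proposal is correct and follows exactly the route the paper intends: Corollary \ref{global} is stated there as a direct consequence of Theorem \ref{rad} and Theorem \ref{radiv} (with no separate written proof), and your bookkeeping of \eqref{ident} and \eqref{ident2} under $\lz_H=0$ is the intended argument. You also correctly identify and resolve the only delicate point, upgrading the pointwise bound $\lip_H u(x)\le\lip_H^\ast(u,\Omega)$ to condition (iv) with the single value $\lz=\lip_H^\ast(u,\Omega)$ via the monotonicity and right-continuity of $\lz\mapsto d_\lz(x,y)$ from Lemma \ref{dpro}.
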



Next, we apply  the above Rademacher type property {\color{black}to  study    a minimization problem} for    $L^\fz$-functionals corresponding {\color{black}to the above Hamiltonian} $H(x,p)$ in both Euclidean and Carathe\'odory spaces:
$$\cf(u,U):=\big\|H(\cdot, Xu )\big\|_{L^\fz(U)}\  \mbox{for any $u\in  W^{1,\infty}_{X,\loc}(U)$ and  domain $ U\subset \Omega$}.$$
Aronsson \cite{a1,a2,a3,a4} in 1960's  initiated the study in this direction
via introducing  absolute minimizers.  
A  function $u\in  W^{1,\fz}_{X,\loc}(U)$  is called an absolute minimizer  in $U$ for $H$ and $X$
(write $u\in AM(U;H,X)$ for short)  if for any domain $V\Subset U$, it holds that
$${\color{black}\cf(u,V) \le \cf(v,V)}   
\mbox{\ whenever}\ v\in \dot W^{1,\fz}_X(V)\cap C(\overline V)\ \mbox{and} \ u\big|_{\partial V}=v\big|_{\partial V}.$$
{\color{black}Here and throughout this paper,
for domains $A$ and $B$, the notation $A\Subset B$ stands for that   $A$ is a bounded subdomain of  $B$ and its closure $\overline A\subset B$.}

{\color{black}
The existence of absolute minimizers with a given boundary value  has been extensively studied. Apart from the pioneering work by Aronsson mentioned above, we refer the readers to
\cite{acj,bjw,cp,cpp,c03,gxy,j98} and the references therein in the Euclidean setting. For the existence results in Heisenberg groups, Carrot-Carath\'{e}odory spaces and general metric spaces with special type of Hamiltonians, we refer the readers to \cite{b1,j02,ksz,kz,ksz2,wcy}.
Usually, there are two major approaches to obtain the existence of absolute minimizers. When dealing with $C^2$ Hamiltonians, one usually transfers the study of absolute minimizers into the study of viscosity solutions of the Aronsson equation (the Euler-Lagrange equation of the $L^\fz$-functional $\cf$). Thus, to get the existence of absolute minimizers, it suffices to show the existence of the corresponding viscosity solutions. This approach was employed, for instance, in \cite{a3,a68,bjw,c03,gwy,j98,wcy}. To study the the existence of absolute minimizers for Hamiltonians $H(x,p)$ with less regularity, one efficient way is to use Perron's method to first get the existence of absolute minimizing Lipschitz extensions (ALME), and then show the equivalence between ALMEs and absolute minimizers. This idea was adopted in \cite{a1,a2,j02,acj,kz,ksz2,gxy}. To see the close connection between ALMEs and absolute minimizers, we refer the readers to \cite{js,ksz,dmv} and references therein.

 Theorem \ref{rad}, Theorem \ref{radiv} and Corollary \ref{global} allow us  to apply  Perron's method directly and then to establish the  following existence result of absolute minimizers.
 This is  partially motivated by \cite{cpp}.
However, since we are faced with measurable Hamiltonians,
there are  several new barriers to be  overcome as illustrated  at the end of this section.
}

\begin{thm}\label{t231}
Suppose that  $H$  satisfies (H0)-(H3) with $\lz_H=0$.
Given any domain $U\Subset\Oz$ and $g\in \lip_{d_{CC}}(\partial U)$, there must be a function $u\in AM(U;H,X)\cap \lip_{d_{CC}}(\overline U)$
  so that $u|_{\partial U}=g$.
\end{thm}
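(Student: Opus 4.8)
The plan is to produce $u$ as a Perron solution adapted to the family of pseudo-distances $\{d_\lz\}_{\lz\ge0}$, using Theorems \ref{rad} and \ref{radiv} and Corollary \ref{global} to recast the $L^\fz$-minimization of $\cf$ as a ``comparison with $d_\lz$-cones'' problem. The first step is to compare $d_\lz$ with $\dcc$. On the one hand, (H3) turns the constraint $\|H(\cdot,Xu)\|_{L^\fz(\Omega)}\le\lz$ into $\|Xu\|_{L^\fz}\le R_\lz<\fz$, so every competitor in \eqref{d311} is $R_\lz$-Lipschitz for $\dcc$ (see Theorem \ref{holder}), whence $d_\lz(x,y)\le R_\lz\,\dcc(x,y)$ for all $x,y\in\Omega$. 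On the other hand, for each $c>0$ one has $M_c:=\sup\{H(x,p):x\in\Omega,\ |p|\le c\}<\fz$, since otherwise some $(x_k,p_k)$ with $|p_k|\le c$ and $H(x_k,p_k)\ge k$ would force $R'_k\le c$ for all $k$, against $R'_\lz\to\fz$ in (H3). As $\dcc(z_0,\cdot)$ is $1$-Lipschitz for $\dcc$, Theorem \ref{radcc} gives $|X\dcc(z_0,\cdot)|\le1$ a.e., so, after a harmless truncation to a bounded function, $c\,\dcc(z_0,\cdot)$ is an admissible competitor in \eqref{d311} at level $M_c$; taking $z_0=x$ gives $d_\lz(x,y)\ge c\,\dcc(x,y)$ whenever $\lz\ge M_c$. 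Therefore, if $g\in\lip_{d_{CC}}(\pa U)$ has Lipschitz constant $L$, the level $\lz^\ast:=M_L$ satisfies $g(x)-g(z)\le d_{\lz^\ast}(z,x)$ for all $x,z\in\pa U$.

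Next I would perform the McShane construction relative to $d_\lz$: for $\lz\ge\lz^\ast$ set, on $\overline U$,
\[
u_\lz^+(x):=\inf_{z\in\pa U}\big[g(z)+d_\lz(z,x)\big],\qquad u_\lz^-(x):=\sup_{z\in\pa U}\big[g(z)-d_\lz(x,z)\big].
\]
The triangle inequality for $d_\lz$ (Lemma \ref{lem311}) together with the choice of $\lz^\ast$ show that $u_\lz^\pm$ coincide with $g$ on $\pa U$, satisfy $u_\lz^\pm(y)-u_\lz^\pm(x)\le d_\lz(x,y)$ for all $x,y\in\overline U$, and are extremal among all functions with these two properties, i.e. $u_\lz^-\le v\le u_\lz^+$ for every such $v$; moreover $u_\lz^\pm\in C(\overline U)$ because $d_\lz\le R_\lz\,\dcc$. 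By the implication (ii)$\Rightarrow$(i) of Theorem \ref{rad}, applied on the subdomain $U$, $u_\lz^\pm\in\dot W^{1,\fz}_X(U)$ with $\cf(u_\lz^\pm,U)\le\lz$. Using the right-continuity of $\lz\mapsto d_\lz(x,y)$ (Lemma \ref{dpro}), I would then set
\[
\lz_0:=\min\{\lz\ge0:\ g(x)-g(z)\le d_\lz(z,x)\text{ for all }x,z\in\pa U\}\le\lz^\ast,
\]
and note that, by Theorem \ref{rad} (equivalently Corollary \ref{global}), every $v\in\dot W^{1,\fz}_X(U)\cap C(\overline U)$ with $v|_{\pa U}=g$ has $\cf(v,U)\ge\lz_0$, because the level $\cf(v,U)$ makes $g$ admissible on $\pa U$; hence $\lz_0$ is the smallest value of $\cf(\cdot,U)$ among extensions of $g$, attained by the barriers $u_{\lz_0}^\pm$.

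The core of the proof is a Perron argument turning these extremal extensions into a genuine absolute minimizer. Call $v\in C(\overline U)$ a \emph{subsolution} if $v|_{\pa U}=g$, $\cf(v,U)\le\lz_0$ (equivalently, by Theorem \ref{rad}, $v(y)-v(x)\le d_{\lz_0}(x,y)$ on $\overline U$, whence $u_{\lz_0}^-\le v\le u_{\lz_0}^+$), and $v$ enjoys comparison with $d_\lz$-cones from above in $U$: whenever $V\Subset U$, $z_0\in\Omega$, $a\in\rr$ and $\lz\ge\lz_H\,(=0)$ satisfy $v(y)\le a+d_\lz(z_0,y)$ for all $y\in\pa V$, the same inequality holds for all $y\in V$. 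Here the pseudo-length property of $d_\lz$ proved on the way to Theorem \ref{radiv} is precisely what makes $d_\lz$-cones the right test functions. One checks that $u_{\lz_0}^-$ is a subsolution (being a supremum of $d_{\lz_0}$-distance functions), that the class of subsolutions is closed under finite maxima and under increasing limits (continuity being preserved since all of them obey the same $d_{\lz_0}$-Lipschitz bound), and --- the decisive step --- that it is closed under ``$d_\lz$-harmonious replacement'' on balls $B\Subset U$, i.e. replacing $v$ on $B$ by the McShane extension of $v|_{\pa B}$ at the minimal level allowed for $v|_{\pa B}$ on $\overline B$, an operation that can only raise $v$ while keeping it $\le u_{\lz_0}^+$. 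Setting $u:=\sup\{v:v\text{ is a subsolution}\}$, a standard Perron dichotomy forces $u$ to enjoy comparison with $d_\lz$-cones from both sides in every $V\Subset U$: from above, because it is a supremum of functions that do; from below, because otherwise a harmonious replacement on a small ball would produce a strictly larger subsolution, contradicting maximality. By the characterization of $\cf(\cdot,V)$ through $d_\lz$ in Theorems \ref{rad} and \ref{radiv} and Corollary \ref{global} (applied on $V$), together with the right-continuity of $\lz\mapsto d_\lz$, this two-sided comparison is exactly the assertion that $\cf(u,V)\le\cf(v,V)$ whenever $v\in\dot W^{1,\fz}_X(V)\cap C(\overline V)$ and $v|_{\pa V}=u|_{\pa V}$, i.e. $u\in AM(U;H,X)$. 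Finally, $\cf(u,U)\le\lz_0<\fz$ gives $u(y)-u(x)\le d_{\lz_0}(x,y)\le R_{\lz_0}\,\dcc(x,y)$ on $\overline U$, so $u\in\lip_{d_{CC}}(\overline U)$, while the sandwich $u_{\lz_0}^-\le u\le u_{\lz_0}^+$ with $u_{\lz_0}^\pm|_{\pa U}=g$ yields $u|_{\pa U}=g$.

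I expect the main obstacle to be the Perron step itself: proving that the class of subsolutions is genuinely closed under suprema and under $d_\lz$-harmonious replacement, and that the maximal subsolution detaches, from both sides, from every $d_\lz$-cone it touches interiorly. Because $H$ is only measurable in $x$ and $H(x,\cdot)$ need not be even --- so $d_\lz$ is genuinely asymmetric --- one cannot route the argument through the Aronsson equation or viscosity solutions, and each estimate must be carried out via $d_\lz$ and its pseudo-length property. A secondary technical burden is to record the localized forms of Theorem \ref{rad}, Theorem \ref{radiv} and Corollary \ref{global} on arbitrary subdomains $V\Subset U$, with precise bookkeeping of the levels $\lz$, and to combine them with the right-continuity of $\lz\mapsto d_\lz(x,y)$ when translating the cone-comparison property into the inequality $\cf(u,V)\le\cf(v,V)$.
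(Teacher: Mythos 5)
Your overall strategy (McShane-type barriers plus a Perron argument) is in the same spirit as the paper's, and your opening comparison of $d_\lz$ with $\dcc$ is fine, but the construction is run throughout with the \emph{global} pseudo-distance $d_\lz$ of \eqref{d311}, and this creates a genuine gap. The decisive unjustified claim is that every $v\in \dot W^{1,\fz}_X(U)\cap C(\overline U)$ with $v|_{\pa U}=g$ satisfies $\cf(v,U)\ge \lz_0$, where $\lz_0$ is defined through the global $d_\lz$ on $\pa U$. Applying Theorem \ref{rad} on the subdomain $U$ (i.e.\ Corollary \ref{radu}) to such a $v$ with $\lz'=\cf(v,U)$ only gives $g(y)-g(x)\le d^{U}_{\lz'}(x,y)$ for the \emph{intrinsic} distance $d^U_{\lz'}$ of \eqref{du}; since $d_{\lz'}\le d^U_{\lz'}$, this is strictly weaker than the admissibility with respect to the global $d_{\lz'}$ that your definition of $\lz_0$ requires. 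Hence you only get $\lz_0\ge \mathbf I(g,U)=\mu(g,\pa U)$ (the correct minimal level, computed with $d^U_\lz$ as in Lemma \ref{lem231}), possibly with strict inequality, and your barriers $u^{\pm}_{\lz_0}$ are pegged at a level that need not be optimal. The same global-versus-intrinsic mismatch breaks the final step: for $V\Subset U$ and a competitor $v$ with $v|_{\pa V}=u|_{\pa V}$, Theorem \ref{rad} localizes with $d^V_\lz$, so comparison with cones built from the global $d_\lz$ (which is $\le d^V_\lz$) does not yield $\cf(u,V)\le\cf(v,V)$. This is exactly why the paper works with $d^U_\lz$ and its relatives, at the price of Lemma \ref{cont} (continuity of McShane extensions up to $\pa U$, since $d^U_\lz$ may be infinite there) and of the comparison lemmas for nested domains, Lemma \ref{property}, Lemma \ref{ll}, Lemma \ref{lem5.9}, Lemma \ref{lem5.7} and Lemma \ref{lem5.10}.

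The second gap is that the core of your Perron argument is asserted rather than proved. Closure of the subsolution class under ``harmonious replacement'', the two-sided cone comparison of the maximal subsolution, and above all the claim that two-sided comparison with $d_\lz$-cones ``is exactly'' absolute minimality are not formal consequences of Theorems \ref{rad}, \ref{radiv} or Corollary \ref{global}; for measurable, merely quasiconvex and possibly asymmetric $H$ no such cone-comparison characterization of $AM(U;H,X)$ is available, and proving one would amount to redoing the paper's Sections 5--6. In the paper these steps are carried out as: Lemma \ref{lem234} (McShane extensions are local super/sub minimizers, via the patching Lemma \ref{lem5.10} and Lemma \ref{lem5.7}); Lemma \ref{lem235} (a local super- and subminimizer is an absolute minimizer, proved with the punctured domains $V_j=V\setminus\{x_i\}_{1\le i\le j}$, the identity $d^{V_j}_\lz=d^V_\lz$ from Lemma \ref{lem5.9}, and the convergence Lemma \ref{c0}); and Proposition \ref{p42} (the Perron supremum is both a local sub- and superminimizer, by two separate contradiction arguments reusing Lemma \ref{lem234}). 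Your proposal would need to supply arguments of this kind, recast for the intrinsic distances $d^U_\lz$, before the conclusion $u\in AM(U;H,X)$ is reached; as written, the passage from the Perron supremum to absolute minimality is missing.
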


\medskip
Theorem \ref{rad}, Theorem \ref{radiv}, Corollary \ref{global} and Theorem \ref{t231} improve or extend several previous studies in the literature including Theorem \ref{rade}
and Theorem \ref{radcc} above; see Remark \ref{re1} and  Remark \ref{re2} below.

 \begin{rem} \label{re1}  \rm
 \begin{enumerate}
 \item[ (i)]   In Euclidean spaces, that is,  $X=\{\frac{\partial }{\partial x_i} \}_{1\le i\le n}$, if $H(x,p)=|p|$, then
 Corollary \ref{global} coincides with Lemma \ref{rade'}, which is a consequence of Theorem \ref{rade}.
  In Carnot-Carath\'{e}odory  spaces $(\Omega, X)$, if $H(x,p)=|p|$, then Corollary \ref{global} coincides with Lemma \ref{radcclem}, which is a consequence of Theorem \ref{radcc}.

 \item[ (ii)] In Euclidean spaces,  
if $H(x,p)$ is lower semi-continuous in $U \times \rn$,
   $H(x,\cdot)$ is quasi-convex for each $x \in U$,  and  satisfies (H2) and (H3),
     (ii)$\Leftrightarrow$(i) in Theorem \ref{rad} was proved in
  Champion-De Pascale \cite{cp} (see also \cite{c03,acjs} for convex $H(p)$ in Euclidean spaces).
  The proof in \cite{cp} relies on the lower semi-continuity  in both of $x$ and $p$ heavily, which {\color{black} allows for  approximation $H(x,p)$ via a continuous} Hamiltonian in $x$ and $p$.
     But such an approach fails under the weaker assumptions (H0)\&(H1)  here.
We refer to Section 7 for more details and related further discussions.

 \item[ (iii)]{\color{black}In both Euclidean and Carrot-Carath\'{e}odory} spaces, if $H(x,p)=\sqrt{\langle A(x)p,p\rangle}$, where {\color{black}$A(x)$ is a measurable symmetric matrix-valued function} satisfying uniform ellipticity, then Theorem \ref{rad} was established  in \cite{ksz,ksz2}.
       The proofs therein rely on the inner product structure, and also do  not work here.
For measure spaces {\color{black}endowed with strongly regular} nonlocal Dirichlet energy forms, where the Hamiltonian is given by the square root of Dirichlet form, {\color{black}we refer to} \cite{sp,kz,ksz2,flw} for some corresponding Rademacher type property.

 \item[ (iv)] Under merely (H0)-(H3), one can not expect that
$\lip_H(x)=H(x,Xu(x)) $ almost everywhere.  Recall that  in Euclidean spaces,
there does exist  $A(x)$, which satisfying  Remark \ref{re1}(iii) above, so that such pointwise property fails for the Hamiltonian
$\sqrt {\langle A(x)p,p\rangle}$. For more details see   \cite{s97,ksz}.
\end{enumerate}
       \end{rem}

 \begin{rem}\rm \label{re2}
 \begin{enumerate}
 \item[ (i)] {\color{black}In Euclidean spaces}, that is,   $X=\{\frac{\partial }{\partial x_i}\}_{1\le i\le n}$,
 if $H(x,p)$ is given by  Euclidean norm   and  also any Banach norm,
the existence of  absolute minimizers was established in   Aronsson \cite{a1,a2} and   Aronsson et al \cite{acj}.
If $H(x,p)$ is given by  $\sqrt{\langle A(x)p,p\rangle}$ with $A$ being as in Remark \ref{re1} (iii) above,  existence of  absolute minimizers is given by \cite{ksz} with the aid of
 \cite{js}.
In a similar way, with the aid of \cite{js},
Guo et al \cite{gxy}   also obtained the existence of absolute minimizers
if 
 $H(x,p)$ is a measurable function in $\Omega\times\rn$, and satisfies that $\frac1C< H(x,p)<C$ for all $x\in\Omega$ and $p\in S^{n-1}$, where  $C\ge1$ is a constant, and that $H(x,\eta p) = |\eta| H(x,p)$ for all $x\in \Omega$, $p\in\rn$ and $\eta\in\rr$.

 \item[ (ii)] In Euclidean spaces, if $H(x,p)$ is continuous in both variables $x,p$  and quasi-convex in the variable $p$,  with an additional growth assumption in $p$,
Barron et al \cite{bjw} built up the existence of absolute minimizers.
If $H(x,p)$ is lower semi-continuous in $x,p$ and quasi-convex in $p$ and  satisfies (H1)-(H3), Champion-De Pascale \cite{cp} established the existence of absolute minimizers with the help of their Rademacher type theorem  in Remark \ref{re1}(ii). Recall that   the lower semi-continuity of $H$ plays a key role in \cite{cp} to obtain the pseudo-length property for $d_\lz$.

 \item[ (iii)] In  Heisenberg groups,  if $H(x,p)=\frac12|p|^2$ we refer to \cite{b1} for the existence of absolute minimizers.
In any Carnot group, 
 if  $H\in C^2(\Omega\times \rr^m)$, $D^2_{pp}H(x, \cdot)$ is positive definite,  and
    there exists $ \alpha\ge1$ such that
\begin{equation}\label{e1.2}
H(x,\eta p) = \eta^{\az}H(x,p)\quad\forall x\in \Omega,\ \eta>0,\ p\in\rn,
\end{equation}
then the existence of absolute minimizers was obtained by
Wang   \cite{wcy}    via considering viscosity solutions to the corresponding Aronsson equations.
\end{enumerate}
     \end{rem}

The following  remark    explains that,   
without the assumption (H0), Theorem \ref{rad} does not necessarily hold.
\begin{rem} \label{eg}\rm
 The Hamiltonian $H(x,p)=\lfloor|p|\rfloor$ given in $\eqref{[p]}$  satisfies (H1)-(H3)  but does not satisfy (H0).
Given any $\lz\in(0,1)$,  we have
  \begin{align*}
    d_{\lz} (x,y) & =\sup\{u(y)-u(x) \ | \ u \in \dot W^{1,\fz}_X(\Omega)\ \mbox{with} \  \|H(\cdot,X u)\|_{L^\fz(\Omega)}=\|\lfloor|X u|\rfloor\|_{L^\fz(\Omega)} \le \lz\} \\
     & =\sup\{u(y)-u(x) \ | \ u \in \dot W^{1,\fz}_X(\Omega) \ \mbox{with} \ \| |X u| \|_{L^\fz(\Omega)} \le 1\} \\
     &=d_{CC}(x,y)\quad\forall x,y\in\Omega.
  \end{align*}
Fix any $ z\in\Omega$ and write $u(x)=d_\lz(z,x)\ \forall x\in\Omega$. By the triangle inequality we have
$$u(y)-u(x)=d_\lz(z,y)-d_\lz(z,x)\le d_\lz(x,y)$$
Recall that, when $X=\{\frac{\partial}{\partial x_j}\}_{1\le j\le n}$   or  when  $X$ is given by Carnot type H\"ormander vector fields,
one always has $|X d_{CC} (z,\cdot) | =1$ almost everywhere; see  \cite{monti}. For such $X$, we conclude that
 $$\|H(\cdot,X u)\|_{L^\fz(\Omega)}=1>\lz.$$  Thus Theorem \ref{rad} fails.
\end{rem}


The following  remark   explains the reasons why we need $\lz\ge \lz_H $   in Theorem \ref{radiv}, and why we assume $\lz_H=0$ in Theorem \ref{t231}.
Note that,
in Theorem \ref{rad} where $\lz_H$ maybe not $0$, we do get the equivalence among (i), (ii) and (iii)  for any $\lz\ge 0$.

\begin{rem}\label{rlambda}\rm
\begin{enumerate} {\color{black}
 \item[ (i)]
  To prove  (iv) in Theorem \ref{radiv}  $\Rightarrow$ (i) in Theorem \ref{rad},
 we need a pseudo-length property for $d_\lz$ as in Proposition \ref{len}.
  When $\lz>\lz_H$ (equivalently, $R'_\lz>0$),  to get such pseudo-length property for $d_\lz$,  our proof does use $R_\lz'>0$  so to guarantee that the topology induced by $\{d_\lz(x ,\cdot)\}_{x \in \Omega}$
 (See Definition \ref{pseudo}) is the same as the Euclidean topology; see Remark \ref{exp1}.
 When $\lz=\lz_H$,  we get such pseudo-length property for $d_{\lz_H}$ via approximating by $d_{\lz_H+\ez}$ with sufficiently small $\ez>0$.

 \item[ (ii)]
 When  $ \lz_H>0$ and $0\le\lz<\lz_H$, we do not know whether $d_\lz$ enjoys such pseudo-length property.
We remark that there does exist some Hamiltonian $H(x,p)$ which satisfies
the assumptions (H0)-(H3) with $\lz_H>0$ (that is,  for some $\lz>0$,
 $R_\lz'=0$) ; but for $0<\lz<\lz_H$,  the topology induced by $\{d_\lz(x ,\cdot)\}_{x \in \Omega}$ does not coincide with the Euclidean topology; see Remark \ref{d1} (ii).

 \item[ (iii)] To get the existence of absolute minimizers,
 our approach does need Theorem \ref{radiv} and also several properties of $d^U_\lz$, whose proof relies heavily on the  pseudo-length property for $d_\lz$ and $R_\lz'>0$.
 In Theorem 1.6, we assume $\lz_H=0$ so that we can work
  with  all Lipschitz boundary  $g$ so to get existence of absolute minimizer.

In the case $\lz_H>0$, our approach  will give the existence of of absolute minimizer
when the boundary   $g:\partial U\to\rr$ satisfies $\mu(g,\partial
 U)>\lz_H$, but does not work when   $\mu(g,\partial
 U)\le  \lz_H$.
Here  $ \mu(g,\partial  U)$ is the  infimum of $\lz$ so that $g(y)-d(x)\le d^U_\lz(x,y)$ $forall x,y\in\pa U$.
 }
\end{enumerate}
\end{rem}

{\color{black}
The paper is organized as below,
where we also clarify the ideas and  main novelties to prove Theorem \ref{rad}, Theorem \ref{radiv} and also Theorem \ref{t231}.
We emphasize that in our results from Section 2 - Section 6, $X$ is a fixed smooth vector field in a domain $\Oz$  and
  satisfies the H\"ormander condition;  and that
  the Hamiltonian $H(x,p)$  always enjoys (H0)-(H3).
  In all results from Section 5 and 6, we  further assume $\lz_H=0$.      }

In Section 2, we state several facts about the analysis and geometry in Carnot-Carath\'{e}odory spaces employed in the proof.



In Section 3, we prove (i)$\Leftrightarrow$(ii)$\Leftrightarrow$(iii) in Theorem \ref{rad}. 
Since (i)$\Rightarrow$(ii)  follows from the definition and that
 (ii)$\Rightarrow $(iii) is obvious, it suffices to    prove (iii)$\Rightarrow $(i).
To this end, we borrow some ideas from \cite{sk,sp,flw,ksz}, which were designed for  nonlocal Dirichlet energy forms originally.

The key is that,
 by employing assumptions (H0), (H1) and Mazur's theorem, we  are able to prove that if $v_j\in\dot W^{1,\fz}_X(\Omega)$ with $\|H(\cdot, Xv_j)\|_{L^\fz(\Omega)}\le\lz$
  and $v_j\to v$ in $C^0(\Omega)$ as $j\to\fz$, then $v \in\dot W^{1,\fz}_X(\Omega)$
  with $\|H(\cdot, Xv)\|_{L^\fz(\Omega)}\le\lz$; see  Lemma \ref{l6.1} for details.
Thanks to this,
 choosing a suitable sequence of approximation functions via the definition of $d_\lz$,  we  then show that
 $$\mbox{$ d _{\lz}(x,\cdot) \in  W_{X}^{1,\fz}(\Omega)$ and $\|H(\cdot,Xd _{\lz}(x,\cdot))\|_{L^\fz(\Omega)}\le\lz$ for all $\lz>0$ and $x\in \Omega$}.$$
See Lemma \ref{lem3.2}.
Given any $u$ satisfying (iii),  we construct  approximation functions $u_j$ from $d_\lz$
and use Lemma \ref{l6.2} to show  $H(x,Du_j)\le \lz$. That is (i) holds.


In Section 4, we prove (iii) in Theorem \ref{rad} $\Leftrightarrow $ (iv) in Theorem \ref{radiv}. Since (iii)$\Rightarrow $(iv) is obvious, it suffices to show (iv)$\Rightarrow$(iii).
This  follows from the pseudo-length property  of pseudo metric $d_\lz$ in Proposition \ref{len}.
To get such a length property we find some special functions
which fulfill the assumption {\color{black}Theorem \ref{rad}(iii)}, and hence
we  show that the pseudo metric $d_\lz$ has a pseudo-length property.

In Section 5, we introduce  McShane extensions and
 minimizers, and  then gather several properties of them
 and pseudo-distance  in Lemma \ref{property} to Lemma \ref{lem5.10}, which are
required in Section 6.
These properties also have their own interests.

 Given any domain $U\Subset\Oz$,
 via the intrinsic distance $d_\lz^U$ induced from $U$, we introduce
  McShane extensions $\csgv^\pm$ of  any $g\in \lip_{d_{CC}}(\partial U)$ in $U$.
There are several reasons to use   $d^U_\lz$  other than $d_\lz$, for example,
 $d^U_\lz$ has {\color{black}the pseudo-length property in
 $U$} but the restriction of $d_\lz$ may not have; moreover, Theorem \ref{rad},
  and Theorem \ref{radiv} holds if $(\Omega,d_\lz)$ therein is
 replaced by $ (U,d^U_\lz)$, but not necessarily hold if $(\Omega,d_\lz)$ therein is
 replaced by $ (U,d _\lz)$.
 However,  the use of $d_\lz^U$  causes several difficulties.  For example,
 $d_\lz^U$ may be infinity  when  extended to $\overline U$.
 This  makes it quite implicit to see the continuity of McShane extensions around $\pa U$ from the definition. In Lemma \ref{cont}, we get such continuity by analyzing the behaviour of  $d_\lz^U$ near $\pa U$.  Moreover, as required in Section 6, we have  to study the relations between    $d_\lz^U$ and $d^V_\lz$ for subdomains $V$ of $U$ in Lemma \ref{ll} and Lemma  \ref{lem5.9}.

In Section 6, we prove Theorem \ref{t231} in a constructive way by using above Rademacher type property and Perron's approach, where we borrow some ideas from  \cite{bjw,cp,cpp}.

 The proof consists of   crucial Lemma \ref{lem234}, Lemma \ref{lem235} and Proposition \ref{p42}.    Lemma \ref{lem234}  says that McShane extensions $\csgu^\pm$ in $U$ of   function $g$ in $\pa U$ are local super/sub minimizers in $U$.
Since $\csgu^\pm$ are the maximum/minimum  minimizers,
the proof of  Lemma \ref{lem234} is reduced to showing that for any subdomain $V \subset U$, the McShane extensions $\cS^\pm_{h^\pm, V}$ in $V$ with boundary   $h^\pm=\csgu^\pm|_{\pa V}$ satisfy
$$\mbox{$\cS^\pm_{h^\pm; V}(y) - \cS^\pm_{h^\pm; V}(x) \le d_\lz^U(x,y)$ for all $x,y \in \overline V$};$$  see Lemma \ref{lem5.7} and Lemma \ref{lem5.10} and the proof of Lemma \ref{lem234}.
However,  since Lemma \ref{cont} only gives
$$\mbox{$\cS^\pm_{h^\pm; V}(y) - \cS^\pm_{h^\pm; V}(x) \le d_\lz^V(x,y)$ for all $x,y \in \overline V$,}$$
  we must improve $d_\lz^V(x,y)$ here to the smaller one $d_\lz^U(x,y)$.  To this end, we show $d_\lz^V= d_\lz^U$ locally in Lemma \ref{ll}, and also use the pseudo-length property of $ d_\lz^U$ heavily.
 Lemma \ref{lem235} says that a function which   is both of
local superminimizers and subminimizer must be an absolute minizimzer.
To get the required local minimizing property, we use McShane extensions  to construct approximation functions
and also need the fact $d_\lz^V= d_\lz^{V\setminus\{x_i\}_{1\le i\le m}}$ in $\overline V\times \overline V$ as
in Lemma \ref{lem5.9}.
Proposition \ref{p42}  says that
   the supremum of local subminimizers are absolute minimizers.
Due to  Lemma \ref{lem235}, it suffices  to prove the local super/sub minimizing property
of such a supremum. We do prove this via using Lemma \ref{lem234} and  Lemma \ref{lem5.10} repeatedly and a contradiction argument.

In Section 7, we aim at explaining some obstacles in using previous approach to establish the Rademacher type theorem and the existence of absolute minimizers.
Indeed, in the literature to study Hamiltonians with better regularity or homogeneity, for instance, \cite{cp,gxy}, another intrinsic distance $\bar d_\lz$ is more common used in those proof which is hard to fit our setting.

In Appendix, we revisit the Rademacher's theorem in the Euclidean space to show that Theorem \ref{rad} and Corollary \ref{global} are indeed an extension of the Rademacher's theorem.

{\color{black}
\begin{center}

\textsc{Acknowledgement
}
\end{center}
The authors would like to thank Katrin F\"{a}ssler for the reference \cite{ls16,l16} and her comment on this paper. The authors are also grateful to the anonymous referee for many valuable suggestions which make the paper more self-contained and improve the final presentation of the paper.}

\section{Preliminaries}

In this section, we introduce the background and some known results related to Carnot-Carath\'{e}odory  spaces employed in the proof.

Let $X:=\{X_1,  ... ,X_m\}$  for some $m\le n$ be  a family  of smooth vector fields  in $\Oz$
which satisfies the H\"ormander condition, that is,   there is a step $k \ge 1$ such that, at  each point,
  $\{X_i\}^{m}_{i=1}$ and all their commutators up to at most order $k$ generate the whole $\rn$.  Then for each $i=1,\cdots,m$, $X_i$ can be written as
$$  X_i = \sum_{l=1}^n b_{il} \frac{\pa}{\pa x_l}  \mbox{ in $\Omega$}  $$
with $b_{il} \in C^\fz(\Omega)$ for all $i=1,\cdots,m$ and $l=1,\cdots,n$.

Define the
 Carnot-Carath\'{e}odory distance corresponding to $X$ by
\begin{equation} \label{dcc0}
  d_{CC}(x,y):=\inf\{\ell_\dcc(\gz) \ | \ \mbox{$\gz\in \mathcal {ACH}(0,1;x,y;\Omega)$}\}
\end{equation}
Here and below, we write $\gz\in \mathcal {ACH}(0,1;x,y;\Omega)$  if $\gz:[0,1]\to\Omega$ is    absolutely continuous, 
$\gz(0)=x,\gz(1)=y$, and there exists measurable functions $c_{i}:[0,1] \to \rr$ with $1 \le i \le m$ such that $\dot\gz(t) = \sum_{i=1}^m c_i(t)X_i(\gz(t))$ whenever $\dot\gz(t)$ exists.
The length of $\gz$ is $$\ell_\dcc(\gz):=\int_0^1|\dot\gz(t)|\,dt=\int_0^1 \sqrt{\sum_{i=1}^m c_i^2(t)}\,dt.$$ In the Euclidean case, we have the following remark.
\begin{rem}\label{dccde} \rm In  Euclidean case, that is, $X=\{\frac{\partial }{\partial {x_i}}\}_{1\le i\le n}$, one has
$d_{CC}$ coincides with the intrinsic distance $d_E^\Omega$ as given in (A.2).
In particular, $d_{CC}(x,y)=d^\Omega_E(x,y)$ for all $ x,y\in\Omega$ with $|x-y|<\dist(x,\partial\Omega)$. When $\Omega$ is convex, one further have
and $d_{CC}(x,y)=|x-y|$ for all $x,y\in\Omega$; however, when $\Omega$ is not convex, this    is not necessarily true.
See  Lemma A.4 in Appendix for more details.
\end{rem}

Since $X$ is a H\"ormander vector field in $\Omega$,
for any compact set $K \subset \Omega$, there exists a constant $C(K) \ge 1$ such that
\begin{equation*}
  C(K)^{-1} |x - y | \le \dcc(x,y) \le C(K) |x - y |^{\frac{1}{k}} \quad   {\color{black}\forall x, y \in K},
\end{equation*}
see for example \cite{nsw85} and \cite[Chapter 11]{HK}.
This shows that the topology induces by  $(\Omega,d_{CC})$ is exactly {\color{black}the Euclidean topology}.
%

Given a function $u \in L^1_{\loc}(\Omega)$, its distributional derivative along $X_i$ is defined by the identity
$$   \la X_i u , \phi \ra = \int_\Omega u X_i^\ast \phi \, dx \mbox{ for all $\phi \in C_0^\fz(\Omega)$,}  $$
where $X_i^\ast = -\sum_{l=1}^n \frac{\pa}{\pa x_l}(b_{il} \cdot)$ denotes the formal adjoint of $X_i$. Write $X^\ast= (X_1^\ast, \cdots,X_m^\ast)$. We call $Xu:=(X_1u, \cdots,X_m u)$ the horizontal distributional derivative for $u \in L^1_{\loc}(\Omega)$ and the norm $|Xu|$ is defined by
$$ |Xu|= \sqrt{\sum_{i=1}^m|X_i u|^2}  .$$
For $1\le p\le \fz$, denote by $\dot W^{1,p}_X(\Omega)$ the $p$-th integrable horizontal Sobolev space, that is,
the collection of all functions $u\in L^1_\loc(\Omega)$ with its distributional derivative $Xu\in L^p(\Omega)$. Equip $\dot W^{1,p}_X(\Omega)$ with the semi-norm
$\|u\|_{\dot W^{1,p}_X(\Omega)}=\||Xu|\|_{L^p(\Omega)}$.


%

The following was proved in  \cite[Lemma 3.5 (II)]{gn96}.
\begin{lem}\label{uplus}
 If  $ u \in \dot W^{1,p}_X(U)$ with $1\le p<\fz$ and
 $ U\Subset \Omega$, then  $u^+=\max\{u,0\} \in \dot W^{1,p}_X(U)$ with $Xu^+=(Xu)\chi_{\{x\in U|u>0\}}$ almost everywhere.
\end{lem}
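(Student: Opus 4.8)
The plan is to reduce the claim to a chain rule for the horizontal gradient together with a truncation argument, exactly as in the Euclidean theory, the only structural input being the smoothness of the coefficients $b_{il}$.

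\emph{Step 1 (chain rule).} I would first prove: if $f\in C^1(\rr)$ with $\|f'\|_{L^\fz(\rr)}<\fz$ and $v\in\dot W^{1,p}_X(U)$ with $1\le p<\fz$, then $f\circ v\in\dot W^{1,p}_X(U)$ and $X(f\circ v)=f'(v)\,Xv$ a.e.\ in $U$. Note $f\circ v\in L^1_\loc(U)$ since $|f\circ v|\le|f(0)|+\|f'\|_\fz|v|$. Fix $\phi\in C_0^\fz(U)$ and $\delta_0>0$ with $\mathrm{supp}\,\phi\Subset U_{\delta_0}:=\{x\in U:\dist(x,\pa U)>\delta_0\}$; for $0<\delta<\delta_0$ put $v_\delta:=v\ast\rho_\delta\in C^\fz(U_{\delta_0})$, where $\rho_\delta$ is a standard mollifier. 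Since each $b_{il}$ is smooth, Friedrichs' commutator lemma gives $X_iv_\delta=(X_iv)\ast\rho_\delta+R_{i,\delta}$ with $R_{i,\delta}\to0$ in $L^p_\loc(U)$; hence $v_\delta\to v$ in $L^1_\loc(U)$ and $X_iv_\delta\to X_iv$ in $L^p_\loc(U)$, and along a subsequence $v_\delta\to v$ a.e. For the smooth function $v_\delta$ one has the pointwise identity $X_i(f\circ v_\delta)=f'(v_\delta)\,X_iv_\delta$, so that
\[
\int_U (f\circ v_\delta)\,X_i^\ast\phi\,dx=\int_U X_i(f\circ v_\delta)\,\phi\,dx=\int_U f'(v_\delta)\,X_iv_\delta\,\phi\,dx.
\]
Letting $\delta\to0$, the left-hand side converges to $\int_U(f\circ v)X_i^\ast\phi\,dx$ ($f$ Lipschitz, $X_i^\ast\phi$ bounded with compact support), while on the right $f'(v_\delta)\to f'(v)$ a.e.\ with $\|f'(v_\delta)\|_\fz\le\|f'\|_\fz$ and $X_iv_\delta\to X_iv$ in $L^p(\mathrm{supp}\,\phi)$, so $f'(v_\delta)X_iv_\delta\to f'(v)X_iv$ in $L^p(\mathrm{supp}\,\phi)$. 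Therefore $\langle X_i(f\circ v),\phi\rangle=\int_U f'(v)X_iv\,\phi\,dx$ for all such $\phi$; since $f'(v)X_iv\in L^p(U)$, this proves $f\circ v\in\dot W^{1,p}_X(U)$ with the stated derivative. (Alternatively, one could invoke the local density of $C^\fz\cap\dot W^{1,p}_X$ in $\dot W^{1,p}_X$ and run the same computation along any good smooth approximating sequence.)

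\emph{Step 2 (truncation).} For $\ez>0$ pick $\Phi_\ez\in C^1(\rr)$ nondecreasing with $\Phi_\ez\equiv0$ on $(-\fz,0]$, $\Phi_\ez(t)=t-\ez/2$ for $t\ge\ez$, and $0\le\Phi_\ez'\le1$ everywhere; then $0\le t^+-\Phi_\ez(t)\le\ez$ for all $t$, and $\Phi_\ez'=\chi_{(0,\fz)}$ off $(0,\ez)$. By Step 1, $\Phi_\ez\circ u\in\dot W^{1,p}_X(U)$ with $X(\Phi_\ez\circ u)=\Phi_\ez'(u)\,Xu$. As $\ez\to0$ one has $\Phi_\ez\circ u\to u^+$ uniformly, hence in $L^1_\loc(U)$. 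For the derivatives, for every compact $K\subset U$,
\[
\int_K\big|\Phi_\ez'(u)Xu-\chi_{\{u>0\}}Xu\big|^p\,dx=\int_{K\cap\{0<u<\ez\}}\big|\Phi_\ez'(u)-1\big|^p|Xu|^p\,dx\le\int_{K\cap\{0<u<\ez\}}|Xu|^p\,dx\longrightarrow0,
\]
because $\{0<u<\ez\}$ decreases to $\bigcap_{\ez>0}\{0<u<\ez\}=\emptyset$, so $|K\cap\{0<u<\ez\}|\to0$, and the Lebesgue integral is absolutely continuous. Hence $\Phi_\ez'(u)Xu\to\chi_{\{u>0\}}Xu$ in $L^p_\loc(U)$; passing to the limit in the distributional identity yields $Xu^+=\chi_{\{u>0\}}Xu$, and since this belongs to $L^p(U)$ we conclude $u^+\in\dot W^{1,p}_X(U)$, which is the assertion. (Applying the same to $-u$ gives $Xu^-=-\chi_{\{u<0\}}Xu$, whence $Xu=Xu^+-Xu^-=\chi_{\{u\ne0\}}Xu$, i.e.\ $Xu=0$ a.e.\ on $\{u=0\}$, consistent with the displayed formula.)

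\emph{Main obstacle.} The only nontrivial ingredient is the Friedrichs commutator estimate $X_i(v\ast\rho_\delta)-(X_iv)\ast\rho_\delta\to0$ in $L^p_\loc$ used in Step 1; this is where the smoothness (local Lipschitz continuity) of the $b_{il}$ enters, and it is exactly what lets the classical chain rule survive mollification in the subelliptic setting. The hypothesis $U\Subset\Omega$ plays no essential role here — the whole argument is local — and merely keeps everything inside the domain on which the $b_{il}$ are defined. Everything else is the familiar truncation scheme, the point being to choose $\Phi_\ez$ vanishing identically near the origin so that the level set $\{u=0\}$ disappears in the limit, avoiding any separate ``$Xu=0$ on level sets'' lemma.
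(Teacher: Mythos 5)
Your argument is correct, and there is nothing to compare it against inside the paper: Lemma \ref{uplus} is not proved there but quoted from Garofalo--Nhieu \cite{gn96}, Lemma 3.5(II), whose proof is essentially the same mollification-plus-truncation scheme you use, so your route is the standard one rather than a genuinely different one. One small inaccuracy worth fixing: since membership in $\dot W^{1,p}_X(U)$ only presupposes $u\in L^1_{\loc}(U)$, the Friedrichs commutator term $R_{i,\delta}$ is guaranteed to tend to $0$ in $L^1_{\loc}$, not in $L^p_{\loc}$, unless you first establish $u\in L^p_{\loc}$ (e.g.\ via a subelliptic Poincar\'e/embedding result). This is harmless for your Step 1, because in the limit passage
\[
\int_U f'(v_\delta)\,X_i v_\delta\,\phi\,dx\longrightarrow\int_U f'(v)\,X_i v\,\phi\,dx
\]
you only ever integrate against bounded, compactly supported functions: split the difference into $\int f'(v_\delta)(X_iv_\delta-X_iv)\phi\,dx$, controlled by $\|f'\|_{L^\fz}\|\phi\|_{L^\fz}\|X_iv_\delta-X_iv\|_{L^1(\mathrm{supp}\,\phi)}$, and $\int (f'(v_\delta)-f'(v))X_iv\,\phi\,dx$, which goes to $0$ by dominated convergence since $f'(v_\delta)\to f'(v)$ a.e.\ and boundedly while $X_iv\in L^p\subset L^1_{\loc}$. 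With that adjustment (or with the alternative you mention, the Meyers--Serrin type density of smooth functions in these spaces), Step 1 is complete, and Step 2 --- the choice of $\Phi_\ez$ vanishing identically near the origin, the absolute-continuity argument on $\{0<u<\ez\}$, and the passage to the limit in the distributional identity --- is sound and correctly avoids any separate lemma about $Xu$ on the level set $\{u=0\}$.
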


 We recall   the following  imbedding of  horizontal Sobolev spaces from  \cite[Theorem 1.4]{gn}. For any set $U\subset\Omega$, the Lipschitz class
  $\lip_{d_{CC}}(U)$ is   the collection of all functions $u:U\to\rr$ with   its seminorm
 $$\lip_{d_{CC}}(u,U)  := \sup_{x \ne y, x,y \in U} \frac{|u(x)-u(y)|}{\dcc(x,y) } < \fz.   $$

 \begin{thm}  \label{holder}
  For any subdomain $U \Subset \Omega$, if $u \in \dot W^{1,\fz}_X(U)$,   then there is a continuous function $\wz u \in \lip_{d_{CC}}(U)$ with $\wz u=u$ almost everywhere and
  $$\lip_{d_{CC}}(\wz u,U)\le C(U,\Omega)  [\|u\|_{L^\fz(U)}+\|u\|_{\dot W^{1,\fz}_X(U)}].$$
 \end{thm}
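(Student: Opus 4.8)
The plan is to treat this as a Morrey-type embedding and reduce it to a telescoping estimate over Carnot--Carath\'eodory balls. The two analytic inputs are both standard on H\"ormander vector fields and uniform on compact subsets of $\Omega$: (a) Jerison's subelliptic weak $(1,1)$-Poincar\'e inequality --- there is a dilation constant $\sigma\ge1$ and, for each compact $K\subset\Omega$, a constant $C_K$ such that $\frac1{|B|}\int_B|v-v_B|\,dx\le C_K\,r\,\frac1{|\sigma B|}\int_{\sigma B}|Xv|\,dx$ whenever $B=B_{CC}(z,r)$ satisfies $\sigma B\subset K$; and (b) the Nagel--Stein--Wainger doubling property of Lebesgue measure for CC-balls. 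Together with the metric comparability recalled above (and hence the Lebesgue differentiation theorem on $(\Omega,d_{CC},\mathcal L^n)$), these are all that is needed.

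First I would work locally. Set $M:=\|Xu\|_{L^\fz(U)}$, and for $z\in U$ let $r_z:=\frac1{2\sigma}d_{CC}(z,\partial U)$, so that $\sigma B_{CC}(z,r)\subset U$ for $0<r\le r_z$; (a) then gives $\frac1{|B_{CC}(z,r)|}\int_{B_{CC}(z,r)}|u-u_{B_{CC}(z,r)}|\le C\,r\,M$. Telescoping along the balls $B_k:=B_{CC}(z,2^{-k}r)$ and using (b) to bound $|u_{B_k}-u_{B_{k+1}}|$ by $C\,\frac1{|B_k|}\int_{B_k}|u-u_{B_k}|$, the averages $u_{B_k}$ form a Cauchy sequence; its limit $\wz u(z)$ equals $u(z)$ at every Lebesgue point, hence $\wz u=u$ a.e., and summing gives $|\wz u(z)-u_{B_{CC}(z,r)}|\le C\,r\,M$ for $0<r\le r_z$. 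For $x,y\in U$ with $d_{CC}(x,y)=r\le\min\{r_x,r_y\}$, comparing $u_{B_{CC}(x,2r)}$ and $u_{B_{CC}(y,2r)}$ through their overlap (both sit in $B_{CC}(x,4r)$, of comparable measure by (b)) yields $|u_{B_{CC}(x,2r)}-u_{B_{CC}(y,2r)}|\le C\,r\,M$, so $|\wz u(x)-\wz u(y)|\le C\,d_{CC}(x,y)\,M$. This shows $\wz u$ is continuous and locally $d_{CC}$-Lipschitz.

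Then I would globalize over the compact set $\overline U$. Fix a radius $\rho_0=\rho_0(U,\Omega)>0$ and a common constant so that the previous estimate applies on CC-balls of radius $\rho_0$ covering a neighbourhood of $\overline U$, and choose $\rho_1\le\rho_0$ such that any $x,y\in U$ with $d_{CC}(x,y)<\rho_1$ can be joined by a chain of at most $N=N(U,\Omega)$ such balls staying inside $U$; concatenating the local bounds along the chain gives $|\wz u(x)-\wz u(y)|\le C(U,\Omega)\,M\,d_{CC}(x,y)$ for these pairs. For $x,y\in U$ with $d_{CC}(x,y)\ge\rho_1$ one uses instead $|\wz u(x)-\wz u(y)|\le 2\|u\|_{L^\fz(U)}\le\frac2{\rho_1}\|u\|_{L^\fz(U)}\,d_{CC}(x,y)$. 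Combining the two cases gives $\lip_{d_{CC}}(\wz u,U)\le C(U,\Omega)\big[\|u\|_{L^\fz(U)}+\|Xu\|_{L^\fz(U)}\big]$, which is the claim (recall $\|u\|_{\dot W^{1,\fz}_X(U)}=\|Xu\|_{L^\fz(U)}$).

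The hard part will be the globalization step rather than the local iteration: near $\partial U$ one cannot use balls of a fixed radius, so the connecting chain must be allowed to shrink and, crucially, to stay inside $U$, and one must keep both the number of balls and the total chain length comparable to $d_{CC}(x,y)$ uniformly over $U$. This is precisely where the dependence of the constant on the geometry of $U$ inside $\Omega$ enters; it is available because $\overline U$ is compact in $\Omega$ and $U$ is connected. Alternatively, one could bypass the explicit chaining by invoking the Haj\l asz--Koskela embedding theorem on doubling spaces supporting a $(1,1)$-Poincar\'e inequality, since $|Xu|$ serves as an $L^\fz$ upper gradient of $u$.
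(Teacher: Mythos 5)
The paper does not actually prove this statement: Theorem \ref{holder} is recalled verbatim from Garofalo--Nhieu \cite[Theorem 1.4]{gn}, so there is no internal argument to compare yours with, and your proposal has to stand on its own. Its local half does: the telescoping/Morrey iteration built on Jerison's $(1,1)$-Poincar\'e inequality and the Nagel--Stein--Wainger doubling property is a standard and correct way to produce a representative $\wz u$ that is continuous on $U$ and Lipschitz with respect to $\dcc$ on compact subsets, with constant $C\,\|Xu\|_{L^\fz}$; that much is all the paper actually uses later (Remark \ref{conrep}).

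The gap is in the globalization. The uniform chain condition you invoke --- every pair $x,y\in U$ with $\dcc(x,y)<\rho_1$ can be joined inside $U$ by at most $N(U,\Omega)$ balls whose total length is comparable to $\dcc(x,y)$ --- is a quasiconvexity/uniformity property of $U$ relative to the ambient metric, and it does \emph{not} follow from ``$\overline U$ compact and $U$ connected''. The paper's own Remark \ref{eg2}(ii) (the slit disk with Euclidean vector fields) is a counterexample: the points $(1/2,\pm\ez)$ are at ambient distance $2\ez$, yet every chain of balls inside $U$ joining them has length bounded below by a fixed constant; and the function $w(r,\tz)=r\tz$ there shows that the estimate your chaining would deliver for nearby pairs, $|\wz u(x)-\wz u(y)|\le C\,\|Xu\|_{L^\fz}\,\dcc(x,y)$, is simply false for such $U$. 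This is exactly why the stated bound carries the $\|u\|_{L^\fz(U)}$ term: it is needed for pairs that are close in the ambient distance but far in the inner distance of $U$, whereas your argument only uses it when $\dcc(x,y)\ge\rho_1$, which does not cover those pairs. To repair the proof one must either interpret the Lipschitz seminorm via the inner CC distance $d^U_{CC}$ of $U$ (in which case no chaining is needed at all: mollify along a connecting curve, which is a compact subset of $U$, exactly as in the paper's proof of Lemma \ref{jerison} via Proposition \ref{mollif2}, getting $|u(y)-u(x)|\le\|Xu\|_{L^\fz(U)}\,d^U_{CC}(x,y)$), or impose and exploit a genuine boundary chain condition on $U$ (CC ball, uniform or $(\ez,\dz)$ domain) as in Garofalo--Nhieu, where the $L^\fz$ term enters through the finitely many ``long'' chains. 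The suggested shortcut via the Haj\l asz--Koskela embedding has the same defect, since $(U,\dcc|_U)$ with Lebesgue measure need not be doubling nor support a Poincar\'e inequality up to $\partial U$ for an arbitrary subdomain.
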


\begin{rem}\label{conrep}\rm
For any $u \in\dot W^{1,\fz}_X(U)$,  we call  above $\wz u$ given in Theorem \ref{holder} as the continuous representative of $u$. Up to considering $\wz u $, in this paper  we always   assume that $u$ itself is continuous.
\end{rem}





We have the following dual formula of $\dcc$.
\begin{lem} \label{jerison}
For any $x,y\in \Omega$, we have
     \begin{equation} \label{dcc}
  d_{CC}(x,y) =\sup\{u(y)-u(x) \ | \ u\in  \dot W^{1,\fz}_X(\Omega) \mbox{ with 
  $ \||Xu|\|_{L^\fz(\Omega)}\le1$}\}.
\end{equation}
\end{lem}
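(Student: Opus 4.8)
The statement to prove is Lemma \ref{jerison}, the dual formula
\[
d_{CC}(x,y)=\sup\{u(y)-u(x)\ | \ u\in\dot W^{1,\fz}_X(\Omega)\ \mbox{with}\ \||Xu|\|_{L^\fz(\Omega)}\le 1\}.
\]
Denote the right-hand side by $\delta(x,y)$. The plan is to prove the two inequalities $\delta(x,y)\le d_{CC}(x,y)$ and $d_{CC}(x,y)\le\delta(x,y)$ separately.

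\textbf{Upper bound $\delta(x,y)\le d_{CC}(x,y)$.} First I would fix any admissible competitor $u\in\dot W^{1,\fz}_X(\Omega)$ with $\||Xu|\|_{L^\fz(\Omega)}\le 1$, and (as agreed in Remark \ref{conrep}) identify $u$ with its continuous representative. The point is that $|Xu|\le 1$ almost everywhere forces $u$ to be $1$-Lipschitz with respect to $d_{CC}$. One way to see this is to use that any $\gz\in\mathcal{ACH}(0,1;x,y;\Omega)$ with finite length can be mollified/approximated so that, along the smooth approximant, $\frac{d}{dt}u(\gz(t))=\sum_i c_i(t)X_iu(\gz(t))$ and hence $|u(\gz(1))-u(\gz(0))|\le\int_0^1|Xu(\gz(t))|\,\sqrt{\sum_i c_i^2(t)}\,dt\le\ell_{d_{CC}}(\gz)$; taking the infimum over such $\gz$ gives $u(y)-u(x)\le d_{CC}(x,y)$. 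Taking the supremum over admissible $u$ gives $\delta(x,y)\le d_{CC}(x,y)$. (Alternatively, since Theorem \ref{holder} / the results of \cite{gn} already record that $\dot W^{1,\fz}_X$ functions are $d_{CC}$-Lipschitz with the right constant, this direction can be quoted almost directly; I would still spell out the constant-$1$ normalization.)

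\textbf{Lower bound $d_{CC}(x,y)\le\delta(x,y)$.} This is the substantive direction. The natural candidate is the distance function itself: fix $x$ and set $v(z):=d_{CC}(x,z)$. Then $v(y)-v(x)=d_{CC}(x,y)$, so it suffices to show that $v$ is an admissible competitor, i.e. $v\in\dot W^{1,\fz}_X(\Omega)$ with $\||Xv|\|_{L^\fz(\Omega)}\le 1$. The function $v$ is globally $1$-Lipschitz for $d_{CC}$ by the triangle inequality, hence continuous; the content is to pass from metric $1$-Lipschitzness to the Sobolev bound $|Xv|\le 1$ a.e. This is exactly (the easy half of) the Carnot--Carath\'eodory Rademacher theorem, Theorem \ref{radcc}: a $d_{CC}$-Lipschitz function with Lipschitz constant $\lz$ lies in $\dot W^{1,\fz}_X$ with $|Xv|\le\lz$ a.e. Applying that with $\lz=1$ finishes the inequality. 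One caveat: Theorem \ref{radcc} as stated is for functions Lipschitz on $\Omega$; if $d_{CC}$ on $\Omega$ is not complete or not finite one should be mildly careful, but $d_{CC}$ is always finite on each component and locally comparable to a power of the Euclidean distance, so $v$ is genuinely $d_{CC}$-Lipschitz on all of $\Omega$ and the theorem applies. If one prefers to avoid invoking Theorem \ref{radcc}, one can instead work locally: on each ball $B\Subset\Omega$, approximate $v$ by the sup-convolutions $v_\ez(z)=\sup_{w}\{v(w)-\ez^{-1}d_{CC}(z,w)^2\}$ or by a mollification adapted to $X$, control $|Xv_\ez|\le 1+o(1)$, and pass to the limit using weak-$\ast$ compactness in $L^\fz$; but quoting Theorem \ref{radcc} is cleanest.

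\textbf{Main obstacle.} The only real work is the lower bound, and within it the assertion that the metric object $d_{CC}(x,\cdot)$ actually has horizontal gradient bounded by $1$ \emph{in the distributional sense}. The subtlety is that the $\mathcal{ACH}$ curves defining $d_{CC}$ are merely absolutely continuous and the vector fields are only smooth (not, say, left-invariant), so one must be slightly careful chaining ``$1$-Lipschitz along horizontal curves'' with ``$|Xv|\le 1$ a.e.'' — this is precisely what Theorem \ref{radcc} packages, so I expect the proof here to be short, essentially: (i) check $v$ is $d_{CC}$-$1$-Lipschitz on $\Omega$ by the triangle inequality; (ii) invoke Theorem \ref{radcc} to get $v\in\dot W^{1,\fz}_X(\Omega)$ with $|Xv|\le 1$ a.e.; (iii) conclude $d_{CC}(x,y)=v(y)-v(x)\le\delta(x,y)$; and combine with the upper bound from the first paragraph. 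A secondary, purely bookkeeping point is the continuous-representative convention from Remark \ref{conrep}, which I would mention so that $u(y)-u(x)$ makes pointwise sense in the definition of $\delta$.
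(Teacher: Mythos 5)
Your two inequalities are both valid, but only the upper bound follows the paper's route, while your lower bound is a genuinely different argument. For the upper bound the paper does exactly what you sketch: pick a near-geodesic $\gz_\ez\in\mathcal{ACH}(0,1;x,y;\Omega)$, enclose it in a subdomain $U\Subset\Omega$, mollify $u$ (not the curve), invoke Proposition \ref{mollif2} to get $|X(u\ast\eta_t)|\le 1+A_t(u)$ uniformly on $\overline U$ with $A_t(u)\to0$, integrate the smooth function along $\gz_\ez$, and pass to the limit. Be aware that this mollification bound is the real technical content there: since $X$ does not commute with Euclidean mollification for general H\"ormander vector fields, you cannot simply write $\frac{d}{dt}u(\gz(t))=\sum_i c_i(t)X_iu(\gz(t))$ for the Sobolev function itself, nor bound $|Xu|$ pointwise along a curve of measure zero; also note that your fallback of quoting Theorem \ref{holder} would only give a constant $C(U,\Omega)$ rather than $1$, and quoting Lemma \ref{radcclem} would be circular, since its proof in the paper uses Lemma \ref{jerison}. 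For the lower bound the paper instead quotes the Jerison--Sanchez-Calle dual formula \cite[Proposition 3.1]{js87}, i.e. the same supremum taken over $C^\fz(\Omega)$ functions with $\||Xu|\|_{L^\fz(\Omega)}\le1$ already equals $d_{CC}(x,y)$, so the supremum over the larger class $\dot W^{1,\fz}_X(\Omega)$ is trivially $\ge d_{CC}(x,y)$; you instead use $v=d_{CC}(x,\cdot)$ as a competitor and invoke the easy half of Theorem \ref{radcc} to get $|Xv|\le1$ a.e. Your route is legitimate and self-contained modulo Theorem \ref{radcc} (which the paper states as a black box from \cite{gn,FSS97,fhk99,ksz}), and it avoids the smooth dual formula altogether; what the paper's choice buys is that it never needs the Rademacher-type implication ``metric Lipschitz $\Rightarrow$ Sobolev'' at this point, only the elementary inclusion of competitor classes. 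One small repair if you follow your route: with the paper's convention that $\dot W^{1,\fz}_X(\Omega)$ consists of $L^\fz$ functions, $d_{CC}(x,\cdot)$ may be unbounded on $\Omega$, so you should test with the truncations $\min\{d_{CC}(x,\cdot),M\}$ and let $M\to\fz$.
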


To prove this we need the following bound for the norm of horizontal derivative of smooth
 approximation of functions in $\dot W^{1,\fz}_{X}(\Omega)$, see for example  \cite[Proposition 11.10]{HK}.
Denote by $\{\eta_\ez\}_{\ez\in(0,1)}$ the standard smooth mollifier, that is,
$\eta_\ez(x) = \ez^{-n} \eta(\frac{x}{\ez}) \quad\forall x\in\rn$,
where $\eta\in C^\fz(\rn)$ is supported in unit ball of  $\rn$ (with Euclidean distance),
  $\eta \ge 0$ and $\int_{\rn} \eta\,dx=1$.

\begin{prop}\label{mollif2}
Given  any compact set $K\subset \Omega$,  there is $\ez_K\in(0,1)$ such that
for any $\ez<\ez_K$ and $u \in \dot W^{1,\fz}_{X}(\Omega)$ one has
\begin{equation}\label{hk1}
  |X(u\ast \eta_\ez)(x)| \le \||Xu|\|_{L^\fz(\Omega)}
 +A_\ez(u)\quad\forall x\in K,
\end{equation}
where  $A_\ez(u)\ge0$ and $\lim_{\ez \to 0}A_\ez(u) \to 0$  in $K$.
\end{prop}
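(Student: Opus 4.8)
The plan is to compare the horizontal gradient of the mollification, $X(u*\eta_\ez)$, with the mollification $(Xu)*\eta_\ez$ of the horizontal gradient itself, and to estimate the commutator between $X$ and convolution. Fix the compact set $K\subset\Omega$, pick an open set $U$ with $\overline U$ compact, $K\subset U$ and $\overline U\subset\Omega$, and set $\ez_K:=\tfrac12\dist(K,\pa U)$; for $\ez<\ez_K$ and $x\in K$ all convolutions below make sense and $u*\eta_\ez\in C^\fz$. By Remark \ref{conrep} (based on Theorem \ref{holder}) we may assume $u$ is continuous, hence uniformly continuous on the compact set $\{z:\dist(z,K)\le\ez_K\}\subset\Omega$, with some modulus of continuity $\oz$; and each $b_{il}$, being smooth, is Lipschitz on this same compact set, with constant $C$ below independent of $\ez$ and of $x\in K$.

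Next I would record the commutator identity. Since $u*\eta_\ez$ is smooth, $X_i(u*\eta_\ez)(x)=\sum_l b_{il}(x)\int u(y)(\pa_l\eta_\ez)(x-y)\,dy$; on the other hand, applying the distributional definition of $X_iu$ to the test function $\eta_\ez(x-\cdot)$ together with $X_i^\ast\phi=-\sum_l\pa_l(b_{il}\phi)$ gives $(X_iu)*\eta_\ez(x)=\int u(y)\big[\sum_l b_{il}(y)(\pa_l\eta_\ez)(x-y)-\sum_l(\pa_lb_{il})(y)\eta_\ez(x-y)\big]\,dy$. Subtracting, $X_i(u*\eta_\ez)(x)-(X_iu)*\eta_\ez(x)=\int u(y)\,K_{i,\ez}(x,y)\,dy$, where $K_{i,\ez}(x,y):=\sum_l[b_{il}(x)-b_{il}(y)](\pa_l\eta_\ez)(x-y)+\sum_l(\pa_lb_{il})(y)\eta_\ez(x-y)$.

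The crucial point will be the cancellation $\int K_{i,\ez}(x,y)\,dy=0$: after the substitution $z=x-y$, one integration by parts in $z_l$ (legitimate since $\eta_\ez$ is compactly supported, with no boundary terms) gives $\int[b_{il}(x)-b_{il}(x-z)](\pa_l\eta_\ez)(z)\,dz=-\int(\pa_lb_{il})(x-z)\eta_\ez(z)\,dz$, which cancels the second sum term by term. Consequently I may replace $u(y)$ by $u(y)-u(x)$ in the commutator integral; then on the support $|x-y|\le\ez$ one has $|u(y)-u(x)|\le\oz(\ez)$ and $|K_{i,\ez}(x,y)|\le C\ez^{-n}$ (using $|b_{il}(x)-b_{il}(y)|\le C\ez$, $|(\pa_l\eta_\ez)(x-y)|\le C\ez^{-n-1}$ and $\eta_\ez(x-y)\le C\ez^{-n}$), and since the domain of integration has measure $\le C\ez^n$ this yields $|X_i(u*\eta_\ez)(x)-(X_iu)*\eta_\ez(x)|\le C\oz(\ez)$ for every $x\in K$.

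To conclude, set $A_\ez(u):=|X(u*\eta_\ez)-(Xu)*\eta_\ez|$, a nonnegative function on $K$; by the previous step $A_\ez(u)\le C\oz(\ez)\to0$ uniformly on $K$ as $\ez\to0$. Since $(Xu)*\eta_\ez(x)=\int Xu(y)\eta_\ez(x-y)\,dy$ is a weighted average of the vectors $Xu(y)$ with weights integrating to $1$, we have $|(Xu)*\eta_\ez(x)|\le\||Xu|\|_{L^\fz(\Omega)}$; combining with the triangle inequality in $\rr^m$ gives $|X(u*\eta_\ez)(x)|\le\||Xu|\|_{L^\fz(\Omega)}+A_\ez(u)$ for all $x\in K$, which is \eqref{hk1}. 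The only genuinely delicate point is the commutator estimate, and within it the cancellation $\int K_{i,\ez}(x,y)\,dy=0$, which is precisely what upgrades a merely bounded error term into one of size $O(\oz(\ez))$; everything else is routine bookkeeping. (This is of course the content of \cite[Proposition 11.10]{HK}, an instance of Friedrichs' commutator lemma, so the above can alternatively be quoted directly.)
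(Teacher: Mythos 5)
Your proof is correct: the commutator identity, the cancellation $\int K_{i,\ez}(x,y)\,dy=0$, and the resulting $O(\oz(\ez))$ bound all check out, and the final triangle-inequality step is fine. The paper itself gives no proof of this proposition but simply cites \cite[Proposition 11.10]{HK}, and your Friedrichs-type commutator argument is precisely the standard proof behind that citation, so you have in effect filled in the quoted result rather than diverged from the paper.
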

%

\begin{proof}[Proof of Lemma \ref{jerison}]
Recall that it was shown by \cite[Proposition 3.1]{js87}   that
\begin{equation}\label{dcc2}
  d_{CC}(x,y) =\sup\{u(y)-u(x) \ | \ u\in  C^\fz(\Omega) \mbox{ with
  $ \||Xu|\|_{L^\fz(\Omega)}\le1$}\} \quad\forall x,y\in \Omega.
\end{equation}
It then suffices to show that for any $u\in  \dot W^{1,\fz}_X(\Omega) $ with
  $ \||Xu|\|_{L^\fz(\Omega)}\le1$, we have
 $$\mbox{$ u(y)-u(x)\le d_{CC}(x,y) \quad\forall x,y\in\Omega$.}$$   Note that $u$ is assumed to be continuous as in Remark \ref{conrep}.

To this end, given any $x,y\in\Omega$,   for any $ \ez>0$
there exists a curve $ \gz_\ez  \subset \mathcal {ACH}(0,1;x,y;\Omega)$ such that
$
  \ell_{d_{CC}}(\gz_\ez)\le (1+\ez)d_{CC}(x,y).
$
We can find a   domain $U\Subset \Omega$ such that $\gz_\ez  \subset U$.
 It is standard that $u\ast \eta_t\to u$ uniformly in $\overline U$ and hence
   $$u(y)-u(x) =\lim_{t\to 0}[u\ast \eta_t(y)-u\ast \eta_t(x)] . $$

 Next,  by Proposition \ref{mollif2},
 for   $0<t<t_{\overline U}$ one has
 $$|X(u\ast \eta_t)(z)| \le \||Xu|\|_{L^\fz(\Omega)}
 +A_t u(z)\quad\forall z\in \overline U ,$$
 and  moreover, $A_t u(z) \to 0$ uniformly in $\overline U $    as $t  \to 0$.
Obviously,
 we  can find $t_{\ez,\overline U}<t_{\overline U}$
  such that for any $0<t<t_{\ez,\overline U}$,
   we have $A_t u(x)\le \ez$ and hence,  by $\||Xu|\|_{L^\fz(\Omega)}\le1$,
  $|X(u\ast \eta_t)(z)| \le 1+\ez ,$
    for all $z\in\overline U$. Therefore
\begin{align*}u\ast \eta_t(y)-u\ast \eta_t(x)&=\int_0^1 [(u\ast \eta_t)\circ\gz_t]'(s)\,ds\\
&=  \int_0^1  X(u\ast \eta_t)(\gz_t(s))\cdot \dot \gz_t(s)\,ds\\
&\le (1+\ez)\ell_{d_{CC}}(\gz_\ez)\\
&\le  (1+\ez)(1+\ez)d_{CC}(x,y).
\end{align*}
Sending $t\to0$ and $\ez\to0$, one concludes
 $ u(y)-u(x)\le d_{CC}(x,y) $ as desired.
\end{proof}

As a consequence of Rademacher type theorem (that is, Theorem  \ref{radcc}),
we  have the following, which is an analogue of Lemma \ref{rade'}.  Denote by $\lip^\ast_{d_{CC}}(\Omega)$ the  collection of all functions  $u$ in $\Omega$
  with
\begin{equation}\label{suplip1}\lip^\ast_{d_{CC}}(u,\Omega):=\sup_{x \in\Omega }\lip_{d_{CC}} u(x)<\fz.
\end{equation}
\begin{lem}\label{radcclem} We have $
\dot W^{1,\fz}_X(\Omega)=\lip_{d_{CC}}(\Omega)=\lip^\ast_{d_{CC}}(u,\Omega)$ with
\begin{equation}\label{1.2}
 \||X u|\|_{L^\fz(\Omega)} = \lip_{d_{CC}}( u,\Omega)= \lip^\ast_{d_{CC}}( u,\Omega)
\end{equation}
\end{lem}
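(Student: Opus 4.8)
The plan is to prove Lemma \ref{radcclem} as a direct consequence of the Carnot--Carath\'eodory Rademacher theorem (Theorem \ref{radcc}), the dual formula for $d_{CC}$ (Lemma \ref{jerison}), and the H\"older imbedding (Theorem \ref{holder}), by establishing the two set inclusions together with the matching (semi-)norm inequalities. Throughout, note that by Remark \ref{conrep} every element of $\dot W^{1,\fz}_X(\Omega)$ is identified with its continuous representative, and that $\lip_{d_{CC}} u(x)$ is the pointwise Lipschitz constant defined via \eqref{pointlip} with $|y-x|$ replaced by $d_{CC}(y,x)$, so $\lip_{d_{CC}}^\ast(u,\Omega)=\sup_{x\in\Omega}\lip_{d_{CC}}u(x)$.

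\smallskip
\emph{Step 1: $\lip_{d_{CC}}(\Omega)\subset \dot W^{1,\fz}_X(\Omega)$ with $\||Xu|\|_{L^\fz(\Omega)}\le \lip_{d_{CC}}(u,\Omega)$.} Suppose $u$ satisfies \eqref{lip01} with constant $\lz=\lip_{d_{CC}}(u,\Omega)$. By Theorem \ref{radcc}, $Xu\in L^\fz(\Omega;\rr^m)$ and $|Xu(x)|\le\lz$ for a.e.\ $x\in\Omega$, which is exactly the claimed inclusion and norm bound. (One should be slightly careful that Theorem \ref{radcc} is stated for $\Omega$ itself; since $X$ is a H\"ormander field on all of $\Omega$ this applies directly.)

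\smallskip
\emph{Step 2: $\dot W^{1,\fz}_X(\Omega)\subset \lip_{d_{CC}}(\Omega)$ with $\lip_{d_{CC}}(u,\Omega)\le \||Xu|\|_{L^\fz(\Omega)}$.} Let $u\in\dot W^{1,\fz}_X(\Omega)$ and set $\lz=\||Xu|\|_{L^\fz(\Omega)}$. If $\lz=0$, then $u$ is constant (Theorem \ref{holder} gives local constancy, hence global constancy by connectedness of $\Omega$), so the claim is trivial; assume $\lz>0$. The function $v:=u/\lz$ satisfies $\||Xv|\|_{L^\fz(\Omega)}\le1$, so by the dual formula Lemma \ref{jerison}, $v(y)-v(x)\le d_{CC}(x,y)$ for all $x,y\in\Omega$; swapping $x,y$ gives $|v(y)-v(x)|\le d_{CC}(x,y)$, i.e.\ $|u(y)-u(x)|\le\lz\,d_{CC}(x,y)$. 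Hence $u\in\lip_{d_{CC}}(\Omega)$ with $\lip_{d_{CC}}(u,\Omega)\le\lz$. Combined with Step 1, this yields $\dot W^{1,\fz}_X(\Omega)=\lip_{d_{CC}}(\Omega)$ and $\||Xu|\|_{L^\fz(\Omega)}=\lip_{d_{CC}}(u,\Omega)$.

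\smallskip
\emph{Step 3: identification with $\lip^\ast_{d_{CC}}(\Omega)$.} First, for any $u\in\lip_{d_{CC}}(\Omega)$ one has $\lip_{d_{CC}}u(x)\le\lip_{d_{CC}}(u,\Omega)$ for every $x$, so $\lip^\ast_{d_{CC}}(u,\Omega)\le\lip_{d_{CC}}(u,\Omega)<\fz$; thus $\lip_{d_{CC}}(\Omega)\subset\lip^\ast_{d_{CC}}(\Omega)$. Conversely, suppose $u\in\lip^\ast_{d_{CC}}(\Omega)$, i.e.\ $\Lambda:=\sup_{x\in\Omega}\lip_{d_{CC}}u(x)<\fz$. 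The plan is to upgrade this pointwise bound to the global one $\||Xu|\|_{L^\fz(\Omega)}\le\Lambda$, which by Step 2 then gives $u\in\dot W^{1,\fz}_X(\Omega)=\lip_{d_{CC}}(\Omega)$ and closes the circle of equalities with $\lip^\ast_{d_{CC}}(u,\Omega)=\Lambda=\||Xu|\|_{L^\fz(\Omega)}$. For this: fix $x_0\in\Omega$ and a small ball $B=B_{d_{CC}}(x_0,r)\Subset\Omega$; for every $x\in B$ pick $\rho_x>0$ with $|u(y)-u(x)|\le(\Lambda+1)d_{CC}(x,y)$ for all $y\in B_{d_{CC}}(x,\rho_x)$. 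These balls cover $B$; using that $d_{CC}$ is a length distance on the connected open set $B$ (indeed $d_{CC}$ is geodesic/length — this is standard for H\"ormander fields, and in the Euclidean case follows from Remark \ref{dccde} after possibly shrinking $B$ to a convex piece), a chaining argument along an almost-minimizing horizontal curve between any $x,y\in B$ gives $|u(y)-u(x)|\le(\Lambda+1)d_{CC}(x,y)$; more carefully, a Lebesgue-number/compactness argument on the curve lets one sum the local estimates with the constant $\Lambda+\ez$ for arbitrary $\ez>0$, yielding $u\in\lip_{d_{CC}}(B)$ with $\lip_{d_{CC}}(u,B)\le\Lambda$. By Step 1 applied on $B$, $|Xu|\le\Lambda$ a.e.\ on $B$; since $x_0$ was arbitrary, $\||Xu|\|_{L^\fz(\Omega)}\le\Lambda$, as desired.

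\smallskip
\emph{Main obstacle.} Steps 1 and 2 are immediate from the cited results; the real content is Step 3, specifically passing from the pointwise ``Lipschitz'' bound $\lip_{d_{CC}}u(x)\le\Lambda$ everywhere to a genuine local Lipschitz bound with the same constant. This requires a covering/chaining argument that crucially uses that $d_{CC}$ is a length metric (so that the lengths of short sub-curves add up controllably) — exactly the ``pseudo-length'' type phenomenon that reappears, in the $d_\lz$ setting, as Proposition \ref{len} later in the paper. One must take care that the local radii $\rho_x$ are not uniform, so a naive finite subcover is not enough; instead one argues along each fixed curve, where compactness does furnish a uniform local scale. In the Euclidean case this is the classical fact that a function with $\limsup_{y\to x}|u(y)-u(x)|/|y-x|\le\Lambda$ for all $x$ is $\Lambda$-Lipschitz on convex sets (via the mean value theorem / integration along segments), and Remark \ref{dccde} reduces the general Euclidean $d_{CC}$ to convex pieces; in the genuine H\"ormander case one replaces segments by horizontal curves and uses the length structure of $d_{CC}$.
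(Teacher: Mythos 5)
Your proposal is correct, and it rests on the same three ingredients as the paper's proof: Theorem \ref{radcc} for $\lip_{d_{CC}}(\Omega)\subset\dot W^{1,\fz}_X(\Omega)$ with $\||Xu|\|_{L^\fz}\le\lip_{d_{CC}}(u,\Omega)$, the dual formula of Lemma \ref{jerison} (applied to $u/\lz$) for the reverse inclusion and inequality, and a chaining argument along almost--length-minimizing horizontal curves to pass from the pointwise bound $\lip_{d_{CC}}u(x)\le\Lambda$ to a genuine Lipschitz bound. The organization differs in one respect: the paper proves $\lip^\ast_{d_{CC}}(\Omega)\subset\lip_{d_{CC}}(\Omega)$ with $\lip_{d_{CC}}(u,\Omega)\le\lip^\ast_{d_{CC}}(u,\Omega)$ \emph{directly and globally}, by chaining the pointwise estimates along an arbitrary curve in $\mathcal{ACH}(0,1;x,y;\Omega)$ (using $d_{CC}(\gz(s),\gz(t))\le\ell_{d_{CC}}(\gz)|s-t|$ for an arc-length-proportional parameterization) and then passing to almost-minimizing curves via \eqref{dcc0}; this step is purely metric and never touches the Sobolev space. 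You instead localize to a small CC ball, deduce a local Lipschitz bound there, invoke Theorem \ref{radcc} on the ball to get $|Xu|\le\Lambda$ a.e., and then re-apply the dual formula to recover the global constant. Both routes work; yours buys a slightly cleaner treatment of the non-uniform radii $\rho_x$ (you only need them along each fixed curve, and in fact you do not even need to stay inside $B$, since the pointwise bound holds on all of $\Omega$), while the paper's is shorter because it never re-enters $\dot W^{1,\fz}_X$.

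One small slip to fix: in Step 2, for $\||Xu|\|_{L^\fz(\Omega)}=0$ you justify constancy of $u$ by Theorem \ref{holder}, but that theorem only gives $\lip_{d_{CC}}(\wz u,U)\le C(U,\Omega)[\|u\|_{L^\fz(U)}+\|u\|_{\dot W^{1,\fz}_X(U)}]$, which does not vanish when $Xu=0$ and so does not yield (local) constancy. The conclusion is still true and is obtained exactly as in the paper: for every $\lz'>0$ the function $u/\lz'$ is admissible in Lemma \ref{jerison}, so $|u(y)-u(x)|\le\lz' d_{CC}(x,y)$ for all $\lz'>0$, whence $\lip_{d_{CC}}(u,\Omega)=0$ (and in particular $u$ is constant). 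With that repair your argument is complete.
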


\begin{proof}
First, we show $\lip_{d_{CC}}(\Omega)= \lip^\ast_{d_{CC}}(\Omega)$ and $\lip_{d_{CC}}( u,\Omega)= \lip^\ast_{d_{CC}}( u,\Omega)$. Notice that $\lip_{d_{CC}}(u,\Omega)\subset \lip^\ast_{d_{CC}}(u,\Omega)$ and $\lip^\ast_{d_{CC}}(u,\Omega) \le \lip_{d_{CC}}(u,\Omega)$ are obvious. We prove
$$\mbox{$\lip^\ast_{d_{CC}}(u,\Omega) \subset \lip_{d_{CC}}(u,\Omega)$  and $\lip_{d_{CC}}(u,\Omega) \le \lip_{d_{CC}}^\ast(u,\Omega).$}$$

Let $u  \in \lip^\ast_{d_{CC}}(u,\Omega)$. Given any $x,y\in\Omega$, and $\gz\in \mathcal {ACH}(0,1;x,y;\Omega)$, parameterise $\gz$ such that $|\dot\gz(t)| = \ell_{\dcc}(\gz)$ for almost every $t \in [0,1]$.
Since $$A_{x,y}:=\sup_{t\in[0,1]} \lip u(\gz(t))<\fz,$$
 for each $t\in[0,1]$ we can find $r_t>0$ such that
 $$\mbox{$|u(\gz(s))-u(\gz(t))|\le A_{x,y}|\gz(s)-\gz(t)|=A_{x,y}\ell_{\dcc}(\gz)|s-t|$ whenever $|s-t|\le r_t$ and $s \in [0,1]$.}$$
 Since $[0,1]\subset\cup_{t\in[0,1]}(t-r_t,t+r_t)$, we can find an increasing sequence
  $t_i\in[0,1]$ with $t_0 = 0$ and $t_N=1$ such that $$[0,1]\subset \cup_{i=1}^N(t_i-\frac12r_{t_i},t_i+\frac12 r_{t_i}).$$ Write $x_i=\gz(t_i)$ for $i =0 , \cdots , N.$ We have
 \begin{align*}|u(x)-u(y)|&=|\sum_{i=0}^{N-1}[u(x_i)-u(x_{i+1})]|\\
&\le
 \sum_{i=0}^{N-1}|u(x_i)-u(x_{i+1})| \\&\le A_{x,y}\ell_{\dcc}(\gz)\sum_{i=0}^{N-1}|t_i-t_{i+1}|\\&=A_{x,y}\ell_{\dcc}(\gz).
\end{align*}
 Noticing that $A_{x,y}  \le  \lip^\ast(u,\Omega) < \fz$ for all $x,y \in \Omega$, we deduce that
 \begin{equation}\label{dcc3}
   |u(y)-u(x)|\le \lip_{d_{CC}}^\ast(u,\Omega)\ell_{d_{CC}}(\gz) \quad \forall x,y \in \Omega.
 \end{equation}
For any $\ez>0$, recalling the definition of $\dcc$ in \eqref{dcc0}, there exists $\{\gz_\ez\}_{\ez >0} \subset \mathcal {ACH}(0,1;x,y;\Omega)$ such that
\begin{equation}\label{dcc4}
  \ell_{d_{CC}}(\gz_\ez)\le (1+\ez)d_{CC}(x,y) \quad \forall x,y \in \Omega.
\end{equation}
Combining \eqref{dcc3} and \eqref{dcc4}, we have
$$ \frac{|u(y)-u(x)|}{d_{CC}(x,y)} \le \lim_{\ez \to 0} \frac{|u(y)-u(x)|}{(1+\ez)d_{CC}(x,y)} \le \lip_{d_{CC}}^\ast(u,\Omega) \quad \forall x,y \in \Omega.$$
Taking supremum among all  $x,y \in \Omega$ in the above inequality, we deduce that $u \in \lip_{d_{CC}}(u,\Omega)$ and $\lip_{d_{CC}}(u,\Omega) \le \lip_{d_{CC}}^\ast(u,\Omega)$. Hence the second equality in \eqref{1.2} holds.

Next, we show $\dot W^{1,\fz}_X(\Omega)=\lip_{d_{CC}}(\Omega)$ and $\lip_{d_{CC}}(u,\Omega) = \| | Xu| \|_{L^\fz(\Omega)}$.
By Theorem \ref{radcc}, we know $\lip_{\dcc}(\Omega) \subset \dot W^{1,\fz}_X (\Omega)$ and $\| | Xu| \|_{L^\fz(\Omega)} \le\lip_{d_{CC}}(u,\Omega)$.

  To see $\dot W^{1,\fz}_X (\Omega) \subset \lip_{\dcc}(\Omega)$ and $\lip_{d_{CC}}(u,\Omega) \le \| | Xu| \|_{L^\fz(\Omega)}$, let $u \in \dot W^{1,\fz}_X (\Omega)$.  Then $\||Xu|\|_{L^\fz(\Omega)} =:\lz < \fz.$ If $ \lz >0$, then $\lz^{-1}u \in \dot W^{1,\fz}_X (\Omega)$ and $\||X (\lz^{-1} u )|\|_{L^\fz(\Omega)} =1$. Hence $\lz^{-1}u$ could be the test function in \eqref{dcc}, which implies
  $$    \lz^{-1} u(y)- \lz^{-1}u(x) \le  \dcc (x,y) \ \forall x,y \in \Omega,  $$
 or equivalently,
 $$  \frac{|u(y)- u(x) |}{\||Xu|\|_{L^\fz(\Omega)}} \le  \dcc (x,y) \ \forall x,y \in \Omega.  $$
 Therefore, $u \in \lip_{\dcc}(\Omega)$ and $ \lip_{\dcc} (u,\Omega) \le \||Xu|\|_{L^\fz(\Omega)}$. If $\lz=0$, then similar as the above discussion, we have for any $\lz'>0$
 $$  \frac{|u(y)- u(x) |}{\lz'} \le  \dcc (x,y) \ \forall x,y \in \Omega.  $$
 Therefore, $u \in \lip_{\dcc}(\Omega)$ and $ \lip_{\dcc} (u,\Omega) \le \lz'$ for any $\lz'>0$. Hence $ \lip_{\dcc} (u,\Omega) =0 = \||Xu|\|_{L^\fz(\Omega)}$ and  we complete the proof.
\end{proof}

Next, we recall some concepts from metric geometry. First we recall the notion of pseudo-distance.

\begin{defn}
\label{pseudo}
We say that $\rho$ is a pseudo-distance in a set $\Omega \subset \rn$ if $\rho $ is a function in $\Omega \times \Omega$ such that
\begin{enumerate}
 \vspace{-0.3cm}  \item[(i)]  $\rho(x,x)=0$ for all $x \in \Omega$ 
 and
 $\rho(x,y) \ge 0$ for all $x,y \in  \Omega$;
  \vspace{-0.3cm} \item[(ii)] $\rho(x,z) \le \rho(x,y) + \rho(y,z)$ for all $x,y,z \in  \Omega$.
\end{enumerate}
We call $(\Omega,\rho)$ as a pseudo-metric space. The topology induced by $\{\rho(x, \cdot)\}_{x \in \Omega}$ (resp. $\{\rho(\cdot, x)\}_{x \in \Omega}$) is the weakest topology on $\Omega$ such that $\rho(x, \cdot)$ (resp. $\rho(\cdot, x)$)
is continuous for all $x \in \Omega$.
\end{defn}

We remark that since the above pseudo-distance $\rho$ may not have symmetry, the topology induced by $\{\rho(x, \cdot)\}_{x \in \Omega}$ in $\Omega$ may be different from that induced by $\{\rho(\cdot, x)\}_{x \in \Omega}$.

Suppose that $H(x,p)$ is an Hamiltonian in $\Omega$ satisfying  assumptions (H0)-(H3).
 Let $\{d_\lz\}_{\lz \ge 0}$ be defined as in \eqref{d311}. Thanks to the convention in Remark \ref{conrep}, one has
  \begin{equation} \label{dlz}
d _{\lambda} (x,y):= \sup\{u(y)-u(x) \ | \ u \in \dot W^{1,\infty}_{X }(\Omega) \ \mbox{with}\ \|H(\cdot, Xu)\|_{L^\infty(\Omega)} \le \lz
  \}\quad\forall \mbox{$x,y \in \Omega$}.
\end{equation}

The following properties holds for $\dl$.

\begin{lem} \label{lem311}
The following holds.
  \begin{enumerate}
    \item[(i)] For any $\lz\ge0$, $\dl$ is  a pseudo distance on $\Omega$.
    \item[(ii)]  For any $\lz\ge0$,
    \begin{eqnarray}\label{e311}
 R'_\lz\dcc(x,y) \leq \dl (x,y) \leq   R_\lz\dcc(x,y)<\fz\quad\forall x,y\in\Omega.
\end{eqnarray}
    \item[(iii)] If $H(x,p)=H(x,-p)$ for all $p \in \rr^m$ and almost all $x \in \Omega$, then
     $d_\lz(x,y)=d_\lz(y,x)$ for all $x, y\in \Omega$.
  \end{enumerate}
\end{lem}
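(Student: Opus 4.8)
\textbf{Proof proposal for Lemma \ref{lem311}.}

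The plan is to prove the three parts in the order (ii), (i), (iii), since the two-sided bound in (ii) immediately yields finiteness and nonnegativity, and hence most of the pseudo-distance axioms in (i). First, for (ii), I would fix $\lz\ge0$ and $x,y\in\Omega$. For the upper bound, take any admissible competitor $u\in\dot W^{1,\fz}_X(\Omega)$ with $\|H(\cdot,Xu)\|_{L^\fz(\Omega)}\le\lz$. By the definition of $R_\lz$ in (H3), we have $|Xu(z)|\le R_\lz$ for almost every $z\in\Omega$, so $\||Xu|\|_{L^\fz(\Omega)}\le R_\lz<\fz$ by (H3). If $R_\lz>0$ then $R_\lz^{-1}u$ is admissible in the dual formula \eqref{dcc} for $d_{CC}$ (Lemma \ref{jerison}), giving $u(y)-u(x)\le R_\lz d_{CC}(x,y)$; if $R_\lz=0$ the same argument with $\lz'>0$ in place of $R_\lz$ gives $u(y)-u(x)\le\lz' d_{CC}(x,y)$ for all $\lz'>0$, hence $u(y)-u(x)\le 0$. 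Taking the supremum over all admissible $u$ yields $d_\lz(x,y)\le R_\lz d_{CC}(x,y)<\fz$. For the lower bound, I would test the supremum in \eqref{dlz} with a near-optimal competitor for $d_{CC}$: by \eqref{dcc}, for any $\ez>0$ there is $v\in\dot W^{1,\fz}_X(\Omega)$ with $\||Xv|\|_{L^\fz(\Omega)}\le1$ and $v(y)-v(x)\ge d_{CC}(x,y)-\ez$. For any $t\ge0$, the scaled function $tv$ satisfies $|X(tv)|\le t$ a.e.; by the definition of $R'_\lz$, whenever $t<R'_\lz$ we have $H(z,X(tv)(z))<\lz$ for a.e.\ $z$ (since $H(z,p)\ge\lz$ forces $|p|\ge R'_\lz$), so $tv$ is admissible for $d_\lz$, giving $d_\lz(x,y)\ge t(v(y)-v(x))\ge t(d_{CC}(x,y)-\ez)$. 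Letting $t\uparrow R'_\lz$ and $\ez\downarrow0$ gives $d_\lz(x,y)\ge R'_\lz d_{CC}(x,y)$. (A small point to handle: if $R'_\lz=0$ the lower bound is trivial; if $v(y)-v(x)$ could be negative one replaces $v$ by $-v$, noting $|{-}v|$ has the same bound, but in fact near-optimality forces $v(y)-v(x)\ge d_{CC}(x,y)-\ez\ge -\ez$, which suffices after the limit.)

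For (i), the property $d_\lz(x,x)=0$ follows since the constant-difference $u(x)-u(x)=0$ is attained (the zero function is always admissible by (H2): $H(\cdot,0)\equiv0\le\lz$), and $d_\lz(x,x)\le R_\lz d_{CC}(x,x)=0$ from (ii); nonnegativity $d_\lz(x,y)\ge R'_\lz d_{CC}(x,y)\ge0$ is immediate from (ii) as well (or directly, again using the zero competitor). For the triangle inequality, fix $x,y,z\in\Omega$ and any admissible $u$; then $u(z)-u(x)=[u(z)-u(y)]+[u(y)-u(x)]\le d_\lz(y,z)+d_\lz(x,y)$, and taking the supremum over admissible $u$ gives $d_\lz(x,z)\le d_\lz(x,y)+d_\lz(y,z)$. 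This matches Definition \ref{pseudo} exactly.

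For (iii), assume $H(x,p)=H(x,-p)$ for a.e.\ $x$ and all $p$. Then $u$ is admissible for $d_\lz$ if and only if $-u$ is: indeed $X(-u)=-Xu$ and $H(z,X(-u)(z))=H(z,-Xu(z))=H(z,Xu(z))$ for a.e.\ $z$, so the constraint $\|H(\cdot,Xu)\|_{L^\fz(\Omega)}\le\lz$ is preserved. Hence
\begin{align*}
d_\lz(x,y)&=\sup\{u(y)-u(x) : u \text{ admissible}\}\\
&=\sup\{(-u)(y)-(-u)(x) : u \text{ admissible}\}\\
&=\sup\{u(x)-u(y) : u \text{ admissible}\}=d_\lz(y,x).
\end{align*}

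The main obstacle I anticipate is the lower bound in (ii), specifically the passage from "$t<R'_\lz$ implies $tv$ admissible" to the limit as $t\uparrow R'_\lz$, together with correctly accounting for the degenerate cases $R_\lz=0$ or $R'_\lz=0$ and the sign of $v(y)-v(x)$ for the near-optimal $d_{CC}$-competitor; all of these are routine once one is careful, but they are where a sloppy argument could go wrong. The rest is essentially bookkeeping on top of the dual characterization of $d_{CC}$ from Lemma \ref{jerison} and the definitions of $R_\lz,R'_\lz$ in (H3).
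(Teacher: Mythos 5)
Your proof is correct and follows essentially the same route as the paper: parts (i) and (iii) are the paper's arguments verbatim, and for (ii) the paper likewise sandwiches the admissible class between $\{\||Xu|\|_{L^\fz(\Omega)}\le R'_\lz\}$ and $\{\||Xu|\|_{L^\fz(\Omega)}\le R_\lz\}$ and invokes the dual formula \eqref{dcc}. Your scaling by $t<R'_\lz$ and letting $t\uparrow R'_\lz$ (and the treatment of $R_\lz=0$) is if anything slightly more careful than the paper's one-line inclusion at the borderline $|p|=R'_\lz$, but the substance is identical.
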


\begin{proof}
  To see Lemma \ref{lem311} (i),
by choosing constant functions as test functions in \eqref{dlz}, one has  $\rho(x,y)\ge0$ for all $x,y\in\Omega$.  Obviously, one has $d_\lz(x,x)=0$ for all $x\in\Omega$.
Besides, 
\begin{align*}
 d _{\lambda} (x,z)&= \sup\{u(z)-u(x) : u \in \dot W^{1,\infty}_{X }(\Omega),\ \|H(\cdot, Xu)\|_{L^\infty(\Omega)} \le \lz
  \}   \\
   & \le \sup\{u(y)-u(x) : u \in \dot W^{1,\infty}_{X }(\Omega),\ \|H(\cdot, Xu)\|_{L^\infty(\Omega)} \le \lz
  \} \\
  & \quad + \sup\{u(z)-u(y) : u \in \dot W^{1,\infty}_{X }(\Omega),\ \|H(\cdot, Xu)\|_{L^\infty(\Omega)} \le \lz
  \}  \\
  & = d _{\lambda} (x,y) + d _{\lambda} (y,z).
\end{align*}
By Definition \ref{pseudo}, $\dl$ is a pseudo distance.

To see Lemma \ref{lem311} (ii), by (H3), we have
\begin{align*}
  \{u\in \dot W^{1,\infty}_{X }(\Omega ) \ | \  \||Xu|\|_{L^\fz(\Omega )} \le R'_\lz\} &\subset \{u\in \dot W^{1,\infty}_{X }(\Omega ) \ | \ \|H(x,Xu)\|_{L^\fz(\Omega )} \le \lz\} \\
  & \subset \{u\in \dot W^{1,\infty}_{X }(\Omega ) \ |  \ \||Xu|\|_{L^\fz(\Omega )} \le R_\lz\}.
\end{align*}
From this and the definitions of $\dcc$ in \eqref{dcc} and $\dl$ in \eqref{dlz},
we deduce  \eqref{e311} as desired.

Finally  we show  Lemma \ref{lem311} (iii),  since $H(x,p)=H(x,-p)$ for all $p \in \rr^m$ and almost all $x \in \Omega$, then
$$ \|H(x,Xu)\|_{L^\fz(\Omega )} = \|H(x,X(-u))\|_{L^\fz(\Omega )} \mbox{ for all $u \in \dot W^{1,\infty}_{X }(\Omega ) $}.$$
Hence for any $x, y\in \Omega$, $u$ can be a test function for $d_\lz(x,y)$ in the right hand side of \eqref{dlz} if and only if $-u$ can be a test function for $d_\lz(y,x)$ in the right hand side of \eqref{dlz}. As a result, $d_\lz(x,y)=d_\lz(y,x)$, which completes the proof.
\end{proof}

As a consequence of Lemma \ref{lem311}, we obtain the following.

\begin{cor} \label{lem311-1}
 For any $\lz> \lz_H$, $d_\lz$ is comparable with $d_{CC}$, that is ,
\begin{eqnarray}\label{e311-1}
 0<R'_\lz \leq \frac{\dl (x,y)} {\dcc(x,y)} \leq   R_\lz <\fz\quad\forall x,y\in\Omega.
\end{eqnarray}
Consequently, the topology induced by $\{d_\lz(x ,\cdot)\}_{x \in \Omega}$ and $\{d_\lz(\cdot,x)\}_{x \in \Omega}$ coincides with the one induced by
   $\dcc$ in $\Omega$, and hence, is the Euclidean topology.
\end{cor}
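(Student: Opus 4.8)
The plan is to deduce Corollary \ref{lem311-1} directly from Lemma \ref{lem311}(ii) together with the definition of $\lz_H$ in \eqref{lh}. First I would recall that, by the remark following \eqref{lh}, one has $\lz>\lz_H$ if and only if $R'_\lz>0$; so the hypothesis $\lz>\lz_H$ gives $R'_\lz>0$, and (H3) in Assumption \ref{ham} gives $R_\lz<\fz$. Plugging these two facts into \eqref{e311} of Lemma \ref{lem311}(ii) immediately yields the double inequality \eqref{e311-1}: for every $x,y\in\Omega$,
\begin{equation*}
 0<R'_\lz\,\dcc(x,y)\le\dl(x,y)\le R_\lz\,\dcc(x,y)<\fz,
\end{equation*}
and dividing through by $\dcc(x,y)>0$ when $x\ne y$ gives the stated comparison (the case $x=y$ being trivial since both sides vanish).

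For the topological consequence, I would argue as follows. Since $R'_\lz>0$ and $R_\lz<\fz$, the map $x\mapsto\dl(x_0,x)$ and the map $x\mapsto\dcc(x_0,x)$ are, by \eqref{e311-1}, each bounded above and below by a positive constant times the other; hence a set is open in the topology induced by $\{\dl(x,\cdot)\}_{x\in\Omega}$ if and only if it is open in the topology induced by $\{\dcc(x,\cdot)\}_{x\in\Omega}$, and likewise for the topologies induced by the maps in the second slot. By the bi-H\"older estimate recalled in Section 2 (the inequalities $C(K)^{-1}|x-y|\le\dcc(x,y)\le C(K)|x-y|^{1/k}$ on compact sets $K$), the topology induced by $\dcc$ is the Euclidean topology on $\Omega$. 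Combining these observations gives that the topologies induced by $\{\dl(x,\cdot)\}_{x\in\Omega}$ and by $\{\dl(\cdot,x)\}_{x\in\Omega}$ both coincide with the Euclidean topology, as claimed.

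There is essentially no obstacle here: the corollary is a formal consequence of Lemma \ref{lem311}(ii) and the elementary characterization of $\lz_H$, and the only mild point worth spelling out is that the comparison constants in \eqref{e311-1} are independent of $x,y$, so that the equivalence of induced topologies (rather than merely of the distance functions pointwise) genuinely follows. One should also note that this uses $\lz>\lz_H$ strictly — at $\lz=\lz_H$ one may have $R'_{\lz_H}=0$, in which case the lower bound in \eqref{e311-1} degenerates and the argument for the topology breaks down; this is exactly the situation flagged in Remark \ref{rlambda}.
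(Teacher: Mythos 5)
Your proposal is correct and follows exactly the route the paper intends: the corollary is stated as an immediate consequence of Lemma \ref{lem311}(ii), using that $\lz>\lz_H$ is equivalent to $R'_\lz>0$ and that (H3) gives $R_\lz<\fz$, with the topological statement then following from the uniform two-sided comparison with $\dcc$ and the known equivalence of the $\dcc$-topology with the Euclidean one. Your added remarks on the uniformity of the constants and the failure at $\lz=\lz_H$ are accurate but not needed beyond what the paper already records.
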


\begin{rem} \label{d1}
\rm
(i) We remark that in  Lemma \ref{lem311} (iii),  without  the assumption $H(x,p)=H(x,-p)$ for all $p \in \rr^m$ and almost all $x \in \Omega$, $d_\lz$ may not be symmetric, that is, $d_\lz(x,y)=d_\lz(y,x)$ may not hold for all $x, y\in \Omega$.

(ii)   If $R'_\lz=0$ for some $\lz>0$, then the topology induced by $\{\dl(x,\cdot)\}_{x \in  \Omega}$ may be different from the Euclidean topology.
To wit this, we construct an Hamiltonian $H(p)$ in Euclidean disk $\Omega=\{x\in\rr^2||x|<1\}$  with $X=\{\frac{\partial}{\partial x_1},
\frac{\partial}{\partial x_2} \}$, which  satisfies (H0)-(H3) with $\lz_H>0$.
Define $H: \Omega \times \rr^2 \to [0,\fz)$ by
$$ H(p) = H(p_1,p_2) =\max\{ |p| ,2 \}\chi_{\{p \in \rr^2 | p_1<0\}} + |p|\chi_{\{p \in \rr^2 | p_1 \ge 0\}}$$
where $\chi_E$ is the characteristic function of the set $E$. One can check that $H(p)$ satisfies (H0)-(H3).  We omit the details.

Now we show
$$ R_{1}' = \inf\{ |p| \ | \  H(p) \ge 1  \}=0 ,$$
and thus $\lz_H \ge 1 >0$.
Indeed,  for any $ p=(p_1,p_2)$ and $p_1\ge 0$, one has $H(p )=|p|$ and hence $H(p)\ge 1$ implies $|p|\ge1$.  On the other hand,  for any $ p=(p_1,p_2)$ and $p_1< 0$,
we always have $H(p)=\max\{|p|,2\}\ge2$, and hence
$$ R_{1}' = \inf\{ |p|   | \   p=(p_1,p_2)\in\rr^2 \ \mbox{with $ |p|<1$ and $ p_1<0$} \} =0.$$

%

Writing $e_1=(1,0)$, we claim that
\begin{equation}\label{a1}
  d_1(x,x+ae_1) = 0 \quad \forall x, x+ae_1\in\Omega\ \mbox{with}\ a\in (-1,0] .
\end{equation}
This claim implies that the topology induced by $\{d_1(x,\cdot)\}_{x\in\Omega}$ is different with the Euclidean topology. 

To see the  claim \eqref{a1},
writing $o=(0,0)$, we only need to show that
\begin{equation}\label{a1-1}
  d_1(o,ae_1) = 0 \quad \forall a\in (-1,0).
\end{equation}
It then suffices to show that $
  u(ae_1)- u(0)   \le  0
$ for all $u \in W^{1,\fz}(\Omega) \mbox{ with } \|H(\nabla u)\|_{L^\fz(\Omega)} \le 1$.
 Given such a function $u$, observe that $ \|H(\nabla u)\|_{L^\fz(\Omega)} \le 1$ implies
 $\frac{\partial  u}{\partial x_1}(x) \ge 0 $ and  $|\nabla u(x)|\le 1$ for almost all   $x \in \rr^2$.
Let $\{\gz_\dz\}_{0\le \dz \ll 1}$ be the line segment joining $\dz e_2$  and
$ae_1+\dz e_2$ with $e_2=(0,1)$, that is,
$$\gz_\dz (t) := t(a,\dz) + (1-t)(0,\dz) \quad \forall t \in [0,1].$$
Since $u \in W^{1,\fz}(\Omega)$ implies that $u$ is ACL (see \cite[Section 6.1]{hkst}), there exists $\{\dz_n\}_{n \in \nn}$ depending on $u$ such that $u$ is differentiable almost everywhere on $\gz_{\dz_n}$.
Noting that
 $ \dot \gz_{\dz_n}(t) =-e_1 $  and by
 $\frac{\partial  u}{\partial x_1}(x) \ge 0 $, one has  $$   \nabla u (\gz_{\dz_n}(t)) \cdot \dot \gz_{\dz_n}(t) =  -\frac{\pa u}{\pa x_1}(\gz_{\dz_n}(t))\le0 \quad \forall t \in [0,1],$$
 and hence
 \begin{align*}
 u((a,\dz_n)) - u((0,\dz_n))
  & =   \int_0^1 \nabla u (\gz_{\dz_n}(t)) \cdot \dot \gz_{\dz_n}(t) \, dt  = \int_0^1 -\pa_1 u(\gz_{\dz_n}(t))  \, dt    \le  0.
\end{align*}
Thus
\begin{align*}
  u(ae_1)- u(0) &  = \lim_{n \to \fz} [u((a,\dz_n)) - u((0,\dz_n))] \le  0
\end{align*}
 as desired.
\end{rem}

Finally, we introduce the pseudo-length property.

\begin{defn}\label{leng}\rm We say a pseudo-metric space $(\Omega,\rho)$ is a pseudo-length space if for all $x,y \in \Omega$,
  \begin{equation*}
  \rho (x, y) := \inf \{\ell_{\rho} (\gz) \  | \ \gz \in \mathcal C(a, b; x, y;  \Omega )\}
\end{equation*}
where $ \mathcal C(a, b; x, y;  \Omega)$ denotes the class of all continuous curves $\gz:[a,b]\to {\color{black}\Omega}$ with
$\gz(a)=x$ and $\gz(b)=y$, and $$\ell_{\rho} (\gz):= \sup \bbbl \sum_{i=0}^{N-1} \rho (\gz(t_{i}), \gz(t_{i+1}))\ \bigg | \ a=t_0 < t_1 < \cdots < t_N=b  \bbbr.$$
\end{defn}

\section{Proof of Theorem \ref{rad}}

In this section,  we  always suppose that
  the Hamiltonian $H(x,p)$    enjoys assumptions  (H0)-(H3).
To prove Theorem \ref{rad}, we first need several auxiliary lemmas.

\begin{lem} \label{l6.1} 
Suppose that $ \{u_j\}_{j\in\nn}\subset \dot W^{1,\fz}_X(\Omega )$,
 and  there exists $\lz\ge0$ such that
 $$\mbox{$\|H(x,Xu_j)\|_{L^\fz(\Omega )}\le \lz$ for  all $j\in\nn$.}$$
 If $u_j\to u$ in $C^0(\Omega )$, then  $u\in \dot W^{1,\fz}_X(\Omega )$ and
  $\|H(x,Xu)\|_{L^\fz(\Omega )}\le \lz$.
  {\color{black} Here and in below, for any open set $V \subset \Omega$, $u_j\to u$ in $C^0(V)$ refers to for any $K \Subset V$, $u_j\to u$ in $C^0(\overline K)$.
  }
\end{lem}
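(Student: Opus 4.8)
\textbf{Proof plan for Lemma \ref{l6.1}.}

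The plan is to argue locally. Fix an arbitrary subdomain $V \Subset \Omega$; it suffices to show $u \in \dot W^{1,\infty}_X(V)$ with $\|H(\cdot,Xu)\|_{L^\infty(V)} \le \lambda$, and then exhaust $\Omega$ by such $V$. On $V$, the hypothesis $\|H(\cdot,Xu_j)\|_{L^\infty(\Omega)} \le \lambda$ together with (H3) gives $|Xu_j| \le R_\lambda < \infty$ a.e., so the sequence $\{Xu_j\}_j$ is bounded in $L^\infty(V;\mathbb R^m) \subset L^2(V;\mathbb R^m)$. Passing to a subsequence, $Xu_j \rightharpoonup g$ weakly in $L^2(V;\mathbb R^m)$ for some $g \in L^\infty(V;\mathbb R^m)$ with $\|g\|_{L^\infty(V)} \le R_\lambda$. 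Since $u_j \to u$ uniformly on compact subsets of $V$, in particular $u_j \to u$ in $L^1_{\loc}(V)$, and testing the definition of distributional horizontal derivative against $\phi \in C_0^\infty(V)$ gives $\langle X_i u, \phi\rangle = \int_V u\, X_i^*\phi = \lim_j \int_V u_j\, X_i^*\phi = \lim_j \langle X_i u_j,\phi\rangle = \int_V g_i \phi$; hence $Xu = g \in L^\infty(V;\mathbb R^m)$, so $u \in \dot W^{1,\infty}_X(V)$.

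It remains to upgrade the weak convergence to a pointwise control that respects $H$. This is where assumptions (H0), (H1) and Mazur's theorem enter, and it is the main obstacle. By Mazur's theorem, for each $j$ there is a finite convex combination $w_j = \sum_{k} \theta_k^{(j)} u_{k}$ (with $k$ ranging over indices $\ge j$, $\theta_k^{(j)} \ge 0$, $\sum_k \theta_k^{(j)} = 1$) such that $Xw_j = \sum_k \theta_k^{(j)} X u_k \to Xu$ strongly in $L^2(V;\mathbb R^m)$; passing to a further subsequence, $Xw_j(x) \to Xu(x)$ for a.e.\ $x \in V$. Now for a.e.\ fixed $x$, the quasi-convexity (H1) of $H(x,\cdot)$ gives, by iterating the two-point inequality over the convex combination,
\begin{equation*}
H\bigl(x, Xw_j(x)\bigr) = H\Bigl(x, \sum_k \theta_k^{(j)} Xu_k(x)\Bigr) \le \max_k H\bigl(x, Xu_k(x)\bigr) \le \lambda,
\end{equation*}
using $\|H(\cdot,Xu_k)\|_{L^\infty(\Omega)} \le \lambda$ for each $k$ in the combination (the exceptional null set being a countable union of null sets, hence null). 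Finally, at a.e.\ $x$ where both $Xw_j(x) \to Xu(x)$ and the lower-semicontinuity (H0) of $H(x,\cdot)$ hold,
\begin{equation*}
H\bigl(x, Xu(x)\bigr) \le \liminf_{j\to\infty} H\bigl(x, Xw_j(x)\bigr) \le \lambda.
\end{equation*}
Taking the essential supremum over $x \in V$ yields $\|H(\cdot,Xu)\|_{L^\infty(V)} \le \lambda$. Since $V \Subset \Omega$ was arbitrary and $\Omega = \bigcup_{V \Subset \Omega} V$, we conclude $u \in \dot W^{1,\infty}_X(\Omega)$ with $\|H(\cdot,Xu)\|_{L^\infty(\Omega)} \le \lambda$.

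One technical point to watch: Mazur's theorem is applied inside $L^2(V;\mathbb R^m)$ for a fixed $V \Subset \Omega$, so strictly speaking the convex-combination sequence depends on $V$; to get an honest subsequence of the original $\{u_j\}$ one can either run the argument $V$ by $V$ (which is all we need, by the exhaustion) or combine it with a diagonal argument over an exhaustion $V_1 \Subset V_2 \Subset \cdots \Subset \Omega$. Another point is that the a.e.\ statements in (H0) and (H1) are for a.e.\ $x$, but the union over the countably many indices appearing across all the $w_j$'s is still a null set, so the chain of inequalities above holds at a.e.\ $x \in V$. No other subtlety arises; the weak-to-strong passage via Mazur is the only nonroutine ingredient.
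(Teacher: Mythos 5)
Your proof is correct and follows essentially the same route as the paper: weak $L^2$ compactness of $\{Xu_j\}$ on subdomains, identification of the weak limit with $Xu$, and then Mazur's theorem combined with the quasi-convexity (H1) and lower semicontinuity (H0) to conclude $H(\cdot,Xu)\le\lambda$ almost everywhere. The only (harmless) difference is at the membership step: the paper first deduces $u\in \dot W^{1,\infty}_X(\Omega)$ from the uniform Lipschitz bound $|u(x)-u(y)|\le R_\lambda d_{CC}(x,y)$ together with Lemma \ref{radcclem}, whereas you read off $Xu\in L^\infty$ directly from the $L^\infty$-bounded weak limit on each $V\Subset\Omega$, which is equally valid and slightly more self-contained.
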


\begin{proof} By Lemma \ref{radcclem}, one has
$$|u(x)-u(y)|=\lim_{j\to\fz}|u_j(x)-u_j(y)|\le\limsup_{j\to\fz} \||Xu_j|\|_{L^\fz(\Omega )}d_{CC}(x,y).$$
By (H3) and $\|H(\cdot,Xu_j)\|_{L^\fz(\Omega )}\le \lz$, we have
 $\||Xu_j|\|_{L^\fz(\Omega )}\le R_\lz$ for all $j$, and hence
 $$|u(x)-u(y)| \le R_\lz d_{CC}(x,y),$$
 that is, $ u\in\lip_{d_{CC}}(\Omega)$.
By Lemma 2.7 again,  we have $u\in {\color{black}\dot W^{1,\fz}_X(\Omega )}$.

Next we show that   $\|H(x,Xu)\|_{L^\fz(\Omega )}\le \lz$.
It suffices to show that
$\|H(x,X(u|_U))\|_{L^\fz(U )}\le \lz$ for any $U\Subset\Omega$.
{\color{black}Given any $U\Subset\Omega$, we claim that    $X(u_j|_{ U })$ converges to $X(u|_{ U })$ weakly in $L^2( U ,\rr^m)$, that is, for all $1\le i\le m$,  one has
$$  \lim_{j \to \fz}\int_{ U }u_j X_i^\ast \phi \, dx   =\int_{ U } \phi Xu \, dx\ \ \forall \phi\in L^2( U ).$$
}
   To  see  this claim,   note that
  $\||Xu_j|\|_{L^\infty(\Omega )}\le R_\lz$ for all $j\in\nn$, and hence
  $\||Xu_j|\|_{L^2( U  )}\le R_\lz {\color{black}| U  |^{1/2}}$
  for all $j\in\nn$. In other words, for each $k\in\nn$,  the set  $\{X(u_{j }|_{ U })\}_{j\in\nn}$ is bounded  in $L^2( U ,\rr^m)$.
  {\color{black}
      By the weak compactness of $L^2( U ,\rr^m)$,    any   subsequence
      of $\{Xu_{j }\}_{j\in\nn}$ admits a subsubsequence which converges weakly in $L^2( U ,\rr^m)$.  Therefore, to get the above claim,  by a contradiction argument we only need to show that        for any subsequence
  $\{Xu_{j_s}\}_{s\in\nn}$  of $\{Xu_{j }\}_{j\in\nn}$, if $
     Xu_{j_s} \rightharpoonup q_k$  weakly in $L^2( U , \rr^m )$ as $s\to\fz$,
     then $X(u|_{ U }) =q_k$.
     For  such $\{Xu_{j_s}\}_{s\in\nn}$,
 recalling that $u_j\to u$ in $C^0(\Omega )$ as $j\to\fz$,  for all $1\le i  \le m$  one has
   $$ \int_{ U } u X_i^\ast \phi \, dx  =\lim_{j \to \fz}\int_{ U }u_j X_i^\ast \phi \, dx =\lim_{s \to \fz}\int_{ U }u_{j_s} X_i^\ast \phi \, dx
   =\lim_{s \to \fz}\int_{ U }(X_i u_{j_s})\phi \, dx =\int_{ U }q_k\phi \, dx$$
   for any  $\phi\in C^\fz_c( U )$.   This implies that $Xu|_{ U } =q_k$ as desired.

 }

 By Mazur's Theorem, for any $l>0$, we can find a finite  convex combination $w_l$ of $\{X(u_j|_{ U })\}_{j=1}^{\fz}$ so that  $\|w_l-X(u|_{U})\|_{L^2( U  )}\to0 $ as $l\to\fz$.
 Here $w_l$ is a finite  convex combination of $\{X(u_j|_{ U })\}_{j=1}^{\fz}$ if
 there exist $\{\eta_{j}\}_{j=1}^{k_l}$ for some $k_l$  such that
 $$ \sum_{i=1}^{k_l}\eta_i=1\ \mbox{ and}\   w_l=\sum_{j=1}^{k_l}X(u_j|_{ U })$$
 By the quasi-convexity of $H(x,\cdot)$ as in {\color{black}(H1)},
 we have
 $$H(x,w_l)=H(x,\sum_{j=1}^{k_l}\eta_j X(u_j|_{ U }))\le \sup_{1\le j\le k_l}H(x, X(u_j|_{ U }))\le\lz\quad\mbox{for almost all $x\in  U  $}.$$
 Up to considering subsequence we may assume that $w_l\to X(u|_{ U })$ almost everywhere in $ U $.
By the lower-semicontinuity of $H(x,\cdot)$ as in {\color{black}(H0)},
 we conclude that
 $$H(x,X(u|_{ U }))\le {\color{black}\liminf}_{l\to\fz}H(x,w_l)\le  \lz\quad\mbox{for almost all $x\in  U  $}.$$
The proof is complete.
\end{proof}

\begin{lem} \label{l21}
 If $v  \in \dot W^{1,\infty}_{X }(\Omega)$, then
  \begin{equation}\label{claimx}\mbox{$v^+=\max\{v,0\}\in  \dot W^{1,\infty}_{X }(\Omega)$
 and $Xv^+=(Xv)\chi_{\{x\in\Omega, v>0\}} $ almost everywhere.}
 \end{equation}
 Consequently,   let $\{v_i\}_{1\le i\le j} \subset \dot W^{1,\infty}_{X }(\Omega)$  for some $j\in\nn$.
If  $ u=\max_{1\le i \le j}\{v_i\}$ or $ u=\min_{1\le i \le j}\{v_i\}$, then   \begin{equation}\label{claimxx}\mbox{$u\in \dot W^{1,\infty}_{X }(\Omega)$ and $\|H(x,Xu )\|_{L^\fz(\Omega)} \le \max_{1\le i\le j}\{\|H(x,Xv_i)\|_{L^\fz(\Omega)} \}$.}\end{equation}
\end{lem}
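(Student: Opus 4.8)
The plan is to deduce the statement in two steps. First I would prove the pointwise formula \eqref{claimx} for $v^{+}$ by localizing to subdomains $U\Subset\Omega$ and invoking Lemma \ref{uplus} with exponent $p=2$, then patching the local distributional derivatives together. Second I would obtain \eqref{claimxx} from the elementary identities $\max\{a,b\}=b+(a-b)^{+}$ and $\min\{a,b\}=a-(a-b)^{+}$ together with an induction on $j$. It is worth noting that neither (H0) nor (H1) is needed for this lemma; one only uses that $H$ is measurable and that a finite max (or min) selects, pointwise almost everywhere, the horizontal gradient of one of the $v_{i}$'s.

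\emph{Step 1: proof of \eqref{claimx}.} Let $v\in\dot W^{1,\infty}_{X}(\Omega)$ and fix an exhaustion $U_{1}\Subset U_{2}\Subset\cdots$ of $\Omega$ with $\bigcup_{k}U_{k}=\Omega$. For each $k$ the restriction $v|_{U_{k}}$ lies in $\dot W^{1,2}_{X}(U_{k})$: indeed $v|_{U_{k}}\in L^{\infty}(U_{k})\subset L^{1}_{\loc}(U_{k})$, and since test functions supported in $U_{k}$ are test functions in $\Omega$ one has $X(v|_{U_{k}})=(Xv)|_{U_{k}}\in L^{\infty}(U_{k})\subset L^{2}(U_{k})$ (using $|U_{k}|<\infty$). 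Lemma \ref{uplus} with $p=2$ then gives $v^{+}|_{U_{k}}\in\dot W^{1,2}_{X}(U_{k})$ with $X(v^{+}|_{U_{k}})=(Xv)\chi_{\{v>0\}}$ a.e.\ in $U_{k}$. Put $g:=(Xv)\chi_{\{v>0\}}$, so $g\in L^{\infty}(\Omega;\rr^{m})$ with $|g|\le|Xv|$ a.e. For $\phi\in C_{0}^{\infty}(\Omega)$, choosing $k$ with $\mathrm{supp}\,\phi\subset U_{k}$, we get $\langle X_{i}v^{+},\phi\rangle=\int_{U_{k}}v^{+}X_{i}^{\ast}\phi\,dx=\int_{U_{k}}g_{i}\phi\,dx=\int_{\Omega}g_{i}\phi\,dx$; hence $Xv^{+}=g$ distributionally on $\Omega$. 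Since also $v^{+}\le|v|$ gives $v^{+}\in L^{\infty}(\Omega)$, we conclude $v^{+}\in\dot W^{1,\infty}_{X}(\Omega)$ with $\|Xv^{+}\|_{L^{\infty}(\Omega)}\le\|Xv\|_{L^{\infty}(\Omega)}$, which is \eqref{claimx}.

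\emph{Step 2: proof of \eqref{claimxx}.} Since $\dot W^{1,\infty}_{X}(\Omega)$ is a linear space, for $w_{1},w_{2}\in\dot W^{1,\infty}_{X}(\Omega)$ we apply \eqref{claimx} to $w_{1}-w_{2}$; together with $\max\{w_{1},w_{2}\}=w_{2}+(w_{1}-w_{2})^{+}$ and $\min\{w_{1},w_{2}\}=w_{1}-(w_{1}-w_{2})^{+}$ this shows $\max\{w_{1},w_{2}\},\min\{w_{1},w_{2}\}\in\dot W^{1,\infty}_{X}(\Omega)$ with
\[
X\max\{w_{1},w_{2}\}=Xw_{1}\chi_{\{w_{1}>w_{2}\}}+Xw_{2}\chi_{\{w_{1}\le w_{2}\}},\qquad X\min\{w_{1},w_{2}\}=Xw_{2}\chi_{\{w_{1}>w_{2}\}}+Xw_{1}\chi_{\{w_{1}\le w_{2}\}}
\]
a.e.\ in $\Omega$. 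Iterating (induction on $j$, writing $\max_{1\le i\le j}v_{i}=\max\{\max_{1\le i\le j-1}v_{i},\,v_{j}\}$, and similarly for $\min$) we obtain $u\in\dot W^{1,\infty}_{X}(\Omega)$ for $u=\max_{1\le i\le j}v_{i}$ or $u=\min_{1\le i\le j}v_{i}$, together with the fact that for a.e.\ $x\in\Omega$ there is an index $i(x)\in\{1,\dots,j\}$ with $Xu(x)=Xv_{i(x)}(x)$. Hence, for a.e.\ $x\in\Omega$,
\[
H(x,Xu(x))=H(x,Xv_{i(x)}(x))\le\|H(\cdot,Xv_{i(x)})\|_{L^{\infty}(\Omega)}\le\max_{1\le i\le j}\|H(\cdot,Xv_{i})\|_{L^{\infty}(\Omega)},
\]
and taking the essential supremum over $x$ yields \eqref{claimxx}.

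The argument is essentially routine, and the only point that needs a little care is the measure-theoretic bookkeeping behind the pointwise claim that $Xu$ coincides a.e.\ with one of the $Xv_{i}$: at each stage of the induction the sets $\{w>v_{j}\}$ and $\{w\le v_{j}\}$ partition $\Omega$, and one discards the finitely many null sets produced by the "a.e." in the two gradient formulas. (One may also observe that on $\{w_{1}=w_{2}\}$ one has $Xw_{1}=Xw_{2}$ a.e., since $X(w_{1}-w_{2})=0$ a.e.\ there by \eqref{claimx} applied to $\pm(w_{1}-w_{2})$, so the two displayed formulas agree on the overlap; this consistency is not actually needed for the $L^{\infty}$ bound.)
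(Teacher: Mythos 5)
Your proof is correct, and it diverges from the paper's argument in one worthwhile place. For \eqref{claimx} the paper first establishes $v^{+}\in\dot W^{1,\infty}_{X}(\Omega)$ by passing through the Lipschitz characterization: $v\in\lip_{d_{CC}}(\Omega)$ by Lemma \ref{radcclem}, hence $|v^{+}(x)-v^{+}(y)|\le|v(x)-v(y)|$ gives $v^{+}\in\lip_{d_{CC}}(\Omega)$, and Lemma \ref{radcclem} again gives membership; only then does it invoke Lemma \ref{uplus} on bounded subdomains to identify $Xv^{+}$. You instead get membership and the formula simultaneously by localizing to an exhaustion, applying Lemma \ref{uplus} with $p=2$, and patching the local distributional derivatives into a single $L^{\infty}$ function $g=(Xv)\chi_{\{v>0\}}$; since $X_{i}^{\ast}\phi$ is supported in $\mathrm{supp}\,\phi$, the patching step is sound, and $v^{+}\in L^{\infty}$ with $Xv^{+}=g\in L^{\infty}$ is exactly membership in $\dot W^{1,\infty}_{X}(\Omega)$. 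This buys independence from Lemma \ref{radcclem}, and hence from the Carnot--Carath\'eodory Rademacher theorem and the dual formula behind it, at the cost of nothing beyond the bookkeeping you already carry out; the paper's route is shorter on the page because Lemma \ref{radcclem} is available anyway and gets reused throughout. Your Step 2 (the identities $\max\{a,b\}=b+(a-b)^{+}$, $\min\{a,b\}=a-(a-b)^{+}$, induction on $j$, and the pointwise selection $Xu(x)=Xv_{i(x)}(x)$ a.e.\ feeding into the $H$-bound) is essentially the paper's argument, and your observation that only measurability of $H$ is used, not (H0) or (H1), is accurate.
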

\begin{proof}

{\color{black}
First we prove    \eqref{claimx}. Let  $v  \in \dot W^{1,\infty}_{X }(\Omega)$.
By Lemma \ref{radcclem},   $v  \in \lip_{\dcc}(\Omega)$.  Observe that
 $$|v^+(x)-v^+(y)| \le  |v(x)-v(y)| \le \lip_{\dcc}(v,\Omega) \dcc(x,y) \quad \forall x,y \in \Omega,$$
 that is,  $v^+\in  \lip_{\dcc}(\Omega)$.  By Lemma \ref{radcclem} again, $v^+ \in \dot W^{1,\infty}_{X }(\Omega)$. To get
  $Xv^+=(Xv)\chi_{\{x\in\Omega, v>0\}}$ almost everywhere, it suffices to consider
 the restriction  $v|_U$ of $v$
in  any bounded domain $U \Subset \Omega$,  that is, to prove $X(v|_U)^+=(Xv|_U)\chi_{\{x\in U, v>0\}}$ almost everywhere.  But this always holds thanks to  Lemma \ref{uplus}
and the fact
$v|_U \in \dot W^{1,p}_{X}(U)$ for any $1 \le p < \fz$.%
}

Next we prove \eqref{claimxx}.
If $u  =\max\{v_1,v_2\}$, where $v_i \in \dot W^{1,\infty}_{X }(\Omega)$ for $i=1,2$,
 then
   $u  =v_2+(v_1-v_2)^+.$
By \eqref{claimx}, $u \in  \dot W^{1,\infty}_{X }(\Omega)$ and
   \begin{align*}
Xu &=Xv_2+X(v_1-v_2)^+ \\
&=Xv_2+[(X(v_1-v_2)]\chi_{\{x\in\Omega, v_1>v_2\}}\\
&=(Xv_2)\chi_{\{x\in\Omega, v_1\le v_2\}}+ (Xv_1 ) \chi_{\{x\in\Omega, v_1>v_2\}}.
\end{align*}
Thus  $$H(x,Xu(x))= H(x, Xv_2)\chi_{\{x\in\Omega, v_1\le v_2\}}+ H(x,  Xv_1 )
\chi_{\{x\in\Omega, v_1>v_2\}}  \quad \mbox{for almost all $x\in\Omega$}.$$
{\color{black}A similar argument}  holds for $u=\min\{v_1,v_2\}$.
This gives \eqref{claimxx} when $j=2$.
 By an induction argument, we get  \eqref{claimxx} for all $j\ge2$.
\end{proof}

\begin{lem}\label{lem3.2}
For any $ \lz \ge 0$ and $x \in \Omega $, we have
$ d_\lz(x, \cdot), \dl(\cdot,x)\in \dot W^{1,\infty}_{X }(\Omega )$ and
 $$\|H(\cdot,Xd_\lz(x, \cdot))\|_{L^\fz(\Omega )} \le \lz\quad\mbox{and}\quad\|H(\cdot,-X\dl(\cdot,x))\|_{L^\fz(\Omega )} \le \lz.$$
\end{lem}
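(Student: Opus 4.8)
The plan is to show that $\dl(x,\cdot)$ is itself an admissible function in the supremum \eqref{dlz} defining it, and likewise that $-\dl(\cdot,x)$ is admissible. This is not formal: to transfer the $L^\fz$-bound on $H$ to the extremal function, I will realize $\dl(x,\cdot)$ as a locally uniform limit of a monotone sequence of admissible functions and then invoke Lemma \ref{l6.1}.

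\emph{The function $\dl(x,\cdot)$.} Fix $\lz\ge0$ and $x\in\Omega$, and choose $\{y_k\}_{k\in\nn}$ dense in $\Omega$. For each $(k,j)\in\nn^2$, by \eqref{dlz} pick $u_{k,j}\in\dot W^{1,\fz}_X(\Omega)$ with $\|H(\cdot,Xu_{k,j})\|_{L^\fz(\Omega)}\le\lz$ and $u_{k,j}(y_k)-u_{k,j}(x)\ge\dl(x,y_k)-1/j$; subtracting the constant $u_{k,j}(x)$ (which changes neither $Xu_{k,j}$ nor the difference $u_{k,j}(y_k)-u_{k,j}(x)$) we normalize $u_{k,j}(x)=0$. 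Enumerate $\{u_{k,j}\}$ as $\{w_i\}_{i\in\nn}$ and set $v_N:=\max_{1\le i\le N}w_i$. By Lemma \ref{l21}, $v_N\in\dot W^{1,\fz}_X(\Omega)$ and $\|H(\cdot,Xv_N)\|_{L^\fz(\Omega)}\le\lz$; also $v_N(x)=0$, $v_N\le v_{N+1}$, and since $v_N$ is itself admissible in \eqref{dlz}, $v_N(y)\le\dl(x,y)$ for every $y\in\Omega$. By Lemma \ref{lem311}(ii), $\dl(x,y)\le R_\lz\dcc(x,y)<\fz$, so $\{v_N\}$ is locally uniformly bounded and $v:=\sup_N v_N=\lim_N v_N\le\dl(x,\cdot)$ is well defined; moreover (H3) gives $\| |Xv_N| \|_{L^\fz(\Omega)}\le R_\lz$, so by Lemma \ref{radcclem} the $v_N$ are equi-Lipschitz for $\dcc$, whence (monotonicity plus equicontinuity, an Arzel\`a--Ascoli/Dini argument) $v_N\to v$ in $C^0(\Omega)$ with $v$ again $R_\lz$-Lipschitz for $\dcc$. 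Since each $u_{k,j}$ occurs among the $w_i$, $v\ge u_{k,j}$ and hence $v(y_k)\ge u_{k,j}(y_k)-u_{k,j}(x)\ge\dl(x,y_k)-1/j$; letting $j\to\fz$ and using $v\le\dl(x,\cdot)$ gives $v(y_k)=\dl(x,y_k)$ for all $k$. As $\dl(x,\cdot)$ is continuous (indeed $R_\lz$-Lipschitz for $\dcc$, by Lemma \ref{lem311}(ii) and the triangle inequality), $v$ is continuous, and $\{y_k\}$ is dense, we conclude $v=\dl(x,\cdot)$ on $\Omega$. Lemma \ref{l6.1} applied to $v_N\to v$ then gives $\dl(x,\cdot)\in\dot W^{1,\fz}_X(\Omega)$ with $\|H(\cdot,X\dl(x,\cdot))\|_{L^\fz(\Omega)}\le\lz$.

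\emph{The function $\dl(\cdot,x)$.} Shifting admissible functions by constants, $\dl(z,x)=-\inf\{u(z):u\in\dot W^{1,\fz}_X(\Omega),\ \|H(\cdot,Xu)\|_{L^\fz(\Omega)}\le\lz,\ u(x)=0\}$, so $-\dl(\cdot,x)$ is a pointwise infimum of admissible functions normalized at $x$. Repeating the construction with a dense set $\{z_k\}$, admissible $u_{k,j}$ normalized by $u_{k,j}(x)=0$ and chosen with $u_{k,j}(z_k)\le-\dl(z_k,x)+1/j$, and $v_N:=\min_{1\le i\le N}w_i$, Lemma \ref{l21} gives $v_N\in\dot W^{1,\fz}_X(\Omega)$ with $\|H(\cdot,Xv_N)\|_{L^\fz(\Omega)}\le\lz$ and $v_N(x)=0$; the decreasing limit $v'=\lim_N v_N$ satisfies $v'(z_k)=-\dl(z_k,x)$ for all $k$ (lower bound from $-v_N(z_k)=v_N(x)-v_N(z_k)\le\dl(z_k,x)$, upper bound from $v_N\le u_{k,j}$), hence $v'=-\dl(\cdot,x)$ on $\Omega$ by continuity of $\dl(\cdot,x)$ and density. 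Lemma \ref{l6.1} then yields $\dl(\cdot,x)=-v'\in\dot W^{1,\fz}_X(\Omega)$ and $\|H(\cdot,-X\dl(\cdot,x))\|_{L^\fz(\Omega)}=\|H(\cdot,Xv')\|_{L^\fz(\Omega)}\le\lz$.

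\emph{Main obstacle.} The delicate step is passing from pointwise convergence of $v_N$ on a dense set to convergence in $C^0(\Omega)$, so that Lemma \ref{l6.1} can be applied; this rests on the uniform gradient bound $\| |Xv_N| \|_{L^\fz(\Omega)}\le R_\lz<\fz$ supplied by (H3) together with the monotonicity intrinsic to the max/min construction. The remainder — admissibility of $v_N$ via Lemma \ref{l21}, the two-sided identification of the limit with $\dl(x,\cdot)$ (resp. $-\dl(\cdot,x)$), and continuity of $\dl(x,\cdot)$ and $\dl(\cdot,x)$ — follows routinely from the triangle inequality for $\dl$ and the comparison $R'_\lz\dcc\le\dl\le R_\lz\dcc$ of Lemma \ref{lem311}.
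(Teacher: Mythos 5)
Your proof is correct and follows essentially the same route as the paper: approximate $d_\lz(x,\cdot)$ (resp.\ $-d_\lz(\cdot,x)$) by maxima (resp.\ minima) of finitely many near-optimal admissible functions normalized at $x$, retain the bound $\|H(\cdot,Xu)\|_{L^\fz(\Omega)}\le\lz$ via Lemma \ref{l21}, and pass to the limit with Lemma \ref{l6.1}. The only (harmless) difference is how the $C^0(\Omega)$ convergence is secured: the paper sandwiches $u_j\le d_\lz(x,\cdot)\le u_j+\frac1j$ on a compact exhaustion via a finite-cover argument, whereas you use a countable dense set together with monotonicity and the equi-Lipschitz bound $R_\lz$ coming from (H3) and Lemma \ref{radcclem}.
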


\begin{proof}
Given any $x\in \Omega $ and $\lz \ge 0$, write $v(z)=d_\lz(x,z)$ for all $z\in \Omega $.
To see $H(\cdot,Xv)\le \lz$ almost everywhere, by Lemma \ref{l6.1},
 it suffices to find a sequence of function $u_j\in {\color{black} \dot W_X^{1,\fz}(\Omega )}$ so that
$H(\cdot,Xu_j)\le \lz$ almost everywhere  and $u_j\to v$ in $C^0(\Omega )$ as $j\to\fz$.

To this end, let $\{K_j\}_{j \in \nn}$ be a sequence of compact subsets in $\Omega $ with
$$\mbox{$\Omega  = \bigcup_{j \in \nn} K_j$ and $K_j \subset K_{j+1}^{\circ}$.}$$
For $j \in \nn$ and $y \in K_j$, by definition of $\dl$ we can find a function
$v_{y,j} \in\dot W^{1,\fz}_X(\Omega)$ such that $H(\cdot,Xv_{y,j})\le\lz$ almost everywhere,
$$d_\lz(x,y)-\frac{1}{2j}\le v_{y,j}(y) -v_{y,j}(x).$$
Since Lemma \ref{lem311} implies that  $d_\lz(x, \cdot)$ is continuous,
 there exists an open neighbourhood $N_{y,j}$ of $y$ with
$$d_\lz(x, z) - \frac{1}{j} \le v_{y,j}(z)-v_{y,j}(x),  \quad\forall z\in N_{y,j}. $$
Thanks to the compactness of $K_j$,
there exist $y_1, \cdots , y_l\in K_j$  such that
$K_j \subset \bigcup_{i=1}^{l} N_{y_i,j}$.  
Write
$$u_j(z) := \max\{v_{y_i,j}(z)-v_{y_i,j}(x) : i = 1, \cdots , l\}\quad\forall z\in K_j.$$
Then $u_j(x)=0$, and
$$d_\lz(x,z) \le u_j(z)+ \frac{1}{j} \ \text{for all} \ z\in K_j.$$
Moreover by Lemma \ref{l21} we have
$$H(\cdot,Xu_j)\le \lz\quad \mbox{in}\ \Omega .$$
Since
  $$d_\lz(x,z)  \ge u_j(z) \ \text{for all} \ z\in K_j$$
  is clear, we conclude that $u_j\to {\color{black}v} $ in $C^0(K_i)$ as $j\to\fz$ for all $i$, and hence {\color{black}$u_j\to {\color{black}v} $   uniformly in any compact subset of $\Omega$} as $j\to\fz$.

  Similarly, we can show $\|H(x,-Xd_\lz(\cdot, x ))\|_{L^\fz(\Omega )} \le \lz$ which finishes the proof.
\end{proof}

In general, for any $E \subset \Omega $, we define
\begin{equation*}
  d_{\lz,E}(z) := \inf_{x \in E} \{d_\lz(x,z)\}.
\end{equation*}

\begin{lem}\label{l6.2}
  For any set $E\subset\Omega $, we have  $d_{\lz,E} \in \dot W^{1,\infty}_{X }(\Omega )$ and $\|H(x,Xd_{\lz,E} )\|_{L^\fz(\Omega )}\le \lz$.
\end{lem}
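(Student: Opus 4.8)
The plan is to reduce the statement for a general subset $E\subset\Omega$ to the case of a single point, which is already handled by Lemma \ref{lem3.2}. First I would observe that for each fixed $x\in E$ the function $z\mapsto d_\lz(x,z)$ lies in $\dot W^{1,\fz}_X(\Omega)$ with $\|H(\cdot,Xd_\lz(x,\cdot))\|_{L^\fz(\Omega)}\le\lz$, by Lemma \ref{lem3.2}; moreover, since each $d_\lz(x,\cdot)\in\lip_{d_{CC}}(\Omega)$ with $\lip_{d_{CC}}(d_\lz(x,\cdot),\Omega)\le R_\lz$ by Lemma \ref{lem311}(ii) (the constant $R_\lz<\fz$ being independent of $x$), the family $\{d_\lz(x,\cdot)\}_{x\in E}$ is uniformly Lipschitz with respect to $d_{CC}$. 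Taking the pointwise infimum over $x\in E$ therefore produces a function $d_{\lz,E}$ which is again in $\lip_{d_{CC}}(\Omega)$ with the same constant $R_\lz$, hence, by Lemma \ref{radcclem}, $d_{\lz,E}\in\dot W^{1,\fz}_X(\Omega)$. (One should note $d_{\lz,E}$ is finite: it is bounded below near any point since $d_\lz(x,z)\ge 0$, and bounded above by $d_\lz(x_0,z)$ for any fixed $x_0\in E$.)

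Next I would produce the bound $\|H(\cdot,Xd_{\lz,E})\|_{L^\fz(\Omega)}\le\lz$ by the same approximation-plus-closedness scheme used in the proof of Lemma \ref{lem3.2}. The idea is to write $d_{\lz,E}$ locally as a monotone limit of finite minima of functions from the admissible class. Precisely, exhaust $\Omega$ by compact sets $K_j$ with $K_j\subset K_{j+1}^\circ$ and $\bigcup_j K_j=\Omega$; for each $j$ and each $z\in K_j$ pick $x_{z,j}\in E$ with $d_\lz(x_{z,j},z)\le d_{\lz,E}(z)+\frac1{2j}$, then use continuity of $z\mapsto d_\lz(x_{z,j},z)$ (and the uniform Lipschitz bound) together with compactness of $K_j$ to extract finitely many points $x_1,\dots,x_l\in E$ so that $w_j:=\min_{1\le i\le l}d_\lz(x_i,\cdot)$ satisfies $d_{\lz,E}\le w_j\le d_{\lz,E}+\frac1j$ on $K_j$. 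By Lemma \ref{l21} (the min-closure part, \eqref{claimxx}) together with Lemma \ref{lem3.2}, each $w_j\in\dot W^{1,\fz}_X(\Omega)$ with $\|H(\cdot,Xw_j)\|_{L^\fz(\Omega)}\le\lz$. Since $w_j\to d_{\lz,E}$ uniformly on every compact subset of $\Omega$, Lemma \ref{l6.1} gives $d_{\lz,E}\in\dot W^{1,\fz}_X(\Omega)$ with $\|H(\cdot,Xd_{\lz,E})\|_{L^\fz(\Omega)}\le\lz$, as desired.

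The one subtlety I anticipate is the local construction of the finite minima $w_j$: in Lemma \ref{lem3.2} the base point $x$ was fixed so one only needed continuity of the single map $z\mapsto d_\lz(x,z)$, whereas here the ``base point'' $x_{z,j}$ varies with $z$. The fix is that only an upper bound is delicate — the lower bound $w_j\ge d_{\lz,E}$ is automatic because each $d_\lz(x_i,\cdot)\ge d_{\lz,E}$ — and for the upper bound one uses that $d_\lz(x_{z,j},\cdot)$ is Lipschitz in the second variable with constant at most $R_\lz$, uniformly in the base point, so a single covering radius works on all of $K_j$. Alternatively, and perhaps more cleanly, one can invoke the inequality $d_\lz(x,z)\le d_\lz(x,z')+d_\lz(z',z)$ (Lemma \ref{lem311}(i)) to pass continuity of each individual $d_\lz(x_i,\cdot)$ directly through the infimum defining $d_{\lz,E}$, obtaining $|d_{\lz,E}(z)-d_{\lz,E}(z')|\le\max\{d_\lz(z,z'),d_\lz(z',z)\}\le R_\lz\,d_{CC}(z,z')$ without any covering argument at all, and then run only the monotone finite-minimum approximation on each $K_j$. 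Everything else is routine application of the lemmas already established.
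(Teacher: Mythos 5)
Your proposal is correct and follows essentially the same route as the paper: exhaust $\Omega$ by compacts, choose near-optimal base points in $E$, take finite minima of the functions $d_\lz(x_i,\cdot)$ (admissible by Lemma \ref{lem3.2} and the min-closure in Lemma \ref{l21}), and pass to the locally uniform limit via Lemma \ref{l6.1}. The opening paragraph on the uniform $R_\lz$-Lipschitz bound is redundant (membership in $\dot W^{1,\fz}_X(\Omega)$ already follows from the limiting step), but it is a harmless and indeed convenient way to justify the covering/continuity step you flag as the one subtlety.
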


\begin{proof}
   Let $\{K_j\}_{j \in \nn}$ be a sequence of compact subsets in $\Omega $ with $\Omega  = \bigcup_{j \in \nn} K_j$ and $K_j \subset K_{j+1}^{\circ}$.
   For each $j$ and $y\in K_j$, we can find $z_{y,j}\in E$ such that
   $$d_{\lz,E}(y)\le d_{\lz,z_{y,j}}(y)\le d_{\lz,E}(y)+1/2j.$$
   Thus there exists a neighborhood $N(y)$ of $y$ such that
    $$d_{\lz,E}(y)\le d_{\lz,z_{y,j}}(y)\le d_{\lz,E}(z)+1/ j\quad\forall z\in N(y).$$
   So we can find $\{y_1,\cdots, y_{l_j}\}$ such that
   $K_j=\cup_{i=1}^{l_j}N(y_i) $ for all $i=1,\cdots l_j$.
     Write $$u_j(z)={\color{black}\min_{i=1,\cdots l_j}}\{d_{\lz,z_{y_i,j}}(z)\} \quad\forall z\in K_j.$$
    $$d_{\lz,E}\le u_j(z)\le d_{\lz,E}+1/j\quad \mbox{in}\ K_j .$$
    This means that $u_j\to d_{\lz,E}$ in $C^0(\Omega)$ as $j\to\fz$.
    Note that $$\|H(\cdot, Xu_j )\|_{L^\fz(\Omega)}\le \lz.$$
    By Lemma {\color{black}\ref{l6.1}} we have $d_{\lz,E}\in \dot W^{1,\fz}_X(\Omega)$ and $\|H(\cdot, Xd_{\lz,E})\|_{L^\fz(\Omega)}\le \lz$ as desired.
\end{proof}


We are able to prove (i)$\Rightarrow$(ii)$\Rightarrow$(iii)$\Rightarrow$(i) in Theorem \ref{rad} as below.

\begin{proof} [Proofs of (i)$\Rightarrow$(ii)$\Rightarrow$(iii)$\Rightarrow$(i) in Theorem \ref{rad}.]
The definition of $d_\lz$ directly gives (i)$\Rightarrow$(ii). Obviously, (ii)$\Rightarrow$(iii).
Below we  prove (iii)$\Rightarrow$ (i). Recall that (iii) says that
$u(y)-u(z)\le  d_\lz (z,y)$  for all $x \in \Omega$ and $y,z \in N(x)$, where $N(x)\subset \Omega$ is   a neighbourhood of $x$.
To get (i), since   $\Omega=\cup_{x\in \Omega}N(x)$,
it  suffices to show that  for all  $x\in \Omega$, one has
  $u\in  W_{X}^{1,\fz}(N(x))$ and $\|H(\cdot,Xu)\|_{L^\fz(N(x))}\le\lz$.

Fix any $x$, and write $ U=N(x)$. Without loss of generality we assume that $U$ is bounded.
Notice that   $u\in L^\fz(U)$. Let  $M\in \nn$ so that $M\ge  \sup_{U} |u|$.
  For each $k \in \nn$ and $l \in\{ -Mk, \cdots , Mk\}$,
   set $$u_{k,l}(z) := l/k + {\color{black}d_{\lz,F_{k,l}} (z)}\quad\forall z\in U $$
   where $$F_{k, l} := \{ y \in U  \ | \ u(y) \le \frac{l}{k} \}.$$
 By Lemma \ref{l6.2}, one has  $$\|H(\cdot,{\color{black}Xu_{k,l}})\|_{L^\fz(U)}\le\lz.$$
   Set
  $$ u_k (z):= \min_{l \in\{ -Mk, \cdots , Mk\}}u_{k,l}(z)\quad\forall z\in U.$$
  {\color{black} To get  $u\in \dot W_{X}^{1,\fz}(U)\ \text{and}  \ \|H(\cdot,Xu)\|_{L^\fz(U)}\le\lz$,
  thanks to Lemma \ref{l6.1} with $\Omega=U$, we only need to  show $u_k \to u$  in $C^0(U)$ as $k\to\fz$.

  To see   $u_k \to u$  in $C^0(U)$ as $k\to\fz$,  note that, for   any $k \in \nn$, $-M\le u \le M$ in $U$   implies  $-Mk\le ku \le Mk$ in $U$.
   Thus, at  any $z \in U$,    we can   find  $j\in\nn$ with $-k\le j\le k$, which depends on $z$,
 such that $Mj\le ku(z)\le M(j+1)$. Letting $l=Mj$, we have
  $ \frac{l}{k}\le u(z)\le \frac{l+M}k$.
  We claim that  $u_k(z) \in [u(z),\frac{l+M}{k}]$.
  Obviously,  this claim  gives
$$|u(z)-u_k(z)| \le \frac{M}{k}\quad\forall z\in U,$$
and hence,  the desired convergence $u_k \to u$  in $C^0(U)$ as $k\to\fz$.

 Below we prove the above claim  $u_k(z) \in [u(z),\frac{l+M}{k}]$.  Recall that
  $u(z)\in [\frac{l}{k},\frac{l+M}{k}]$ for some
  $l =Mj$ with $-k\le j\le k$. 

  First, we prove   $u_k(z) \le \frac{l+M}{k}$.
 If $l+M > Mk$, then  $M < (l+M)/k$.
 Since  $$F_{k, Mk}  = \{ y \in U  \ | \ u(y) \le M \}=U,$$  we have
 $d_{\lz,F_{k,Mk}} (z)=0$ and hence,
 $$u_{k,Mk}(z)=M+d_{\lz,F_{k,Mk}} (z)=M < \frac{l+M}{k}.$$
Therefore,
   \begin{equation}\label{upper2}
    u_k (z) \le   u_{k,Mk}(z) < \frac{l+M}{k}.
 \end{equation}
}
 {\color{black}
  If   $l+M\le Mk$,  then $u(z)\in [\frac{l}{k},\frac{l+M}{k}]$ implies $z \in F_{k,l+M}$ and hence $\dl(z,F_{k,l+M})=0$.   Thus 
\begin{equation}\label{eq2121}
  u_k(z) \le u_{k,l+M}(z)=\frac{l+M}k+\dl(z,F_{k,l+M})  =\frac{l+M}{k}.
\end{equation}
   Combining \eqref{eq2121} and \eqref{upper2}, we have $
   u_k(z) \le \frac{l+M}{k}
$
 as desired. }

Next, we prove $u(z)\le u_k(z)$.
 {\color{black}
   For any $-k\le j\le k$ with $Mj\le l$, since $u(z) \ge \frac{l}{k} \ge \frac{Mj}{k}$, we can find $w \in \partial F_{k,Mj}$ such that $d_{\lz,F_{k,Mj}}(z)=\dl(w,z)$. Since $w \in \partial F_{k,Mj}$,
   we deduce that $u(w)=\frac{Mj} k$ and
\begin{equation}\label{eq2122}
  u_{k,Mj}(z)=\frac{Mj}{k}+d_{\lz,F_{k,Mj}}(z)=u(w)+\dl(w,z).
\end{equation}
Note that $w \in \overline U$, hence there exists a sequence $\{w_s\}_{s \in \nn}\subset U$ such that
$w_s\to w$ as $s\to\fz$.
Thus by the assumption (iii),
\begin{equation*}
  u(z)-u(w) =\lim _{s\to\fz}  [u(z)-u(w_s)] \le \lim _{s\to\fz}\dl(w_s,z)
\end{equation*}
By the triangle inequality and  $\dl(w_s,w)\le R_\lz d_{CC}(w_s,w)$ given in Lemma \ref{lem311}, we then obtain
\begin{equation}\label{eq2123}
  u(z)-u(w)  \le
   \lim _{s\to\fz}[\dl(w_s,w)+ d_\lz(w,z)]=d_\lz(w,z).
\end{equation}
Combining \eqref{eq2122} and \eqref{eq2123}, we have
\begin{equation}\label{eq2124}
  u(z)\le u(w)+  \dl(w,z) = u_{k,Mj}(z).
\end{equation}
}

On the other hand,
for any $-k\le j\le k$ with    $Mj>l$,  we have
$$u_{k,Mj}(z)\ge \frac{ Mj}{k}\ge \frac{M(l+1)}k>u(z).$$
From this  and \eqref{eq2124}, it follows that
  $$u_k(z)= \min_{j\in\{ - k, ... ,  l/M\}}u_{k,Mj}(z) \ge u(z)  $$
  as desired.
\end{proof}


\section{Proof of  Theorem \ref{radiv}}

In this section, we always suppose that
  the Hamiltonian $H(x,p)$   enjoys assumptions  (H0)-(H3).

To  prove Theorem \ref{radiv},
we need to show that
 {\color{black}
$(\Omega,\dl)$ is a pseudo-length space for all $\lz \ge \lz_H$ in the sense of Definition \ref{leng}.
In other words, define
\begin{equation*}
  \rl (x, y) := \inf \{\ell_{d_\lz} (\gz) \  | \ \gz \in \mathcal C(a, b; x, y;  \Omega )\},
\end{equation*}
where we recall the pseudo-length $\ell_{d_\lz}(\gz)$ induced by $\dl$ defined in Definition \ref{leng} and $\lz_H$ in \eqref{lh}. We have the following.}

\begin{prop}\label{len}
   For any   $\lz \ge \lz_H$,    we have     $d _\lz=\rl$.
 \end{prop}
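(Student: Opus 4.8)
The plan is to prove the two inequalities $d_\lz\le\rl$ and $\rl\le d_\lz$ separately. The first is immediate: since $d_\lz$ is a pseudo-distance (Lemma \ref{lem311}(i)), for any $\gz\in\mathcal C(a,b;x,y;\Omega)$ the triangle inequality applied to the trivial partition $\{a,b\}$ gives $\ell_{d_\lz}(\gz)\ge d_\lz(x,y)$, whence $\rl(x,y)\ge d_\lz(x,y)$ after taking the infimum over $\gz$. So the real work is to show $\rl\le d_\lz$; I would do this first under the hypothesis $\lz>\lz_H$ and then recover the borderline case $\lz=\lz_H$ by approximation.

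Assume $\lz>\lz_H$, so $R'_\lz>0$ and, by Corollary \ref{lem311-1}, $d_\lz\simeq d_{CC}$ with the Euclidean topology; in particular every point has arbitrarily small $d_{CC}$-balls with compact closure in $\Omega$. Fix $x\in\Omega$ and put $u:=\rl(x,\cdot)$. The idea is to show $u$ satisfies condition (iii) of Theorem \ref{rad} for this $\lz$: then $u\in\dot W^{1,\fz}_X(\Omega)$ with $\|H(\cdot,Xu)\|_{L^\fz(\Omega)}\le\lz$, so $u$ is admissible in \eqref{dlz}, and since $u(x)=\rl(x,x)=0$ this gives $\rl(x,y)=u(y)-u(x)\le d_\lz(x,y)$ for all $y$. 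To verify (iii) it suffices, by the triangle inequality $u(b)-u(a)=\rl(x,b)-\rl(x,a)\le\rl(a,b)$, to prove the \emph{local} statement: each $z_0\in\Omega$ has a neighbourhood $N$ with $\rl(a,b)\le d_\lz(a,b)$ for all $a,b\in N$.

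This local statement is the main obstacle. Given $a,b\in N$ (a small $d_{CC}$-ball, to be shrunk below) and $\ez>0$, I must exhibit a curve $\gz$ from $a$ to $b$ in $\Omega$ with $\ell_{d_\lz}(\gz)\le d_\lz(a,b)+\ez$; a $d_{CC}$-geodesic will not do, since for it one only gets $\ell_{d_\lz}(\gz)\le R_\lz d_{CC}(a,b)\le(R_\lz/R'_\lz)\,d_\lz(a,b)$, a merely bi-Lipschitz bound, so $\gz$ must be adapted to $d_\lz$. This is where the ``special functions'' come in. By Lemma \ref{lem3.2} the functions $v:=d_\lz(a,\cdot)$ and $d_\lz(\cdot,b)$ are admissible in \eqref{dlz} and continuous, and $v$ is the pointwise largest admissible function vanishing at $a$, so $d_\lz(a,q)=v(q)$ for all $q$. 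Using this maximality, the continuity of $v$ and of $d_\lz(\cdot,b)$ on the relevant level sets, and the comparability $d_\lz\simeq d_{CC}$ to keep all competitors inside a fixed ball with closure in $\Omega$ (here $N$ is shrunk so that the iterated midpoints below stay in such a ball), I would show that $d_\lz$ has approximate midpoints in $N$, i.e.\ for all $p,q\in N$ and $\dz>0$ there is $w$ with $d_\lz(p,w),d_\lz(w,q)\le\tfrac12 d_\lz(p,q)+\dz$; then by the usual iteration over dyadic parameters, choosing the errors summably small and completing by uniform continuity (the space $(\overline N,d_{CC})$, equivalently $(\overline N,d_\lz)$, being compact), one obtains $\gz$ with $\ell_{d_\lz}(\gz)=d_\lz(a,b)$. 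I expect the existence of approximate midpoints to be the technical heart: because $d_\lz$ need not be symmetric and $H$ is only measurable in $x$, the midpoints have to be located through the level-set geometry of the two extremal functions $d_\lz(a,\cdot)$ and $d_\lz(\cdot,b)$, with no recourse to characteristic ODEs.

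For the borderline case $\lz=\lz_H$ I would argue by monotone approximation: for each $\ez>0$, $d_{\lz_H}\le d_{\lz_H+\ez}$, hence $\ell_{d_{\lz_H}}(\gz)\le\ell_{d_{\lz_H+\ez}}(\gz)$ for every curve $\gz$, so $\rho_{\lz_H}\le\rho_{\lz_H+\ez}=d_{\lz_H+\ez}$ by the case already proved (as $\lz_H+\ez>\lz_H$). Letting $\ez\downarrow0$ and using the right-continuity of $\lz\mapsto d_\lz(x,y)$ (Lemma \ref{dpro}) gives $\rho_{\lz_H}\le d_{\lz_H}$, which together with the first inequality yields $d_{\lz_H}=\rho_{\lz_H}$.
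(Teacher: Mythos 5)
Your skeleton is the same as the paper's: $d_\lz\le\rho_\lz$ from the triangle inequality; for $\lz>\lz_H$ show that $\rho_\lz(x,\cdot)$ satisfies Theorem \ref{rad}(iii) by producing, near each point, curves whose $d_\lz$-length is within $\ez$ of $d_\lz(a,b)$ via a dyadic midpoint iteration (keeping everything in a $d_\lz$-ball compactly contained in $\Omega$, which is where $R'_\lz>0$ enters); and recover $\lz=\lz_H$ by monotonicity plus the right continuity of $\lz\mapsto d_\lz(x,y)$, which is exactly the paper's Case 2. The gap is at what you yourself call the technical heart: the approximate midpoint property. You state it (for all $p,q$ and $\dz>0$ there is $w$ with $\max\{d_\lz(p,w),d_\lz(w,q)\}\le\tfrac12 d_\lz(p,q)+\dz$) and say you \emph{would} derive it from the maximality of $d_\lz(a,\cdot)$ and the level-set geometry of $d_\lz(a,\cdot)$ and $d_\lz(\cdot,b)$, but no argument is given, and it does not follow routinely from maximality or from the comparability $d_\lz\simeq d_{CC}$ (that comparability only yields the bi-Lipschitz bound you already dismissed).

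The paper supplies this missing step as Proposition \ref{p301}, valid for every $\lz\ge0$ and with the midpoint sought in all of $\Omega$, by contradiction: if $\inf_{z\in\Omega}\max\{d_\lz(x_0,z),d_\lz(z,y_0)\}\ge r_0:=\tfrac12 d_\lz(x_0,y_0)+\ez_0$, one sets $f=f_1+f_2$ with $f_1=\min\{d_\lz(x_0,\cdot)-(r_0-\dz),0\}$ and $f_2=\max\{(r_0-\dz)-d_\lz(\cdot,y_0),0\}$, checks on the three regions $\{d_\lz(x_0,\cdot)<r_0\}$, $\{d_\lz(\cdot,y_0)<r_0\}$ and $\{d_\lz(x_0,\cdot)>r_0-\dz\}\cap\{d_\lz(\cdot,y_0)>r_0-\dz\}$ (using Lemmas \ref{l21} and \ref{lem3.2}) that $f$ satisfies Theorem \ref{rad}(iii), hence (ii), and then $f(y_0)-f(x_0)=2r_0-2\dz>d_\lz(x_0,y_0)$ for $\dz<\ez_0$ gives the contradiction. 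This is precisely the ``special functions'' device your sketch gestures at but does not carry out; without it the construction of the nearly optimal curve never gets started. A secondary, fixable point: the midpoint property only produces $w$ somewhere in $\Omega$, not in your neighbourhood $N$, so the iteration must track quantitatively that all dyadic points stay in a fixed set $\Subset\Omega$ (the paper's bookkeeping with $B^+_{d_\lz}(x,5r_x)$ and the summable error $2^{-2t}\ez$); your ``shrink $N$'' remark points the right way, but that control is part of the proof rather than an afterthought.
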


To prove Proposition \ref{len}, we need the following approximation midpoint property of $d_\lz$.

\begin{prop} \label{p301}
For any $\lz\ge 0$,   we have
  \begin{equation}\label{eq2211}
    \inf_{z\in\Omega}\max\{\dl (x,z),\dl (z,y)\} \le \frac{1}{2}\dl (x,y) \quad\mbox{for all $x,y \in \Omega $.}
  \end{equation}

\end{prop}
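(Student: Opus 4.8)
The plan is to deduce the approximate midpoint inequality \eqref{eq2211} from the characterization of $d_\lz$ already established in Theorem \ref{rad}, specifically the equivalence (i)$\Leftrightarrow$(ii), combined with the regularity of the function $z\mapsto d_\lz(x,z)$ obtained in Lemma \ref{lem3.2}. Fix $x,y\in\Omega$ and, for brevity, write $\delta:=d_\lz(x,y)$. The natural candidate for a ``near-midpoint'' function is built from the two one-variable functions $d_\lz(x,\cdot)$ and $d_\lz(\cdot,y)$; the idea is to set
\begin{equation*}
v(z):=\min\bigl\{\,d_\lz(x,z)\,,\ \delta-d_\lz(z,y)\,\bigr\}\qquad\forall z\in\Omega,
\end{equation*}
or a suitable truncation/variant thereof. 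First I would record, using Lemma \ref{lem3.2}, that $d_\lz(x,\cdot)\in\dot W^{1,\infty}_X(\Omega)$ with $\|H(\cdot,Xd_\lz(x,\cdot))\|_{L^\infty(\Omega)}\le\lz$, and likewise that $z\mapsto \delta - d_\lz(z,y)$ lies in $\dot W^{1,\infty}_X(\Omega)$ with $\|H(\cdot,X(\delta-d_\lz(\cdot,y)))\|_{L^\infty(\Omega)}=\|H(\cdot,-Xd_\lz(\cdot,y))\|_{L^\infty(\Omega)}\le\lz$. Then, by Lemma \ref{l21} (closure of $\{\|H(\cdot,X\,\cdot)\|_{L^\infty}\le\lz\}$ under finite minima), $v\in\dot W^{1,\infty}_X(\Omega)$ and $\|H(\cdot,Xv)\|_{L^\infty(\Omega)}\le\lz$.

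Next I would use the definition \eqref{dlz} of $d_\lz$ applied to the admissible test function $v$: for every $w,z\in\Omega$ one has $v(z)-v(w)\le d_\lz(w,z)$. In particular, taking $w=x$ and $z=y$: since $d_\lz(x,x)=0$ we get $v(x)\le d_\lz(x,x)=0$ from the first slot, and $\delta-d_\lz(x,y)=0$ gives $v(x)\le 0$ from the second slot as well, while $v(x)\ge\min\{0,\delta-d_\lz(x,y)\}=0$ (using $d_\lz(x,x)=0\le\delta$ and $d_\lz(x,y)=\delta$), so $v(x)=0$; similarly $v(y)=\min\{d_\lz(x,y),\delta-d_\lz(y,y)\}=\min\{\delta,\delta\}=\delta$. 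Now the point is to locate a point $z$ where the two quantities in the definition of $v$ are balanced. Since $z\mapsto d_\lz(x,z)$ is continuous (Lemma \ref{lem311} together with Corollary \ref{lem311-1}), so is $z\mapsto d_\lz(x,z)-(\delta-d_\lz(z,y))$; at $z=x$ it equals $0-\delta=-\delta\le 0$ and at $z=y$ it equals $\delta-0=\delta\ge 0$. I would then apply the intermediate value theorem along a continuous curve joining $x$ to $y$ inside the domain $\Omega$ (which exists since $\Omega$ is connected and, by Lemma \ref{lem311}(ii), $d_\lz$ is bounded above by $R_\lz d_{CC}$, so continuity in the Euclidean topology is available) to produce $z_0\in\Omega$ with $d_\lz(x,z_0)=\delta-d_\lz(z_0,y)$, equivalently $d_\lz(x,z_0)+d_\lz(z_0,y)=\delta$. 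Then, since $\max\{a,b\}\le a+b$ when $a,b\ge 0$... no: rather I use that $\max\{a,b\}\le\frac12(a+b)$ fails in general, so instead I argue: by the triangle inequality $\delta=d_\lz(x,y)\le d_\lz(x,z_0)+d_\lz(z_0,y)$, which is automatic and not yet what I want. To get the factor $\tfrac12$ I must do better and find $z_0$ with $d_\lz(x,z_0)=d_\lz(z_0,y)$; then each of these equals at most $\tfrac12\delta$ provided $d_\lz(x,z_0)+d_\lz(z_0,y)\le\delta$, and the latter holds because $v(z_0)-v(x)\le d_\lz(x,z_0)$ gives $d_\lz(x,z_0)\le d_\lz(x,z_0)$ (vacuous), so I instead note $v(y)-v(z_0)\le d_\lz(z_0,y)$, i.e.\ $\delta-v(z_0)\le d_\lz(z_0,y)$, and $v(z_0)-v(x)\le d_\lz(x,z_0)$, i.e.\ $v(z_0)\le d_\lz(x,z_0)$; adding, $\delta\le d_\lz(x,z_0)+d_\lz(z_0,y)$, combined with the defining property $v(z_0)=d_\lz(x,z_0)$ (first slot attained) and $v(z_0)=\delta-d_\lz(z_0,y)$ (second slot attained, by choice of $z_0$) forces $d_\lz(x,z_0)=v(z_0)$ and $\delta-d_\lz(z_0,y)=v(z_0)$, hence $d_\lz(x,z_0)+d_\lz(z_0,y)=\delta + (v(z_0)-v(z_0))$... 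I will need to choose $z_0$ so that additionally $d_\lz(x,z_0)=d_\lz(z_0,y)=\tfrac12\delta$, using an intermediate value argument for the continuous map $z\mapsto d_\lz(x,z)-d_\lz(z,y)$ (value $-\delta$ at $x$, value $+\delta$ at $y$) combined with the above identity $d_\lz(x,z_0)+d_\lz(z_0,y)=\delta$ that already holds on the relevant level set of $v$; intersecting the two conditions pins down $d_\lz(x,z_0)=d_\lz(z_0,y)=\tfrac12\delta$, and then $\max\{d_\lz(x,z_0),d_\lz(z_0,y)\}=\tfrac12\delta=\tfrac12 d_\lz(x,y)$, which yields \eqref{eq2211} (with infimum bounded by this value, and an $\varepsilon$ if $z_0$ only approximately balances).

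The main obstacle I anticipate is making the ``intermediate value'' step fully rigorous when $d_\lz$ is only a pseudo-distance and possibly non-symmetric: I must exhibit a genuinely continuous curve in $\Omega$ (in the Euclidean topology, which by Corollary \ref{lem311-1} agrees with the $d_\lz$-topology when $\lz>\lz_H$, but the proposition is claimed for all $\lz\ge 0$, so for $\lz\le\lz_H$ I should instead rely directly on continuity of $d_\lz(x,\cdot)$ and $d_\lz(\cdot,y)$ in the Euclidean topology, which follows from the upper bound $d_\lz\le R_\lz d_{CC}$ in Lemma \ref{lem311}(ii) — note $R_\lz<\infty$ is guaranteed by (H3) even when $R'_\lz=0$) along which both one-variable functions vary continuously, and then show the two balancing conditions can be met simultaneously. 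A clean way around the simultaneity issue is to not insist on exact balance: fix $\varepsilon>0$, subdivide the curve finely, and use continuity to find a point $z_0$ where $|d_\lz(x,z_0)-d_\lz(z_0,y)|<\varepsilon$ and $d_\lz(x,z_0)+d_\lz(z_0,y)\le d_\lz(x,y)+\varepsilon$ (the latter via the test-function argument above applied to $v$ and the near-level point), whence $\max\{d_\lz(x,z_0),d_\lz(z_0,y)\}\le\tfrac12(d_\lz(x,y)+\varepsilon)+\tfrac{\varepsilon}{2}$; letting $\varepsilon\to 0$ in the infimum on the left of \eqref{eq2211} gives the claim. I would present the $\varepsilon$-version to avoid delicate exact-value arguments.
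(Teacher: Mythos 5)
There is a genuine gap, and it sits exactly where your own text hesitates. Your strategy needs a point $z_0$ satisfying simultaneously $d_\lz(x,z_0)\approx d_\lz(z_0,y)$ \emph{and} $d_\lz(x,z_0)+d_\lz(z_0,y)\le d_\lz(x,y)+\ez$, but the second condition is never established and does not follow from the ingredients you invoke. First, the intermediate value computation is wrong: at $z=x$ the quantity $d_\lz(x,z)-\bigl(\dz_0-d_\lz(z,y)\bigr)$ (with $\dz_0:=d_\lz(x,y)$) equals $0-(\dz_0-\dz_0)=0$, not $-\dz_0$; in fact by the triangle inequality this function is nonnegative \emph{everywhere} and vanishes at both endpoints, so the IVT produces nothing. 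Second, the auxiliary function $v=\min\{d_\lz(x,\cdot),\,\dz_0-d_\lz(\cdot,y)\}$ is indeed admissible (your use of Lemma \ref{lem3.2} and Lemma \ref{l21} is fine), but testing it in \eqref{dlz} only reproduces the triangle inequality: admissibility forces $v(z)=\dz_0-d_\lz(z,y)$ for all $z$, i.e.\ $\dz_0\le d_\lz(x,z)+d_\lz(z,y)$, which you already knew. You can certainly find a balanced point $z_0$ with $d_\lz(x,z_0)=d_\lz(z_0,y)$ along a Euclidean curve (here the endpoint values $-\dz_0$ and $+\dz_0$ are correct for $d_\lz(x,z)-d_\lz(z,y)$), but there is no control on the common value at such a point. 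Indeed, the properties you actually use (continuity, triangle inequality, closure of the test class under min/max and constants, and the fact that $d_\lz(x,\cdot)$, $-d_\lz(\cdot,y)$ are test functions) are all shared by pseudo-metrics such as $\min\{|x-y|,1\}$, for which the approximate midpoint property fails; so no argument built only on these facts can prove \eqref{eq2211}.

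The missing ingredient is the local-to-global implication (iii)$\Rightarrow$(ii) of Theorem \ref{rad}, which is precisely what the paper exploits. Its proof is by contradiction: if \eqref{eq2211} fails at $(x_0,y_0)$ with gap $\ez_0$ and $r_0:=\frac12 d_\lz(x_0,y_0)+\ez_0$, one sets, for $\dz\in(0,\ez_0)$, $f_1=\min\{d_\lz(x_0,\cdot)-(r_0-\dz),0\}$, $f_2=\max\{(r_0-\dz)-d_\lz(\cdot,y_0),0\}$ and $f=f_1+f_2$. The failure hypothesis makes the regions where $f_1$ and $f_2$ are nonconstant disjoint (separated by the set where both $d_\lz(x_0,\cdot)$ and $d_\lz(\cdot,y_0)$ exceed $r_0-\dz$), so $f$ is \emph{locally} one of $f_1$, $f_2$, or a constant, hence satisfies Theorem \ref{rad}(iii); by (iii)$\Rightarrow$(ii) it must satisfy $f(y_0)-f(x_0)\le d_\lz(x_0,y_0)$, while a direct computation gives $f(y_0)-f(x_0)=2(r_0-\dz)>d_\lz(x_0,y_0)$ — a contradiction. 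If you want to salvage your direct approach you would have to inject this gluing/local-to-global mechanism; as written, the proposal does not prove the proposition.
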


\begin{proof} We prove by contradiction.
 Suppose that
\eqref{eq2211} were not true.
 There exists $x_0,y_0\in\Omega$ such that
 \begin{equation}\label{lambda}
   \inf_{z\in\Omega}\max\{\dl (x_0,z),\dl (z,y_0)\} \ge \frac{1}{2}\dl (x_0,y_0)+\ez_0:=r_0
 \end{equation}
for some $\ez_0>0$.

  Given any $\dz\in(0,\ez_0)$, define  $f(z):=f_1(z)+f_2(z)$ with
  $$f_1(z) := \min\{\dl(x_0, z)-(r_0 - \dz), 0\},\ \mbox{ }  f_2(z):= \max\{(r_0 - \dz) - \dl(z, y_0), 0\}\quad\forall z\in\Omega.$$
We claim that
 $f$ satisfies Theorem \ref{rad}(iii),
that is, for any $z\in\Omega$, there is an open neighborhood $N(z)$  such that
\begin{equation}\label{thm3iii}f(y)-f(w)\le d_\lz(w,y)\quad\forall w,y\in N(z).\end{equation}
Assume the claim \eqref{thm3iii} holds for the moment. Since we have already shown the equivalence  between (ii) and (iii) in Theorem \ref{rad},
we know that  $f$ satisfies Theorem \ref{rad}(ii), that is, $$f(y)-f(w)\le d_\lz(w,y)\quad\forall w,y\in \Omega.$$
 In particular,
 \begin{equation}\label{glob1}
   f(y_0)-f(x_0)\le \dl(x_0, y_0).
 \end{equation}
On the other hand, we have  $f_1(x_0)=-(r_0-\dz)$ and $f_2(y_0)=r_0-\dz$.
Since \eqref{lambda} implies $$\dl (x_0,y_0)=\max\{\dl (x_0,x_0),\dl (x_0,y_0)\} \ge \frac{1}{2}\dl (x_0,y_0)+\ez_0 =r_0 $$
 and $f_2(x_0)=0$ and $f_1(y_0)=0$. Therefore,
 $$ f(y_0) - f(x_0) = f_2(y_0)-f_1(x_0)=
 2r_0 - 2\dz= \dl (x_0,y_0)+2\ez_0 - 2\dz,$$
 By $\dz<\ez_0$, one has
  $$ f(y_0) - f(x_0)  >\dl (x_0,y_0),$$
which contradicts to \eqref{glob1}.

Finally we prove the above claim \eqref{thm3iii}.
Firstly, thanks to Lemma \ref{l21} and \ref{lem3.2}, $H(x,Xf_1)\le \lz$  and $H(x,Xf_2)\le \lz$ almost everywhere in $ \Omega$, and hence,  by the definition of $d_\lz$,
$$f_1(y)-f_1(w)\le d_\lz(w,y)\ \mbox{and}\ f_2(y)-f_2(w)\le d_\lz(w,y) \quad\forall w,y\in \Omega.$$

 Next, {\color{black} set $$\mbox{$\Lambda_1:=\{z\in \Omega  \ | \ d_\lz(x_0,z)<r_0\}$,\quad $\Lambda_2:=\{z\in \Omega  \ | \ d_\lz(z,y_0)<r_0 \}$.}$$
and
$$\Lambda_3:=\{z\in \Omega  \ | \ d_\lz(x_0,z)> r_0-\dz\ \mbox{ and }\  d_\lz(z,y_0)> r_0-\dz\}$$

For any $z \in \Lambda_1$, that is, $d_\lz(x_0,z)<r_0$,
 \eqref{lambda} implies $d_\lz(z,y_0) \ge r_0$. Consequently,
$$f_2(z)=\max\{(r_0 - \dz) - \dl(z, y_0), 0\}= 0$$
and hence,
  $f(z)=f_1(z).$
Consequently,
\begin{equation}\label{lz1}
  f(w)-f(y)=f_1(w)-f_1(y) \le d_\lz(y,w)\quad\forall w,y\in  \Lambda_1.
\end{equation}

Similarly, for any $z \in \Lambda_2$, that is,  $d_\lz(z,y_0)<r_0$,
  \eqref{lambda} implies  $d_\lz(x_0,z) \ge r_0$.
Consequently,
$$f_1(z)=\min\{\dl(x_0, z)-(r_0 - \dz), 0\}= 0$$
 and hence,
 $ f(z)= f_2(z).$ Consequently,
 \begin{equation}\label{lz2}
   f(w)-f(y)=f_2(w)-f_2(y) \le d_\lz(y,w)\quad\forall w,y\in  \Lambda_2.
 \end{equation}

For any $z\in\Lambda_3$, that is,
$d_\lz(x_0,z)> r_0-\dz$ and $  d_\lz(z,y_0)> r_0-\dz$, we have $f_1(z)=0=f_2(z)$
and hence
 $f(z) =0.$  Consequently
 \begin{equation}\label{lz3}
   f(w)-f(y)=0\le d_\lz(y,w)\quad\forall w,y\in \Lambda_3.
 \end{equation}

     Noticing that $\{\Lambda_i\}_{i=1,2,3}$ forms an open cover of $\Omega$, for any $z \in \Omega$, we choose
      \begin{equation}\label{nz}
N(z)= \left \{ \begin{array}{ll}
\Lambda_1  & \quad \text{if }   z \in \Lambda_1, \\
\Lambda_2 & \quad \text{if }   z \in \Lambda_2\setminus\Lambda_1, \\
\Lambda_3 & \quad \text{if }  z\in \Omega\setminus (\Lambda_1\cup\Lambda_2).
\end{array}
\right.
\end{equation}
From \eqref{lz1}, \eqref{lz2} and \eqref{lz3}, we obtain  \eqref{thm3iii} as desired with the choice of $N(z)$ in \eqref{nz}. The proof is complete.
}
\end{proof}

{\color{black}
\begin{lem} \label{dpro}

 Given any $x,y\in \Omega$, the map $\lz\in[\lz_H
 ,\fz)\mapsto d_\lz(x,y)\in[0,\fz)$ is nondecreasing
and right continuous.
\end{lem}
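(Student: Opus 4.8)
The plan is to treat monotonicity and right continuity separately, with monotonicity being essentially immediate and right continuity requiring a compactness-plus-lower-semicontinuity argument analogous to Lemma \ref{l6.1}. For monotonicity, fix $x,y\in\Omega$ and take $\lz_1\le\lz_2$ in $[\lz_H,\fz)$. Any admissible competitor $u\in\dot W^{1,\fz}_X(\Omega)$ with $\|H(\cdot,Xu)\|_{L^\fz(\Omega)}\le\lz_1$ also satisfies $\|H(\cdot,Xu)\|_{L^\fz(\Omega)}\le\lz_2$, so the supremum defining $d_{\lz_2}(x,y)$ is taken over a larger family; hence $d_{\lz_1}(x,y)\le d_{\lz_2}(x,y)$. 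This is purely formal from \eqref{dlz}.

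For right continuity, fix $\lz\in[\lz_H,\fz)$ and a decreasing sequence $\lz_j\downarrow\lz$. By monotonicity $d_{\lz_j}(x,y)$ is nonincreasing and bounded below by $d_\lz(x,y)$, so $L:=\lim_{j\to\fz}d_{\lz_j}(x,y)\ge d_\lz(x,y)$ exists; the goal is the reverse inequality $L\le d_\lz(x,y)$. First I would reduce to working with the canonical competitors: by Lemma \ref{lem3.2}, the function $v_j:=d_{\lz_j}(x,\cdot)$ lies in $\dot W^{1,\fz}_X(\Omega)$ with $\|H(\cdot,Xv_j)\|_{L^\fz(\Omega)}\le\lz_j$, and $v_j(x)=0$, $v_j(y)=d_{\lz_j}(x,y)$. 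Since $\lz_j\le\lz_1$, Lemma \ref{lem311}(ii) gives the uniform bound $\||Xv_j|\|_{L^\fz(\Omega)}\le R_{\lz_1}$ and hence $|v_j(z)-v_j(w)|\le R_{\lz_1}\dcc(z,w)$ for all $z,w$; together with $v_j(x)=0$ this makes $\{v_j\}$ locally uniformly bounded and locally equicontinuous (using the local comparability of $\dcc$ with the Euclidean metric recalled in Section 2). By Arzel\`a--Ascoli and a diagonal argument over an exhaustion $\{K_i\}$ of $\Omega$, a subsequence converges in $C^0(\Omega)$ to some $v$ with $v(x)=0$ and $v(y)=L$.

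The main point — and the step I expect to be the real obstacle — is to show $\|H(\cdot,Xv)\|_{L^\fz(\Omega)}\le\lz$, not merely $\le\lz_j$ for each $j$; i.e.\ the limsup of the constraints must collapse to the limit value $\lz$ rather than to some $\lz_j$. For any fixed $U\Subset\Omega$ and any fixed $j_0$, the tail $\{v_j\}_{j\ge j_0}$ satisfies $\|H(\cdot,Xv_j)\|_{L^\fz(U)}\le\lz_{j_0}$, so the argument of Lemma \ref{l6.1} (weak $L^2$ convergence of horizontal gradients, Mazur's theorem to pass to convex combinations, quasiconvexity (H1) to keep $H(x,\cdot)\le\lz_{j_0}$, then lower semicontinuity (H0) to pass to the a.e.\ limit) yields $\|H(\cdot,Xv)\|_{L^\fz(U)}\le\lz_{j_0}$. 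Letting $j_0\to\fz$ gives $\|H(\cdot,Xv)\|_{L^\fz(U)}\le\lz$, and then $U\Subset\Omega$ arbitrary gives the global bound. Thus $v$ is an admissible competitor for $d_\lz(x,y)$, so $d_\lz(x,y)\ge v(y)-v(x)=L$, completing the proof. (One should note the harmless subtlety that $v$ is only the $C^0$-limit of a subsequence, but since the limit value $v(y)-v(x)=L$ is the same for every convergent subsequence, this causes no difficulty.)
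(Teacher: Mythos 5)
Your proof is correct and follows essentially the same route as the paper: both take the canonical competitors $d_{\lz_j}(x,\cdot)$ furnished by Lemma \ref{lem3.2}, pass to a locally uniform limit $v$, apply Lemma \ref{l6.1} with the tail constraint to get $\|H(\cdot,Xv)\|_{L^\fz(\Omega)}\le\lz$, and conclude $L\le d_\lz(x,y)$ by using $v$ as a test function. The only cosmetic differences are that the paper obtains the locally uniform convergence from monotonicity of $\lz\mapsto d_\lz(x,\cdot)$ rather than from the uniform Lipschitz bound plus Arzel\`a--Ascoli, and phrases the argument as a contradiction instead of a direct limit.
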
 }

\begin{proof}
  The fact that $d_\lz(x,y)$ is non-decreasing with respect to $\lz$ is obvious for any $x,y \in \Omega$ from the definition of $\dl$. Given any $x,y \in \Omega$, we show the right-continuity the map $\lz\in [\lz_H ,\fz)\mapsto d_\lz(x,y)\in[0,\fz)$. We argue by contradiction. Assume there exists $\lz_0 \ge \lz_H$ and $x,y \in \Omega$ such that
\begin{equation}\label{rad0}
  \liminf_{\lz \to \lz_0+}d_\lz(x,y)=\lim_{\lz \to \lz_0+}d_\lz(x,y) =c > d_{\lz_0}(x,y) .
\end{equation}
Let $w_\lz(\cdot):= d_\lz(x,\cdot)$. By Lemma \ref{lem3.2}, we know $\|H(\cdot,Xw_\lz)\|_{L^\fz(\Omega)}\le \lz$. Since $\{w_\lz\}_{\lz>\lz_0}$ is a non-decreasing sequence with respect to $\lz$, $\{w_\lz\}$ converges pointwise to a function $w$ as $\lz \to \lz_0+$ and for any set $V \Subset \Omega$ and $x,y \in \overline V$, we have $w_\lz \to w$ in $C^0(\overline V)$. Then applying Lemma \ref{l6.1}, we have
$$  \|H(\cdot,Xw)\|_{L^\fz(\overline V)}\le \lz  $$
for any $\lz>\lz_0$, which implies
\begin{equation}\label{rada}
  \|H(\cdot,Xw)\|_{L^\fz(\overline V)}\le \lz_0.
\end{equation}
By the definition of $w$,
we have
\begin{equation}\label{radb}
  w(x)=\lim_{\lz \to \lz_0+}d_\lz(x,x)=0, \text{ and } w(y)= \lim_{\lz \to \lz_0+}d_\lz(x,y)=c
\end{equation}
Combining \eqref{rada} and \eqref{radb} and applying Theorem \ref{rad}, we have
$$c - 0 =w(y) -w(x)  \le d_{\lz_0}(x,y),$$
which contradicts to \eqref{rad0}. The proof is complete.
\end{proof}

    We are in the position to show

\begin{proof}[Proof of Proposition \ref{len}.] {\color{black}We consider the cases $\lz>\lz_H$ and $\lz=\lz_H$ separately.}

\medskip
\textbf{Case 1. $\lz>\lz_H$.}  First,
 $\dl \le \rho_\lz  $ follows from  the triangle inequality  for $d_\lz$.

 To see   $\rho_\lz\le d_\lz$,   it suffices to prove that
  for any   $z\in \Omega$, the function
    $\rho_\lz(z,\cdot):\Omega\to\rr$ satisfies Theorem \ref{rad}(iii),
    that is,  for any $ x\in\Omega $ we can find a neighborhood $N(x)$ of $ x$ such that
    \begin{equation}\label{rhoth3iii}
\rho_\lz (z,y) -\rho_\lz (z,w) \le \dl (w,y)\quad\forall w,y \in  N(x).
\end{equation}
   Indeed, since we have already shown the equivalence of (i) and (iii) in Theorem \ref{rad},
   \eqref {rhoth3iii} implies that
   $\rho_\lz(z,\cdot)$ satisfies  Theorem \ref{rad}(i), that is,
   $\rho_\lz(z,\cdot)\in \dot W^{1,\fz}_X(\Omega)$ and $\|H(\cdot,X\rho_\lz(z,\cdot))\|_{L^\fz(\Omega)}\le\lz$.
   Taking $\rho_\lz(z,\cdot)$  as the test function in
  the definition of $\dl(z,x)$, one has
    $$\rho_\lz (z,x) \le \dl(z,x)\quad\forall x\in \Omega $$  as desired.

     To prove \eqref{rhoth3iii},  let  $z\in\Omega$ be fixed.
      {\color{black}  For any $x\in\Omega$ and any $t>0$, write
 $$  B_{\dl}^+(x, t) := \{ y \in \Omega \ | \ \dl(x,y) < t \mbox{ or } \dl(y,x) < t \}   $$
 and
 $$  B_{\dl}^-(x, t) := \{ y \in \Omega \ | \ \dl(x,y) < t \mbox{ and } \dl(y,x) < t  \} .   $$
}
 For any $x\in\Omega$, {\color{black}letting
 $r_x = \min\{\frac{R_\lz'}{10} \dcc(x, \pa \Omega),1\}$, by Corollary \ref{lem311-1}, we have
 \begin{equation}\label{Sub}
    B_{\dl}^{+}(x, 6r_x) \subset B_\dcc(x, \frac{6r_x}{R_\lz'} ) \Subset \Omega,
 \end{equation}
 where  $R_\lz'>0$ thanks to (H3).
 }
 Write  $N(x)= B_{\dl}^-(x, {\color{black}r_x})$.
 Given any $w, y  \in N(x)$, it then suffices to prove
  $\rho_\lz(z,y) -\rho_\lz (z,w)\le   \dl (w, y ) $.
 To this end, for any  $0 < \ez <  \frac12 \dl(w, y)$, we will construct a curve
 \begin{equation} \label{ccurve}
     \gz_\ez:[0,1]\to   B_{d_\lz}^+(x,6r_x) \ \mbox{with $ \gz_\ez(0)=w$, $ \gz_\ez(1)=y$ and $\ell_{d_\lz} (\gz_\ez)\le \dl(w, y)+\ez$}. \end{equation}
Assume the existence of $\gz_\ez$ for the moment.
   By the triangle inequality for $\rho _\lz$,    we have
$$\rho_\lz(z,y) -\rho_\lz (z,w)\le \ell_{\dl}(\gz_\ez )  \le \dl (w, y )+\ez $$
By sending $\ez\to0$,  this yields  $\rho_\lz(z,y) -\rho_\lz (z,w)\le   \dl (w, y ) $ as desired.

\medskip
{\bf  Construction of  a curve $\gz_\ez$ satisfying  \eqref{ccurve}}.
{\color{black}
For each $t \in \nn$, set $$D_t:= \{k2^{-t} \ | \ k \in \nn, \ 0\le k\le 2^t\}.$$
We will use induction and Proposition \ref{p301}  to construct a set
\begin{equation}\label{adj4}
  Y_t =\{y_s\}_{s \in D_t} \subset B_{d_\lz}^+(x,5r_x )
\end{equation}
 with  $y_0=w$ and $y_1=y$
so that $Y_t \subset Y_{t+1}$ ,
and   that
\begin{equation}\label{adj5}
  d_\lz(y_{j2^{-t}},y_{(j+1)2^{-t}})\le     2^{-t}(\dl (w, y )+\ez) \mbox{ for any  $0\le j\le 2^t-1$}.
\end{equation}
}
The construction  of  $\{Y_t\}_{t\in\nn}$ is postponed to the end of this proof.
Assuming that $\{Y_t\}_{t\in\nn}$ are constructed, we are able to construct $\gz_\ez$ as below.

Firstly, set $D:=\cup_{t\in\nn} D_t$ and $Y:=\cup_{t\in\nn} Y_t$. Given  any $s_1, s_2 \in D$ with $s_1 <s_2$,  there exists $t \in \nn$ such that
$s_1=l2^{-t} \in D_{t}$, $s_2=k2^{-t} \in D_{t}$ for some $l < k$ and hence
$y_{s_1},y_{s_2} \in Y_t$. Using \eqref{adj5} and the triangle inequality for $d_\lz$, we have
\begin{equation}\label{adj6}
  d_\lz(y_{s_1},y_{s_2})\le \sum_{j=l}^{k-1} \dl(y_{j2^{-t}}, y_{(j+1)2^{-t}}) \le |k-j| 2^{-t}( \dl(w,y) + \ez)= |s_1-s_2|(\dl(w,y) +\ez).
\end{equation}

{\color{black}
Next, define a map $\gz_\ez^0 : D \to Y$ by $\gz_\ez^0(s) = y_s$ for all $s \in D$.
The above inequality \eqref{adj6} implies that
\begin{equation*}
  \lim_{D \ni s' \to s}\gz_\ez^0(s') = \gz_\ez^0(s) \mbox{ for all }s \in D.
\end{equation*}
}
Since $D$ is dense in $[0, 1]$ and $\overline {B_{\dl}^+(y_0, {\color{black}5r_x})}$ is complete, it is standard to extend $\gz_\ez^0$ uniquely  to a continuous map
 $\gz_\ez: [0, 1] \to B_{\dl}^+(y_0, {\color{black}6r_x})$, that is,
$\gz_\ez(s)=\gz_\ez^0(s)$ for any $s\in D$ and
$$  \gz_\ez(s)=\lim_{D \ni s'\to s}\gz_\ez(s')=\lim_{D \ni s'\to s}y_{s'}  \mbox{ for any $s\in [0,1]\setminus D$.}  $$
Recalling \eqref{adj6}, one therefore has
$$d_\lz(\gz_\ez (s_1),\gz_\ez(s_2))\le |s_1-s_2|(\dl(x,y)+\ez)\quad\forall s_1,s_2\in [0,1],\ s_1\le s_2,$$
which gives
 $\ell_{d_\lz} (\gz_\ez)\le \dz+\ez $. Thus the curve
   $\gz_\ez$ satisfies  \eqref{ccurve} as desired.
\medskip

{\bf Construction of $\{Y_t\}_{t\in\nn}$ via induction and Proposition \ref{p301}}.
{\color{black}Since $y,w \in B_{\dl}^-(x,r_x)$, we have
 $\dl(w,x)<r_x$ and $ \dl(x,y) < r_x$, which implies
$$\dz:= \dl(w,y) \le \dl(w,x)+\dl(x,y) <2r_x.$$

We construct $Y_1=\{y_0,y_{1/2},y_1\}$ which satisfies \eqref{adj5} with $t=1$ and
\begin{equation}\label{y1sub}Y_1 \subset B_{d_\lz}^+(x,3r_x)\subset  B_{d_\lz}^+(x,5r_x).
 \end{equation}
 We set $y_0=w$ and $y_1=y$.
Noting  that Proposition \ref{p301} gives
 $$\inf_{z\in\Omega}\max\{\dl (y_0,z),\dl (z,y_1)\} \le \frac{1}{2}\dl (y_0,y_1)$$
 we choose   $y_{1/2}\in\Omega$  so that
\begin{equation}\label{induction1}
  \max\{d_\lz(y_0,y_{1/2}),d_\lz(y_{1/2},y_1) \}\le  \frac12\dz+\frac14\ez.
\end{equation}
Obviously, \eqref{induction1} gives \eqref{adj5}.
To see \eqref{y1sub}, obviously,
$$y_0,y_1 \in B_{d_\lz}^-(x,r_x) \subset B_{d_\lz}^+(x,3r_x).$$
Moreover, noting that  $0<\ez <\frac12\dz<r_x$ implies  $$\frac12\dz+\frac14\ez< \dz<2r_x$$
 and  that  $ y\in B_{\dl}^-(x,r_x)$ implies $\dl(y,x)\le r_x$, we have
$$\dl(y_{1/2},x) \le \dl(y_{1/2},y_1)+\dl(y_1,x) \le \frac12\dz+\frac14\ez + \dl(y,x) \le 3r_x,$$
which gives   $y_{1/2} \in  B_{d_\lz}^+(x,3r_x).$

In general, by induction given any $t\ge2$, assume that $Y_{t-1}=\{y_s\}_{s\in D_{t-1}}$ is constructed so that
\begin{equation}\label{yt-1sub}Y_{t-1}\subset B_{d_\lz}^+(x,(3+\sum_{l=1}^{t-2}2^{-l})r_x+\ez\sum_{l=1}^{t-1}2^{-l})
\end{equation}
and that
\begin{equation}\label{adj}
  \dl(y_{j2^{-(t-1)}}, y_{(j+1)2^{-(t-1)}}) \le 2^{-(t-1)}\bl \dz + \ez\sum_{l=1}^{t-1}2^{-l} \br \mbox{ for any $0 \le j \le 2^{t-1}-1$}.
\end{equation}
Here and in what follows, we make the convention that $\sum_{l=1}^{t-2}2^{-l} =0$ if $t=2$.
Since
 \begin{equation}\label{le5rx}(3+\sum_{l=1}^\fz2^{-l})r_x+\ez\sum_{l=1}^\fz 2^{-l}\le 4r_x+\ez<4r_x+\dz\le 5r_x, \end{equation}
the inclusion \eqref{yt-1sub} implies $Y_{t-1}\subset B_{d_\lz}^+(x,5r_x)$ and hence \eqref{adj4}.

Below, we construct $Y_t=\{y_s\}_{s\in D_t}$ satisfying  \eqref{adj5} and
 \begin{equation}\label{adj7}
   Y_{t}\subset B_{d_\lz}^+(x,(3+\sum_{l=1}^{t-1}2^{-l})r_x+\ez\sum_{l=1}^{t}2^{-l}),
 \end{equation}
Note that \eqref{le5rx} and \eqref{adj7} imply $Y_{t}\subset B_{d_\lz}^+(x,5r_x)$ and hence \eqref{adj4}.

 We define $Y_t=\{y_s\}_{s\in D_t}$ first.
 Since  $D_{t-1}\subsetneqq D_{t}$,  for
$s \in D_{t-1}$,    $y_s \in Y_{t-1}$ is defined.  It is left to define $y_s$ for $s\in D_t \setminus D_{t-1}$.
Given any $s\in D_t\setminus D_{t-1}$,  we know that
$s=j2^{-t}\in D_t \setminus D_{t-1}$ for some odd $j$ with $1\le j\le 2^t-1$.
Write $s'=(j-1)2^{-t}  $ and $s''=(j+1)2^{-t}  $. Then $s',s''\in D_{t-1}$ and hence $y_{s'}\in Y_{t-1}$ and $y_{s''}\in Y_{t-1}$ are defined.
Since Proposition \ref{p301}  gives
 \begin{equation}
 \inf_{z\in\Omega}  \max\{d_\lz(y_{s'},z),d_\lz(z,y_{s''}) \}\le  \frac12d_\lz(y_{s'},y_{s''})
 \end{equation}
 we choose $y_s\in\Omega$ such that
 \begin{equation}\label{adj3}
   \max\{d_\lz(y_{s'},y_{s}),d_\lz(y_{s},y_{s''}) \}\le  \frac12d_\lz(y_{s'},y_{s''})+2^{-2t}\ez.
 \end{equation}

 Note that \eqref{adj3} and \eqref{adj} gives  \eqref{adj5} directly. Indeed,
 for any $0 \le j \le 2^t-1$, if $j$ is odd,    applying \eqref{adj3} with
 $s=j2^{-t}$,  $ s'=(j-1)2^{-t}$ and $s''=(j+1)2^{-t}$, we deduce that
 $$\dl(y_{j2^{-t}}, y_{(j+1)2^{-t}})=d_\lz(y_{s},y_{s''})\le \frac12d_\lz(y_{s'},y_{s''})+2^{-2t}\ez.$$
 Since $s',s''\in D_{t-1}$ and $s''=s'+2^{-(t-1)}$,
applying \eqref{adj} to $y_{s'},y_{s''}$ we have
 \begin{align}\label{adj2}
   \dl(y_{j2^{-t}}, y_{(j+1)2^{-t}})
   & \le \frac12 2^{-(t-1)}\bl \dz + \ez\sum_{l=1}^{t-1}2^{-l} \br + 2^{-t}2^{-t}\ez  = 2^{-t}\bl \dz + \ez\sum_{l=1}^{t}2^{-l} \br .
 \end{align}
 If $j$ is even, then $j\le 2^t-2$. Applying \eqref{adj3} with
 $s=(j+1)2^{-t}$,  $ s'=j2^{-t}$ and $s''=(j+2)2^{-t}$,
 we deduce that
 $$\dl(y_{j2^{-t}}, y_{(j+1)2^{-t}})=d_\lz(y_{s'},y_{s})\le \frac12d_\lz(y_{s'},y_{s''})+2^{-2t}\ez.$$
 Similarly, we  also have \eqref{adj2}.

To see \eqref{adj7}, since \eqref{yt-1sub} gives
 \begin{equation}\label{adj8}
   Y_{t-1}\subset B_{d_\lz}^+(x,(3+\sum_{l=1}^{t-2}2^{-l})r_x+\ez\sum_{l=1}^{t-1}2^{-l}) \subset B_{d_\lz}^+(x,(3+\sum_{l=1}^{t-1}2^{-l})r_x+\ez\sum_{l=1}^{t }2^{-l}),
 \end{equation}
 it suffices to check
 $$Y_{t} \setminus Y_{t-1}=\{y_{j2^{-t}} \ |  \ {1\le j\le 2^t-1,\\
  \mbox{$j$ is odd}} \} \subset B_{d_\lz}^+(x,(3+\sum_{l=1}^{t-1}2^{-l})r_x+\ez\sum_{l=1}^{t}2^{-l}).$$
 For any odd number $j$ with $1 \le j \le 2^t-1$, since   $y_{(j-1)2^{-t}}  \in Y_{t-1}$,
 combining \eqref{adj2} and \eqref{adj8} and noting  $\ez <\dz <2r_x$, we obtain
 \begin{align*}
  \dl(y_{j2^{-t}},x)  & \le  \dl(y_{j2^{-t}},y_{(j-1)2^{-t}}) + \dl(y_{(j-1)2^{-t}},x) \\
    & \le
 2^{-t}\bl 2r_x + \ez\sum_{l=1}^{t}2^{-l} \br + \bl 3+\sum_{l=1}^{t-2}2^{-l} \br r_x+\ez\sum_{l=1}^{t-1}2^{-l} \\
 & \le \bl 3+\sum_{l=1}^{t-1}2^{-l} \br r_x   + \ez\sum_{l=1}^{t}2^{-l}
 \end{align*}
 which implies \eqref{adj7}.   We finish the proof of Case 1. }

 \bigskip
  \textbf{ Case 2. $\lz=\lz_H$.} Fix $x,y \in  \Omega$. For any $\ez>0$ sufficiently small, by the right continuity of the map $\lz \mapsto \dl(x,y)$ at $\lz=\lz_H$ from Lemma \ref{dpro}, there exists $\mu >\lz_H$ such that
 \begin{equation}\label{d01}
   d_\mu(x,y) < d_{\lz_H}(x,y) +\frac{\ez}2.
 \end{equation}
 By Case (i), there exists $\gz:[0,1] \to \Omega$ joining $x$ and $y$ such that
 \begin{equation}\label{d02}
   \ell_{d_\mu}(\gz) < d_\mu(x,y) +\frac{\ez}2.
 \end{equation}
 By the definition of the pseudo-length and recalling from Lemma \ref{dpro} that the map $\lz \mapsto \dl(z,w)$ is non-decreasing for all $z,w \in \Omega$, we have
 \begin{equation}\label{d03}
   \ell_{d_{\lz_H}}(\gz) \le \ell_{d_\mu}(\gz).
 \end{equation}
 Combining \eqref{d01}, \eqref{d02} and \eqref{d03}, we conclude
 $$ \ell_{d_{\lz_H}}(\gz) < d_{\lz_H}(x,y) + \ez. $$
 The proof is complete.
\end{proof}

 We are ready to prove Theorem \ref{radiv}.
\begin{proof}[Proof of Theorem \ref{radiv}] Obviously, (iii) in Theorem \ref{rad} $\Rightarrow$ (iv) in Theorem \ref{radiv}. To see the converse, let $\lz\ge 0$ be  as in (iv).
Given any $x$ and  $y,z \in N(x)$, where $N(x)$ is given in (iv)  we need to show
$$   u(y)-u(z)\le d_\lz(z,y).    $$
By Proposition \ref{len}, we know $(\Omega,d_\lz)$ is a pseudo-length space. Hence for any $\ez>0$, there exists a curve $\gz_\ez : [0,1] \to \Omega$ joining $z$ and $y$ such that
\begin{equation}\label{ez1}
  \ell_{d_\lz}(\gz_\ez) \le d_\lz(z,y) +\ez.
\end{equation}
Since $\gz_\ez \subset \Omega$ is compact, we can find a finite set $\{t_i\}_{i=0}^n \subset [0,1]$ satisfying
$$ t_0=0, \ t_n=1, \ \text{and } \gz_\ez(t_{i+1}) \in N(\gz_\ez(t_i)), \ i=0,\cdots , n-1 $$
where $N(\gz_\ez(t_i))$ is the neighbourhood of $\gz_\ez(t_i)$ in (iv).
Hence by (iv) we have
\begin{equation*}
  u(\gz_\ez(w_{i+1})) - u(\gz_\ez(w_i)) \le d_\lz(\gz_\ez(w_{i}),\gz_\ez(w_{i+1})), \ i=0, \cdots i-1.
\end{equation*}
Summing the above inequalities from $0$ to $n-1$, we have
\begin{align*}
  u(y) - u(z) & = u(\gz_\ez(w_n)) - u(\gz_\ez(w_0)) \le \sum_{i=0}^{n-1}d_\lz(\gz_\ez(w_{i}),\gz(w_{i+1}))
    \le \sum_{i=0}^{n-1} \ell_{d_\lz}(\gz_\ez| _{[w_i,w_{i+1}]}) \\
   & = \ell_{d_\lz}(\gz_\ez )
   \le d_\lz(y,z) +\ez
\end{align*}
where in the last inequality we applied \eqref{ez1}.
Letting $\ez\to0$ in the above inequality, we obtain (iii) in Theorem \ref{rad}.

Finally, \eqref{ident2} is a direct consequence of (iv)$\Leftrightarrow$(i) and thanks to Lemma \ref{dpro}, the minimum in \eqref{ident2} is achieved.
\end{proof}

{\color{black}
\begin{rem}  \label{exp1}\rm
The assumption $R_\lz'>0$ is needed in the proof of Proposition \ref{len}. Indeed, recall the construction of $\gz_\ez$ in the proof of Proposition \ref{len} below \eqref{adj6}. To guarantee $\gz_\ez$ is a continuous map, especially $\gz_\ez([0,1])$ is compact under the topology induced by $\dcc$, we need $\{\dl(x,\cdot)\}_{x \in \Omega}$ induces the same topology as the one by $\dcc$. By Corollary \ref{lem311-1}, $R_\lz'>0$ can guarantee this.

Moreover, to show $\gz_\ez \Subset \Omega$ in \eqref{Sub} in the proof of Proposition \ref{len}, for each $x \in \Omega$, we need the existence of $r_x>0$, such that
\begin{equation}\label{Sub2}
  B^+_{d_\lz}(x,r_x) \Subset \Omega.
\end{equation}
Again, by Corollary \ref{lem311-1}, $R_\lz'>0$ can guarantee this. If $R_\lz'>0$ does not hold for some $\lz>0$, in Remark \ref{d1} (ii), the example shows that \eqref{Sub2} may fail for some $x \in \Omega$.
\end{rem}
}

\section{McShane extensions and minimizers}

In this section, we always suppose that
  the Hamiltonian $H(x,p)$   enjoys assumptions  (H0)-(H3) and further that $\lz_H=0$.

Let $U\Subset \Omega$ be any domain.
Note that   the restriction of $d_\lz$ in $U$ {\color{black}may not have the pseudo-length property} in $U$,
and moreover,
Theorem \ref{rad} with $\Omega $ replaced by $U$ may not hold for the restriction of $d_\lz$ in $U$.
Thus instead of $d_\lz$, below we use intrinsic pseudo metrics
 $\{d^U_\lz\}_{\lz> 0}$  in $U $,
 which are defined via  \eqref{d311} with $ \Omega$ replaced by $ U$,  that is,
\begin{equation}\label{du}
  d^{U} _{\lambda} (x,y):= \sup\{u(y)-u(x) : u \in \dot W^{1,\infty}_{X }(U),\ \|H(\cdot, Xu)\|_{L^\infty(U)} \le \lz
  \}\quad\forall \mbox{$x,y \in  U$ and $\lz \ge 0$}.
\end{equation}

Obviously we have proved the following.
\begin{cor} \label{radu}
Theorem \ref{rad}, Theorem \ref{radiv} and Proposition \ref{len} hold with $\Omega$ replaced by $U$ and $d_\lz$ replaced by $d_\lz^U$.  In particular, $d_\lz^U$ {\color{black}has the pseudo-length property} in $U$ for all $\lz \ge 0$.
\end{cor}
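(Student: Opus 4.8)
The plan is to observe that Corollary \ref{radu} needs no genuinely new argument: the proofs of Theorem \ref{rad}, Theorem \ref{radiv} and Proposition \ref{len}, together with every auxiliary result of Sections 2--4 that feeds into them, were carried out using only the standing hypotheses of the paper --- namely that $\Oz$ is a domain carrying a smooth vector field $X$ satisfying the H\"ormander condition, that $H$ satisfies (H0)--(H3), and (for Theorem \ref{radiv} and Proposition \ref{len}) that the parameter is at least $\lz_H$. So the whole task is to check that these hypotheses are inherited when $\Oz$ is replaced by a subdomain $U\Subset\Oz$, and then to note that every argument transcribes verbatim.

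First I would record the structural facts. Since $U\Subset\Oz$, the set $U$ is itself a bounded domain, and $X_1,\dots,X_m$ restrict to smooth vector fields on $U$ satisfying the H\"ormander condition with the same step $k$; hence the Carnot--Carath\'eodory distance $\dcc^U$ of $U$ is well defined, it induces the Euclidean topology on $U$, and the $U$-versions of Lemma \ref{uplus}, Theorem \ref{holder}, Proposition \ref{mollif2}, Lemma \ref{jerison} and Lemma \ref{radcclem} all hold (with $\dcc$ replaced by $\dcc^U$). Next I would check that $H|_{U\times\rrm}$ still satisfies (H0)--(H3), and that the constants in (H3) computed over $U\times\rrm$, call them $R_\lz^U$ and $(R'_\lz)^U$, obey $R_\lz^U\le R_\lz<\fz$ and $(R'_\lz)^U\ge R'_\lz$ for all $\lz\ge0$. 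Consequently $\lz_H^U:=\inf\{\lz\ge0\mid (R'_\lz)^U>0\}\le\lz_H=0$, that is, $\lz_H^U=0$. This is the one point-by-point verification required; it is routine but worth stating explicitly, since it is exactly what makes the $U$-versions of Theorem \ref{radiv} and Proposition \ref{len} valid for \emph{all} $\lz\ge0$.

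With this in hand I would simply re-run the proofs. Lemma \ref{l6.1}, Lemma \ref{l21}, Lemma \ref{lem3.2}, Lemma \ref{l6.2}, Lemma \ref{lem311}, Corollary \ref{lem311-1}, Proposition \ref{p301} and Lemma \ref{dpro} all hold with $(\Oz,\dl,\dcc,\lz_H)$ replaced by $(U,\dlu,\dcc^U,0)$, and therefore so do Theorem \ref{rad}, Theorem \ref{radiv} and Proposition \ref{len}. In particular, the $U$-analogue of Proposition \ref{len} yields $\dlu=\rlu$ for every $\lz\ge0$, where $\rlu(x,y)=\inf\{\ell_{\dlu}(\gz)\mid\gz\in\mathcal C(a,b;x,y;U)\}$; this is precisely the assertion that $(U,\dlu)$ is a pseudo-length space, i.e. the pseudo-length property of $\dlu$ in $U$.

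The only place I expect to need any care --- and the closest thing to an obstacle --- is the step in the proof of Proposition \ref{len} flagged in Remark \ref{exp1}, where one uses an inclusion of the form $B^+_{\dl}(x,6r_x)\Subset\Oz$ coming from Corollary \ref{lem311-1}. In the $U$-version this becomes $B^+_{\dlu}(x,6r_x)\Subset U$ with the choice $r_x=\min\{\tfrac{(R'_\lz)^U}{10}\,\dcc^U(x,\pa U),\,1\}$, and it is legitimate exactly because $(R'_\lz)^U\ge R'_\lz>0$ whenever $\lz>\lz_H^U=0$; the remaining boundary case $\lz=0$ is handled, just as in Case~2 of the proof of Proposition \ref{len}, by approximating from $\lz>0$ and invoking the right continuity supplied by the $U$-version of Lemma \ref{dpro}. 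Apart from this bookkeeping there is nothing further to do, which is the whole content of the corollary: Sections 2--4 were written in a way insensitive to the choice of ambient domain.
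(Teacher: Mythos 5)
Your proposal is correct and is essentially the paper's own (implicit) argument: the paper states Corollary \ref{radu} without proof precisely because the results of Sections 2--4 were proved for an arbitrary domain carrying a H\"ormander vector field and a Hamiltonian satisfying (H0)--(H3), and these hypotheses pass to $U\Subset\Omega$ with $R_\lz^U\le R_\lz$, $(R'_\lz)^U\ge R'_\lz$, hence $\lz_H^U=0$, so the $U$-versions (in particular the pseudo-length property of $d_\lz^U$ for all $\lz\ge0$, with $\lz=0$ handled by right continuity as in Case 2 of Proposition \ref{len}) follow verbatim. Your explicit check of the inherited constants and of the inclusion $B^+_{d_\lz^U}(x,6r_x)\Subset U$ is exactly the bookkeeping the paper leaves to the reader.
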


 Observe that, apriori, $d^U_\lz$ is only defined in $U$ but not in $\overline U$.
Naturally, we extend $d^U_\lz:U\times U\to[0,\fz)$ as a function  $\wz d^U_\lz: \overline U\times \overline U\to[0,\fz]$  by
$$
\wz d_\lz^U(x,y)=\lim_{r\to0}\inf\{d_\lz^U(z,w) \ | \ (z,w)\in U\times U, |(z,w)-(x,y)|\le r\}.$$
Obviously, $\wz d^U_\lz(x,y)=d^U_\lz(x,y)$ for all  {\color{black}$(x,y)\in U\times U$}, and $\wz d_\lz^U$ is {\color{black}lower semicontinuous} in $\overline U\times \overline U$,
that is, for any $a\in\rr$, the set $$ \{(x,y)\in \overline U\times \overline U \ | \ \wz d^U_\lz(x,y)>a\}$$ is open in $\overline U$.
One may also note that it may happen that
  $\dlu(x,y)=+\fz$  for some   $(x,y)\notin U\times U$.
Below, {\color{black} for the sake of simplicity, we write $\wz d_\lz^U$ as $d_\lz ^U$. We define $d_{CC}^U$ by letting $H(x,p)=|p|$ in $U$ and $\lz=1$ in \eqref{du}.}

 The following property will be used later.
\begin{lem} \label{property}
Let $U \Subset \Omega$ be a   subdomain and $\lz\ge 0$.
\begin{itemize}
\item [(i)]
  For any $x,y\in \overline U$, we have $\dlu(x,y) \ge  \dl(x,y)\ge  R'_\lz d_{CC}(x,y)$.

\item [(ii)]
  For any $x\in U$ and {\color{black}$y \in  U$ with $\dcc(x,y) < \dcc(x,\pau)$}, we have
$
  \dlu(x, y)\le R_\lz d_{CC}(x,y).
$

\item [(iii)]
   For any $x\in U$, {\color{black}let $x^\ast\in\partial U$ be the point
such that $ d_{CC}(x,x^\ast)=d_{CC}(x,\partial U)$. Then  $$d_\lz^U (x,x^\ast)\le R_\lz d_{CC}(x,x^\ast)<\fz.$$}
\vspace{-0.8cm}
\item [(iv)]
   For any $x\in \overline U$ and $ y \in U$ 
   we have
    $$\mbox{$\dlu(x, z) \le \dlu(x,y) + \dlu(y,z) $ and $ d^U_\lz(z,x)
 \le d^U_\lz(z, y)+ d^U_\lz(y,x)  \quad\forall z\in\pa U$}
 .$$

\item[(v)]  Given any $z\in \partial U$, if $\dlu(x,z)= \fz$ for some $x\in \overline U$,
then   $\dlu(y,z)=+\fz$ for any $y \in \overline U$.
\item[(vi)] Given any $x,y\in \overline U\times \overline U$ the map $\lz\in[0,\fz)\mapsto d_\lz^U(x,y)\in[0,\fz]$ is nondecreasing and for ${\color{black} 0<\lz<\mu<\fz}$,
     $$d^U_\lz(x,y)<\fz \text{ if and only if } d^U_\mu(x,y)<\fz.$$
As a consequence,
$$\overline U^\ast:=\{y\in\overline U \ | \ d_{\lz}^U(x,y)<\fz\quad\mbox{for some $x\in U$ and $\lz >0$} \}$$  is well-defined independent of the choice of  $\lz  >0$.

\item[(vii)] Given any $x,y\in  \overline U^\ast\times \overline  U^\ast$, the map $\lz\in {\color{black} [0 ,\fz)}\mapsto d_\lz^U(x,y)\in[0,\fz]$ is right continuous.
\end{itemize}
\end{lem}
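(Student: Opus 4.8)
The plan is to treat (i)--(vii) in this order, reducing each to the intrinsic data on $U$ via restriction arguments, the dual/Lipschitz characterizations already available (Lemma \ref{lem311}, Lemma \ref{radcclem}, Corollary \ref{radu}) and a careful passage to the lower semicontinuous extension $\wz d^U_\lz$ to $\overline U\times\overline U$. Throughout I would use that $\overline U\subset\Omega$, so $\dl$ and $\dcc$ are already defined and continuous on $\overline U\times\overline U$ (continuity of $\dl$ from $\dl\le R_\lz\dcc$ of Lemma \ref{lem311}(ii) plus the triangle inequality, with $R_\lz<\fz$ by (H3)); I write $d_{CC}^U$ for the intrinsic Carnot--Carath\'eodory distance of $U$, i.e.\ $d^U_1$ with $H(x,p)=|p|$, likewise extended. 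A first auxiliary fact I would record, refining (i) \emph{inside} $U$: taking $(R'_\lz-\ez)d_{CC}^U(x,\cdot)$ as a test function in \eqref{du} (admissible since $|p|<R'_\lz$ forces $H(x,p)<\lz$) and letting $\ez\to0$ gives $\dlu(x,y)\ge R'_\lz d_{CC}^U(x,y)$ for $x,y\in U$, hence also the lower bound in (i) on $U\times U$ since $d_{CC}^U\ge\dcc$; together with the routine bound $\dlu\le R_\lz d_{CC}^U$ on $U\times U$ (any admissible $u$ has $|Xu|\le R_\lz$ a.e.\ by (H3), hence is $R_\lz$-Lipschitz for $d_{CC}^U$) this two-sided comparison is the engine for (v) and (vi). Then (i) itself: on $U\times U$, every $u$ admissible in \eqref{d311} restricts to one admissible in \eqref{du}, so $\dlu\ge\dl$ there; for $x,y\in\overline U$ I pass to the liminf defining $\wz d^U_\lz$ and use continuity of $\dl$ on $\Omega$, while $\dl\ge R'_\lz\dcc$ is Lemma \ref{lem311}(ii).

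For (ii): $\|H(\cdot,Xu)\|_{L^\infty(U)}\le\lz$ forces $|Xu|\le R_\lz$ a.e.\ by (H3), and re-running the mollification argument from the proof of Lemma \ref{jerison} inside $U$ gives $u(y)-u(x)\le R_\lz d_{CC}^U(x,y)$, hence $\dlu(x,y)\le R_\lz d_{CC}^U(x,y)$; I then show $d_{CC}^U(x,y)=\dcc(x,y)$ when $\dcc(x,y)<\dcc(x,\pau)$, by observing that the open $\dcc$-ball of radius $\dcc(x,\pau)$ about $x$ lies in $U$ (a curve in $\Omega$ from $x\in U$ to a point of $\Omega\setminus\overline U$ must meet $\pau$), so near-length-minimizing $\mathcal{ACH}$-curves from $x$ to $y$ stay in $U$, whence $d_{CC}^U(x,y)\le\dcc(x,y)$, the reverse being trivial. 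For (iii): I approximate $x^\ast$ by points $w_n:=\gz_n(t_n)\in U$ on near-minimizing curves $\gz_n$ from $x$ to $x^\ast$, choosing $t_n$ with $\ell_\dcc(\gz_n|_{[0,t_n]})=\dcc(x,x^\ast)-1/n<\dcc(x,\pau)$ (so $w_n\in U$ by the ball inclusion) and $\dcc(w_n,x^\ast)\le 2/n\to0$; then (ii) gives $\dlu(x,w_n)\le R_\lz\dcc(x,x^\ast)$, and passing to the liminf defining $\wz d^U_\lz(x,x^\ast)$ finishes it.

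For (iv) (the second inequality being symmetric): the triangle inequality for $d^U_\lz$ on $U\times U$ (Lemma \ref{lem311}(i) for $U$) gives $\dlu(x',z')\le\dlu(x',y)+\dlu(y,z')$ for $x',z'\in U$; taking the liminf defining $\wz d^U_\lz(x,z)$ and using that the two summands depend on $x'$ and $z'$ separately (so that liminf splits as a sum), I reduce to showing $\liminf_{x'\to x}\dlu(x',y)=\dlu(x,y)$ and $\liminf_{z'\to z}\dlu(y,z')=\dlu(y,z)$, which hold because $y\in U$ makes $\dlu(\cdot,y)$ and $\dlu(y,\cdot)$ locally $d_{CC}^U$-Lipschitz near $y$ (again by Lemma \ref{lem311}(i),(ii) for $U$). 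For (v): from (iv) with interior midpoints together with finiteness of $\dlu$ on $U\times U$ (from $R'_\lz d_{CC}^U\le\dlu\le R_\lz d_{CC}^U$ there and $d_{CC}^U<\fz$ on $U\times U$ by connectedness of $U$) I obtain the dichotomy that, for fixed $z\in\pau$, either $\dlu(w,z)=\fz$ for all $w\in U$ or for none; if $\dlu(x,z)=\fz$ for some $x\in\overline U$, then (iv) with an interior midpoint forces the first alternative, and for $y\in\overline U$ (with $y\ne z$) I conclude $\dlu(y,z)=\liminf_{(y',z')\to(y,z)}\dlu(y',z')\ge R'_\lz\liminf d_{CC}^U(y',z')=+\fz$ using the refined lower bound extended to $\overline U$ by lower semicontinuity. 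For (vi): monotonicity of $\lz\mapsto\dlu(x,y)$ is immediate from \eqref{du}; for $0<\lz<\mu$, if $\dlu(x,y)<\fz$ then $d_{CC}^U(x,y)\le\dlu(x,y)/R'_\lz<\fz$ (here $R'_\lz>0$ since $\lz>\lz_H=0$), whence $d^U_\mu(x,y)\le R_\mu d_{CC}^U(x,y)<\fz$, and the converse is monotonicity; hence $\overline U^\ast$ is independent of $\lz>0$.

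For (vii): for $x,y\in U$ this is Lemma \ref{dpro} applied to $U$ (legitimate, since the threshold for $U$ is again $0$ because $R'_\lz>0$ on $U$ whenever it is on $\Omega$), the key point being that the monotone limit $w=\lim_{\lz\to\lz_0+}\dlu(x,\cdot)$ satisfies $\|H(\cdot,Xw)\|_{L^\infty(U)}\le\lz_0$ by Lemma \ref{l6.1} (with $U$ in place of $\Omega$), so $w$ is admissible for $d^U_{\lz_0}$ by Corollary \ref{radu} and $\lim_{\lz\to\lz_0+}\dlu(x,y)=w(y)-w(x)\le d^U_{\lz_0}(x,y)$; for general $x,y\in\overline U^\ast$ I approximate $x,y$ by interior points realizing $d^U_{\lz_0}$ up to $\ez$ and combine this interior right continuity with the lower semicontinuous definition of $\dlu$ on $\overline U\times\overline U$ and parts (iv), (vi). The step I expect to be the main obstacle is exactly this boundary behaviour shared by (iv), (v), (vii): one must control $\liminf$s of $\dlu$ over $U\times U$ against a target point in $\overline U$, where both $\dlu$ and $d_{CC}^U$ may be infinite, or where intrinsic distances may blow up, near $\pau$; the clean statements survive only because of the two-sided comparison $R'_\lz d_{CC}^U\le\dlu\le R_\lz d_{CC}^U$ valid on $U\times U$, whose lower half crucially uses $\lz_H=0$, and in (v) this must be fed into the dichotomy produced by (iv).
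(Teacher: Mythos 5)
Your overall route coincides with the paper's proof part by part: (i) by restricting test functions and passing to the liminf extension via the continuity of $\dl$; (ii) by the gradient bound $|Xu|\le R_\lz$ together with mollification along curves and the inclusion $B_{\dcc}(x,\dcc(x,\pau))\subset U$; (iii) by approximating $x^\ast$ through interior points and invoking (ii); (iv) by the interior triangle inequality pushed to the boundary by lower semicontinuity (you make explicit the frozen--coordinate identity for the liminf that the paper uses tacitly, which is a genuine improvement in rigor); (vi)--(vii) by monotonicity, comparison with an intrinsic distance, and the Lemma \ref{dpro} argument run in $U$ via Lemma \ref{l6.1} and Corollary \ref{radu}. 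Your auxiliary two-sided bound $R'_\lz d_{CC}^U\le \dlu\le R_\lz d_{CC}^U$ on $U\times U$ is a sensible device, and in (vi) it is in fact the right substitute for the paper's appeal to $\dcc$, since (ii) only applies when $\dcc(x_k,y_k)<\dcc(x_k,\pau)$. One small repair is needed there: $(R'_\lz-\ez)d_{CC}^U(x,\cdot)$ need not belong to $L^\fz(U)$ on a rough domain, so it is not admissible in \eqref{du} as written; truncate it, e.g. use $\min\{(R'_\lz-\ez)d_{CC}^U(x,\cdot),M\}$ and let $M\to\fz$.

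The genuine gaps are in (v) and in the boundary case of (vii). In (v), your transfer step ``if $\dlu(x,z)=\fz$ for some $x\in\overline U$, then (iv) with an interior midpoint forces $\dlu(w,z)=\fz$ for all $w\in U$'' does not follow: (iv) only yields $\dlu(x,z)\le \dlu(x,w)+\dlu(w,z)$, and when $x\in\pau$ the term $\dlu(x,w)$ may itself be $+\fz$ (your finiteness holds only on $U\times U$), so no conclusion about $\dlu(w,z)$ is obtained. Likewise the closing claim $\liminf_{(y',z')\to(y,z)}d_{CC}^U(y',z')=+\fz$ fails when $y\in\pau$ is itself at infinite intrinsic distance from the interior: the triangle inequality then produces an indeterminate $\fz-\fz$, and in comb/labyrinth-type domains (alternating slits at heights $1/n$) one has deep interior points $y'_k\to y$, $z'_k\to z$ with $d_{CC}^U(y'_k,z'_k)$ bounded, hence extended $\dlu(y,z)<\fz$, while every fixed interior point is intrinsically infinitely far from $z$; so the step cannot be closed for boundary $y$ (the paper's one-line ``direct consequence of (iv)'' has exactly the same defect, and a safe argument covers the dichotomy for $x,y\in U$ only). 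In (vii), your boundary case is only a sketch: picking a single interior pair $(z,w)$ with $d^U_{\lz_0}(z,w)$ within $\ez$ of the extended $d^U_{\lz_0}(x,y)$ and applying interior right continuity does not control the extended $d^U_\mu(x,y)$, which is a liminf over all approaches to $(x,y)$; one would need smallness of the extended $d^U_\mu(x,z)$ and $d^U_\mu(w,y)$, uniformly as $\mu\downarrow\lz_0$, and this is not established (the paper also waves this through with ``by an approximation argument''), so in a self-contained write-up this step must be supplied or the statement restricted.
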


\begin{proof}
To see (i), for any $x,y\in U$, since the restriction
$u|_U$ is a test function in the definition of $\dlu(x,y)$ whenever
 $u$  is a test function in the definition of $\dl(x,y)$,
we have $d^U_\lz(x,y)\ge d_\lz(x,y)$.
In general,
given any    $(x,y)\in (\overline U\times\overline  U)\setminus (U\times U)$,
for any $r>0$ sufficiently small, we have
 $d^U_\lz(z,w)\ge d_\lz(z,w)$ whenever $z,w\in U$  and $|(z,w)-(x,y)|\le r$.
By the continuity of $d_\lz$ in $\Omega\times\Omega$, we have
  $$\lim_{r\to0}\inf\{d^U_\lz(z,w) \ | \ \mbox{$z,w\in U$  and $|(z,w)-(x,y)|\le r$}\}\ge d_\lz(x,y),$$
that is, $ d_\lz^U(x,y)\ge d_\lz(x,y)$.  Recall that   $d_\lz(x,y)\ge R'd_{CC}(x,y)$ comes from Lemma 2.1.

\medskip
 To see (ii), given any $y\in U$ with $\dcc(x,y)<\dcc(x,\pau)$, there is a geodesic
 $\gz$ with respect to $\dcc$
 connecting $x$ and $y$ so that $\gz\subset B_{d_{CC}}(x,d_{CC}(x,\partial U))$.
For any function $u\in \dot W^{1,\infty}_{X }(U)$ with $\|H(\cdot,Xu)\|_{L^\fz(U)}\le \lz$,
we know that
$\|Xu\|_{L^\fz(U)}\le R_\lz$.
{\color{black}Let $U' \Subset U$   and $x,y \in U'$. Thanks to Proposition \ref{mollif2}, we can find a sequence $\{u_k\}_{k\in\nn} \subset C^\fz(U')$
such that  $u_k\to u$ everywhere as $k\to\fz$
 and $\|Xu_k\|_{L^\fz(U')}\le  R_\lz + A_k(u)$ with $\lim_{k \to \fz} A_k(u) \to 0$.}
Since
$$u_k(y)-u_k(x)= \int_\gz Xu_k\cdot \dot\gz \le \int_\gz |Xu_k||\dot\gz| \le   \int_\gz {\color{black}(R_\lz+A_k(u))}|\dot\gz| = {\color{black}(R_\lz+A_k(u))} \dcc(x,y) \mbox{ for all $k \in\nn$},$$
we have
  $$u(y)-u(x)
=\lim_{k \to \fz}\{u_k(y)-u_k(x)\}\le \lim_{k \to \fz} [R_\lz+A_k(u)] \dcc(x,y) = R_\lz \dcc(x,y).$$
Taking supremum in the above inequality over all such $u$, we have $d^U_\lz(x,y)\le R_\lz \dcc(x,y)$.


\medskip
To see (iii), given any $x\in U$,  there exists  $x^\ast\in\partial U$
such that $ d_{CC}(x,x^\ast)=d_{CC}(x,\partial U)$. By (ii) and the definition of $d^U_\lz(x,x^\ast)$,
we know that  $d_\lz^U (x,x^\ast)\le R_\lz d_{CC}(x,x^\ast)<\fz$.

\medskip
To get (iv), 
for  any $ x\in\overline U$, $y\in U$ and $z\in \partial U$, choose $x_k,z_k\in U$ such that
$\dlu(x_k,y)\to \dlu(x,y)$ and $\dlu(y,z_k)\to \dlu(y,z)$  as $k\to\fz$.
Since   $$\dlu(x_k, z_k) \le \dlu(x_k,y) + \dlu(y,z_k),$$  letting $k\to\fz$ and by the lower-semicontinuous of $d^U_\lz$ we get
$$\dlu(x, z ) \le \dlu(x,y) + \dlu(y,z )$$
In a similar way, we also have $$ d^U_\lz(z,x)
 \le d^U_\lz(z, y)+ d^U_\lz(y,x) $$

\medskip
Note that (v) is a direct consequence of (iv).

\medskip
We show (vi). The fact that $d_\lz^U(x,y)$ is non-decreasing with respect to $\lz$ is obvious for any $x,y \in \overline U$. Assume $0< \lz<\mu<\fz$. Then $d^U_\mu(x,y)<+\fz$ implies $d^U_\lz(x,y)<+\fz$. Conversely, if $d_{\mu}^U(x,y)=+\fz$, we show $d_\lz^U(x,y)=+\fz$. This may happen if  at least one of $x$ and $y$ lie in $\pau$. Then for any $\{x_k\}_{k\in \nn} \subset U$ and $\{y_k\}_{k\in \nn}\subset U$ converging to $x$ and $y$, it holds
$$ \liminf_{k \to \fz}d_{\mu}^U(x_k,y_k) = +\fz $$
where we let $x_k \equiv x$ (resp. $y_k \equiv y$) if $x \in U$ (resp. $y\in U$). By (i) and (ii), we have for any $\lz\le \mu$
$$ \liminf_{k \to \fz} d_{\lz}^U(x_k,y_k) \ge R'_\lz \liminf_{k \to \fz} d_{CC} (x_k,y_k) \ge \frac{R'_\lz}{R_{\mu}} \liminf_{k \to \fz} d_{\mu}^U(x_k,y_k) = +\fz $$
where we recall that $R'_\lz>0$ for all $\lz>0$.

\medskip
Finally, we show (vii). Since we only consider $x,y \in \overline U^\ast$,
by an approximation argument, it is enough to show the right-continuity for $x,y \in U$. The proof is similar to the one of Lemma \ref{dpro}. We omit the details.
The proof of Lemma \ref{property} is complete.
\end{proof}

{\color{black}
 \begin{lem} \label{ll}
Suppose that  $ U \Subset \Omega$ and that $V$ is a subdomain of $U$.
 For any $\lz\ge0$, one has
$$d^U_\lz(x,y)\le d^V_\lz(x,y)\quad\forall x,y\in \overline V.$$
Conversely, given any $\lz>0$ and $x \in V$, there exists a neighborhood $N _\lz(x) \Subset V$ of $x$ such that
$$\mbox{ $d^U_\lz ( x,y) = d^V_\lz(x,y)$ and $d^U_\lz (y,x) = d^V_\lz(y,x)$  for any $y \in N_\lz(x)$.}$$
\end{lem}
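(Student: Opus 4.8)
The first inequality $d^U_\lz(x,y)\le d^V_\lz(x,y)$ for $x,y\in\overline V$ is the monotonicity of the intrinsic pseudo-distance under enlarging the domain: every test function $u\in\dot W^{1,\infty}_X(U)$ with $\|H(\cdot,Xu)\|_{L^\infty(U)}\le\lz$ restricts to a test function for $d^V_\lz$ (the $L^\infty$-norm over the smaller set $V$ is no larger). Strictly this argument produces the inequality on $V\times V$, and one passes to $\overline V\times\overline V$ using the lower-semicontinuous extension: for $(x,y)\in\overline V\times\overline V$ and $z,w\in V$ near $(x,y)$ one has $d^U_\lz(z,w)\le d^V_\lz(z,w)$, and taking the liminf through the definition of $\wz d$ gives $d^U_\lz(x,y)\le d^V_\lz(x,y)$. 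This is exactly the pattern used to prove Lemma \ref{property}(i), so I would simply cite that argument.

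For the converse, the plan is to exploit that both $d^U_\lz$ and $d^V_\lz$ agree, up to the multiplicative constants $R'_\lz,R_\lz$, with $d_{CC}$ on small Carnot-Carath\'eodory balls: by Lemma \ref{property}(ii) applied in $U$ and in $V$, for $x\in V$ and $y$ close to $x$ (closer than $\dcc(x,\partial V)$),
\begin{equation*}
d^U_\lz(x,y)\le R_\lz\dcc(x,y),\qquad d^V_\lz(x,y)\le R_\lz\dcc(x,y),
\end{equation*}
and by Lemma \ref{property}(i), $d^U_\lz\ge R'_\lz\dcc$ and $d^V_\lz\ge R'_\lz\dcc$. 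Thus on such a ball the topology induced by each of the four pseudo-distances $d^U_\lz(x,\cdot)$, $d^U_\lz(\cdot,x)$, $d^V_\lz(x,\cdot)$, $d^V_\lz(\cdot,x)$ coincides with the Euclidean/$\dcc$-topology (this is the role of $R'_\lz>0$, i.e.\ of $\lz>0=\lz_H$). The key point is then the pseudo-length property: by Corollary \ref{radu}, $(V,d^V_\lz)$ and $(U,d^U_\lz)$ are pseudo-length spaces. Choose $r_x>0$ so small that $\overline{B_{\dcc}(x,4r_x)}\Subset V$ and, by the comparison above, $B^+_{d^V_\lz}(x,6R_\lz r_x)\subset B_{\dcc}(x,\tfrac{6R_\lz r_x}{R'_\lz})\Subset V$, and set $N_\lz(x):=B_{\dcc}(x,r_x)$ (or the analogous $d^V_\lz$-ball). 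For $y\in N_\lz(x)$, using the pseudo-length property of $d^V_\lz$, pick for each $\ez>0$ a curve $\gz_\ez$ from $x$ to $y$ with $\ell_{d^V_\lz}(\gz_\ez)\le d^V_\lz(x,y)+\ez$; the length comparison forces $\gz_\ez$ to stay inside a small ball $\Subset V\subset U$, on which $d^U_\lz$ and $d^V_\lz$ restrict identically (both are defined by sup over test functions on sets containing that small ball — here one repeats the argument of Lemma \ref{property}(i)(ii) locally, or more simply observes that on a ball $B\Subset V$ one has $d^V_\lz|_{B\times B}=d^B_\lz=d^U_\lz|_{B\times B}$ once $B$ is small enough that Lemma \ref{property}(ii) pins both down). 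Hence
\begin{equation*}
d^U_\lz(x,y)\le \ell_{d^U_\lz}(\gz_\ez)=\ell_{d^V_\lz}(\gz_\ez)\le d^V_\lz(x,y)+\ez,
\end{equation*}
and letting $\ez\to0$ combined with the first inequality gives $d^U_\lz(x,y)=d^V_\lz(x,y)$. The statement for $d^U_\lz(y,x)=d^V_\lz(y,x)$ is identical after replacing $d_\lz(x,\cdot)$ by $d_\lz(\cdot,x)$ throughout (both spaces are pseudo-length spaces in the "reversed" pseudo-distance as well, by Proposition \ref{len} applied to $\rho_\lz(\cdot,x)$).

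I expect the main obstacle to be the bookkeeping needed to guarantee that the approximating curves $\gz_\ez$ from the pseudo-length property actually lie in a fixed small neighborhood of $x$ contained in $V$ — this is where one must carefully chain the comparisons $R'_\lz\dcc\le d^{\,\cdot}_\lz\le R_\lz\dcc$ on nested balls (exactly the kind of radius estimate done in Remark \ref{exp1} and in the construction of $\gz_\ez$ in the proof of Proposition \ref{len}), so that $\ell_{d^V_\lz}(\gz_\ez)\approx d^V_\lz(x,y)\le R_\lz\dcc(x,y)$ controls the $\dcc$-diameter of $\gz_\ez$. Once $\gz_\ez$ is trapped in a ball $B\Subset V$, the identity $d^U_\lz|_{B\times B}=d^V_\lz|_{B\times B}$ on that ball is the routine part, handled by the localization argument of Lemma \ref{property}. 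The use of $R'_\lz>0$, i.e.\ $\lz>0$, is essential precisely here and cannot be dropped, consistent with Remark \ref{d1}(ii).
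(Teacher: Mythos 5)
Your first inequality is fine and matches the paper: restriction of test functions gives $d^U_\lz\le d^V_\lz$ on $V\times V$, and the lower-semicontinuous extension carries it to $\overline V\times\overline V$.

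The converse, however, has a genuine gap, in fact two intertwined ones. First, your final chain $d^U_\lz(x,y)\le\ell_{d^U_\lz}(\gz_\ez)=\ell_{d^V_\lz}(\gz_\ez)\le d^V_\lz(x,y)+\ez$ only reproves the easy direction $d^U_\lz\le d^V_\lz$; the direction you actually need near $x$ is $d^V_\lz(x,y)\le d^U_\lz(x,y)$, and nothing in your argument produces it (running the same scheme with the roles of $U$ and $V$ swapped needs $d^V_\lz\le d^U_\lz$ on nearby pairs along the curve, i.e.\ the hard direction again at small scale). Second, the step you call routine --- that on a small ball $B\Subset V$ one has $d^V_\lz|_{B\times B}=d^B_\lz=d^U_\lz|_{B\times B}$ ``once Lemma \ref{property}(ii) pins both down'' --- is not a consequence of the two-sided comparison $R'_\lz\dcc\le d^{\,\cdot}_\lz\le R_\lz\dcc$: that comparison gives only comparability with $\dcc$, never equality of two different intrinsic pseudo-distances. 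The claimed local identity is exactly the statement of Lemma \ref{ll} applied to the pair $(V,B)$ or $(U,B)$, so your plan is circular at its crucial point. What is missing is a mechanism for transferring information from the $V$-intrinsic distance to test functions on the larger domain. The paper supplies this by an explicit extension: it takes $u_{x,r}(z)=\min\{d^V_\lz(x,z),r\}$ with $r$ small compared with $R'_\lz\dcc(x,\pa V)$, observes that $d^V_\lz\ge R'_\lz\dcc$ forces $u_{x,r}\equiv r$ outside a small $\dcc$-ball compactly contained in $V$ (so the extension by the constant $r$ is globally defined), verifies via Lemma \ref{l21}, Lemma \ref{lem3.2} and a three-case Lipschitz estimate that $u_{x,r}\in\lip_{d_{CC}}(\Omega)$ with $H(\cdot,Xu_{x,r})\le\lz$ a.e.\ in $\Omega$, and then uses $u_{x,r}|_U$ as a test function in the definition of $d^U_\lz$ to conclude $d^V_\lz(x,y)=u_{x,r}(y)-u_{x,r}(x)\le d^U_\lz(x,y)$ for $y\in N_r(x)$. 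This truncate-and-extend construction (where $R'_\lz>0$, i.e.\ $\lz>0$, is indeed essential) is the idea your proposal lacks; the pseudo-length property is not needed for this lemma at all.
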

}

\begin{proof}

For any $u \in \dot W^{1,\infty}_{X }(U)$ with $\|H(\cdot, Xu)\|_{L^\infty(U)} \le \lz$, we know that  the restriction $u|_{V} \in \dot W^{1,\infty}_{X }(V)$ with $\|H(\cdot, Xu|_V)\|_{L^\infty(V)} \le \lz$. Hence by the definition of $d^U_\lz $ and $d^V_\lz$,
 $ d^U_\lz  (x,y) \le d^V_\lz (x,y) $ for all $ x,y \in  V $ and then   for all $ x,y \in \overline V.$

Conversely, we just show $d^U_\lz (x,\cdot) = d^V_\lz(x,\cdot)$ in some neighborhood $N(x)$.  In a similar way, we can prove
  $d^U_\lz (\cdot , x) = d^V_\lz(\cdot ,x)$ in some neighborhood $N(x)$.

By Lemma \ref{property} (i), one has $\dl^V (x,y)  \ge   R'_\lz d_{CC} (x,y)  $
for any $x,y\in V$. Thus for any $r>0$,
 $d^V_\lz(x,y)\le r$ implies  $ d_{CC} (x,y)\le r/R'_\lz$.
Given any $x\in V$ and $0<r<\dcc(x,\pa V)/R'_\lz$, we therefore  have
$$N _r(x):=\{y\in V \ | \ d^V_\lz(x,y)\le r\}\subset B_{d_{CC}}(x,r/R'_\lz)\Subset V.$$
Define $u_{x,r}: \Omega \to \rr$ by
$$u_{x,r}(z):=\min\{d^V_\lz(x,z),r\}\quad\forall z\in \Omega.$$
If $ r< \dcc(x,\pa V)R'_\lz/4$, we claim  that
\begin{equation}\label{claim5.3}\mbox{$u_{x,r}\in\lip_{d_{CC}}(\Omega)$ with
$H(z,Xu_{x,r}(z))\le \lz$ for almost all $z\in \Omega$.}
\end{equation}
Assume  claim \eqref{claim5.3} holds for  the moment.
By \eqref{claim5.3},   we are able to  take     $u_{x,r}|_{U}$  as a test function in $d^U_\lz$ so that
\begin{equation*}
    u_{x,r} (z)-u_{x,r} (w)\le d^U_\lz (w,z)\quad\forall (w,z)\in U\times U.
  \end{equation*}
 On the other hand,  for any $y\in N_r(x)$, since
 $d^V _\lz(x,y)=  u_{x,r}(y)-u_{x,r}(x)$,  we get $d^V _\lz(x,y)\le d^U_\lz(x,y)$
as desired.

  Finally, we prove  the claim \eqref{claim5.3}.

 {\bf Proof of    the claim \eqref{claim5.3}.}
First,  by Lemma 3.3 and Lemma 3.4,
   the restriction $u_{x,r}|_V$ of $u_{x,r}$ in $V$ belongs to  $\dot W^{1,\fz}_X(V)$
 with $H(z,X u_{x,r}|_V(z))\le \lz$ almost everywhere in $V$, and hence
   \begin{equation}\label{eq11}
  u_{x,r}|_V(z)-u_{x,r}|_V(w)\le d^V_\lz(w,z)\quad\forall (w,z)\in V\times V.
   \end{equation}

Next,
we show $u \in \lip_\dcc(\Omega)$. Given any $w,z\in\Omega$, we consider 3 cases separately.

\medskip
 {\bf Case 1.}  $w \in \overline {B_{d_{CC}}(x,r/R'_\lz)}$ and  $z\in \overline { B_{d_{CC}}(x, r /R'_\lz)}$.
 We have
 $z\in \overline {B_{d_{CC}}(w,2 r/R'_\lz )}$. Since
 $$ d_{CC}(w,\pa V) \ge  d_{CC}(x,\pa V)-d_{CC}(x,w) \ge \frac{4r}{R_\lz'} -  \frac{r}{R_\lz'} =  \frac{3r}{R_\lz'} > \frac{2r}{R_\lz'} \ge d_{CC}(w,z),  $$
by Lemma \ref{property} (ii), $d^V_\lz(w,z)\le R_\lz d_{CC}(w,z)$, which  combined with \eqref{eq11}, gives
 $$  u_{x,r}(z)-u_{x,r} (w)\le R_\lz d_{CC}(w,z).$$

\medskip
 {\bf Case 2.} $w,z\notin  B_{d_{CC}}(x,r / R'_\lz )$. Then
$w,z \in \Omega\setminus B_{d_{CC}}(x,r / R'_\lz )$, since $u_{x,r} $ is  constant $r$ in $\Omega\setminus B_{d_{CC}}(x,r /R'_\lz)$,
   we know that
    $$  u_{x,r} (z)-u_{x,r} (w)=r-r=0\le R_\lz d_{CC}(w,z).$$

\medskip
 {\bf Case 3.}  $w \in \overline {B_{d_{CC}}(x,r/R'_\lz)}$ and  $z\notin \overline {B_{d_{CC}}(x,r/R'_\lz)}$. Then for any $\ez>0$, there exists a curve $\gz_\ez:[0,1] \to \Omega$ joining $z$ and $w$ such that
     $$\ell_{\dcc}(\gz_\ez) \le \dcc(w,z) + \ez$$
     and there exists $t \in [0,1]$ such that $\gz_\ez(t) \in \pa B_\dcc(x,r/R'_\lz)$.
      Thus using Case 1 and Case 2, we deduce
      \begin{align*}
        u_{x,r}(z)-u_{x,r} (w)& = u_{x,r}(z)-u_{x,r} (\gz_\ez(t)) +u_{x,r}(\gz_\ez(t))-u_{x,r} (w) \\
         & \le R_\lz d_{CC}(z,\gz_\ez(t)) + R_\lz d_{CC}(\gz_\ez(t),w)\\
         & \le R_\lz \ell_{\dcc}(\gz_\ez|_{[0,t]}) + R_\lz \ell_{\dcc}(\gz_\ez|_{[t,1]})   \\
         & \le R_\lz[\dcc(w,z)+\ez].
      \end{align*}
   Letting $\ez \to 0$ in the above inequality, we conclude
   $$ u_{x,r}(z)-u_{x,r} (w)\le R_\lz\dcc(w,z). $$

 Finally,  by Lemma \ref{radcclem}, $u_{x,r}\in \dot W^{1,\fz}_X(\Omega)$. Note that   $X u_{x,r}=0$ in $\Omega \setminus
     \overline {B_{d_{CC}}(x,r/R'_\lz)}$ implies $H(z,X u_{x,r}(z) )=0$ almost everywhere.
  Therefore recalling $H(z,X u_{x,r}(z) )\le \lz$ almost everywhere in $V$ and $\Omega= V \cup (\Omega\setminus
     \overline {B_{d_{CC}}(x,r/R'_\lz)})$, we conclude $H(z,X u_{x,r}(z) )\le \lz$ almost everywhere in $\Omega$.
\end{proof}

{\color{black}
\begin{lem} \label{lem5.9}
Suppose that  $ U \Subset \Omega$ and that
 $V=U\setminus\{x_i\}_{1\le i\le m}$ for some $m\in\nn$ and $\{x_i\}_{1\le i\le m}\subset U$.
 Then for any $\lz \ge 0$, one has $$\mbox{$d^V_\lz(x,y)=d^U_\lz(x,y)$ for all $x,y\in\overline U$.}$$
\end{lem}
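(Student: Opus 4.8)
The plan is to establish the two inequalities $d^U_\lz\le d^V_\lz$ and $d^V_\lz\le d^U_\lz$ on $\overline U\times\overline U$ separately. The first is essentially already in hand: removing finitely many points from the domain $U\subset\rn$ (with $n\ge2$) leaves an open \emph{connected} set, so $V=U\setminus\{x_i\}_{1\le i\le m}$ is a subdomain of $U$, and moreover $\overline V=\overline U$ since $U$ is open. Hence Lemma \ref{ll} directly gives $d^U_\lz(x,y)\le d^V_\lz(x,y)$ for all $x,y\in\overline U$. So the real work is the reverse inequality $d^V_\lz\le d^U_\lz$.

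For that, I would run a removable-singularity argument showing that every admissible competitor for $d^V_\lz$ is also admissible for $d^U_\lz$. Precisely, given $u\in\dot W^{1,\infty}_X(V)$ with $\|H(\cdot,Xu)\|_{L^\infty(V)}\le\lz$, the first goal is $u\in\dot W^{1,\infty}_X(U)$ with $\|H(\cdot,Xu)\|_{L^\infty(U)}\le\lz$. Since $u\in L^\infty(V)=L^\infty(U)$ and $\{x_i\}$ is Lebesgue-null, it is enough to check that the distributional identity defining $X_iu$ survives when tested against arbitrary $\phi\in C_0^\infty(U)$, not just $\phi\in C_0^\infty(V)$; once this is known, $Xu$ on $U$ agrees a.e.\ with $Xu$ on $V$, so the Hamiltonian bound is preserved. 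I would do this by the standard cutoff: choose $\eta_\dz\in C^\infty(\rn)$ with $0\le\eta_\dz\le1$, $\eta_\dz\equiv0$ on $\bigcup_i B(x_i,\dz)$, $\eta_\dz\equiv1$ off $\bigcup_i B(x_i,2\dz)$ and $|\nabla\eta_\dz|\le C\dz^{-1}$; then $\phi\eta_\dz\in C_0^\infty(V)$ for small $\dz$, so $\int_U(X_iu)\,\phi\eta_\dz\,dx=\int_U u\,X_i^\ast(\phi\eta_\dz)\,dx=\int_U u\,\eta_\dz X_i^\ast\phi\,dx-\int_U u\,\phi\,X_i\eta_\dz\,dx$. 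Letting $\dz\to0$, the left side and the first term on the right converge (dominated convergence, $\eta_\dz\to1$ a.e.) to $\int_U(X_iu)\phi\,dx$ and $\int_U uX_i^\ast\phi\,dx$, while the error term is bounded, using that the $b_{il}$ are bounded on $\overline U$, by $C\|u\|_\infty\|\phi\|_\infty\int|\nabla\eta_\dz|\le C'm\dz^{n-1}\to0$ because $n\ge2$. This gives the claim. Then, by Theorem \ref{holder}, the extended $u$ lies in $\lip_{d_{CC}}(W)$ for every $W\Subset U$, hence has a continuous representative on $U$ whose restriction to $V$ is continuous and a.e.\ equal to the (continuous) $u$, so the two agree on all of $V$; thus $u$ is an admissible test function for $d^U_\lz$ and $u(y)-u(x)\le d^U_\lz(x,y)$ for $x,y\in V$. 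Taking the supremum over such $u$ yields $d^V_\lz\le d^U_\lz$ on $V\times V$, and combined with the first paragraph, $d^V_\lz=d^U_\lz$ on $V\times V$.

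It remains to propagate this equality to $\overline U\times\overline U$. Here I would use that $d^U_\lz$ and $d^V_\lz$ are, by definition, the lower-semicontinuous regularizations from $U\times U$ and $V\times V$ respectively, that $\overline V=\overline U$, that $V\times V$ is dense in $U\times U$, and that $d^U_\lz$ is jointly continuous on $U\times U$ (a consequence of the triangle inequality together with the local bound $d^U_\lz(x,y)\le R_\lz\dcc(x,y)$ from Lemma \ref{property}(ii)). Joint continuity plus density force the lsc regularization of $d^U_\lz|_{V\times V}$ to coincide with that of $d^U_\lz|_{U\times U}$; combined with the equality on $V\times V$, the lsc regularization of $d^V_\lz|_{V\times V}$ equals that of $d^U_\lz|_{V\times V}$ equals that of $d^U_\lz|_{U\times U}$, i.e.\ $d^V_\lz=d^U_\lz$ on all of $\overline U\times\overline U$.

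The only non-routine point is the removable-singularity estimate in the second paragraph, and its quantitative heart is $\int|\nabla\eta_\dz|\lesssim m\dz^{n-1}\to0$, which is exactly where $n\ge2$ is used (and the statement genuinely fails for $n=1$); all other steps are direct applications of Lemma \ref{ll}, Theorem \ref{holder}, Lemma \ref{property}, or the elementary density argument for the lsc extensions. Note that this argument is uniform in $\lz\ge0$, so no case distinction is needed.
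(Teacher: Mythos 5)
Your proof is correct, but it follows a genuinely different route from the paper's. You prove the nontrivial inequality $d^V_\lz\le d^U_\lz$ by a removable-singularity argument: since points have zero capacity in $\rn$ with $n\ge2$, the cutoff computation $\int|\nabla\eta_\dz|\lesssim m\dz^{n-1}\to0$ shows every test function for $d^V_\lz$ extends to a test function for $d^U_\lz$ with the same Hamiltonian bound, so the two suprema coincide on $V\times V$; you then pass to $\overline U\times\overline U$ through the liminf-regularization, using joint continuity of $d^U_\lz$ on $U\times U$ (which indeed follows from the triangle inequality and Lemma \ref{property}(ii)) and density of $V\times V$. The paper instead keeps everything at the level of the pseudo-metrics: after reducing to $\lz>0$ via right continuity in $\lz$, it invokes the pseudo-length property of $d^U_\lz$ (Proposition \ref{len}/Corollary \ref{radu}) and estimates $d^V_\lz(\gz(a),\gz(b))\le\ell_{d^U_\lz}(\gz)$ by a case analysis on whether the curve meets the removed points, using the local coincidence $d^V_\lz=d^U_\lz$ from Lemma \ref{ll}. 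Your approach buys simplicity and generality: it only uses the easy direction of Lemma \ref{ll}, is uniform in $\lz\ge0$ (no need for $R'_\lz>0$, $\lz_H=0$, Lemma \ref{dpro}, or the length-space machinery), and proves the stronger fact that the admissible classes themselves agree; its cost is that it leans on the Euclidean cutoff and on the paper's convention that elements of $\dot W^{1,\fz}_X$ are (essentially) bounded with continuous representatives (Theorem \ref{holder}), which is exactly what makes your error term $O(\dz^{n-1})$ and lets you identify the extended function with the original one on $V$. The paper's argument, by contrast, stays entirely within the intrinsic-metric framework it has already built and reuses that framework verbatim in Lemma \ref{lem5.7} and Lemma \ref{lem5.10}. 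Both are valid proofs of the lemma as stated.
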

}

\begin{proof}  
Obviously $d_\lz^U\le d^V_\lz$ in $\overline U$.
Conversely, we show $d_\lz^V\le d^U_\lz$ in $\overline U$. First, by an approximation argument, it suffices to consider $x,y \in U$.
By the right continuity of $\lz\in[0,\fz)\to d_\lz^U(x,y)$ for any $x,y\in \overline U^\ast$, up to considering $ d_{\mu+\ez}^U$ for sufficiently small $\ez>0$, we may assume that $\mu>0$.  For any $\lz>0$,
by the pseudo-length property of $d^U_\lz$ as in Proposition \ref{len},
it suffices to prove that  for any  curve $\gz:[a,b] \to  U$, one has
\begin{equation}\label{xx-4}d^V_\lz(\gz(a),\gz(b)) \le \ell_{d^U_\lz}(\gz) .\end{equation}
We consider the following 3 cases.

\medskip
{\bf Case 1. } $\gz((a,b))\subset V$ and $\gz(\{a,b\}) \subset  V$,   that is, $\gz( [a,b])\subset V$.
Recall from Lemma \ref{ll}  that,   for each $x \in V$, there exists a neighborhood $N(x)$ such that
$$
  d^V_\lz(x,y) = d^U_\lz(x,y) \quad \forall  y \in N(x).
$$
Since $\gz \subset \cup_{t \in [a,b]}N(\gz(t))$,
we can find $a=t_0=0<t_1<\cdots< t_m=b$ such that $\gz\subset \cup_{i=0}^m N(\gz(t_i))$
and $\gz([t_i,t_{i+1}])\subset N(\gz(t_i))$. By the triangle inequality, one has
\begin{equation*}
d_\lz^V(\gz(a),\gz(b))\le
 \sum_{i=0}^{m-1}d^U_\lz (\gz(t_{i}),\gz(t_{i+1}))\le
\ell_{d^U_\lz}(\gz).
\end{equation*}

\medskip
{\bf Case 2. }  $\gz((a,b))\subset V$  and $\gz(\{a,b\})\not\subset  V$.
 Applying Case 1 to $\gz|_{[a+\ez,b-\ez]}$ for sufficiently small $\ez>0$, we get
\begin{equation*}
d_\lz^V(\gz(a),\gz(b))\le \lim_{\ez\to 0}d_\lz^V(\gz(a+\ez),\gz(b-\ez))
\le \liminf_{\ez\to 0}  \ell_{d^U_\lz}(\gz|_{[a+\ez,b-\ez]})
\le  \ell_{d^U_\lz}(\gz).
\end{equation*}

\medskip
{\bf Case 3. }  $\gz((a,b))\not\subset   V$.
 Without loss of generality,  for each $x_i$, there is at most one $t \in(a,b)$  such that $\gz(t )=x_i$.
 Indeed,  let $t ^\pm$ as the maximum/mimimum $s\in(a,b)$ such that $\gz(s)=x_1$.
 Then  $ a\le t ^-\le t^+ \le  b$. If $t ^-<t ^+ $,
we consider $ \gz_1:[a,b-(t^+ -t^- )]\to U$ with $\gz_1(t)=\gz(t)$ for $t\in[a,t ^-]$, and $\gz_1(t)=\gz(t-(t^+ -t^- ))$ for $t\in [t^+ ,b]$.
Then $\ell_{d^U_\lz}(\gz_1) \le \ell_{d^U_\lz}(\gz) $.   Repeating this procedure for $x_2,\cdots,x_m$ in order, we may get a new curve
$\eta$ such that for each $x_i$, there is at most one $t\in(a,b)$  such that $\gz(t )=x_i$ and $\ell_{d^U_\lz}(\eta)\le \ell_{d^U_\lz}(\gz).$

Denote by $\{a_j\}_{j=0}^s$ with $a=a_0<a_1<\cdots  <a_s=b$ such that $\gz( \{a_1,\cdots,a_{s-1}\}) \subset U\setminus V=\{x_i\}_{1\le i\le m}$
and $\gz([a,b]\setminus\{a_1,\cdots,a_{s-1}\})\subset V$. Applying Case 2 to $\gz|_{[a_j,a_{j+1}]}$ for all $0\le j\le s-1$,
we obtain
\begin{equation*}
d_\lz^V(\gz(a),\gz(b))\le  \sum_{j=0}^{s-1} d_\lz^V(\gz(a_j),\gz(a_{j+1}))\le \sum_{j=0}^{s-1}\ell_{d^U_\lz}(\gz|_{[a_j,a_{j+1}]})  \le \ell_{d^U_\lz}(\gz)
\end{equation*}
as desired. The proof is complete.
\end{proof}

{\color{black}For any $g\in C^0(\partial U)$, write}
\begin{align*}
  \mu(g,  \partial U) := \inf \big \{ \lz \ge0 \ | \ g(y) - g(x) \le d^{U}_\lz(x, y)  \quad \forall \ x, y \in \pa{U}  \big \}.
\end{align*}
The following lemma  says the infimum can be reached.

\begin{lem} We have
  \begin{align*}
  \mu(g,  \partial U)
   & =\min \big \{ \lz\ge0 \ | \ g(y) - g(x) \le d^{U}_\lz(x, y)  \quad \forall \ x, y \in \pa{U}\cap \overline U^\ast  \big \}.
\end{align*}
\end{lem}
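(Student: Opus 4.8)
The plan is to show that the two constraint sets appearing in the two formulas coincide, and that the common infimum is attained. Set $S:=\{\lambda\ge0:\ g(y)-g(x)\le d^U_\lambda(x,y)\ \forall x,y\in\partial U\}$ and $S^\ast:=\{\lambda\ge0:\ g(y)-g(x)\le d^U_\lambda(x,y)\ \forall x,y\in\partial U\cap\overline U^\ast\}$, so that $\mu(g,\partial U)=\inf S$ and the assertion is $\inf S=\min S^\ast$. First, $S\subseteq S^\ast$ is immediate because $\partial U\cap\overline U^\ast\subseteq\partial U$. Next, both $S$ and $S^\ast$ are upward closed: if $\lambda$ lies in one of them and $\mu>\lambda$, then the monotonicity $d^U_\mu(x,y)\ge d^U_\lambda(x,y)$ of Lemma \ref{property}(vi) keeps all the defining inequalities valid, so $\mu$ lies in the same set. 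Hence each of $S,S^\ast$ is an interval of the form $[\alpha,\infty)$ or $(\alpha,\infty)$, or is empty.

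The main point is that in fact $S=S^\ast$. For this it suffices to check that whenever $x,y\in\partial U$ and at least one of them fails to lie in $\overline U^\ast$, one has $d^U_\lambda(x,y)=+\infty$ for every $\lambda>0$; the corresponding constraint in the definition of $S$ is then automatic, $g$ being finite-valued. If $y\notin\overline U^\ast$, then by the definition of $\overline U^\ast$ we have $d^U_\lambda(w,y)=+\infty$ for all $w\in U\subseteq\overline U$, and Lemma \ref{property}(v) upgrades this to $d^U_\lambda(x,y)=+\infty$ for every $x\in\overline U$. The remaining case, $x\notin\overline U^\ast$ and $y\in\overline U^\ast$, is the crux. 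I would first establish $d^U_\lambda(x,w)=+\infty$ for every $w\in U$: if instead $d^U_\lambda(x,w)<\infty$, then, since $d^U_\lambda$ on $\overline U\times\overline U$ is the lower-semicontinuous extension of its restriction to $U\times U$, there are $z_k\to x$ and $w_k\to w$ in $U$ with $\sup_k d^U_\lambda(z_k,w_k)<\infty$; invoking the two-sided comparison $R'_\lambda d^U_{CC}\le d^U_\lambda\le R_\lambda d^U_{CC}$ (Corollary \ref{radu} together with Lemma \ref{property}(i)) and the symmetry of $d^U_{CC}$ one gets first that $d^U_{CC}(z_k,w)$ stays bounded, and then, fixing $w_0\in U$, that $d^U_\lambda(w_0,z_k)$ stays bounded, which contradicts $d^U_\lambda(w_0,x)=+\infty$ (true because $x\notin\overline U^\ast$) together with the lower semicontinuity of $d^U_\lambda(w_0,\cdot)$. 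The passage from this to $d^U_\lambda(x,y)=+\infty$ for a general $y\in\partial U$ then requires a further, more careful use of the triangle inequality of Lemma \ref{property}(iv) and the lower semicontinuity of $d^U_\lambda$; I expect this propagation of the value $+\infty$ all the way to the boundary to be the main obstacle in the argument.

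Finally, assume $S^\ast\neq\emptyset$ (otherwise $S=S^\ast=\emptyset$, both infima are $+\infty$, and there is nothing to prove) and put $\lambda_0:=\inf S^\ast$. By upward closedness, $(\lambda_0,\infty)\subseteq S^\ast$, so for every $x,y\in\partial U\cap\overline U^\ast$ and every $\lambda>\lambda_0$ we have $g(y)-g(x)\le d^U_\lambda(x,y)$. Since $x,y\in\partial U\cap\overline U^\ast$ implies $(x,y)\in\overline U^\ast\times\overline U^\ast$, Lemma \ref{property}(vii) gives the right continuity of $\lambda\mapsto d^U_\lambda(x,y)$ at $\lambda_0$, so letting $\lambda\downarrow\lambda_0$ yields $g(y)-g(x)\le d^U_{\lambda_0}(x,y)$ for all such $x,y$; thus $\lambda_0\in S^\ast$, i.e. $\min S^\ast=\lambda_0$. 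Combining with $S=S^\ast$ from the previous step, $\mu(g,\partial U)=\inf S=\inf S^\ast=\lambda_0=\min S^\ast$, as claimed. It is worth noting that restricting the pairs to $\partial U\cap\overline U^\ast$ is precisely what makes this last step go through, since the right continuity in Lemma \ref{property}(vii) is available only on $\overline U^\ast\times\overline U^\ast$.
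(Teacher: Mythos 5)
Your overall strategy coincides with the paper's: show that every pair $(x,y)\in\partial U\times\partial U$ with $x\notin\overline U^\ast$ or $y\notin\overline U^\ast$ yields a vacuous constraint because $d^{U}_\lambda(x,y)=+\infty$, so the two constraint sets agree, and then turn the infimum into a minimum via the right continuity of $\lambda\mapsto d^{U}_\lambda(x,y)$ on $\overline U^\ast\times\overline U^\ast$ from Lemma \ref{property}(vii). Your treatment of the case $y\in\partial U\setminus\overline U^\ast$ (definition of $\overline U^\ast$ plus Lemma \ref{property}(v)), the upward-closedness via Lemma \ref{property}(vi), and the final attainment step are correct and match what the paper does.

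There is, however, a genuine gap, and you have flagged it yourself: the case $x\in\partial U\setminus\overline U^\ast$, $y\in\partial U\cap\overline U^\ast$ is not completed. What you actually prove there is only that $d^{U}_\lambda(x,w)=+\infty$ for every interior point $w\in U$; the fact needed — and the one the paper simply asserts before invoking Lemma \ref{property}(vii), namely that $d^{U}_\lambda(x,y)=+\infty$ whenever either endpoint of the pair lies outside $\overline U^\ast$ — concerns a boundary point $y$ in the second slot, and that is exactly where the argument stops being routine. Finiteness of the lower-semicontinuous extension $d^{U}_\lambda(x,y)$ only means that there exist sequences $z_k\to x$ and $w_k\to y$ in $U$ with $d^{U}_\lambda(z_k,w_k)$ bounded; the triangle inequality of Lemma \ref{property}(iv) requires the intermediate point to lie in $U$, and the approximants $w_k$ arising this way need not be intrinsically close to the interior points that witness $y\in\overline U^\ast$ (a boundary point can be approached from two parts of $U$ that are Euclidean-close but intrinsically far apart, so membership of $y$ in $\overline U^\ast$ gives no control on these particular $w_k$). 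Hence the "propagation of $+\infty$ to the boundary" that you defer is not a technicality the triangle inequality settles; it is the first-slot half of the key claim, and without it the identity $S=S^\ast$, and with it the asserted equality of $\mu(g,\partial U)$ with the restricted minimum, is not established by your argument. (A minor further point: the two-sided comparison $R'_\lambda d^{U}_{CC}\le d^{U}_\lambda\le R_\lambda d^{U}_{CC}$ you use is not literally Corollary \ref{radu} or Lemma \ref{property}(i), but it does follow by running the proof of Lemma \ref{lem311}(ii) in $U$, so that step is repairable; the unfinished boundary case is the real gap.)
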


\begin{proof}
  First, if $x \in \pa{U}\setminus \overline U^\ast $ or $y \in \pa{U}\setminus \overline U^\ast $, we have $d^{U}_\lz(x, y) = \fz$. Hence $g(y) - g(x) \le d^{U}_\lz(x, y)$ holds trivially, which implies that
 \begin{align*}
  \mu(g,  \partial U)
    =\inf \big \{ \lz\ge0 \ | \ g(y) - g(x) \le d^{U}_\lz(x, y)  \quad \forall \ x, y \in \pa{U}\cap \overline U^\ast  \big \}.
\end{align*}
  Thanks to Lemma \ref{property}(vii), we finish the proof.
\end{proof}

If $\mu=\mu(g,\partial U)<\fz$, we define
 $$\cS^+_{g;U}(x) := \inf_{y\in\partial U} \{g(y) + d^{U}_\mu  (y, x)   \}  \quad{\rm and}\quad
  \cS^-_{g; U}(x) := \sup_{y\in\partial U} \{g(y) - d^{U}_\mu  (x, y)    \}\quad \forall x \in \overline U.$$
 Note that $\cS^\pm_{g; U}$  serve  as    ``McShane" extensions of $g$ in $U$.

\begin{lem} \label{cont} If
   $\mu=\mu(g,\partial U)<\fz$, then we have
   \begin{enumerate}
   \item[(i)]$\csgu^\pm  \in \dot W^{1,\fz}_X(U)\cap C^0(\overline U)$ with $\csgu^\pm=g$ on $\partial U$;
    \item[(ii)]  for any $ x,y\in\overline U$, \begin{equation}\label{yy-3}\csgu^\pm (y)-\csgu^\pm (x)\le d^U_\mu(x,y) ;\end{equation}
\item[(iii)]   $ \|H(\cdot,{\color{black}X\csgu^\pm} )\|_{L^\fz(U)}\le \mu.$
   \end{enumerate}
\end{lem}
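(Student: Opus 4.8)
The plan is to prove (ii) first, since (i) and (iii) will follow from it together with earlier results. I will focus on $\cS^+_{g;U}$; the argument for $\cS^-_{g;U}$ is symmetric (replacing $d^U_\mu(x,y)$ by $d^U_\mu(y,x)$ and using the triangle inequality on the other side). By definition $\cS^+_{g;U}(x) = \inf_{w\in\partial U}\{g(w)+d^U_\mu(w,x)\}$. For any $x,y\in\overline U$ and any $w\in\partial U$, the triangle inequality for $d^U_\mu$ (Lemma \ref{property}(iv), which covers exactly the case where one point is on $\partial U$) gives $d^U_\mu(w,y)\le d^U_\mu(w,x)+d^U_\mu(x,y)$, hence $g(w)+d^U_\mu(w,y)\le g(w)+d^U_\mu(w,x)+d^U_\mu(x,y)$. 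Taking the infimum over $w$ on the left and then on the right yields $\cS^+_{g;U}(y)\le \cS^+_{g;U}(x)+d^U_\mu(x,y)$, which is \eqref{yy-3}. One should note here that $\cS^+_{g;U}$ is finite on $\overline U$: finiteness at interior points follows from Lemma \ref{property}(iii) (pick $w=x^\ast$), and finiteness on $\partial U^\ast$ then follows from \eqref{yy-3}; on $\partial U\setminus\overline U^\ast$ the relevant distances are $+\infty$ but one checks the infimum is still attained along boundary points in $\overline U^\ast$ by the definition of $\mu(g,\partial U)$ via $\overline U^\ast$.

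Next I would establish that $\cS^+_{g;U}=g$ on $\partial U$. The inequality $\cS^+_{g;U}(x)\le g(x)$ for $x\in\partial U$ is immediate by taking $w=x$ and using $d^U_\mu(x,x)=0$. For the reverse, take any $w\in\partial U$: by the very definition of $\mu=\mu(g,\partial U)$ one has $g(x)-g(w)\le d^U_\mu(w,x)$, i.e. $g(w)+d^U_\mu(w,x)\ge g(x)$; taking the infimum over $w$ gives $\cS^+_{g;U}(x)\ge g(x)$. Hence $\cS^+_{g;U}|_{\partial U}=g$.

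For (i) and (iii): from \eqref{yy-3} restricted to $U\times U$ together with Lemma \ref{property}(i)–(ii), $\cS^+_{g;U}$ is locally Lipschitz with respect to $d_{CC}$ in $U$, so by Lemma \ref{radcclem} (or Theorem \ref{holder}) it lies in $\dot W^{1,\infty}_X(U)$, and \eqref{yy-3} says precisely that $\cS^+_{g;U}$ satisfies statement (ii) of Theorem \ref{rad} with $\Omega$ replaced by $U$ and $d_\lz$ replaced by $d^U_\mu$ (using Corollary \ref{radu}); Theorem \ref{rad}(i) then gives $\|H(\cdot,X\cS^+_{g;U})\|_{L^\infty(U)}\le\mu$, which is (iii), and in particular reconfirms $\cS^+_{g;U}\in\dot W^{1,\infty}_X(U)$. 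It remains to upgrade continuity in $U$ to continuity on $\overline U$, i.e. continuity up to the boundary. Here the main obstacle is that $d^U_\mu$ can blow up near $\partial U$, so the naive estimate is not enough; the fix is to use Lemma \ref{property}(ii)–(iii): for $x\in U$ close to a boundary point $z\in\partial U$, estimate $|\cS^+_{g;U}(x)-g(z)|$ by choosing the competitor $w=z^\ast$ (the nearest boundary point to $x$) on one side, giving $\cS^+_{g;U}(x)\le g(z^\ast)+R_\mu d_{CC}(x,z^\ast)$, and on the other side using \eqref{yy-3} with Lemma \ref{property}(i) to bound $g(z)-\cS^+_{g;U}(x)=\cS^+_{g;U}(z)-\cS^+_{g;U}(x)\le d^U_\mu(x,z)$, then controlling $d^U_\mu(x,z)$ via a path that first goes from $x$ to its projection and stays in a $d_{CC}$-ball where Lemma \ref{property}(ii) applies; combined with $g\in\lip_{d_{CC}}(\partial U)$ and the fact that $d_{CC}(x,z^\ast), d_{CC}(x,z)\to0$ as $x\to z$, this gives $\cS^+_{g;U}(x)\to g(z)$. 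This boundary-continuity step is the delicate part and is where one must exploit the comparison $d^U_\mu\le R_\mu d_{CC}$ locally near $\partial U$ rather than any global bound; everything else is a direct consequence of the triangle inequality and Theorem \ref{rad}.
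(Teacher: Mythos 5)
Your treatment of (ii), of the boundary identity $\csgu^\pm=g$ on $\partial U$, and of the deduction of (i)/(iii) from (ii) via Corollary \ref{radu} is essentially the paper's argument (one small caution: Lemma \ref{property}(iv) gives the triangle inequality only when the \emph{intermediate} point lies in $U$, so your blanket use of it for all $x,y\in\overline U$, $w\in\partial U$ over-reaches; the cases with intermediate point on $\partial U$ must be handled directly from the definition and $\csgu^\pm=g$ on $\partial U$, as the paper does, which is easy).

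The genuine gap is in the delicate direction of continuity up to $\partial U$, namely $\liminf_{U\ni x\to z}\cS^+_{g;U}(x)\ge g(z)$ (equivalently $\limsup\cS^-_{g;U}\le g$). You reduce it to the bound $g(z)-\cS^+_{g;U}(x)\le d^U_\mu(x,z)$ and then claim to control $d^U_\mu(x,z)$ ``via a path that first goes from $x$ to its projection and stays in a $d_{CC}$-ball where Lemma \ref{property}(ii) applies.'' This does not work: Lemma \ref{property}(ii) requires both points to lie in $U$ with $d_{CC}(x,y)<d_{CC}(x,\partial U)$, and Lemma \ref{property}(iii) only controls $d^U_\mu(x,x^\ast)$ for the \emph{nearest} boundary point $x^\ast$ of $x$; there is no estimate for $d^U_\mu(x,z)$ with $z\in\partial U$ arbitrary, nor for $d^U_\mu$ between two $d_{CC}$-close boundary points, and in general $d^U_\mu(x,z)$ does \emph{not} tend to $0$ as $x\to z$ (think of comb- or slit-type domains, where the intrinsic distance across the boundary stays bounded below or is infinite -- this is exactly the difficulty the paper flags when it says the continuity of the McShane extension near $\partial U$ is implicit). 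The paper closes this direction differently: if $\cS^-(w_j)$ stayed $2\ez$ above $g(x)$ along $w_j\to x$, pick near-optimal boundary points $z_j$, pass to a subsequential limit $z\in\partial U$, and use the lower semicontinuity of $d^U_\mu$ on $\overline U\times\overline U$ together with the continuity of $g$ to contradict the defining property of $\mu<\fz$, i.e.\ $g(z)-g(x)\le d^U_\mu(x,z)$. Your proposal contains no substitute for this compactness/lower-semicontinuity step, so the boundary continuity (and hence membership in $C^0(\overline U)$ in (i)) is not established. A further minor point: you invoke $g\in\lip_{d_{CC}}(\partial U)$, which is not a hypothesis of the lemma (only $\mu(g,\partial U)<\fz$ and continuity of $g$ are available), and even Lipschitz $g$ would not rescue the flawed step.
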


\begin{proof}  By Corollary \ref{radu}, (ii) implies (iii) and $ \csgu^\pm \in \dot W^{1,\fz}_X(U)$.  Below we show $\csgu^\pm=g$ on $\partial U$,
(ii) and
$\csgu^\pm  \in  C^0(\overline U)$ in order.

\medskip
{\bf Proof for $\csgu^\pm=g$ on $\partial U$}. 
For any $x\in\partial U$, by definition  we have   $$\csgu^-(x) \ge g(x)\ge \csgu^+(x).$$ Conversely,  for  $y \in \pau$,  one has
  $$g(y) - d^U_\mu(x, y) \le g(x)\le g(y) + d^U_\mu(y, x)$$ and hence
  $\csgu^-(x) \le g(x)\le \csgu^+(x) $ as desired.

\medskip
{\bf Proof of (ii). }  We only prove (ii) for  $\csgu^-$; the  proof for $\csgu^+$ is similar.
For $x,y\in\partial U$, by the definition of $\mu$ one has
$$ \csgu^- (y)-\csgu^- (x)=g(y)-g(x) \le d^U_\mu(x,y).$$
For $x\in U$ and $y\in \pa U$, by definition
 \begin{align*}
    \csgu^-(x)  & \ge  g(y) - d^U_\mu(x, y)   =  \csgu^- (y)    - d^U_\mu(x, y)
  \end{align*}
  and hence $$ \csgu^- (y)-\csgu^- (x) \le d^U_\mu(x,y).$$

  For $x\in\overline  U$ and $ y \in  U$,
 by Lemma \ref{property}(iv), we have $$d^U_\mu(x,z)\le d^U_\mu(x,y)+d^U_\mu(y,z)   \quad\forall z\in\partial U.$$
 One then has
 \begin{align*}
    \csgu^-(y)  & =  \sup_{z  \in \pau }\{g(z) - d^U_\mu(y, z)  \} \\
    & \le  \sup_{z  \in \pau }\{g(z) - d^U_\mu
(x, z)+d^U_\mu(x,y)  \} \\
     & \le  \csgu^-(x)  +d^U_\mu(x,y),
  \end{align*}
     and hence $$ \csgu^- (y)-\csgu^- (x) \le d^U_\mu(x,y).$$

%
%
%
%

{\bf Proof of   $\csgu^\pm\in C^0(\overline U)$}. We only consider   $\csgu^-\in C^0(\overline U)$;
 the proof for $\csgu^+\in C^0(\overline U)$ is similar.
It suffices to show that  for any $x \in \pau$ and a sequence $\{x_j\} \subset U$ converging to $x$,
  \begin{equation*}
    \lim_{j \to \fz}\csgu^-(x_j)=\csgu^-(x)=g(x).
\end{equation*}

Choosing $x^\ast_j\in\partial U$ such that $d_{CC}(x_j,x^\ast_j)=\dist_{d_{CC}}(x_j,\partial U)$,
one has $$\mbox{$d_{CC}(x_j,x^\ast_j)\le d_{CC}(x_j,x)\to0$, and hence $d_{CC}(x_j^\ast,  x)\to0 $.}$$
Thanks to {\color{black}Lemma \ref{property}(iii) with $x=x_j$ and $x^\ast=x^\ast_j$ therein, we deduce}
$$d^U _\mu(x_j,x^\ast_j)\le R_\mu d_{CC}(x_i,x^\ast_j)\to0.$$
Since
$$\csgu^-(x_j)\ge g(x_j^\ast)-d^U _\mu(x_j,x^\ast_j),$$
by the continuity of $g$, we have  $$\liminf_{j \to \fz}\csgu^-(x_j)\ge g(x).$$

Assume that
$$\liminf_{j \to \fz}\csgu^-(w_j)\ge g(x)+2\ez\ \mbox{
 for some $\{w_j\}_{j\in\nn}\subset  U$ with $w_j\to x$ and some $\ez>0$.}$$
By the definition we can find $z_j\in\partial U$ such that
$$
\csgu^-(w_j)-\ez\le g(z_j)-d^U_\mu(w_j,z_j)  .$$
Thus {\color{black} for $j \in \nn$ sufficiently large, we have}
$$g(z_j)-d^U_\mu(w_j,z_j) \ge g(x)+\ez. $$
Up to some subsequence, we assume that $z_j\to z
  \in\partial U$.
Note that  $$d^U_\mu(x,z) \le\liminf_{j\to\fz} d^U_\mu(w_j,z_j)$$
By the continuity of $g$, we conclude  $$g(z )-d^U_\mu(x,z) \ge g(x)+\ez,$$ which is a contradiction with the definition  of $\mu$ and $\mu<\fz$.
\end{proof}

Write
\begin{equation}\label{minimizer}
\mathbf I(g,U) =\inf\{\|H(\cdot,Xu)\|_{L^\fz(U)} \ | \ u\in \dot W_X^{1,\fz}(U)\cap C^0(\overline U), u|_{\partial U}=g \}.
\end{equation}
{\color{black} A  function $ u:\overline U\to\rr$
 is called as a minimizer  for  $\mathbf I(g,U)$   if
 $$\mbox{$ u\in \dot W_X^{1,\fz}(U)\cap C^0(\overline U)$, $ u|_{\partial U}=g$ and $\|H(\cdot,Xu)\|_{L^\fz(U)}=\mathbf I(g,U)$.}$$}
We have the following existence and properties  for minimizers.

\begin{lem}\label{lem231}
For any  $g\in C(\partial U)$ with  $\mu(g,\partial U)<\fz$, we have the following:
\begin{enumerate}
  \item[(i)]  We have $
   \mu(g,\partial U)=\mathbf I(g,U).
 $ Both of $\csgu^\pm$ are minimizers for $\mathbf I(g,U)$.

\item[(ii)] If  $u$ is a  minimizer for $\mathbf I(g,U)$
    then
$$
 \csgu^- \le u \le \csgu^+ \quad\mbox{in $\overline U$ and }
 \| H(x,Xu) \|_{L^\fz(U)} = \mathbf I(g,U)=\mu(g,\partial U) .
$$

\item[(iii)]
If $u,v$ are minimizer for $\mathbf I(g,U)$, then $tu+(1-t)v$ with $t\in(0,1)$, $\max\{u, v\}$ and $\min\{u, v\}$ are minimizers for $\mathbf I(g,U)$.
\end{enumerate}
\end{lem}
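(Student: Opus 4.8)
The plan is to establish the three items in order, using the McShane extensions $\csgu^\pm$ constructed in Lemma \ref{cont} and the characterization of membership in $\dot W^{1,\fz}_H$ via $d_\lz^U$ provided by Corollary \ref{radu}.

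\emph{Part (i).} First I would show $\mu(g,\partial U)\le \mathbf I(g,U)$. Let $u\in \dot W^{1,\fz}_X(U)\cap C^0(\overline U)$ with $u|_{\partial U}=g$ and set $\lz:=\|H(\cdot,Xu)\|_{L^\fz(U)}$. By Corollary \ref{radu} (the implication (i)$\Rightarrow$(ii) of Theorem \ref{rad}, applied on $U$), one has $u(y)-u(x)\le d_\lz^U(x,y)$ for all $x,y\in U$; passing to the boundary and using the continuity of $u$ and the lower semicontinuity of $d_\lz^U$ on $\overline U\times\overline U$, we get $g(y)-g(x)\le d_\lz^U(x,y)$ for all $x,y\in\partial U$, hence $\mu(g,\partial U)\le\lz$. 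Taking the infimum over all admissible $u$ gives $\mu(g,\partial U)\le\mathbf I(g,U)$. For the reverse inequality, by Lemma \ref{cont} the functions $\csgu^\pm$ are admissible competitors for $\mathbf I(g,U)$ (they lie in $\dot W^{1,\fz}_X(U)\cap C^0(\overline U)$ with boundary value $g$) and satisfy $\|H(\cdot,X\csgu^\pm)\|_{L^\fz(U)}\le\mu(g,\partial U)$, so $\mathbf I(g,U)\le\mu(g,\partial U)$. Combining, $\mathbf I(g,U)=\mu(g,\partial U)$ and $\csgu^\pm$ are minimizers.

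\emph{Part (ii).} Let $u$ be a minimizer, so $\|H(\cdot,Xu)\|_{L^\fz(U)}=\mathbf I(g,U)=\mu=:\mu(g,\partial U)$. As in Part (i), Corollary \ref{radu} gives $u(y)-u(x)\le d_\mu^U(x,y)$ for all $x,y\in U$, and by continuity this extends to all $x,y\in\overline U$. For $x\in\overline U$ and any $y\in\partial U$, using $u|_{\partial U}=g$ we obtain $u(x)\ge g(y)-d_\mu^U(x,y)$ and $u(x)\le g(y)+d_\mu^U(y,x)$; taking the supremum (resp. infimum) over $y\in\partial U$ yields $u(x)\ge\cS^-_{g;U}(x)$ and $u(x)\le\cS^+_{g;U}(x)$. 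This is exactly $\cS^-_{g;U}\le u\le\cS^+_{g;U}$ on $\overline U$, and the norm identity has already been noted.

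\emph{Part (iii).} Let $u,v$ be minimizers, so by Part (ii) both satisfy $\|H(\cdot,Xu)\|_{L^\fz(U)}=\|H(\cdot,Xv)\|_{L^\fz(U)}=\mu$, and both are admissible competitors. For the convex combination $w:=tu+(1-t)v$ with $t\in(0,1)$: clearly $w\in\dot W^{1,\fz}_X(U)\cap C^0(\overline U)$ and $w|_{\partial U}=g$; since $Xw=tXu+(1-t)Xv$ a.e., quasi-convexity (H1) gives $H(x,Xw(x))\le\max\{H(x,Xu(x)),H(x,Xv(x))\}\le\mu$ a.e., so $\|H(\cdot,Xw)\|_{L^\fz(U)}\le\mu=\mathbf I(g,U)$, whence equality and $w$ is a minimizer. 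For $\max\{u,v\}$ and $\min\{u,v\}$: these lie in $\dot W^{1,\fz}_X(U)\cap C^0(\overline U)$ with boundary value $g$ (continuity is clear; membership and the derivative formula come from Lemma \ref{l21}, which also gives $\|H(\cdot,X\max\{u,v\})\|_{L^\fz(U)}\le\max\{\|H(\cdot,Xu)\|_{L^\fz(U)},\|H(\cdot,Xv)\|_{L^\fz(U)}\}=\mu$), so again they are minimizers.

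The only slightly delicate point, and the one I would be most careful about, is the passage from the inequality $u(y)-u(x)\le d_\lz^U(x,y)$ on $U\times U$ to the same inequality on $\overline U\times\overline U$: it relies on $u\in C^0(\overline U)$ together with the lower semicontinuity of the extended pseudo-distance $d_\lz^U$ on $\overline U\times\overline U$ (which may take the value $+\fz$ there), both of which are available from Lemma \ref{cont} and the discussion preceding Lemma \ref{property}. Everything else is a routine assembly of Corollary \ref{radu}, Lemma \ref{l21}, and Lemma \ref{cont}.
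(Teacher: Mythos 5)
Your proposal is correct and follows essentially the same route as the paper: admissibility of $\csgu^\pm$ from Lemma \ref{cont} for $\mathbf I(g,U)\le\mu(g,\partial U)$, Corollary \ref{radu} plus continuity of $u$ and lower semicontinuity of $d_\lz^U$ on $\overline U\times\overline U$ for the reverse inequality and for the bounds $\csgu^-\le u\le\csgu^+$, and (H1) together with Lemma \ref{l21} for part (iii). The only cosmetic difference is that in part (i) you take $\lz=\|H(\cdot,Xu)\|_{L^\fz(U)}$ for each competitor, whereas the paper works with an arbitrary $\lz>\mathbf I(g,U)$ and a near-minimizer; the two are equivalent.
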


\begin{proof}    (i)   Since   $\mu(g,\partial U) <\fz$, then  by {\color{black}Lemma  \ref{cont}}, we know that $\csgu^\pm $ satisfies  the condition required in \eqref{minimizer} and hence
  \begin{equation} \label{xx0}
  \mathbf I(g,U)\le \|H(\cdot,{\color{black}X\csgu^\pm} )\|_{L^\fz(U)}\le \mu.\end{equation}
Below we show that $\mu(g,\partial U)\le  \mathbf I(g,U) .$   Note that combining this and \eqref{xx0} we know that  $$\mathbf I(g,U)= \|H(\cdot,{\color{black}X\csgu^\pm} )\|_{L^\fz(U)}= \mu$$ and moreover,
      $\csgu^\pm$ are minimizers for ${ \bf I}(g;U)$.

For any $\lz>\mathbf I(g,U)$,
there is a  function $u\in \dot W^{1,\fz}_X(U)\cap C^0(\overline U)$ with $u=g$ on $\partial U$   such that
$  \|H(x,Xu)\|_{L^\fz(U)}\le \lz$. By Corollary \ref{radu},
$${\color{black} u(y)-u(x)\le d_\lz^U(x,y), \ \forall x,y\in U.}$$
By the continuity of $u$ in $\overline U$ and  the definition of  $d^U_\lz$ in $\overline U\times\overline U$
we have $$g(x)-g(y)\le d^U_\lz(x,y) {\color{black} \mbox{ for all } x,y  \in \pau}.$$
  Thus
$\mu(g,\partial U)\le  \mathbf I(g,U) .$

\medskip
(ii) If $u$ is a minimizer for ${ \bf I}(g,U)$, one has $$\|H(\cdot, Xu )\|_{L^\fz(U)} = \bi(g,U)=\mu.$$
By {\color{black}Corollary \ref{radu}},    $u(y) - u(x) \le d_\mu^U(x, y)$ for any $x, y \in U$   and hence, by the continuity of $u$ and the definition of $d_\mu^U$,
for all $x, y \in \overline U$.
Since $u= g$ on $\partial U$, for any $x \in U$, one has $g(y) - d_\mu^U(x,  y) \le u(x)$ for any $y \in \partial U$, which yields
$\csgu^-(x) \le u(x)$. By a similar argument, one also has  $u \le \csgu^+$ in $U$  as desired.

\medskip
(iii)  Suppose that $u_1,u_2$ are  minimizers for $\bi(g,U)$.
Set  $$u_\eta:=(1-\eta) u_1+ \eta u_2 \mbox{ for any $\eta \in [0, 1]$.}$$
 Then $u_\eta\in \dot W^{1,\fz}_X(U)\cap C^0(\overline U)$, and $u_\eta= g$ on $\partial U$,
  and by (H1),   $$H(\cdot, Xu_\eta(\cdot)) \le \max_{i=1,2}\{  H(\cdot, Xu_i(\cdot)) \}\le \mu \ \mbox{ a.e. on $U$}.$$
  This, {\color{black}combined with the definition of $\bi (g,U)$}, implies that $u_\eta$ is also a minimizer for ${ \bf I}(g,U)$.

Finally, note that
$$\mbox{$\max \{u_1, u_2\} \in   \dot W^{1,\fz}_X(U)\cap C^0(\overline U)$,   $\max \{u_1, u_2\} = g$ on $\partial U$}.$$
By Lemma \ref{l21}, one has
$$H(\cdot, X\max \{u_1, u_2\}(\cdot)) \le \max_{i=1,2}\{  H(\cdot, Xu_i(\cdot)) \}\le \mu \ \mbox{ a.e. on $U$}.$$
 We know that
 \begin{align*}
  \bi (g, U )\le \cf (\max\{u_1, u_2\}, U)
   & \le \max \{\cf (u, U), \cf (v, U)\} =  \bi (g, U ),
\end{align*}
that is,  $\max \{u_1, u_2\}$ is a minimizer for ${ \bf I}(g,U)$.
Similarly, $\min\{u_1, u_2\}$ is a minimizer for ${ \bf I}(g,U)$.
\end{proof}

We have the following improved  regularity for Mschane extension via $d_\lz$.

{\color{black}
\begin{lem} \label{lem5.7}
Suppose that $U\Subset\Omega$ and that $V \subset U$ is a subdomain.  If $g:\partial V\to\rr$ satisfies
\begin{equation}\label{xx2}
g(y)-g(x)\le d^U_\lz (x,y)\quad\forall x,y\in\partial V\end{equation}
for some $\lz \ge 0$, then  $ \mu(g,\partial U)\le \lz$ and
\begin{equation}\label{yy-1}
S^\pm_{g,V}(y)- S^\pm_{g,V}(x)\le d^U_\lz (x,y)\quad\forall x,y\in\overline V.
\end{equation}
\end{lem}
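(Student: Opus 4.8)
The plan is to dispose of the first assertion by a direct comparison of intrinsic distances, and then to prove the estimate \eqref{yy-1} by splitting a near-geodesic of $d^U_\lz$ along $\partial V$. Set $\mu:=\mu(g,\partial V)$ (this is the quantity one needs finite in order for the McShane extensions to be defined). First I would note $\mu\le\lz$: by Lemma~\ref{ll} one has $d^U_\lz(x,y)\le d^V_\lz(x,y)$ for $x,y\in\overline V$, so the hypothesis gives $g(y)-g(x)\le d^U_\lz(x,y)\le d^V_\lz(x,y)$ for all $x,y\in\partial V$, i.e. $\lz$ is admissible in the infimum defining $\mu(g,\partial V)$; hence $\mu<\fz$ and $\csgv^\pm$ are well defined. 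Lemma~\ref{cont}, applied with $U$ replaced by $V$, then yields $\csgv^\pm\in\dot W^{1,\fz}_X(V)\cap C^0(\overline V)$, $\csgv^\pm|_{\partial V}=g$, $\|H(\cdot,X\csgv^\pm)\|_{L^\fz(V)}\le\mu$, and the key preliminary bound $\csgv^\pm(y)-\csgv^\pm(x)\le d^V_\mu(x,y)$ for all $x,y\in\overline V$. It remains to improve $d^V_\mu$ to the smaller quantity $d^U_\lz$ in \eqref{yy-1}.

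For \eqref{yy-1}, fix $x,y\in\overline V$; we may assume $d^U_\lz(x,y)<\fz$, since otherwise the inequality is trivial. Using the lower semicontinuous extension of $d^U_\lz$ to $\overline U\times\overline U$ together with the continuity of $\csgv^\pm$ on $\overline V$ (approximate $x,y$ by points of $V$ realizing $d^U_\lz(x,y)$ as a limit), it suffices to treat $x,y\in V$. Since $d^U_\lz$ has the pseudo-length property in $U$ by Corollary~\ref{radu}, for each $\ez>0$ there is a curve $\gz\in\mathcal C(0,1;x,y;U)$ with $\ell_{d^U_\lz}(\gz)\le d^U_\lz(x,y)+\ez$. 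The heart of the proof is to split $\gz$ at $\partial V$: the set $\gz^{-1}(U\setminus\overline V)$ is open in $[0,1]$, and after a standard approximation I may assume it is a finite disjoint union of intervals $(a_1,b_1),\dots,(a_k,b_k)$, so that the ``excursions'' $\gz|_{[a_j,b_j]}$ run in $U$ with endpoints $\gz(a_j),\gz(b_j)\in\partial V$, while the remaining arcs of $\gz$ lie in $\overline V$. Telescoping $\csgv^\pm(y)-\csgv^\pm(x)$ along the partition of $[0,1]$ so obtained, each excursion contributes $g(\gz(b_j))-g(\gz(a_j))$, which by the hypothesis is at most $d^U_\lz(\gz(a_j),\gz(b_j))\le\ell_{d^U_\lz}(\gz|_{[a_j,b_j]})$; each arc $\eta$ of $\gz$ lying in $\overline V$ contributes, by the preliminary bound, at most the $d^V_\mu$-distance of its endpoints, hence at most $\ell_{d^V_\mu}(\eta)\le\ell_{d^V_\lz}(\eta)$, and because $d^V_\lz=d^U_\lz$ locally in $V$ (Lemma~\ref{ll}) the pseudo-lengths of $\eta$ with respect to $d^V_\lz$ and to $d^U_\lz$ coincide. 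Adding all contributions and using superadditivity of pseudo-length over the partition gives $\csgv^\pm(y)-\csgv^\pm(x)\le\ell_{d^U_\lz}(\gz)\le d^U_\lz(x,y)+\ez$; letting $\ez\to0$ yields \eqref{yy-1} for $x,y\in V$, and the reduction above finishes the proof.

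The main obstacle is exactly this curve-splitting. Everything that happens inside $V$ can only produce $d^V$, since the local identity $d^V_\lz=d^U_\lz$ of Lemma~\ref{ll} is global only for $d^V$; the improvement from $d^V$ to $d^U$ must therefore be extracted from the pieces of a $d^U_\lz$-near-geodesic that leave $V$, and it is precisely there that the strong hypothesis $g(y)-g(x)\le d^U_\lz(x,y)$ on $\partial V$ (and not merely the weaker $g(y)-g(x)\le d^V_\mu(x,y)$) is indispensable, as is the pseudo-length property of $d^U_\lz$ from Corollary~\ref{radu}. The only genuinely technical points are routine: reducing an arbitrary near-geodesic to one crossing $\partial V$ finitely often (a transversality/approximation argument, harmless for the subdomains arising in the applications); comparing $\ell_{d^V_\lz}$ and $\ell_{d^U_\lz}$ for an arc that may touch $\partial V\cap U$, where one extends the local equality of Lemma~\ref{ll} to such boundary points or perturbs the arc off $\partial V$; and the bookkeeping when $d^U_\lz=+\fz$ on part of $\overline V$, which is vacuous because \eqref{yy-1} is then automatic. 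The case of $\csgv^-$ is handled verbatim, starting from $\csgv^-(y)-\csgv^-(x)\le d^V_\mu(x,y)$.
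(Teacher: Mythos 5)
Your first claim and the general reduction are in the paper's spirit: $\mu(g,\partial V)\le\lz$ follows from $d^U_\lz\le d^V_\lz$ on $\overline V\times\overline V$ exactly as in the paper, and using the pseudo-length property of $d^U_\lz$ (Corollary \ref{radu}) to reduce \eqref{yy-1} to an estimate of $\cS^\pm_{g;V}$ along curves is also how the paper proceeds. The gap is in your curve-splitting. First, you cannot assume the near-geodesic meets $U\setminus\overline V$ in finitely many parameter intervals: the curve is merely continuous with finite $d^U_\lz$-length and $V$ is an arbitrary subdomain (in the applications of Section 6, $V$ is for instance a connected component of a set where one continuous function exceeds another), so no transversality or approximation argument is available, and with countably many excursions interleaved with a closed set your finite telescoping breaks down. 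Second, and more seriously, the arcs of $\gz$ lying in $\overline V$ between excursions have their endpoints on $\partial V$ and may touch or even run along $\partial V\cap U$ at interior times; there Lemma \ref{ll} gives nothing, the pointwise inequality $d^V_\lz\ge d^U_\lz$ goes the wrong way, and your claimed identity $\ell_{d^V_\lz}(\eta)=\ell_{d^U_\lz}(\eta)$ is justified only when the image of $\eta$ is a compact subset of $V$ (cover by Lemma \ref{ll}-neighborhoods, refine partitions). Neither of your proposed repairs (``extend the local equality to boundary points'' or ``perturb the arc off $\partial V$'') is substantiated by anything proved in the paper.

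The paper's proof avoids both problems with the one device you are missing: split the curve only at the \emph{first} and \emph{last} times $t_\ast,t^\ast$ at which it leaves $V$. On the middle piece both endpoints lie on $\partial V$, where $\cS^\pm_{g;V}=g$, so the entire (possibly wildly oscillating) portion is controlled in a single stroke by the hypothesis \eqref{xx2} together with $d^U_\lz(\gz(t_\ast),\gz(t^\ast))\le\ell_{d^U_\lz}(\gz|_{[t_\ast,t^\ast]})$; no finiteness of crossings and no comparison of $d^V_\lz$ with $d^U_\lz$ near or outside $\partial V$ is needed. The two outer pieces have their open parameter intervals mapped into $V$, where your own Case-1 argument works (chop into Lemma \ref{ll}-neighborhoods and use $\cS^\pm_{g;V}(y)-\cS^\pm_{g;V}(x)\le d^V_\mu(x,y)\le d^V_\lz(x,y)=d^U_\lz(x,y)$ locally), with endpoints on $\partial V$ handled by continuity of $\cS^\pm_{g;V}$ on $\overline V$ and a limiting argument (the paper's Cases 2--4). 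Until your argument incorporates this first/last-exit decomposition, or some other way to handle infinitely many crossings and arcs meeting $\partial V$, the proof of \eqref{yy-1} is incomplete.
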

}

\begin{proof}
Since $d_\lz^U\le d^V_\lz$ in $\overline V\times\overline V$, we know that
$$g(y)-g(x)\le d^V_\lz (x,y)\quad\forall x,y\in\partial V$$
and hence
$$ \mu(g,\partial V)=\min\{\eta\ge 0 \ |  \ g(y)-g(x)\le d^V_\eta (x,y)\quad\forall x,y\in\partial V   \}\le \lz.$$

To prove  \eqref{yy-1}, by the pseudo-length property of $d_\lz^U$ as in Corollary \ref{radu},
it suffices to prove that  for any  curve $\gz:[a,b] \to  \Omega$ with $\gz(a),\gz(b)\in \overline V$, one has
\begin{equation}\label{xx-2}S^\pm_{g,V}(\gz(b))-S^\pm_{g,V}(\gz(a))\le \ell_{d_\lz^U}(\gz) .
\end{equation}
   We consider the following 4 cases.

\medskip
{\bf Case 1.}  $\gz((a,b))\subset V$ and $\gz(\{a,b\}) \subset  V$,   that is, $\gz( [a,b])\subset V$.   Noting $\mu=\mu(g,\partial V)\le \lz$, one then has $d_\mu^U \le d_\lz^V $ in $  V\times  V$.
From the definition of  $S^\pm_{g,V}$,    it follows
$$S^\pm_{g,V}(y)- S^\pm_{g,V}(x)\le d_\mu^V (x,y) \quad\forall x,y\in  V.$$
Recall from Lemma \ref{ll}  that,   for each $x \in V$, there exists a neighborhood $N(x) \Subset V$ such that
$$
  d^V_\lz(x,y) = d_\lz^U(x,y) \quad \forall  y \in N(x).
$$
We therefore have
\begin{equation}\label{ell1}
S^\pm_{g,V}(y)- S^\pm_{g,V}(x)\le   d_\lz^U  (x,y)\quad\forall x \in  U, y\in N(x).
\end{equation}

Since $\gz \subset \cup_{t \in [a,b]}N(\gz(t))$,
we can find $a=t_0=0<t_1<\cdots< t_m=b$ such that $\gz\subset \cup_{i=0}^m N(\gz(t_i))$
and $\gz([t_i,t_{i+1}])\subset N(\gz(t_i))$.
 Applying  \eqref{ell1} to $\gz(t_i)$ and $\gz(t_{i+1})$, we have
 $$S^\pm_{g,V}(\gz(t_{i+1}))-S^\pm_{g,V}(\gz(t_{i}))
 \le d_\lz^U(\gz(t_{i}),\gz(t_{i+1})).$$
Thus
 \begin{equation*}
S^\pm_{g,V}(\gz(b))- S^\pm_{g,V}(\gz(a))=\sum_{i=0}^{m-1} [S^\pm_{g,V}(\gz(t_{i+1}))-S^\pm_{g,V}(\gz(t_{i}))]\le
 \sum_{i=0}^{m-1}d_\lz^U (\gz(t_{i}),\gz(t_{i+1}))\le
\ell_{d_\lz^U}(\gz).
\end{equation*}

\medskip
{\bf Case 2.} $\gz((a,b))\subset V$  and $\gz(\{a,b\})\not\subset  V$.
 Applying Case 1 to $\gz|_{[a+\ez,b-\ez]}$ for sufficiently small $\ez>0$, we get
\begin{equation*}
S^\pm_{g,V}(\gz(b))- S^\pm_{g,V}(\gz(a))= \lim_{\ez\to 0}[S^\pm_{g,V}(\gz(b-\ez))- S^\pm_{g,V}(\gz(a+\ez))]
\le \liminf_{\ez\to 0}  \ell_{d_\lz^U}(\gz|_{[a+\ez,b-\ez]})
\le  \ell_{d_\lz^U}(\gz).
\end{equation*}

\medskip
{\bf Case 3.} $\gz((a,b))\not\subset   V$ and $\gz(\{a,b\})\subset  V$.
Set
$$ t_\ast=\min\{t\in[a,b],\gz(t)\notin V\} \mbox{ and } t^\ast=\max\{t\in[0,1],\gz(t)\notin V\}.$$
Then $\gz(t_\ast) \in \pa V$ and $\gz(t^\ast) \in \pa V$.
 Write
 $$ S^\pm_{g,V}(\gz(b))- S^\pm_{g,V}(\gz(a))=S^\pm_{g,V}(\gz(b))- S^\pm_{g,V}(\gz(t^\ast))+
 S^\pm_{g,V}(\gz(t^\ast))- S^\pm_{g,V}(\gz(t_\ast))+S^\pm_{g,V}(\gz(t_\ast))- S^\pm_{g,V}(\gz(a)).$$
 Note that
 $$S^\pm_{g,V}(\gz(t^\ast))- S^\pm_{g,V}(\gz(t_\ast))=g(\gz(t^\ast))-g(\gz(t_\ast))\le d_\lz^U  (\gz(t_\ast),\gz(t^\ast))\le \ell_{d_\lz^U}(\gz|_{[a,t_\ast]}) .$$
 By this, and applying Case 2 to $\gz|_{[a,t_\ast]}$ and $\gz|_{[t^\ast,b]}$,
 we obtain
 $$ S^\pm_{g,V}(\gz(b))- S^\pm_{g,V}(\gz(a))\le \ell_{d_\lz^U}(\gz|_{[t^\ast,b]})  + \ell_{d_\lz^U}(\gz|_{[t_\ast,t^\ast ]})+\ell_{d_\lz^U}(\gz|_{[a,t_\ast]})   =\ell_{d_\lz^U}(\gz) .$$

\medskip
{\bf Case 4.} $\gz((a,b))\not\subset   V$ and $\gz(\{a,b\})\not \subset  V$.
 If  $\gz(\{a,b\})\subset  \pa V$, then
$$S^\pm_{g,V}(\gz(b))- S^\pm_{g,V}(\gz(a))=g(\gz(b))-g(\gz(a))\le d_\lz^U  (\gz(b),\gz(a))\le \ell_{d_\lz^U}(\gz ) .$$
 If $\gz(a)\in V$ and $\gz(b)\in\pa V$,
 set   $s^\ast=\min\{s\in[a,b] \ | \ \gz(s) \in \pa V\}$.
 Obviously $a<s^\ast\le b$, and we can find a sequence of $\ez_i>0$ so that $\ez_i\to0$ as $i\to\fz$ and
  $\gz(s^\ast-\ez_i)\in V$.
  Write
 \begin{align*}
  S^\pm_{g,V}(\gz(b))- S^\pm_{g,V}(\gz(a))= S^\pm_{g,V}(\gz(b))- S^\pm_{g,V}(\gz(s^\ast ))+ S^\pm_{g,V}(\gz(s^\ast ))-S^\pm_{g,V}(\gz(a)).
\end{align*}
Since $\gz(s^\ast ),\gz(b)\in\pa V$, we have
$$S^\pm_{g,V}(\gz(b))- S^\pm_{g,V}(\gz(s^\ast))=g(\gz(b))-g(\gz(s^\ast))\le d_\lz^U  (\gz(b),\gz(s^\ast))\le \ell_{d_\lz^U}(\gz|_{[s^\ast,b]} ) .$$
Applying Case 3 to $\gz|_{[a,s^\ast-\ez_i]}$, one has
$$
 S^\pm_{g,V}(\gz(s^\ast ))-S^\pm_{g,V}(\gz(a))
 =    \lim_{i\to\fz}[S^\pm_{g,V}(\gz(s^\ast-\ez_i))-S^\pm_{g,V}(\gz(a))]\le\lim_{i\to\fz}\ell_{d_\lz^U}(\gz|_{[a,s^\ast-\ez_i]} )\le  \ell_{d_\lz^U}(\gz|_{[a,s^\ast]} ).$$
 We therefore have
  \begin{align*}
  S^\pm_{g,V}(\gz(b))- S^\pm_{g,V}(\gz(a))\le \ell_{d_\lz^U}(\gz|_{[s^\ast,b]} ) +\ell_{d_\lz^U}(\gz|_{[a,s^\ast]} )=\ell_{d_\lz^U}(\gz).
\end{align*}
 If $\gz(a)\in \pa V$ and $\gz(b)\in V$,  we could prove in a similar way. Thus \eqref{xx-2} holds and the proof is complete.
 \end{proof}

The following will be used in Section 6.
Let $U\Subset\Omega$ and $u\in \dot W^{1,\fz}_X(U)\cap C^0(\overline U) $  satisfying
\begin{equation}\label{udmu}\mbox{$u(y)-u(x)\le d^U_\mu(x,y) \quad\forall x,y\in\overline U$}
\end{equation}
   for some $0\le \mu<\fz$.
   Given any subdomain  $V\subset U$, write $h =u|_{\partial V}$  as the restriction of $u$ in $\overline V$.   Since $d^U_\mu\le  d^V_\lz$ in $\overline V\times\overline V$ as given in Lemma \ref{ll}, one has
   $$u (y)-u (x) \le d^V_\mu(x,y)  \quad\forall x,y\in\pa V$$
    and hence $\mu(u|_{\pa V}, \partial V) \le \mu$.
     Denote by $\cS_{u|_{\pa V} ,V}^\mp$   the McShane extension of
  $u|_{\pa V} $ in $V$. Define
\begin{equation}\label{upm}
u^\pm:= \left \{ \begin{array}{lll} \cS_{u|_{\pa V},V}^\pm
& \quad \text{in} & \quad V \\
u
  & \quad \text{in}&\quad \overline U \backslash V.
\end{array}
\right.
\end{equation}

{\color{black}
\begin{lem} \label{lem5.10}
Under the assumption \eqref{udmu} for some $0\le \mu<\fz$, the functions $u^\pm$ defined in \eqref{upm}   are continuous in $\overline U$ and satisfy
\begin{equation}\label{claim1}
  u^\pm(y)-u^\pm(x)\le d_\mu^U(x,y)\quad\forall x,y\in  \overline  U.
\end{equation}
In particular,  $u^\pm\in  \dot W^{1,\fz}_X(U) \cap C^0(\overline U)$ and $  \|H(\cdot,Xu^\pm)\|_{L^\fz(U)}\le \mu $.
 \end{lem}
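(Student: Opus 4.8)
The plan is to read off both the continuity of $u^\pm$ and \eqref{claim1} from Lemma \ref{lem5.7}, Lemma \ref{cont}(i), the hypothesis \eqref{udmu} and the pseudo-length property of $d_\mu^U$ supplied by Corollary \ref{radu}; the final $\dot W^{1,\fz}_X$-assertion will then come from Corollary \ref{radu} once \eqref{claim1} is known. First I would apply Lemma \ref{lem5.7} to the present $V$ with $g:=u|_{\pa V}$ and $\lz=\mu$: since \eqref{udmu} in particular gives $g(y)-g(x)\le d_\mu^U(x,y)$ for all $x,y\in\pa V$, Lemma \ref{lem5.7} yields $\mu(u|_{\pa V},\pa V)\le\mu<\fz$ together with
\begin{equation}\label{auxstar}
\cS^\pm_{u|_{\pa V},V}(y)-\cS^\pm_{u|_{\pa V},V}(x)\le d_\mu^U(x,y)\qquad\forall\, x,y\in\overline V.
\end{equation}
As $\mu(u|_{\pa V},\pa V)<\fz$, Lemma \ref{cont}(i) then gives $\cS^\pm_{u|_{\pa V},V}\in C^0(\overline V)$ with $\cS^\pm_{u|_{\pa V},V}=u$ on $\pa V$. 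Since $V$ is open, $\overline V$ and $\overline U\setminus V$ are closed subsets of $\overline U$ that cover $\overline U$ and meet exactly in $\pa V$, on which the two branches in \eqref{upm} coincide; the pasting lemma then gives $u^\pm\in C^0(\overline U)$.

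Next I would establish \eqref{claim1} first for $x,y\in U$ and then pass to $\overline U$. The passage to $\overline U$ is routine: when $d_\mu^U(x,y)<\fz$, the definition of the extension of $d_\mu^U$ to $\overline U\times\overline U$ lets one choose $x_k,y_k\in U$ with $(x_k,y_k)\to(x,y)$ and $d_\mu^U(x_k,y_k)\to d_\mu^U(x,y)$, and then the continuity of $u^\pm$ closes the argument. For $x,y\in U$ I would distinguish three cases. If $x,y\in V$, then by \eqref{upm} the desired inequality is exactly \eqref{auxstar}; if $x,y\in U\setminus V$, then by \eqref{upm} it is exactly \eqref{udmu}. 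The remaining mixed case is the heart of the matter.

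The hard part is the mixed case, say $x\in V$ and $y\in U\setminus\overline V$ (if $y\in\pa V$ we fall back to the first case, and the case $x\in U\setminus\overline V$, $y\in V$ is symmetric, using a last-exit time instead). Here I would invoke that $(U,d_\mu^U)$ is a pseudo-length space (Corollary \ref{radu}): for $\ez>0$ pick $\gz\in\mathcal C(0,1;x,y;U)$ with $\ell_{d_\mu^U}(\gz)\le d_\mu^U(x,y)+\ez$, and set $\tau:=\inf\{t\in[0,1]\ |\ \gz(t)\notin V\}$. Continuity of $\gz$ forces $\gz([0,\tau))\subset V$ and $\gz(\tau)\notin V$, hence $\gz(\tau)\in\overline V\setminus V=\pa V$, and one checks $0<\tau<1$. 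Using $\cS^\pm_{u|_{\pa V},V}=u$ on $\pa V$ and \eqref{upm}, I would write
$$u^\pm(y)-u^\pm(x)=\big[u(\gz(1))-u(\gz(\tau))\big]+\big[\cS^\pm_{u|_{\pa V},V}(\gz(\tau))-\cS^\pm_{u|_{\pa V},V}(\gz(0))\big],$$
bound the first bracket by $d_\mu^U(\gz(\tau),\gz(1))$ via \eqref{udmu} and the second by $d_\mu^U(\gz(0),\gz(\tau))$ via \eqref{auxstar}, and then use $d_\mu^U(\gz(s),\gz(s'))\le\ell_{d_\mu^U}(\gz|_{[s,s']})$ and the additivity of $\ell_{d_\mu^U}$ under concatenation to obtain $u^\pm(y)-u^\pm(x)\le\ell_{d_\mu^U}(\gz|_{[0,\tau]})+\ell_{d_\mu^U}(\gz|_{[\tau,1]})=\ell_{d_\mu^U}(\gz)\le d_\mu^U(x,y)+\ez$; letting $\ez\to0$ finishes this case. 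The key structural observation — and the reason cutting $\gz$ at the single crossing point $\gz(\tau)\in\pa V$ is enough, with no control needed on the possible later excursions of $\gz$ back and forth across $\pa V$ — is that \eqref{auxstar} and \eqref{udmu} are purely endpoint statements.

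Finally, \eqref{claim1} restricted to $x,y\in U$ is precisely the analogue of statement (ii) of Theorem \ref{rad} with $(U,d_\mu^U)$ in place of $(\Omega,d_\lz)$, so Corollary \ref{radu} yields $u^\pm\in\dot W_X^{1,\fz}(U)$ with $\|H(\cdot,Xu^\pm)\|_{L^\fz(U)}\le\mu$; combined with $u^\pm\in C^0(\overline U)$ from the first step, this is the last assertion of the lemma.
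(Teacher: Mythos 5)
Your proposal is correct and follows essentially the same route as the paper: continuity of $u^\pm$ via Lemma \ref{cont} and gluing along $\pa V$, the inequality \eqref{claim1} by the same case split --- $x,y$ outside $V$ handled by \eqref{udmu}, $x,y\in\overline V$ handled by Lemma \ref{lem5.7}, and the mixed case handled by taking an almost-minimizing curve for $d_\mu^U$ from Corollary \ref{radu} and cutting it at a point of $\gz\cap\pa V$ --- followed by Corollary \ref{radu} for the $\dot W^{1,\fz}_X$ and $H$-bound. The only cosmetic differences are that you prove \eqref{claim1} first for $x,y\in U$ and then pass to $\overline U$ by approximation using the liminf definition of $d_\mu^U$ and the continuity of $u^\pm$ (arguably a bit more careful than the paper, which invokes the pseudo-length property directly for boundary points), and that you omit the paper's precautionary reduction to $\mu>0$ via the right continuity of $\lz\mapsto d_\lz^U$, which is harmless since Lemma \ref{lem5.7} and Corollary \ref{radu} are stated for all $\lz\ge0$.
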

 }

\begin{proof}  We only prove Lemma \ref{lem5.10} for $u^+$; the proof of Lemma \ref{lem5.10} for $u^-$ is similar. By Lemma \ref{cont}, $\cS^+_{h,V}\in C^0(\overline V)$ and $\cS^+_{h,V}=u$ on $\pa V$. These, together with $u\in C^0(\overline U)$ imply that   $u^+\in C^0(\overline U)$.
Moreover, by   Corollary \ref{radu}, if $u^+$ satisfies \eqref{claim1}, then
$u^+\in  \dot W^{1,\fz}_X(U)$  and $  \|H(\cdot,Xu^\pm)\|_{L^\fz(U)}\le \mu $.
Below we prove \eqref{claim1} for $u^+$ via the following 3 cases.
By the right continuity of $\lz\in[0,\fz)\to d_\lz^U(x,y)$ for any $x,y\in \overline U$, up to considering $ d_{\mu+\ez}^U$ for sufficiently small $\ez>0$, we may assume that $\mu>0$.

 {\bf Case 1.}  $x,y\in \overline U\setminus V $.
By \eqref{udmu} we have
$$u^+(y)-u^+(x)= {\color{black}u(y)-u(x)}    \le d_\mu^U(x,y).$$

   {\bf Case 2.}   $x \in \overline V  $ and $y \in V$ or $x \in   V  $ and $y \in \overline V$.
Applying Lemma \ref{lem5.7} with $(U, d_\lz^U,V,d^V_\lz,g)$ replaced  by
 $( U,d^U_\mu,V,d^V_\mu, u|_{\partial V})$,
 one has
$$ u^+(y)-u^+(x)=
 \cS_{h, V}^+(y)- \cS_{h, V}^+(x)\le d^U_\mu(x,y)\quad\forall x,y\in \overline V.$$

\medskip
   {\bf Case 3.} $x\in V$ and $y\in\overline U\setminus  V $ or $x\in \overline U\setminus   V$ and $y\in    V $.
For any $\ez>0$,  {\color{black}by Corollary \ref{radu}}, there exists a curve $\gz$ joining $x, y$ such that
 $$\ell_{d^U_\mu}(\gz)\le d_\mu^U(x,y)+\ez.$$
 Let $z\in \gz\cap \partial V$.  Applying Case 2 and Case 1,  we have
$$
u^+(y)-u^+(x)=u^+(y)-u^+(z)+u^+(z)-u^+(x)\le d^U_\mu(z,y)+d^U_\mu(x,z)\le \ell_{d^U_\mu}(\gz)\le d^U_\mu(x,y)+\ez .$$
By the arbitrariness of $\ez>0$, we have $
u^+(y)-u^+(x) \le d^U_\mu(x,y)   $ as desired.
\end{proof}

\section{Proof of Theorem \ref{t231}}

In this section, we always suppose that
  the Hamiltonian $H(x,p)$   enjoys assumptions  (H0)-(H3) and further that $\lz_H=0$.

\begin{defn}\label{d231}\rm
Let $U\Subset\Omega$ be a domain  and $g\in C^0(\pa U)$ with $\mu(g,\partial U)<\fz$.

\begin{enumerate}
\item[(i)]
   A minimizer $u  $  for ${ \bf I}(g,U)$ is called a local superminimizer   for ${ \bf I}(g,U)$
    if $u \ge \cS_{u|_{\partial V}; V}^-$ in $V$    for any subdomain $V\subset U$.

\item[(ii)]  A minimizer $u  $  for ${ \bf I}(g,U)$ is called a local subminimizer for ${ \bf I}(g,U)$  if
 $u \le \cS_{u|_{\partial V}; V}^+$ in $V$  for any subdomain $V\subset U$.
    \end{enumerate}
\end{defn}

The next lemma shows McShane extensions are  local super/sub minimizers.
\begin{lem}\label{lem234}
Let $U\Subset\Omega$ and $g\in C^0(\pa U)$ with $\mu(g,\partial U)<\fz$.
\begin{enumerate}
\item[(i)] For any subdomain  $V\subset U$, we have
\begin{equation}\label{e601}
 \cS_{h^+, V}^- \le \cS_{h^+, V}^+ \le \csgu^+ \text{ in $V$, where $h^+=\csgu^+|_{\partial V}$.}
\end{equation}
In particular, $\csgu^+$ is a local superminimizer for ${ \bf I}(g,U)$.
\item[(ii)]For any subdomain  $V\subset U$, we have
\begin{equation}\label{e601-}
 \csgu^-\le \cS_{h^-, V}^- \le \cS_{h^-, V}^+  \text{ in $V$, where $h^-=\csgu^-|_{\partial V}$.}
\end{equation}
In particular, $\csgu^-$ is a local subminimizer for ${ \bf I}(g,U)$.
\end{enumerate}
\end{lem}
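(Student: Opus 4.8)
The plan is to prove (i); part (ii) follows by the symmetric argument (replacing suprema by infima, $\csgu^+$ by $\csgu^-$, and reversing the roles of $d^U_\mu(x,y)$ and $d^U_\mu(y,x)$). Throughout write $\mu = \mu(g,\partial U)$, which is finite by hypothesis. First I would record that, by Lemma \ref{cont}(ii) applied to $\csgu^+$, we have $\csgu^+(y) - \csgu^+(x) \le d^U_\mu(x,y)$ for all $x,y \in \overline U$; in particular this holds for all $x,y \in \overline V$, so the function $u := \csgu^+$ and the subdomain $V$ satisfy the standing hypothesis \eqref{udmu} of Lemma \ref{lem5.10} (and of Lemma \ref{lem5.7}, via the boundary trace $h^+ = \csgu^+|_{\partial V}$, since \eqref{xx2} holds with $\lz = \mu$). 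The inequality $\cS_{h^+,V}^- \le \cS_{h^+,V}^+$ in $V$ is immediate: for $x \in V$ and $z,w \in \partial V$ one has $h^+(z) - d^V_{\mu'}(x,z) \le h^+(w) + d^V_{\mu'}(w,x)$ by the triangle inequality for $d^V_{\mu'}$ together with $h^+(z) - h^+(w) \le d^V_{\mu'}(w,z)$, where $\mu' = \mu(h^+,\partial V) \le \mu$ by Lemma \ref{lem5.7}; taking sup over $z$ on the left and inf over $w$ on the right gives $\cS_{h^+,V}^-(x) \le \cS_{h^+,V}^+(x)$.

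The substantive inclusion is $\cS_{h^+,V}^+ \le \csgu^+$ in $V$. Fix $x \in V$. By definition $\cS_{h^+,V}^+(x) = \inf_{w \in \partial V}\{h^+(w) + d^V_{\mu'}(w,x)\}$, and since $h^+(w) = \csgu^+(w)$ and $d^U_\mu(w,x) \le d^V_{\mu'}(w,x) \le d^V_\mu(w,x)$... — here I must be slightly careful: the McShane extension inside $V$ is built with parameter $\mu' = \mu(h^+,\partial V)$, not $\mu$. To compare cleanly I would instead argue directly from \eqref{yy-1} of Lemma \ref{lem5.7}: applying that lemma with $(U, d^U_\lz, V, g) \leftsquigarrow (U, d^U_\mu, V, h^+)$ and $\lz = \mu$, we get $\cS_{h^+,V}^+(y) - \cS_{h^+,V}^+(x) \le d^U_\mu(x,y)$ for all $x,y \in \overline V$; but more to the point, I would pick, for $x \in V$, a point $w^\ast \in \partial V$ with $\csgu^+(w^\ast) + d^U_\mu(w^\ast,x)$ close to the infimum defining $\cS_{h^+,V}^+(x)$ — using that $d^U_\mu \le d^V_{\mu'}$ on $\overline V \times \overline V$ — and then bound $\csgu^+(x) \ge \csgu^+(w^\ast) - d^U_\mu(x,w^\ast)$... no: I want an upper bound on $\cS_{h^+,V}^+(x)$. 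So: for any $w \in \partial V$, $h^+(w) = \csgu^+(w) \le \csgu^+(x) + d^U_\mu(x,w)$ is the wrong direction too. The correct route is: $\cS_{h^+,V}^+(x) = \inf_{w}\{\csgu^+(w) + d^V_{\mu'}(w,x)\}$, and I must exhibit one $w$ making the bracket $\le \csgu^+(x)$. Take $w$ realizing (up to $\varepsilon$) $\csgu^+(x) = \inf_{y \in \partial U}\{g(y) + d^U_\mu(y,x)\}$, i.e.\ with $\csgu^+(x) \ge g(y) + d^U_\mu(y,x) - \varepsilon$; if $y \in \partial V$ we are nearly done since then $\csgu^+(y) = g(y)$... but $y \in \partial U$ need not lie in $\partial V$. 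So instead: let $\gamma$ be a near-$d^U_\mu$-geodesic from $y \in \partial U$ to $x$, using the pseudo-length property of $d^U_\mu$ (Corollary \ref{radu}); since $x \in V$ and $y \notin V$, $\gamma$ crosses $\partial V$ at some $w \in \partial V$, and $d^U_\mu(y,x) = d^U_\mu(y,w) + d^U_\mu(w,x)$ up to $\varepsilon$. Then $\csgu^+(w) \le g(y) + d^U_\mu(y,w) + \varepsilon$ (by definition of $\csgu^+$ at the interior/boundary point $w$, using Lemma \ref{cont}(ii) or directly the definition), hence $\csgu^+(w) + d^U_\mu(w,x) \le g(y) + d^U_\mu(y,x) + 2\varepsilon \le \csgu^+(x) + 3\varepsilon$, and since $d^U_\mu(w,x) \le d^V_{\mu'}(w,x)$ we get $\cS_{h^+,V}^+(x) \le \csgu^+(w) + d^V_{\mu'}(w,x)$ — here the bound goes the wrong way again because $d^V_{\mu'} \ge d^U_\mu$.

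This is the crux, and it is exactly the obstacle the authors flag in the overview (improving $d^V_\lz$ to the smaller $d^U_\lz$): the naive comparison fails because the intrinsic distance inside $V$ is \emph{larger}. The resolution is Lemma \ref{ll}, which gives $d^V_{\mu'}(x,y) = d^U_{\mu'}(x,y)$ \emph{locally} (in a neighborhood $N_{\mu'}(x) \Subset V$ of each $x$), combined with the pseudo-length property of $d^V_{\mu'}$ to upgrade this to a global comparison along curves staying in $V$; this is precisely the mechanism of Lemma \ref{lem5.7}'s proof. So the honest plan is: invoke Lemma \ref{lem5.7} to conclude $\cS_{h^+,V}^\pm(y) - \cS_{h^+,V}^\pm(x) \le d^U_\mu(x,y)$ for $x,y \in \overline V$; then for $x \in V$ and $\varepsilon > 0$ choose $w \in \partial V$ with $\cS_{h^+,V}^+(x) \le h^+(w) + d^V_{\mu'}(w,x) \le h^+(w) + \ell_{d^U_\mu}(\gamma_w) + \varepsilon$ where $\gamma_w$ is a curve in $\overline V$ from $w$ to $x$ nearly minimizing the $d^U_\mu$-length (this replaces $d^V_{\mu'}(w,x)$ by something controlled by $d^U_\mu$, using that $d^V_{\mu'}$ is the pseudo-length induced by itself and that locally $d^V_{\mu'} = d^U_{\mu'} = d^U_\mu$ on short subarcs — running exactly the Case 1 estimate of Lemma \ref{lem5.7}); then $h^+(w) = \csgu^+(w) \le g(y) + d^U_\mu(y,w) + \varepsilon$ for suitable $y \in \partial U$, and chaining $d^U_\mu(y,w) + (\text{length of }\gamma_w) $ against a near-$d^U_\mu$-geodesic from $y$ to $x$ through $w$ yields $\cS_{h^+,V}^+(x) \le \csgu^+(x) + O(\varepsilon)$. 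Letting $\varepsilon \to 0$ gives $\cS_{h^+,V}^+ \le \csgu^+$ in $V$. The ``in particular'' statement is then immediate from Definition \ref{d231}(i): for every subdomain $V \subset U$, $\csgu^+ \ge \cS_{h^+,V}^+ \ge \cS_{h^+,V}^- = \cS_{\csgu^+|_{\partial V};V}^-$ in $V$, so $\csgu^+$ is a local superminimizer (it is a minimizer for $\mathbf I(g,U)$ by Lemma \ref{lem231}(i)). Part (ii) is proved mutatis mutandis, with all inequalities and the roles of $\cS^+, \cS^-$ reversed.
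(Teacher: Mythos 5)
You correctly isolated the crux (the McShane extension in $V$ is built from the larger distance $d^V_{\mu'}$, which must somehow be compared with $d^U_\mu$), but your final argument does not close, and it is worth seeing how the paper sidesteps the whole issue. The paper's proof is a gluing argument: define $u^+$ as in \eqref{upm} with $u=\csgu^+$, i.e.\ $u^+=\cS^+_{h^+,V}$ in $V$ and $u^+=\csgu^+$ on $\overline U\setminus V$. Since $\csgu^+$ satisfies \eqref{udmu} with $\mu=\mu(g,\pa U)$ (a fact you yourself recorded at the start), Lemma \ref{lem5.10} gives $u^+\in\dot W^{1,\fz}_X(U)\cap C^0(\overline U)$, $\|H(\cdot,Xu^+)\|_{L^\fz(U)}\le\mu=\bi(g,U)$, and $u^+=g$ on $\pa U$, so $u^+$ is a minimizer for $\bi(g,U)$; then Lemma \ref{lem231}(ii) (every minimizer lies between $\csgu^-$ and $\csgu^+$) yields $u^+\le\csgu^+$ in $U$, hence $\cS^+_{h^+,V}\le\csgu^+$ in $V$. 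All of the geodesic-crossing and $d^V$-versus-$d^U$ technology you are re-deriving is already packaged inside Lemma \ref{lem5.10} (via Lemma \ref{lem5.7} and Lemma \ref{ll}); you had both ingredients on the table and only needed to combine them with the extremality of $\csgu^+$.

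The gap in your direct route is the order of the selections in your concluding plan. You first choose $w\in\pa V$ and a curve $\gz_w\subset\overline V$ from $w$ to $x$, then a ``suitable'' $y\in\pa U$ with $\csgu^+(w)\le g(y)+d^U_\mu(y,w)+\ez$, and finally appeal to ``a near-$d^U_\mu$-geodesic from $y$ to $x$ through $w$''. But a near-geodesic from $y$ to $x$ need not pass through a pre-assigned $w$, and for a $y$ that is nearly optimal for $\csgu^+(w)$ there is no reason that $d^U_\mu(y,w)+\ell_{d^U_\mu}(\gz_w)\le d^U_\mu(y,x)+O(\ez)$, nor that $g(y)+d^U_\mu(y,x)\le\csgu^+(x)+O(\ez)$ (the latter needs $y$ nearly optimal for $\csgu^+(x)$, not for $\csgu^+(w)$). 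The choices must be made in the reverse order, as in your exploratory paragraph: fix $x\in V$, pick $y\in\pa U$ with $g(y)+d^U_\mu(y,x)\le\csgu^+(x)+\ez$, pick a near-$d^U_\mu$-geodesic $\gz$ from (an interior approximation of) $y$ to $x$ via Corollary \ref{radu}, and only then let $w=\gz(t^\ast)$ be the last point of $\gz$ on $\pa V$, so that the terminal subarc lies in $\overline V$ and Lemma \ref{ll} plus the chaining of local identities gives $d^V_{\mu'}(w,x)\le\ell_{d^U_\mu}(\gz|_{[t^\ast,1]})$. Even with the order fixed you still owe two boundary cases that your plan does not address: the approximating curve may never leave $V$ (possible since $V$ need not be compactly contained in $U$, so the interior start point near $y\in\pa U$ may lie in $V$; there one should use $y\in\pa V$ itself as the comparison boundary point), and the use of the extended pseudo-distance at points of $\pa U$ requires an approximation argument because the pseudo-length property is only available for interior points. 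None of this is fatal, but these are precisely the delicate steps the paper's gluing proof was designed to avoid.
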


\begin{proof}  We only prove (i); the proof for (ii) is similar.
Write  $\mu=\mu(g,\partial U).$
By Lemma \ref{lem231}, $\csgu^+$ is a minimizer for $\bi(g,U)$,  $\mu=\bi(g,U)=\|H(\cdot,X\csgu^+)\|_{L^\fz(U)} $, and
\begin{equation}\label{udmus}\mbox{$\csgu^+(y)-\csgu^+(x)\le d^U_\mu(x,y) \quad\forall x,y\in\overline U$}
\end{equation}
Fix any subdomain  $V\subset U$.
 Denote by $\cS_{h,V}^\pm$  the McShane extension of
  $h^+=\csgu^+|_{\partial V}$ in $V$.
Note that $\cS_{h, V}^-\le \cS_{h, V}^+ $ in $\overline V$, that is, the first inequality
 in \eqref{e601} holds.

Below we show  $\cS_{h, V}^+\le \csgu^+$ in $V$, that is, the second inequality in
 \eqref{e601}.
Let $ u^+$ be as in the \eqref{upm} with $u=\cS_{g,U}^+$, that is,
 \begin{equation*}
 u^+:= \left \{ \begin{array}{lll} \cS_{h^+,V}^+
 & \quad \text{in} & \quad V \\
 \csgu^+  & \quad \text{in}&\quad \overline U \backslash V.
 \end{array}
 \right.
 \end{equation*}
 By \eqref{udmus}, we apply Lemma \ref{lem5.10} to conclude that  $u^+\in  \dot W^{1,\fz}_X(U) \cap C^0(\overline U)$ and $  \|H(\cdot,Xu^+)\|_{L^\fz(U)}\le \mu $.
Note that $u^+=\csgu^+$  on $\pa U$ and hence, by definition of $\bi (g, U )$,
one has $\bi (g, U )\le \|H(\cdot,Xu^+)\|_{L^\fz(U)}.$ Recalling $\mu= \bi (g, U )$,
  one obtains  that  $\bi (g, U )= \|H(\cdot,Xu^+)\|_{L^\fz(U)},$ that is,
  $u^+$
is a minimizer for $\bi (g, U )$.
 By Lemma \ref{lem231}(i) again, $u^+ \le \csgu^+$ in $U$.
 Since $\cS_{h, V}^+= u^+$ in $V$,
we conclude that $\cS_{h, V}^+\le \csgu^+$ in $V$ as desired.
The proof is complete.
\end{proof}

{\color{black}
\begin{lem}\label{c0}
  Let $V \Subset \Omega$ be a  domain  and $P=\{x_j\}_{j \in \nn}\subset V$ be a dense subset of $V$. Assume $u \in C^{0}(\overline V)$ and $\{u_j\}_{j \in \nn} \subset \dot W_X^{1,\fz}(V) \cap C^{0}(\overline V)$ such that for any $j \in \nn$,
   \begin{equation}\label{pp01}
 |u_j(x_i)-u(x_i)| \le \frac1j \mbox{ for any $i=1, \cdots,j$, }
\end{equation}
  and
  \begin{equation}\label{pp02}
 \|H(\cdot,Xu_j(\cdot))\|_{L^\fz(V)} \le \lz <\fz.
  \end{equation}
  Then $u_j \to u$ in $C^0(V)$, and moreover,
  \begin{equation}\label{pp03}
    u\in \dot W^{1,\fz}_X(V ) \mbox{  and }
  \|H(x,Xu)\|_{L^\fz(V)}\le \lz.
  \end{equation}
\end{lem}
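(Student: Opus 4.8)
The plan is to first upgrade the pointwise estimate \eqref{pp01} on the dense set $P$ to uniform convergence on compact subsets of $V$, using the uniform Lipschitz control that \eqref{pp02} provides via (H3), and then to invoke Lemma \ref{l6.1} to conclude \eqref{pp03}. First I would observe that \eqref{pp02} together with (H3) gives $\||Xu_j|\|_{L^\fz(V)}\le R_\lz$ for all $j$, so by Lemma \ref{radcclem} (applied on $V$; or directly by Lemma \ref{jerison}) each $u_j\in\lip_{d_{CC}}(V)$ with $\lip_{d_{CC}}(u_j,V)\le R_\lz$, i.e.
\begin{equation*}
|u_j(x)-u_j(y)|\le R_\lz\, d_{CC}(x,y)\qquad\forall x,y\in V.
\end{equation*}
In particular the family $\{u_j\}$ is equicontinuous on $V$ in the $d_{CC}$-topology, which coincides with the Euclidean topology.

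Next I would fix a compact $K\Subset V$ and show $u_j\to u$ uniformly on $K$. Given $\ez>0$, by equicontinuity of $\{u_j\}$ and continuity of $u$ there is $\delta>0$ so that $d_{CC}(x,y)<\delta$ implies $|u_j(x)-u_j(y)|\le\ez$ for all $j$ and $|u(x)-u(y)|\le\ez$; shrink $\delta$ so that the $\delta$-neighbourhood of $K$ still lies in $V$. Cover $K$ by finitely many balls $B_{d_{CC}}(p^{(1)},\delta),\dots,B_{d_{CC}}(p^{(N)},\delta)$ with centres $p^{(l)}\in P$ (possible since $P$ is dense and $K$ compact). Choose $j_0$ large enough that $j_0\ge N$, that $1/j_0\le\ez$, and that all the finitely many indices of $p^{(1)},\dots,p^{(N)}$ in the enumeration of $P$ are $\le j_0$; then for $j\ge j_0$ and any $x\in K$, picking $l$ with $d_{CC}(x,p^{(l)})<\delta$,
\begin{equation*}
|u_j(x)-u(x)|\le |u_j(x)-u_j(p^{(l)})|+|u_j(p^{(l)})-u(p^{(l)})|+|u(p^{(l)})-u(x)|\le \ez+\tfrac1j+\ez\le 3\ez,
\end{equation*}
where the middle term is controlled by \eqref{pp01} because the index of $p^{(l)}$ is $\le j_0\le j$. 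This proves $u_j\to u$ uniformly on $K$, hence $u_j\to u$ in $C^0(V)$ in the sense defined after Lemma \ref{l6.1}.

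Finally, since $\{u_j\}\subset\dot W^{1,\fz}_X(V)$ with $\|H(\cdot,Xu_j)\|_{L^\fz(V)}\le\lz$ for all $j$ and $u_j\to u$ in $C^0(V)$, Lemma \ref{l6.1} (with $\Omega$ replaced by $V$) gives $u\in\dot W^{1,\fz}_X(V)$ and $\|H(\cdot,Xu)\|_{L^\fz(V)}\le\lz$, which is \eqref{pp03}. The only mildly delicate point — the main obstacle, such as it is — is the bookkeeping in choosing $j_0$: one must make sure the finitely many centres used to cover $K$ all appear among the first $j_0$ points of the enumeration so that \eqref{pp01} is applicable to each of them simultaneously; everything else is the standard Arzelà–Ascoli-type patching argument combined with the already-established closure Lemma \ref{l6.1}.
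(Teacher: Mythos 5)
Your proof is correct and follows essentially the same route as the paper: use (H3) to get the uniform bound $\||Xu_j|\|_{L^\fz(V)}\le R_\lz$, hence equi-Lipschitz control via Lemma \ref{radcclem}, cover a compact $K\Subset V$ by small balls centred at finitely many points of the dense set $P$, combine \eqref{pp01} with the triangle inequality to get uniform convergence on $K$, and then invoke Lemma \ref{l6.1} on $V$. The only cosmetic difference is that the paper works with the intrinsic distance $d_{CC}^V$ and its H\"older comparison with the Euclidean distance on $\overline K$, whereas you phrase the same equicontinuity argument abstractly; your index bookkeeping for $j_0$ matches the paper's choice $j\ge\max\{i_K,1/\ez\}$.
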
}

\begin{proof} We only need to  prove $u_j \to u$ in $C^0(V)$. Note  that
 \eqref{pp03} follows from this and Lemma 3.1.

Since $u \in C^0(\overline V)$ and $\overline V$ is compact, $u$ is uniform continuous in $\overline V$,
 that is, for any $\ez>0$, there exists $h_\ez\in(0,\ez)$ such that for all
 \begin{equation}\label{u00}
    |u(x)-u(y)| \le \ez\quad \mbox{whenever $x,y \in \overline V$ with $|x-y|<h_\ez$.}
  \end{equation}
 Recalling the assumption (H3), by \eqref{pp02} one has
 \begin{equation}\label{mini5}
   \||Xu_j(\cdot)|\|_{L^\fz(V)} \le R_\lz \quad \forall j \in \nn.
 \end{equation}
By Lemma \ref{radcclem},
 $$u_j(y)-u_j(x)\le R_\lz d^V_{CC}(x,y ) \quad\forall x,y\in V.$$
Given any $K\Subset V$, recall from \cite{nsw85} that
$$d_{CC}^V(x,y)\le C(K,V)|x-y|^{1/k} \quad\forall x,y\in \overline K.$$
 It then follows
 $$|u_j(y)-u_j(x)|\le R_\lz C(K,V)|x-y|^{1/k} \quad\forall x,y\in \overline K.$$
 Given any $\ez>0$,  thanks to the density of $\{x_i\}_{i\in\nn}$ in
 $V$,  one has $\overline K\subset  \cup_{x_i\in \overline K}B(x_i,h_\ez)$.
 By the compactness of $\overline K$,  we have
 $$\overline K\subset  \cup\{B(x_i,h_\ez) \ | \ 1\le i\le i_K \ \mbox{ and }\ x_i\in\overline K\}$$
 for some $i_K\in\nn$.
 For any $j\ge \max\{i_K,1/\ez\}$ and  for any $x\in\overline K$,
  choose $1\le  i\le i_K$ such that $x_i\in \overline K$ and $|x-x_i|\le h_\ez<\ez$.
  Thus
  $| u(x_i)-u(x)|\le \ez$.
  By \eqref{pp01} we have   $| u_j(x_i)-u(x_i)|\le \frac1j\le \ez$.
Thus
$$| u_j(x)-u(x)|\le | u_j(x)-u_j(x_i)|+ | u_j(x_i)-u(x_i)|+| u(x_i)-u(x)| \le
R_\lz C(K,V)\ez^{1/k}+ 2\ez.$$
This implies that $u_j\to u$ in $C^0(\overline K)$ as $j\to\fz$.
The proof is complete.
\end{proof}

The following  clarifies the relations between absolute minimizers and
local super/subminimizers.
\begin{lem} \label{lem235}
Let $U\Subset\Omega$ and $g\in C^0(\pa U)$ with $\mu(g,\partial U)<\fz$.
Then   a function $u:\overline U\to\rr$ is an absolute minimizer for ${ \bf I}(g,U)$
if and only if it is both a local superminimizer and a local subminimizer for ${ \bf I}(g,U)$.
\end{lem}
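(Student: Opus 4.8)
The plan is to prove both implications by reducing everything to the definitions of absolute minimizer and of local super/subminimizer, together with the minimizing and comparison properties of McShane extensions from Lemma \ref{lem231} and Lemma \ref{lem234}. First, suppose $u$ is an absolute minimizer for $\mathbf I(g,U)$. Fix any subdomain $V\subset U$ and write $h=u|_{\partial V}$. Since $u$ restricted to $\overline V$ is admissible in the definition of $\mathbf I(h,V)$ and $u$ is an absolute minimizer, we get $\cf(u,V)=\mathbf I(h,V)$, i.e. $u|_{\overline V}$ is itself a minimizer for $\mathbf I(h,V)$. By Lemma \ref{lem231}(ii) applied with boundary datum $h$ on $V$, any minimizer $w$ for $\mathbf I(h,V)$ satisfies $\cS^-_{h;V}\le w\le \cS^+_{h;V}$ in $\overline V$; in particular $\cS^-_{u|_{\partial V};V}\le u\le \cS^+_{u|_{\partial V};V}$ in $V$. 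Since $V\subset U$ was arbitrary, $u$ is both a local superminimizer and a local subminimizer for $\mathbf I(g,U)$. (One should also record at the outset that $u$ being an absolute minimizer forces $u$ to be a minimizer for $\mathbf I(g,U)$, by taking $V$ close to $U$; more directly, minimizers for $\mathbf I(g,U)$ exist and any absolute minimizer competes with them on all $V\Subset U$, so $\cf(u,U)=\mathbf I(g,U)$, which also uses $\mu(g,\partial U)<\fz$ and Lemma \ref{lem231}(i).)

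For the converse, suppose $u$ is a minimizer for $\mathbf I(g,U)$ which is simultaneously a local superminimizer and a local subminimizer. We must show $\cf(u,V)\le\cf(v,V)$ for every subdomain $V\Subset U$ and every $v\in\dot W^{1,\fz}_X(V)\cap C(\overline V)$ with $v|_{\partial V}=u|_{\partial V}=:h$. It suffices to show $\cf(u,V)\le\mathbf I(h,V)$, i.e. that $u|_{\overline V}$ is a minimizer for $\mathbf I(h,V)$. Set $\mu=\mu(h,\partial V)$; by the paragraph before Lemma \ref{lem5.10} one has $\mu\le \cf(u,U)=\mathbf I(g,U)$ finite, and $\mathbf I(h,V)=\mu$ by Lemma \ref{lem231}(i). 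The key geometric input is the comparison $\cS^-_{h;V}\le u\le\cS^+_{h;V}$ in $\overline V$, which is exactly the hypothesis that $u$ is a local super- and subminimizer (applied to the single subdomain $V$). Since $\cS^\pm_{h;V}=h=u$ on $\partial V$ and $\cS^\pm_{h;V}$ are continuous on $\overline V$, this sandwich pins $u$ between the two McShane extensions with the same boundary data.

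The heart of the argument is to upgrade this sandwich to the norm bound $\|H(\cdot,Xu)\|_{L^\fz(V)}\le\mu$. The idea is to localize: by Theorem \ref{radiv} (more precisely its $U$-version in Corollary \ref{radu}, with $\Omega$ replaced by $V$ and $d_\lz$ by $d^V_\lz$), it is enough to show that for each $x\in V$ there is a neighborhood $N(x)\subset V$ with $u(y)-u(x)\le d^V_\mu(x,y)$ for all $y\in N(x)$. Fix $x\in V$ and apply the local superminimizer property to the small subdomain $W=N(x)\Subset V$ (a small metric ball); this gives $u\ge\cS^-_{u|_{\partial W};W}$ in $W$, and symmetrically the local subminimizer property gives $u\le\cS^+_{u|_{\partial W};W}$ in $W$. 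By Lemma \ref{cont}(ii) applied on $W$, $\cS^\pm_{u|_{\partial W};W}(y)-\cS^\pm_{u|_{\partial W};W}(z)\le d^W_{\mu_W}(z,y)$ for $z,y\in\overline W$, where $\mu_W=\mu(u|_{\partial W},\partial W)\le\mu$ by Lemma \ref{lem5.7}; hence $d^W_{\mu_W}\le d^W_\mu$. Combining the sandwich at the two points $x$ and $y\in W$, $u(y)-u(x)\le \cS^+_{u|_{\partial W};W}(y)-\cS^-_{u|_{\partial W};W}(x)$, and both extensions agree with $u$ on $\partial W$; passing to boundary points and using Lemma \ref{property}(iv) together with Lemma \ref{ll} (which gives $d^W_\mu=d^V_\mu$ locally near $x$) yields $u(y)-u(x)\le d^V_\mu(x,y)$ on a possibly smaller neighborhood of $x$. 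Then Corollary \ref{radu} gives $u\in\dot W^{1,\fz}_X(V)$ with $\|H(\cdot,Xu)\|_{L^\fz(V)}\le\mu=\mathbf I(h,V)\le\cf(v,V)$, completing the proof.

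I expect the main obstacle to be exactly this last step: turning the pointwise sandwich $\cS^-\le u\le\cS^+$ into the $L^\fz$-norm estimate $\cf(u,V)\le\mu$. A purely comparison-based argument only controls $u(y)-u(x)$ by $\cS^+(y)-\cS^-(x)$, which is not obviously $\le d^V_\mu(x,y)$ because the two McShane extensions are taken with respect to possibly different boundary data on subdomains; one must be careful to apply the sandwich on arbitrarily small subdomains and exploit that $\cS^\pm$ share the boundary values with $u$, using the local identity $d^W_\mu=d^V_\mu$ from Lemma \ref{ll} and the pseudo-length property (Corollary \ref{radu}) to patch local estimates into a global one on $V$. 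An alternative, possibly cleaner route to the same step: show that if $u$ fails $\cf(u,V)\le\mu$ on some small ball $W$, then $\cS^+_{u|_{\partial W};W}$ would be a strictly better competitor on $W$, contradicting either the local subminimizer inequality $u\le\cS^+_{u|_{\partial W};W}$ combined with the analogous lower bound — but making "strictly better" precise still requires the machinery of Section 5, so the obstacle is genuinely in organizing these lemmas rather than in any new estimate.
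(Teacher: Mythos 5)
The forward implication in your proposal matches the paper. The converse, however, has a genuine gap at exactly the step you flag as the heart of the argument. The sandwich $\cS^-_{u|_{\pa W};W}\le u\le \cS^+_{u|_{\pa W};W}$ on a single small subdomain $W=N(x)$ does not yield $u(y)-u(x)\le d^V_\mu(x,y)$ for $y$ near $x$: all it gives is
$u(y)-u(x)\le \cS^+_{u|_{\pa W};W}(y)-\cS^-_{u|_{\pa W};W}(x)=\inf_{z,w\in\pa W}\{\,u(z)-u(w)+d^W_{\mu_W}(z,y)+d^W_{\mu_W}(x,w)\,\}$,
a quantity that forces a detour through $\pa W$. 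By the triangle inequality it is bounded \emph{below} by $d^W_{\mu_W}(x,y)$, not above; and already at $y=x$ it equals the gap $\cS^+_{u|_{\pa W};W}(x)-\cS^-_{u|_{\pa W};W}(x)$, which is in general strictly positive (of the order of the $d^W_{\mu_W}$-distance from $x$ to $\pa W$), whereas $d^V_\mu(x,y)\to 0$ as $y\to x$. So no shrinking of $N(x)$ and no appeal to Lemma \ref{property}(iv), Lemma \ref{ll} or Lemma \ref{lem5.7} can close this step: a function merely sandwiched between the two McShane extensions of its own boundary values on one subdomain can oscillate inside $W$ with arbitrarily large local Lipschitz constant. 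Your alternative route has the same defect, since $u\le \cS^+_{u|_{\pa W};W}$ is an order comparison, not an energy comparison, so a hypothetical failure of $\cf(u,W)\le\mu$ produces no contradiction with it.

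The idea missing from your proposal, and the one the paper uses, is to feed the \emph{interior} values of $u$ into the boundary data. The paper fixes a dense sequence $\{x_i\}\subset V$, works on the punctured domains $V_j=V\setminus\{x_1,\dots,x_j\}$, whose boundary is $\pa V\cup\{x_1,\dots,x_j\}$, and applies the local super/subminimizer hypothesis on $V_{j-1}$: since $\cS^-_{u|_{\pa V_{j-1}};V_{j-1}}(x_j)\le u(x_j)\le \cS^+_{u|_{\pa V_{j-1}};V_{j-1}}(x_j)$, a suitable convex combination of the two extensions is, by Lemma \ref{lem231}(iii) and (H1), a minimizer $u_j$ for ${\bf I}(u|_{\pa V_{j-1}},V_{j-1})$ that agrees with $u$ on $\pa V\cup\{x_1,\dots,x_j\}$. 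Corollary \ref{radu} on $V_{j-1}$ then gives $u_j(z)-u_j(y)\le d^{V_{j-1}}_{\mu_{j-1}}(y,z)$, Lemma \ref{lem5.9} (removing finitely many points does not change the intrinsic pseudo-distance) converts this into $u_j(z)-u_j(y)\le d^{V}_{\mu}(y,z)$, and Lemma \ref{c0} upgrades the agreement of $u_j$ with $u$ on the dense set into $u_j\to u$ in $C^0(V)$, whence $\|H(\cdot,Xu)\|_{L^\fz(V)}\le\mu$. This puncturing-and-interpolation device is precisely what replaces your invalid single-subdomain sandwich step, and it does not appear in the proposal.
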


\begin{proof}
  If $u$ is an absolute minimizer for $\bi (g, U )$, then for every subdomain $V \subset U$, $u$ is a minimizer for ${ \bf I}(u|_{\pa V},V)$. By Lemma \ref{cont}, $\cS_{u|_{\pa V}, V}^- \le u \le \cS_{u|_{\pa V}, V}^+$, that is, $u$ is both a local superminimizer and a local subminimizer for ${ \bf I}(g,U)$.

  Conversely,  suppose that  $u$ is both a local superminimizer and a local subminimizer for ${ \bf I}(g,U)$.
We need to show that $u$ is absolute minimizer for ${\bf I}(g,U)$. It suffices
   to prove that for any
  domain $V\Subset U$, $u$ is a minimizer for $\bi(u, V )$, in particular,
   $\|H(\cdot,Xu (\cdot))\|_{L^\fz(V)} \le \bi (u, V)$.


The proof consists of 3 steps.

\medskip
  {\bf Step 1.}  Given any subdomain $V \subset U$,  choose  a dense subset $\{x_j\}_{j \in \nn}$   of $V$.
 Set $V_j=V\setminus\{x_i\}_{1\le i\le j}$ and $V_0=V$. Note that
$$\partial V_j=\partial V_{j-1}\cup\{x_j\}=\partial V_0\cup \{x_i\}_{1\le i\le j} \quad \forall j\in\nn. $$
For each $j\ge0$, set
$$\mu_j=\mu(u|_{\partial V_j},\partial V_j)=\inf\{\lz\ge 0 | \  u(y) -u(x) \le d_\lz^{V_j}(x,y) \quad \forall x,y \in \pa V_j\}.$$
Since $\overline V \subset \overline U$, we have $d_\mu^U(x,y)\le d_\mu^V(x,y)$
          for all $ x,y\in \overline V^\ast$, we have
             $$  u(y) -u(x) \le d_\mu^V(x,y) \quad \forall x,y \in \overline V.  $$
          and hence  $\mu_0\le \mu$.  By Lemma \ref{lem231} (i),
          $\bi (u, V_0)=\mu_0$.
          In a similar way and by induction, for all $j\ge0$,  since $V_{j+1}\subset V_j$, we have
          \begin{equation}\label{uj4}
            \bi (u, V_{j+1})=\mu_{j+1}\le \mu_j=\bi (u, V_j)\le \mu_0=\bi (u, V_0)\le\mu=\bi (u, U).
          \end{equation}

{\bf Step 2. }
 We construct 
a sequence  $\{u_j\}_{j \in \nn}$ of functions  so that, for each $j\in\nn$,
\begin{equation}\label{pp1}
u_j\in \dot W^{1,\fz}_X(V_j)\cap C^0(\overline V)
 \mbox{ and $u_j(x )=u(x ) $   for any $ x\in\pa V_j= \pa V\cup\{x_i\}_{1\le i\le j}$  }
\end{equation} 
and
   \begin{equation}\label{pp2j}
  \mbox{
$\|H(\cdot,Xu_j(\cdot))\|_{L^\fz(V_{j-1})}=\mu_{j-1}$ for any $j \in \nn$}.
  \end{equation}

For any $j \ge 1$, since $u$ is both a local superminimizer and a local subminimizer for ${ \bf I}(g,U)$,    by Definition \ref{d231},
  $$
\cS_{u|_{\pa V_{j-1}}, V_{j-1}}^- \le u \le \cS_{u|_{\pa V_{j-1}}, V_{j-1}}^+
\mbox{ in $V_{j-1}$} .$$
{\color{black}At $x_{j}$, we have
\begin{equation}\label{uj}
  a_{j}:=\cS^-_{u|_{\pa V_{j-1}},V_{j-1}}(x_{j}) \le u(x_{j}) \le b_{j}:=\cS^+_{u|_{\pa V_{j-1}},V_{j-1}}(x_{j})
\end{equation}
  Define $u_{j}: \overline V_{j-1} =\overline V \to \rr$ by
   \begin{displaymath}
u_{j}:= \left \{ \begin{array}{lll}
\cS^+_{u|_{\pa V_{j-1}},V_{j-1}} & \quad \text{if} & \quad a_{j}=b_{j}, \\
\frac{u(x_{j})-a_{j}}{b_{j}-a_{j}}\cS^+_{u|_{\pa V_{j-1}},V_{j-1}}+ (1-\frac{u(x_{j})-a_{j}}{b_{j}-a_{j}})\cS^-_{u|_{\pa V_{j-1}},V_{j-1}} & \quad \text{if}&\quad a_{j}<b_{j}.
\end{array}
\right.
\end{displaymath}

 To see \eqref{pp1}, observe that
 Lemma \ref{lem231} gives $u_j\in \dot W^{1,\fz}_X(V_j)\cap C^0(\overline V) $.
Moreover, for any $x\in\pa V_j$, one has either $x\in\pa V_{j-1}$ or $x=x_j$.
In the case  $x\in \pa V_{j-1}$,  by Lemma \ref{lem231}one
has $$ u_{j}(x)=\cS^+_{u|_{\pa V_{j-1}},V_{j-1}}(x) = \cS^-_{u|_{\pa V_{j-1}},V_{j-1}}(x)= u(x).$$
In the case  $x=x_{j}$,  if $a_{j}=b_{j}$, then \eqref{uj} implies
$$u(x_{j}) = \cS^+_{u|_{\pa V_{j-1}},V_{j-1}} (x_{j}) =b_{j}=u(x_{j});$$
if $a_{j}<b_{j}$, then
 \begin{align*}
   u_{j}(x_{j}) & =\frac{u(x_{j})-a_{j}}{b_{j}-a_{j}}\cS^+_{u|_{\pa V_{j-1}},V_{j-1}} (x_{j})+ (1-\frac{u(x_{j})-a_{j}}{b_{j}-a_{j}})\cS^-_{u|_{\pa V_{j-1}},V_{j-1}} (x_{j}) \\
    & = \frac{u(x_{j})-a_{j}}{b_{j}-a_{j}} b_{j} + (1-\frac{u(x_{j})-a_{j}}{b_{j}-a_{j}})a_{j} \\
  &= u(x_{j}).
 \end{align*}

To see \eqref{pp2j},  applying Lemma \ref{lem231}(iii) with  $t=\frac{u(x_{j})-a_{j}}{b_{j}-a_{j}}$, we deduce that $u_{j}$ is a minimizer for $\bi(u|_{\pa V_{j-1}},V_{j-1})$, that is,
\begin{equation}\label{uj1}
  \| H(\cdot,Xu_{j}(\cdot)) \|_{L^\fz(V_{j-1})} = \bi(u|_{\pa V_{j-1}},V_{j-1})=\mu_{j-1}.
\end{equation}
}

{\bf Step 3. }  We show that, for all $j \in  \nn$,
\begin{equation}\label{pp3}
\mbox{
  $u_j(z)-u_j(y) \le d^V_{\mu}( y,z)  \quad \forall y,z \in V.$}
\end{equation}
Note that, by Corollary \ref{radu} in $V$,  \eqref{pp3} yields that
$u_j \in \dot W^{1,\fz}_X(V)$ and
 $\|H(x,Xu_j)\|_{L^\fz(V)}\le \mu$.
Applying Lemma \ref{c0} to $\{u_j\}_{j\in\nn}$ and $u$, we conclude that $u\in \dot W^{1,\fz}_X(V)$  and $\|H(x,Xu )\|_{L^\fz(V)}\le \mu$  as desired.

To see \eqref{pp3}, using \eqref{pp2j} and Corollary \ref{radu} in $V_{j-1}$, we have
\begin{equation*}
  u_j(z)-u_j(y) \le d^{V_{j-1}}_{\mu_{j-1}}(y,z) \quad \forall y,z \in  V_{j-1}.
\end{equation*}
Thus \eqref{uj4} implies
 \begin{equation*}
  u_j(z)-u_j(y) \le d^{V_{j-1}}_{\mu }(y,z) \quad \forall y,z \in  V_{j-1}.
\end{equation*}
Thanks to Lemma \ref{lem5.9} we have $d^{V_{j-1}}_{\mu }=d_\mu^V$ in $V\times V$ and hence
 \begin{equation*}
  u_j(z)-u_j(y) \le d^V_{\mu }(y,z) \quad \forall y,z \in  V_{j-1}.
\end{equation*}
By the continuity of $u_j$  in $\overline V$ we have  \eqref{pp3} and hence finish the proof.
\end{proof}

Finally, using Perron's approach, we obtain the existence of absolute minimizers.
\begin{prop} \label{p42}
Let $U\Subset\Omega$ and $g\in C^0(\pa U)$ with $\mu(g,\partial U)<\fz$.
Define
\begin{equation}\label{ug}
  U^+_g(x):=\sup\{u(x) | u :\overline U\to \rr\text{\ is a local subminimizer for ${ \bf I}(g,U)$} \}\quad\forall x\in\overline U
\end{equation}
and
$$U^-_g(x):=\inf\{u(x) | u :\overline U\to \rr\ \text{\ is a local superminimizer for ${ \bf I}(g,U)$} \}\quad\forall x\in\overline U.$$
Then $U^\pm_g$  are  absolute minimizers for ${ \bf I}(g,U)$.
\end{prop}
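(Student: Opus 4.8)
The plan is to prove the statement for $U_g^+$; the argument for $U_g^-$ is symmetric. By Lemma \ref{lem235}, it suffices to show that $U_g^+$ is \emph{both} a local subminimizer and a local superminimizer for $\bi(g,U)$. First I would record that the class of local subminimizers for $\bi(g,U)$ is nonempty (it contains $\csgu^-$ by Lemma \ref{lem234}(ii)) and that every local subminimizer $u$ satisfies $\csgu^-\le u\le\csgu^+$ in $\overline U$ by Lemma \ref{lem231}(ii) and the definition; hence $U_g^+$ is a well-defined finite function with $U_g^+=g$ on $\pa U$. The key structural fact to establish next is that a supremum of local subminimizers, suitably regularized, is again a local subminimizer. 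For this I would take an increasing sequence of subminimizers realizing the supremum at a countable dense set (a standard Perron diagonalization), replace consecutive pairs by their maximum — which is still a minimizer for $\bi(g,U)$ by Lemma \ref{lem231}(iii) — and then apply Lemma \ref{l6.1}/Corollary \ref{radu} together with Lemma \ref{c0} to see that the pointwise limit $U_g^+$ lies in $\dot W^{1,\fz}_X(U)\cap C^0(\overline U)$ with $\|H(\cdot,XU_g^+)\|_{L^\fz(U)}\le\mu(g,\partial U)$; combined with $U_g^+|_{\pa U}=g$ and Lemma \ref{lem231}(i) this makes $U_g^+$ a minimizer for $\bi(g,U)$.

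To see that $U_g^+$ is a local subminimizer, fix a subdomain $V\subset U$ and set $h=U_g^+|_{\pa V}$; I must show $U_g^+\le \cS^+_{h;V}$ in $V$. Since $U_g^+$ is the pointwise supremum of subminimizers $u$, and each such $u$ satisfies $u\le \cS^+_{u|_{\pa V};V}$ in $V$, the point is to compare $\cS^+_{u|_{\pa V};V}$ with $\cS^+_{h;V}$ using monotonicity of the McShane extension in its boundary data: since $u\le U_g^+$ on $\pa V$, the formula $\cS^+_{\cdot;V}(x)=\inf_{y\in\pa V}\{(\cdot)(y)+d^V_{\mu(\cdot,\pa V)}(y,x)\}$ is monotone in the boundary values once one checks the relevant $\mu$'s are ordered — here I would invoke Lemma \ref{lem5.7}/Lemma \ref{lem5.10} to keep everything controlled by $d^U_\mu$ and thus by the common modulus $\mu=\mu(g,\partial U)$, so that all the extensions $\cS^+_{u|_{\pa V};V}$ use exponent $\le\mu$. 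Taking the supremum over $u$ then yields $U_g^+\le \cS^+_{h;V}$ in $V$, i.e.\ $U_g^+$ is a local subminimizer.

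The harder direction is that $U_g^+$ is a local \emph{superminimizer}, which is where I expect the main obstacle to lie. Here the approach is a contradiction argument in the spirit of \cite{bjw,cpp}: suppose there is a subdomain $V\subset U$ and a point $x_0\in V$ with $U_g^+(x_0)<\cS^-_{h;V}(x_0)$ where $h=U_g^+|_{\pa V}$. The idea is to modify $U_g^+$ inside $V$ by gluing in a function that is larger at $x_0$ but still a legitimate competitor: using $U_g^+$ outside $V$ and a convex combination of $\cS^\pm_{h;V}$ inside $V$ (as in the proof of Lemma \ref{lem235}, Step 2), or more directly $\max\{U_g^+,\cS^-_{h;V}\}$ on $V$ and $U_g^+$ off $V$. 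By Lemma \ref{lem5.10} (applied with $\mu=\mu(g,\partial U)$, whose hypothesis \eqref{udmu} holds because $U_g^+$ is a minimizer for $\bi(g,U)$ and hence satisfies \eqref{yy-3}) this glued function $\widetilde u$ lies in $\dot W^{1,\fz}_X(U)\cap C^0(\overline U)$ with $\|H(\cdot,X\widetilde u)\|_{L^\fz(U)}\le\mu$, agrees with $g$ on $\pa U$, hence is a minimizer for $\bi(g,U)$. One then has to verify that $\widetilde u$ is itself a local subminimizer for $\bi(g,U)$ — this requires re-running the comparison of the previous paragraph for $\widetilde u$, which in turn uses Lemma \ref{lem234}(i) and Lemma \ref{lem5.10} applied to subdomains of $U$; this verification is the technical heart of the proof. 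Once $\widetilde u$ is known to be an admissible competitor in \eqref{ug}, we get $\widetilde u(x_0)\le U_g^+(x_0)$, contradicting $\widetilde u(x_0)\ge\cS^-_{h;V}(x_0)>U_g^+(x_0)$. Therefore $U_g^+\ge\cS^-_{h;V}$ in $V$ for every $V$, so $U_g^+$ is a local superminimizer, and Lemma \ref{lem235} finishes the proof.
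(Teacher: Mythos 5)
Your overall architecture coincides with the paper's: reduce to Lemma \ref{lem235}, show $U_g^+$ is a minimizer by the diagonal construction with maxima (Lemma \ref{lem231}(iii)) and Lemma \ref{c0}, then handle the superminimizer property by gluing $\cS^-_{h;V}$ into $V$ via Lemma \ref{lem5.10} and feeding the glued function back into the definition of $U_g^+$. The gap is in the step you treat as routine: that $U_g^+$ (and later the glued competitor) is a local \emph{subminimizer}. Your mechanism is monotonicity of $\cS^+_{\cdot;V}(x)=\inf_{y\in\pa V}\{(\cdot)(y)+d^V_{\mu(\cdot,\pa V)}(y,x)\}$ in the boundary data, ``once one checks the relevant $\mu$'s are ordered,'' and you only check that all exponents are $\le\mu(g,\pa U)$. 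That is not enough. To conclude $\cS^+_{u|_{\pa V};V}\le\cS^+_{U_g^+|_{\pa V};V}$ from $u\le U_g^+$ on $\pa V$ you would need $\mu(u|_{\pa V},\pa V)\le\mu(U_g^+|_{\pa V},\pa V)$, and nothing gives this: both are $\le\mu$, but a competitor $u$ may oscillate more on $\pa V$ than $U_g^+$ does. Moreover the underlying monotonicity claim is false in general: raising the boundary data can strictly decrease the optimal constant $\mu(\cdot,\pa V)$, and hence decrease $\inf_y\{f(y)+d^V_{\mu_f}(y,x)\}$ at interior points $x$ whose infimum is governed by a nearby boundary point whose value was not raised (a three-boundary-point configuration already does this). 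What your argument actually yields is only $U_g^+(x)\le\inf_{y\in\pa V}\{U_g^+(y)+d^V_{\mu}(y,x)\}$ with the larger exponent $\mu$, which does not imply $U_g^+\le\cS^+_{U_g^+|_{\pa V};V}$ as required by Definition \ref{d231}, and Lemma \ref{lem235} needs the definition as stated.

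The paper closes exactly this hole differently: it argues by contradiction, takes a competitor $u$ with $u(x_0)>\cS^+_{h^+;V}(x_0)$, and localizes to the connected component $E_0$ of the open set $\{u>\cS^+_{h^+;V}\}$; on $\pa E_0$ the two functions \emph{agree}, so the extensions being compared have identical boundary data (hence the same exponent), and Lemma \ref{lem234} applied to the pair $(V,E_0)$ gives $\cS^+_{u|_{\pa E_0};E_0}\le\cS^+_{h^+;V}$ in $E_0$, contradicting the local subminimality of $u$. The same issue recurs in your superminimizer step: verifying that the glued function $\widetilde u$ is a local subminimizer cannot be ``re-running the comparison of the previous paragraph,'' both because that comparison is flawed and because $\widetilde u$ is not a supremum of subminimizers; the paper proves this claim by a separate two-case contradiction argument (the sets $W$ and $D$ in its proof), which uses the already-established local subminimality of $U_g^+$ together with Lemma \ref{lem234}. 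So the skeleton of your plan is right, but the two places you label as straightforward or merely ``technical'' are precisely where a genuinely different argument (the component/contradiction localization) is needed.
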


\begin{proof}
We only show that $U^+_g$ is an absolute minimizer for ${ \bf I}(g,U)$; similarly one can prove that
$U^-_g$ is also an absolute minimizer for ${ \bf I}(g,U)$. {\color{black} Thanks to Lemma \ref{lem235}, it suffices to show that $U^+_g$ is a minimizer for $\bi(g,U)$, a local subminimizer for $\bi(g,U)$ and a local superminimizer for $\bi(g,U)$.  }
Note that
 $\bi(g,U)=\mu(g,\partial U)<\fz. $

\medskip
\textbf{\bf Prove that  $U^+_g$ is a minimizer for $\bi(g,U)$.}
Firstly, since any local subminimizer $w$ for $\bi(g,U)$ is a minimizer for $\bi(g,U)$. We know
$$   w \in  \dot W^{1,\infty}_{X }(U) \cap C^0(\overline U), \ w|_{\pa U} =g, \mbox{ and } \|H(\cdot, Xw(\cdot))\|_{L^\fz(U)} =  \bi(g,U)<\fz.  $$
Recalling the assumption (H3), we have
 $   \| |Xw |\|_{L^\fz(U)} \le  R_{\bi(g,U)}   $
and hence $w\in\lip_{d_{CC}}( \overline U)$ with $\lip_{d_{CC}}(w,\overline U)\le R_{\bi(g,U)}$. By a direct calculation, one has
\begin{equation}\label{ugc0}
  \mbox{$U_g^+\in\lip_{d_{CC}}( \overline U)$ with $\lip_{d_{CC}}(U^+_g,\overline U)\le R_{\bi(g,U)}$     and  $U_g^+|_{\pa U}= g.$}
\end{equation}

Next, let $\{x_i\}_{i \in \nn}$  be a dense set of $U$.
For any $i,j\in\nn$, {\color{black} by the definition of $U_g^+$,} there exists a local subminimizer $u_{ij} $ for $\bi(g,U)$ such that
  $$u_{ij} (x_i) \ge U^+_g(x_i) - \frac{1}{j}.$$
Note that, by Definition \ref{d231},  $u_{ij}$ is also a minimizer for ${ \bf I}(g,U)$.

Moreover, for each $j\in\nn$, write $$u_j := \max\{u_{ij} \}_{1\le i\le j}.$$
 Lemma \ref{lem231}(iii) implies that $u_j$   is a minimizer for $\bi(g,U)$ and hence
  \begin{equation}\label{ujj0}
    u_j \in \dot W_X^{1,\fz}(U) \mbox{ and } \|H(\cdot,Xu_j(\cdot))\|_{L^\fz(U)} \le \bi(g,U)  \quad \forall j \in \nn.
  \end{equation}
   For $1\le i\le j$,
 from the definition of $U_g^+$,
 it follows that
  \begin{equation}\label{ujj1}
    \quad U^+_g(x_i) \ge u_j(x_i) \ge U^+_g(x_i) - \frac{1}{j} .
  \end{equation}

   Finally, combining \eqref{ugc0}, \eqref{ujj0} and \eqref{ujj1}, we are able to
 apply Lemma \ref{c0} to $U_g^+$ and $\{u_j\}_{j \in \nn}$  so to get
 \begin{equation}\label{uniform}
    u_j \to U^+_g  \mbox{ in } C^0(U), \ U^+_g \in \dot W^{1,\infty}_{X }(U) \mbox{ and } \|H(\cdot,XU^+_g)\|_{L^\fz(U)}\le \bi(g,U).
  \end{equation}
 Hence
 $$ \|H(\cdot,XU^+_g)\|_{L^\fz(U)}= \bi(g,U).$$
 Together with \eqref{ugc0} yields that  $U^+_g$ is a minimizer for $\bi(g,U)$.

\medskip
\textbf{Prove that  $U^+_g$ is a local subminimizer for $\bi(g,U)$}.

 We argue by contradiction.
 {\color{black} Assume on the contrary that $U^+_g$ is not a local subminimizer for $\bi(g,U)$. Then, by definition, there exists
 a subdomain $V \subset U$, and
some $x_0$ in $V$ such that
$$U^+_g(x_0) > \cS_{h^+,V}^+(x_0),$$
where $\cS_{h^+,V}$ is the McShane extension in $V$ of
$h^+=U^+_g|_{\pa V}$.
}
By the definition of $U^+_g$, there exists a local subminimizer $u$ for $\bi(g,U)$ such that
\begin{equation}\label{x0}
  U^+_g(x_0)\ge u(x_0) > \cS_{h^+,V}^+(x_0).
\end{equation}
The definition of $U^+_g$  also gives
\begin{equation}\label{ug1}
  u\le U^+_g \mbox{ in $U$.}
\end{equation}
Define
$$E := \{x\in \overline V \ | \ u(x) > \cS_{h^+,V}^+(x)\}.$$
 {\color{black}  Since both $u $ and  $\cS_{h^+,V}^+$ are continuous,      $E$ is  an open subset of $\overline V$.
 Since  $$U^+_g = \cS_{h^+,V}^+ \mbox{ on $\pa V$},$$
by \eqref{ug1}, we infer that
 $u \le  \cS_{h^+,V}^+ \mbox{ on $\pa V$}$
and hence
 $E \subset V.$
 Obviously, $x_0\in E$.
 Denote by $E_0$ the component of $E$ containing $x_0$.
 }

 Recalling that $u$ is a local subminimizer for $\bi(g,U)$, {\color{black}by Definition \ref{d231},} we have
 $u \le \cS_{u|_{\pa E_0}, E_0}^+$ in $  E$.
 Since $x_0 \in E_0$, we have
   $   \cS_{u|_{\pa E_0}, E_0}^+(x_0) \ge u(x_0 ),$
 which, combined with \eqref{x0}, gives
 \begin{equation}\label{x01}
   \cS_{u|_{\pa E_0}, E_0}^+(x_0) \ge u(x_0 ) >   \cS_{h^+,V}^+(x_0).
 \end{equation}

 {\color{black}On the other hand, we are able to apply  Lemma \ref{lem234} with
 $(U,V,g, h=\csgu^+|_{\partial V} )$ therein replaced by $(V,E_0, h^+,
h= \cS^+_{h^+,V}| _{E_0})$ here
and then obtain
 $$    \cS_{h^+, E_0}^+  \le   \cS_{h^+,V}^+ \mbox{ in $E_0$.}       $$
 Since $u|_{E_0}= \cS^+_{h^+,V}| _{E_0}=h$,  at
  $x_0 \in E$, we arrive at
 \begin{equation}\label{x02}
   \cS_{u|_{\pa E_0}, E_0}^+(x_0)  \le   \cS_{h^+,V}^+(x_0).
 \end{equation}
 }
Note that \eqref{x01} contradicts with  \eqref{x02}  as desired.

\medskip
\textbf{Prove that  $U^+_g$ is a local superminimizer for $\bi(g,U)$.} By definition, it suffices to prove   that,  for any given subdomain $V \subset U$,  we have
$\cS_{h^+, V}^-\le U^+_g$ in $V$, where we   write
$h^+=U^+_g|_{\pa V}$.

 To this end,   define $u^+$ as in \eqref{upm} with $u$ therein replaced by $U^+_g$, that is,
  \begin{equation}\label{cla}
u^+:= \left \{ \begin{array}{lll}
\cS_{h^+, V}^-  & \quad \text{in} & \quad V \\
 U^+_g & \quad \text{in} & \quad \overline U \backslash V.
\end{array}
\right.
\end{equation}
Then  $u^+$ is a minimizer for $\bi(g,U)$. Indeed,
 since $U^+_g$ is a minimizer for $\bi(g,U)$, we know that $U^+_g$ satisfies \eqref{udmu} with $\mu=\bi(g,U)$ therein. This allows us to apply   Lemma \ref{lem5.10} with $u= U^+_g$ therein  and then
conclude that   $u^+\in \dot W^{1,\fz}_X(U)\cap C^0(U)$, $u^+=U^+_g =g$ on $\pa U$,
and $\|H(x,Xu^+ )\|\le \bi(g,U) $.   Therefore, by definition of $\bi(g,U) $,
 $\|H(x,Xu^+ )\|= \bi(g,U) $, and hence   $u^+$ is a minimizer for $\bi(g,U)$.

We further claim that
  \begin{equation}\label{cllsub}
   \mbox{$u^+$   is a local subminimizer for $\bi(g,U)$.}
   \end{equation}
  Assume that  this claim holds.  Choosing $u^+$  as a test function in the definition of $U^+_g$ {\color{black}in \eqref{ug},  we know that
  $$U^+_g \ge u^+ \mbox{ in $U$}.$$
  }
  and in particular $U^+_g \ge \cS_{h^+, V}^- $ in $V$ as desired. 


  \medskip
  {\bf Proof the claim \eqref{cllsub}.}
 To  get \eqref{cllsub}, by Definition \ref{d231}, we still need to show for any subdomain $B \subset U$,
  \begin{equation}\label{deft}
    u ^+\le 
    \cS_{u^+|_{\pa B},B}^+ \text{ in $B$}.
  \end{equation}

  To prove \eqref {deft}, we argue by contradiction.
   Assume that \eqref {deft} is not correct,
   that is, \begin{equation}\label{ccc}
  W := \{x \in B \ | \ u^+(x) > \cS_{u^+|_{\pa B},B}^+ (x)\}\ne \emptyset.
   \end{equation}
   Up to considering some connected component of $W$, we may assume that $W$ is connected.
     Note that
         \begin{equation}\label{paW1}
\mbox{$u^+ = \cS_{u^+|_{\pa B},B}^+ $ on $\pa W$. }
   \end{equation}
    Consider the set
   \begin{equation}\label{defd}
     D := \{x \in B \ | \ U^+_g(x) > \cS_{u^+|_{\pa B},B}^+ (x)\}.
   \end{equation}
   By continuity, both  of $W$ and $D$ are open.
Below,  we consider   two cases: $D=\emptyset$ and $D\ne \emptyset$.

\medskip
{\bf Case $D=\emptyset$}.
  If $D$ is empty,  then we always have $U^+_g \le \cS_{u^+|_{\pa B},B}^+$  in $B$.
  Thus $$\mbox{  $U^+_g \le \cS_{u^+|_{\pa B},B}^+< u^+$ in $W$. }$$
  Since $U^+_g=u^+\in \overline U\setminus V$,  this implies
  \begin{equation}\label{incl04}
    W  \subset V.
  \end{equation}
   Since  $u^+= \cS_{h^+, V}^-$  in $\overline V$  and
   $W \subset V$ gives $ \pa W \subset \overline V$,
 we have
 \begin{equation}\label{paW2}\mbox{$u^+= \cS_{h^+, V}^-$ on $\pa W$. }\end{equation}


 Moreover, by Lemma \ref{lem234}, we know that
  $ \cS_{h^+, V}^-$  with $h^+=U^+_g|_{\pa V}$ is a local subminimizer for $\bi(U^+_g,V)$.
By the definition of local subminimizer,  and by $W\subset V$,  we have
  \begin{equation}\label{e91}
    \cS_{h^+, V}^-  \le \cS_{u^+|_{\pa  W}, W}^+    \quad \text{in } W,
  \end{equation}
  where we recall $\cS_{h^+, V}^-|_{\pa W}=u^+|_{\pa  W}$  from \eqref{paW2}.


  {\color{black}
  Applying Lemma \ref{lem234} with $(U,V,g,h^+)$ therein replaced by
  $(B,W, u^+|_{\pa B}, \cS_{u^+|_{\pa B},B}^+|_{\pa W})$ here,
  recalling $\cS_{u^+|_{\pa B},B}^+|_{\pa W}=u^+|_{\pa  W}$  from    \eqref{paW1},
  we obtain
      \begin{equation}\label{e931}
    \cS_{u^+|_{\pa  W}, W}^+
  \le   \cS_{u^+|_{\pa B},B}^+   \mbox{ in $W$.}
  \end{equation}
Combing \eqref{e91} and \eqref{e931},  by $W\subset V$, one arrives at
$$
   u^+= \cS_{h^+, V}^-  \le  \cS_{u^+|_{\pa B},B}^+ \mbox{ in $W$,}
$$
   which contradicts with \eqref{ccc}.
}

\medskip
 {\bf Case  $D\ne\emptyset$.} Up to considering some connected component of $D$, we may assume that $D$ is connected.
 By the definition of $D$ {\color{black} in \eqref{defd}}, we infer that
\begin{equation}\label{paD}U^+_g =  \cS_{u^+|_{\pa B},B}^+ \mbox{ on $\pa D$.}
\end{equation}
  {\color{black} Since  $U^+_g$ is a local subminimizer for $\bi(g,U)$ as proved above, we know}
\begin{equation}\label{e95}
  U^+_g \le \cS_{U^+_g|_{\pa D},D}^+  \text{ in }  D.
\end{equation}

{\color{black}
 Applying Lemma \ref{lem234} with $(U,V,g,h^+)$ therein replaced by
 $(B,D, u^+|_{\pa B},   \cS_{u^+|_{\pa B},B}^+|_{\pa D})  $,
 recalling  $\cS_{u^+|_{\pa B},B}^+|_{\pa D}=U^+_g |_{\pa D}$ from \eqref{paD},
  we obtain
  \begin{equation}\label{e951}
     \cS_{U^+_g |_{\pa D}, D}^+  \le     \cS_{u^+|_{\pa B},B}^+ \mbox{ in $D$.}
  \end{equation}
Combining \eqref{e95} and \eqref{e951}, we deduce
 $$  U^+_g \le  \cS_{u^+|_{\pa B},B}^+\mbox{ in $D$.}$$
  Recalling \eqref{defd}, this contradicts with  $D\ne\emptyset$.
 The proof is complete.
 }
\end{proof}

Theorem \ref{t231} is now a direct consequence of the above series of results.

\begin{proof}[Proof of Theorem \ref{t231}]
Let $g\in\lip_{d_{CC}} (\partial U)$. It suffices to show that $\mu(g,\pa U)<\fz$,
which allows us to use  Proposition \ref{p42} and then conclude the desired absolute minimizer $U_g^+ $ therein.

Taking  $0<\lz<\fz$ such that $R_\lz' \ge \lip_\dcc(g,\pa U)$, we have
$$ g(y) -g(x) \le   R_\lz'\dcc(x,y) \quad \forall x,y \in \pa U.$$
From Lemma \ref{lem311} (ii), that is,
 $R'_\lz d_{CC} \le  d^U_\lz,$
it follows that
$$ g(y) -g(x) \le   d_\lz^U(x,y) \quad \forall x,y \in \pa U.$$
and hence that
$$\mu(g,\partial U) = \inf\{\mu \ge 0 \ | g(y) -g(x) \le d_\mu(x,y)\} \le \lz<\fz.$$
The proof is complete.
\end{proof}

\section{Further discussion}


Note that Rademacher type Theorem \ref{rad} is a cornerstone when showing the existence of absolute minimizers. Indeed, Champion and Pascale \cite{cp} and Guo-Xiang-Yang \cite{gxy} established partial results similar to Theorem \ref{rad} for a special class of Hamiltonians considered in this paper to show the existence of absolute minimizers. However, their method seems to be invalid for more general Hamiltonians considered in this paper. We briefly explain the reason below.

\begin{rem}\rm
Champion and Pascale \cite{cp} showed the McShane extension is a minimizer for $H$ when $H$ is lower semi-continuous on $U \times \rn$. In fact, they defined another intrinsic distance induced by $H(x,p)$.
For every $\lz\ge 0$,
\begin{equation*}
L_\lz (x,q):=\sup_{\{p\in H_\lz(x)\}} p\cdot q, \quad  \forall \ x\in\overline U \ \mbox{and }\  q\in\rrm.
\end{equation*}
where $H_\lz(x)$ is the sub-level set at $x$, namely, $H_\lz(x)=\{p \in \rr^m \mid H(x,p) \le \lz\}$.

For $0\le a<b\le +\infty$, let  $\gz:[a,\,b]\to \overline U$ be a Lipschitz curve, that is, there exists a constant $C>0$ such that $|\gz(s)-\gz(t)|\le C|s-t|$ whenever $s,t\in[a,b]$.
The $L_\lz$-length of $\gz$ is defined by
\begin{equation*}
\ell_\lz(\gz):= \int_a^bL_\lz\lf(\gz(\theta),\gz'(\theta)\r)\,d\theta,
\end{equation*}
which is nonnegative, since $L_\lambda(x,q)\ge 0$ for any $x\in\overline U$ and $q\in\rn$.
For a pair of  points  $x,\,y\in\overline U$, the $\odl$-distance from $x$ to $y$ is defined by

\begin{equation*}
\odl(x,y):= \inf\Big\{\ell_\lz(\gz)\ |\ \gamma\in\mathcal C(a,b,x,y, \overline U)\Big\}.
\end{equation*}
Then, they prove two intrinsic pseudo-distance are equal, that is

\begin{equation}\label{e15}
  \dl(x,y)=\odl(x,y) \quad \text{for any} \ \lz >0 \ \text{and for any} \ x,y \in \overline U.
\end{equation}
Thanks to the definition of $\odl$, they can justify (i)$\Leftrightarrow$(ii) in Rademacher type Theorem \ref{rad}.

However, when asserting \eqref{e15}, we will meet obstacles in generalizing \cite{cp} Proposition A.2 since we are faced with measurable $H$.

On the other hand, Guo-Xiang-Yang \cite{gxy} provided another method to identify a weak version of \eqref{e15} for measurable Finsler metrics $H$, that is \begin{equation}\label{e16}
  \lim_{y \to x}\frac{\dl(x,y)}{\wz d_\lz(x,y)}=1 \quad \text{for any} \ \lz >0 \ \text{and for any} \ x,y \in \overline U.
\end{equation}
Here $\wz d_\lz$ induced by measurable Finsler metrics $H$ is defined in the following way.
\begin{equation} \label{q2}
\wz d_\lz(x,y):= \sup_N\inf\Big\{\ell_\lz(\gz)\ |\ \gamma\in \Gamma_N(a,b,x,y,  \overline  U)\Big\}
\end{equation}
where the supremum is taken over all subsets $N$ of $  \overline U$ such that $|N| = 0$ and $\Gamma_N(a,b,x,y,  U)$
denotes the set of all Lipschitz continuous curves $\gz$ in $  \overline U$ with end points $x$ and $y$ such that
$\mathcal H^1(N \cap \gz) = 0$ with $\mathcal H^1$ being the one dimensional Hausdorff measure.

In fact, \eqref{e16}, combined with the method in \cite{cp} will be sufficient for validating (i)$\Leftrightarrow$(ii) in Rademacher type Theorem \ref{rad}. Unfortunately, since we are coping with H\"{o}rmander vector field, a barrier arises when modifying their proofs. Indeed, their method uses a result by \cite{da} that every $x$-measurable Hamiltonian $H$ can be approximated by a sequence of smooth Hamiltonians $\{H_n\}_n$ such that two intrinsic distances $\wz d_\lz^{H_n}$ and $d_\lz^{H_n}$ induced by $H_n$ by means of \eqref{q2} and \eqref{d311} satisfy $ \lim_{n \to \fz}d_\lz^{H_n} = d_\lz$ and $ \limsup_{n \to \fz}\wz d_\lz^{H_n} \le \wz d_\lz$ uniformly on $ U\times U$ respectively. The process of the proof of \cite{da} is based on a $C^1$ Lusin approximation property for curves. Namely, given a Lipschitz curve $\gz: \ [0,1] \to U$ joining $x$ and $y$, for any $\ez >0$, there exists a $C^1$ curve $\widetilde{\gz}: \ [0,1] \to U$ with the same endpoints such that
  $$ \mathcal{L}^1 ( \{t \in [0,1]  \ | \ \widetilde{\gz}(t) \neq \gz(t) \quad \text{or} \quad \widetilde{\gz}'(t)\neq \gz'(t)\})<\ez$$
  where $\mathcal{L}^1$ denotes the one dimensional Lebesgue measure. Besides,
  $$ \|\widetilde{\gz}\|_{L^\fz} \le c\|\gz\|_{L^\fz}, $$
  for some constant $c$ depending only on $n$.
  Although this version of $C^1$ Lusin approximation property holds for horizontal curves in Heisenberg groups (\cite{l16}) and step 2 Carnot groups (\cite{ls16}),
   it fails for some horizontal curve in Engel group (\cite{l16}).

In summary, it is difficult to generalize the properties of the pseudo metric $\wz d_\lz$ not only from Euclidean space to the case of H\"{o}rmander vector fields but also from lower-semicontinuous $H(x,p)$ to measurable $H(x,p)$. Hence we would like to pose the following open problem.
\end{rem}

\begin{ques}
  Under the assumptions (H0)-(H3), does \eqref{e16} holds?
\end{ques}

\renewcommand{\thesection}{Appendix }
\newtheorem{lemapp}{Lemma \hspace{-0.15cm}}
\newtheorem{thmapp}[lemapp] {Theorem \hspace{-0.15cm}}
\newtheorem{corapp}[lemapp] {Corollary \hspace{-0.15cm}}
\newtheorem{remapp}[lemapp]  {Remark  \hspace{-0.15cm}}
\newtheorem{defnapp}[lemapp]  {Definition  \hspace{-0.15cm}}
\renewcommand{\theequation}{A.\arabic{equation}}

\renewcommand{\thelemapp}{A.\arabic{lemapp}}

\section {Rademacher's theorem  in Euclidean domains---revisit   }
In this appendix, we state some consequence  of Rademacher's theorem (Theorem \ref{rade}) for   Sobolev and Lipschitz classes,  see Lemma \ref{coin} and Lemma \ref{rade'} below.
They were well-known in the literature and also partially  motivated our Theorem \ref{rad} and Corollary \ref{global}.
For reader's convenience, we give the details.

 Recall that $\Omega\subset\rn$ is  always a domain.
 The homogeneous Sobolev space $\dot W^{1,\fz}(\Omega)$ is the collection of all functions $u\in L^1_\loc(\Omega)$ with its distributional derivative $\nabla u=(\frac{\partial u}{\partial x_i})_{1\le i\le n}\in L^\fz(\Omega)$.   We equip $\dot W^{1,\fz}(\Omega)$
with the semi-norm $$\|u\|_{\dot W^{1,\fz}(\Omega)}=\|\nabla u\|_{L^\fz(\Omega)}.$$    Write  
$\dot W^{1,\fz}_\loc(\Omega)$ as the collection of all functions $u$ in $\Omega$ so that $u\in \dot W^{1,\fz}(V)$ whenever $V\Subset\Omega$. Here and below,  $V\Subset\Omega$ means that
$V$ is bounded domain with $\overline V\subset\Omega$.
On the other hand, denote by $\lip(\Omega)$
the collection of all Lipschitz functions $u$ in $\Omega$,
that is,   all functions $u$ satisfying \eqref{lip}. We equip $\lip(\Omega)$ with the semi-norm
$$\lip(u,\Omega)=\sup_{x,y\in\Omega,x\ne y}\frac{|u(y)-u(x)|}{|x-y|}=\inf\{ \mbox{$\lz\ge 0$ satisfiying \eqref{lip}}\} .$$
Denote by $\lip_\loc(\Omega)$ the collection of all   functions $u$ in $\Omega$  so that $u\in\lip(V)$ for any $V\Subset\Omega$.
Moreover,
 denote by $\lip^\ast(\Omega)$ the  collection of all functions  $u$ in $\Omega$
  with
\begin{equation}\label{suplip}\lip^\ast(u,\Omega):=\sup_{x \in\Omega }\lip u(x)<\fz.
\end{equation}

Obviously, $ \lip(\Omega)\subset\lip^\ast(\Omega)$ with the seminorm bound
 $\lip^\ast(u,\Omega)\le \lip(u,\Omega).$
%
%
  Next, we have the following relation.
 \begin{lemapp}  \label{lemast}
 We have $  \lip^\ast(\Omega)\subset\lip_\loc(\Omega)$.
For any convex subdomain $V\subset\Omega$ and $u\in  \lip^\ast(\Omega)$, we have
\begin{equation}\label{ast}
  |  u(x)-  u(y)|\le \|u\|_{\lip^\ast(V ) } |x-y|\quad\forall x,y\in V.
\end{equation}
 \end{lemapp}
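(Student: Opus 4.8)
The plan is to prove Lemma A.1 in two stages: first establish the local Lipschitz bound \eqref{ast} on convex subdomains, and then deduce $\lip^\ast(\Omega)\subset\lip_\loc(\Omega)$ as an immediate corollary. The key tool is Rademacher's theorem (Theorem \ref{rade}) together with the observation that a function which is differentiable a.e.\ with a pointwise Lipschitz bound, when restricted to a line segment on which it is absolutely continuous, can be recovered by integrating its derivative along that segment.

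For the main bound \eqref{ast}, I would fix a convex subdomain $V\subset\Omega$ and a function $u\in\lip^\ast(\Omega)$, and set $L:=\lip^\ast(u,V)=\sup_{x\in V}\lip u(x)<\fz$. The first step is to note that $\lip u(x)<\fz$ for every $x\in V$ already forces $u$ to be locally bounded and measurable, so $u\in L^1_{\loc}(V)$; more precisely, around each point one gets a crude local Lipschitz estimate from the definition \eqref{pointlip} of $\lip u$, enough to know $u$ is continuous. The second step is to observe that $u\in\dot W^{1,\fz}_{\loc}(V)$: indeed, on any ball $B\Subset V$ the pointwise bound $\lip u\le L$ combined with a covering/chaining argument along segments (exactly as in the proof of Lemma \ref{radcclem}, using that $B$ is convex so segments stay inside) shows $u$ is genuinely Lipschitz on $B$ with constant $L$, hence $u\in \dot W^{1,\fz}(B)$ with $\|\nabla u\|_{L^\fz(B)}\le L$. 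Then Rademacher's theorem applies on $B$: $u$ is differentiable a.e.\ and $|\nabla u(x)|=\lip u(x)\le L$ a.e. The third step is the segment argument: given $x,y\in V$, the segment $[x,y]$ lies in $V$ by convexity; since $u\in\dot W^{1,\fz}_{\loc}(V)$ it is ACL, so for a.e.\ choice of parallel translate the function $t\mapsto u(x+t(y-x))$ is absolutely continuous, and a Fubini/approximation argument lets us pass to the segment $[x,y]$ itself (or we simply take translated segments $x_k\to x$, $y_k\to y$ on which $u$ is AC, integrate, and use continuity of $u$ to pass to the limit). Integrating $\frac{d}{dt}u(x+t(y-x))=\nabla u(x+t(y-x))\cdot(y-x)$ over $t\in[0,1]$ and using $|\nabla u|\le L$ a.e.\ on $[x,y]$ yields $|u(x)-u(y)|\le L|x-y|$, which is \eqref{ast}.

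For the inclusion $\lip^\ast(\Omega)\subset\lip_{\loc}(\Omega)$, I would let $u\in\lip^\ast(\Omega)$ and $W\Subset\Omega$ be arbitrary. Cover $\overline W$ by finitely many convex balls $B_1,\dots,B_N$ each contained in $\Omega$, with $\lip^\ast(u,B_i)\le\lip^\ast(u,\Omega)<\fz$; by \eqref{ast}, $u$ is Lipschitz on each $B_i$. A standard chaining argument over a connected compact set (any two points of $W$ are joined by a chain of overlapping $B_i$'s whose length is controlled by $\diam W$ and the Lebesgue number of the cover) then gives a global Lipschitz constant for $u$ on $W$ depending only on $N$, the geometry of the cover, and $\lip^\ast(u,\Omega)$. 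Hence $u\in\lip(W)$, and since $W\Subset\Omega$ was arbitrary, $u\in\lip_{\loc}(\Omega)$.

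The main obstacle is the passage from the pointwise bound $\lip u\le L$ to the statement $u\in\dot W^{1,\fz}_{\loc}$, i.e.\ establishing enough regularity of $u$ (measurability, then the ACL property) to even invoke Rademacher's theorem and to integrate along segments. This is the only genuinely delicate point; once one knows $u$ is locally Lipschitz on balls it is routine. A clean way to handle it is to first prove $u$ is continuous (hence measurable and locally bounded) directly from \eqref{pointlip} via a covering argument on segments in a convex ball — essentially a repeat of the first half of the proof of Lemma \ref{radcclem} with $d_{CC}$ replaced by the Euclidean distance — and only afterwards invoke Rademacher to upgrade $|\nabla u|=\lip u\le L$ a.e. I would present the continuity/chaining step carefully and treat the Rademacher application and the final integration as standard.
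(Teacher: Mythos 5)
Your proposal is correct, but it routes the convex-domain estimate through machinery the paper never touches. The paper's proof of Lemma \ref{lemast} is exactly the chaining argument you describe in your second step: for $x,y\in V$ take the segment $\gz(t)=x+t(y-x)$ (which stays in $V$ by convexity), use $\lip u(\gz(t))\le \lip^\ast(u,V)$ to get, for each $t$, a radius $r_t$ on which increments are controlled, extract a finite chain $0=t_0<\dots<t_N=1$ by compactness, and telescope to get $|u(x)-u(y)|\le \lip^\ast(u,V)\,|x-y|$; applied to balls $B\Subset\Omega$ this also gives the inclusion $\lip^\ast(\Omega)\subset\lip_\loc(\Omega)$. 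In other words, the sub-step you invoke to show ``$u$ is genuinely Lipschitz on $B$ with constant $L$'' already proves \eqref{ast} verbatim once you run it on the segment $[x,y]\subset V$ rather than only on balls, so the subsequent passage through Rademacher's theorem, $\dot W^{1,\fz}_{\loc}$, the ACL property and integration along (translated) segments is sound but redundant: the ``genuinely delicate point'' you flag (upgrading the pointwise bound to Sobolev/ACL regularity) simply does not arise in the paper's argument, and avoiding it keeps Lemma \ref{lemast} elementary and independent of Rademacher, which the paper only brings in later (Lemma \ref{coin}). Your derivation of $\lip^\ast(\Omega)\subset\lip_\loc(\Omega)$ by a finite cover of convex balls plus a Lebesgue-number/chaining argument is also fine, though again heavier than needed: since the Lipschitz constant on every ball is uniformly $\lip^\ast(u,\Omega)$, a short-distance/long-distance dichotomy (single ball for $|x-y|<\dist(\overline W,\pa\Omega)$, boundedness of $u$ on $\overline W$ otherwise) already does the job. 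No gap, just an unnecessary detour.
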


 \begin{proof} Let $u\in  \lip^\ast(\Omega)$. To see $ u\in \lip_\loc(\Omega)$, it suffices to
 prove $u\in\lip(B)$ for any ball $B\Subset\Omega$.  Given any $x,y\in B$, denote by $\gz(t)=x+t(y-x)$. Since $A_{x,y}=\sup_{t\in[0,1]} \lip u(\gz(t))<\fz$,
 for each $t\in[0,1]$ we can find $r_t>0$ such that
 $|u(\gz(s))-u(\gz(t))|\le A_{x,y}|\gz(s)-\gz(t)|=A_{x,y}|s-t||x-y| $ whenever $|s-t|\le r_t$ and $s \in [0,1]$.
 Since $[0,1]\subset\cup_{t\in[0,1]}(t-r_t,t+r_t)$ we can find an increasing sequence
  $t_i\in[0,1]$ with $t_0 = 0$ and $t_N=1$ such that $[0,1]\subset \cup_{i=1}^N(t_i-\frac12r_{t_i},t_i+\frac12 r_{t_i})$. Write $x_i=\gz(t_i)$ for $i =0 , \cdots , N$. We have
 $$|u(x)-u(y)|=|\sum_{i=0}^{N-1}[u(x_i)-u(x_{i+1})]|\le
 \sum_{i=0}^{N-1}|u(x_i)-u(x_{i+1})|\le A_{x,y}\sum_{i=0}^{N-1}|x_i-x_{i+1}|=A_{x,y}|x-y|.$$
 Noticing that $A_{x,y}  \le \lip^\ast(u,B) \le \lip^\ast(u,\Omega) < \fz$ for all $x,y \in B$, we deduce that $u \in \lip(B)$ and hence $u \in \lip_\loc(\Omega)$.

 If $V \Subset \Omega$ is convex, then for any $x,y \in \Omega$, the line-segment joining $x$ and $y$ lies in $V$. Hence similar to the above discussion, we have
 $$ |u(x)-u(y)|\le A_{x,y}|x-y|, \ \forall x,y \in V $$
 and
 $A_{x,y}  \le \|u\|_{\lip^\ast(V)}< \fz$ for all $x,y \in V$. Therefore, \eqref{ast} holds and the proof is complete.
  \end{proof}

 On the other hand, functions $\dot W^{1,\fz}_\loc(\Omega)$ admit  continuous representatives.
\begin{lemapp} \label{sub}
(i) Each $u\in  \dot W^{1,\fz}_\loc(\Omega)$
admits a unique continuous representative $\wz u$, that is,
$\wz u\in \dot W^{1,\fz}_\loc(\Omega)$    with $\wz u=u$ almost everywhere in $\Omega$. Moreover, $\wz u\in \lip_\loc(\Omega)$, and
for any convex subdomain $V\subset\Omega$, we have
$$|\wz u(x)-\wz u(y)|\le \|u\|_{\dot W^{1,\fz}(\Omega)}|x-y|\quad\forall x,y\in V.$$

(ii) Each   $u\in  \dot W^{1,\fz} (\Omega)$
admits a unique continuous representative $\wz u$, that is,  $\wz u\in  \dot W^{1,\fz} (\Omega)$   with $\wz u=u$ almost everywhere in $\Omega$. Moreover,
$\wz u\in \lip^\ast(\Omega)$ with $\lip^\ast(\wz u,\Omega)\le \|u\|_{\dot W^{1,\fz}(\Omega)}$.
\end{lemapp}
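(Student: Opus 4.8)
The plan is to prove (i) by a local mollification argument and then to derive (ii) from (i) with essentially no extra work. Throughout write $u_\ez:=u\ast\eta_\ez$ for the standard mollifier $\eta_\ez$ from Section 2.

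For (i) I would first build the representative on balls. Fix any open ball $B$ with $\overline B\subset\Omega$. For $\ez$ small enough $u_\ez$ is smooth on a neighbourhood of $\overline B$ with $\nabla u_\ez=(\nabla u)\ast\eta_\ez$ there, so $\|\nabla u_\ez\|_{L^\fz(B)}$ is bounded, uniformly in $\ez$, by $\|\nabla u\|_{L^\fz(B_\delta)}<\fz$ for a slightly larger ball $B_\delta\Subset\Omega$; since $B$ is convex, integrating along straight segments shows that the family $\{u_\ez\}$ is equi-Lipschitz (hence, after fixing a Lebesgue point of $u$ in $B$, also equibounded) on $B$. By Arzel\`a--Ascoli a subsequence converges uniformly on $\overline B$ to some $\wz u_B\in\lip(B)$, and since $u_\ez\to u$ in $L^1(B)$, we get $\wz u_B=u$ a.e.\ on $B$. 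Two such balls give continuous functions agreeing a.e.\ on the overlap, hence agreeing there, so the $\wz u_B$ glue to a single $\wz u\in C(\Omega)$ with $\wz u=u$ a.e.; it is the unique continuous representative, since $\{\wz u\ne v\}$ is open and null, hence empty, for any competitor $v\in C(\Omega)$. Then $\wz u\in\dot W^{1,\fz}_\loc(\Omega)$ (same distributional gradient) and $\wz u\in\lip_\loc(\Omega)$ by the local bounds.

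It remains to obtain the inequality on a convex subdomain $V\subset\Omega$, where I may assume $\|u\|_{\dot W^{1,\fz}(\Omega)}=\|\nabla u\|_{L^\fz(\Omega)}<\fz$ (otherwise it is trivial). Given $x,y\in V$, the segment $[x,y]$ is a compact subset of $\Omega$, so for $\ez$ small $u_\ez$ is smooth on a neighbourhood of $[x,y]$ with gradient bounded there by $\|\nabla u\|_{L^\fz(\Omega)}$; integrating $u_\ez$ along $[x,y]\subset V$ yields $|u_\ez(x)-u_\ez(y)|\le\|\nabla u\|_{L^\fz(\Omega)}|x-y|$, and letting $\ez\to0$ (using $u_\ez=\wz u\ast\eta_\ez\to\wz u$ locally uniformly) gives the claimed bound. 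For (ii): if $u\in\dot W^{1,\fz}(\Omega)$ then $\|u\|_{\dot W^{1,\fz}(\Omega)}<\fz$, so (i) supplies $\wz u$, which still lies in $\dot W^{1,\fz}(\Omega)$ since it has the same distributional gradient as $u$; applying the convex-subdomain inequality from (i) to the balls $B(x,\rho)\Subset\Omega$ gives $\lip \wz u(x)\le\|u\|_{\dot W^{1,\fz}(\Omega)}$ for every $x\in\Omega$, and taking the supremum over $x$ gives $\wz u\in\lip^\ast(\Omega)$ with $\lip^\ast(\wz u,\Omega)\le\|u\|_{\dot W^{1,\fz}(\Omega)}$. (Alternatively one could quote Rademacher's Theorem \ref{rade} on balls to see $\lip \wz u=|\nabla u|$ a.e., but (i) already controls every point.)

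This lemma is classical, so there is no genuine obstacle; the only point that deserves attention is that the Lipschitz constant must come out exactly equal to $\|\nabla u\|_{L^\fz}$ and not some larger geometric factor, which is precisely why all the estimates are carried out by integration along straight line segments and why the convexity of $V$ (and of the auxiliary balls) is used.
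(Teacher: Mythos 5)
Your argument is correct and rests on the same mechanism as the paper's proof: mollify, bound $|\nabla(u\ast\eta_\ez)|$ by the local $L^\fz$ norm of $\nabla u$, and exploit convexity by estimating along straight line segments before passing to the limit $\ez\to0$. The only differences are cosmetic --- you construct $\wz u$ by Arzel\`a--Ascoli on the mollifications and gluing over balls, whereas the paper obtains the Lipschitz bound directly at Lebesgue points of $u$ and extends to the remaining points by Cauchy sequences, and you deduce (ii) from the convex-subdomain inequality of (i) applied to small balls $B(x,\rho)\Subset\Omega$ while the paper simply repeats the argument; both variants are routine and complete.
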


\begin{proof}   Since (ii) can be shown in a similar way as (i), we only prove (i).
 Given any  convex domain $V\Subset\Omega$, for any pair $x,y$ of  Lebesgue points  of $ u$, we have
$$
 u(y)-u(x)=\lim_{\dz\to0} [u\ast\eta_\dz(y)-u\ast\eta_\dz(x)]=\lim_{\dz\to 0}\nabla (u\ast\eta_\dz)(x+t_\dz(y-x))\cdot (y-x)$$
 where $\eta_\dz$ is the standard mollifier in $\rr^n$ with its support  ${\rm spt}\eta_\dz \subset B(0,\dz)$ and $t_\dz \in [0,1]$.
 Also, since for any $z\in V$,
 $$|\nabla (u\ast\eta_\dz)(z)|= |(\nabla  u)\ast\eta_\dz)(z)|\le \|\nabla u\|_{L^\fz(B(z,\dz))}, $$
 we deduce that for any pair $x,y$ of Lebesgue points of $ u$,
 $$ |u(y)-u(x)|\le \|\nabla u\|_{L^\fz(V)} |y-x|.$$
 If $z \in V$ is not a Lebesgue point of $u$, let $\{z_i\}_{i \in \nn} \subset V$ be a sequence of Lebesgue points of $u$ converging to $z$. We have
 $$  \lim_{i \to \fz}  |u(z_i)-u(z_{i+1})|\le  \lim_{i \to \fz} \|\nabla u\|_{L^\fz(V)} |z_i-z_{i+1}| = 0,  $$
 which implies $\{u(z_i)\}_{i \in \nn} \subset V$ is a Cauchy sequence.
 Since $\|u\|_{L^\fz(V)} < \fz$, we know $\{u(z_i)\}_{i \in \nn}$ has a limit in $\rr$ independent of the choice of the sequence $\{u(z_i)\}_{i \in \nn}$. Define
 $\wz u(z):= u(z) $ if $z \in V$ is a Lebesgue point of $u$ and $\wz u(z)= \lim_{i \to \fz} u(z_i)$ if $z \in V$ is not a Lebesgue point of $u$ where $\{z_i\}_{i \in \nn} \subset V$ is a sequence of Lebesgue points of $u$ converging to $z$.
 We know $\wz u : V \to \rr$ is well-defined and moreover,
   $$ | \wz u(y)-\wz u(x)|\le \|\nabla u\|_{L^\fz(V)} |y-x|\quad\forall x,y\in V.$$
 Thus $\wz u\in\lip(V)$ with   $\sup_{x\in V}\lip  \wz u(x)\le \lip(\wz u,V)\le \|\nabla u\|_{L^\fz(V)}$. In particular, $\wz u$ is continuous, which shows (i).

\end{proof}
Thanks to  lemma \ref{sub}, below
 for any function $u\in \dot W^{1,\fz}_\loc(\Omega)$ or $u\in \dot W^{1,\fz}(\Omega)$,
up to considering its continuous representative $\wz u$, we may assume that $u$ is continuous.
Under this assumption, Lemma \ref{sub} further gives  $\dot W^{1,\fz}_\loc(\Omega)\subset \lip_\loc(\Omega)$,
and  $\dot W^{1,\fz} (\Omega)\subset \lip^\ast(\Omega)$ with a norm bound $\lip^\ast(u,\Omega)\le \|u\|_{\dot W^{1,\fz}(\Omega)}$.
  Rademacher's theorem (Theorem \ref{rade}) tells that their  converse are also true. Indeed, we have the following.

\begin{lemapp} \label{coin}
(i)
We have
  $ \dot W^{1,\fz}_\loc(\Omega)=\lip_\loc(\Omega)$  and
  $\lip( \Omega)\subset \dot W^{1,\fz} (\Omega)= \lip^\ast(\Omega)$ with
   $\lip^\ast(u,\Omega)= \|u\|_{\dot W^{1,\fz}(\Omega)}\le \lip(u,\Omega)$.

 (ii)  If $\Omega$ is convex, then  $\lip( \Omega)= \dot
 W^{1,\fz} (\Omega)= \lip^\ast(\Omega)$ with
   $\lip^\ast(u,\Omega)= \|u\|_{\dot W^{1,\fz}(\Omega)}= \lip(u,\Omega)$.
   \end{lemapp}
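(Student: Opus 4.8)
The plan is to deduce everything from Rademacher's theorem (Theorem \ref{rade}) together with the already-established inclusions. Recall that Lemma \ref{lemast} and Lemma \ref{sub} (under the standing convention that each Sobolev function is identified with its continuous representative) give the ``easy'' inclusions $\dot W^{1,\fz}_\loc(\Omega)\subset\lip_\loc(\Omega)$ and $\dot W^{1,\fz}(\Omega)\subset\lip^\ast(\Omega)$ with $\lip^\ast(u,\Omega)\le\|u\|_{\dot W^{1,\fz}(\Omega)}$, and also $\lip(\Omega)\subset\lip^\ast(\Omega)$ with $\lip^\ast(u,\Omega)\le\lip(u,\Omega)$. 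So for part (i) it remains to prove the reverse inclusions $\lip_\loc(\Omega)\subset\dot W^{1,\fz}_\loc(\Omega)$, $\lip^\ast(\Omega)\subset\dot W^{1,\fz}(\Omega)$, together with the norm inequality $\|u\|_{\dot W^{1,\fz}(\Omega)}\le\lip^\ast(u,\Omega)$.

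First I would handle $\lip_\loc(\Omega)\subset\dot W^{1,\fz}_\loc(\Omega)$: given $u\in\lip_\loc(\Omega)$ and a ball $B\Subset\Omega$, $u|_B$ is Lipschitz, so by Theorem \ref{rade} it is differentiable a.e.\ with $|\nabla u(x)|=\lip u(x)\le\lip(u,B)<\fz$ a.e.; a standard argument (e.g. difference quotients converge in $L^\fz_\loc$, or directly: the a.e.\ classical gradient is the distributional one for Lipschitz functions) shows $\nabla u\in L^\fz(B)$ with $\|\nabla u\|_{L^\fz(B)}\le\lip(u,B)$, hence $u\in\dot W^{1,\fz}(B)$; since $B$ was arbitrary, $u\in\dot W^{1,\fz}_\loc(\Omega)$. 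Next, for $\lip^\ast(\Omega)\subset\dot W^{1,\fz}(\Omega)$: by Lemma \ref{lemast} we already know $\lip^\ast(\Omega)\subset\lip_\loc(\Omega)$, so by the previous step $u\in\dot W^{1,\fz}_\loc(\Omega)$ and is differentiable a.e.\ with $|\nabla u(x)|=\lip u(x)\le\lip^\ast(u,\Omega)$ at a.e.\ $x\in\Omega$; taking the essential supremum over $\Omega$ gives $\|u\|_{\dot W^{1,\fz}(\Omega)}=\||\nabla u|\|_{L^\fz(\Omega)}\le\lip^\ast(u,\Omega)$, so $u\in\dot W^{1,\fz}(\Omega)$ and $\|u\|_{\dot W^{1,\fz}(\Omega)}\le\lip^\ast(u,\Omega)$. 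Combining with the easy inequality $\lip^\ast(u,\Omega)\le\|u\|_{\dot W^{1,\fz}(\Omega)}$ yields the equality $\lip^\ast(u,\Omega)=\|u\|_{\dot W^{1,\fz}(\Omega)}$, and the chain $\lip(\Omega)\subset\dot W^{1,\fz}(\Omega)=\lip^\ast(\Omega)$ with $\lip^\ast(u,\Omega)=\|u\|_{\dot W^{1,\fz}(\Omega)}\le\lip(u,\Omega)$ follows, proving (i).

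For part (ii), assume $\Omega$ is convex. By (i) it suffices to prove $\dot W^{1,\fz}(\Omega)\subset\lip(\Omega)$ with $\lip(u,\Omega)\le\|u\|_{\dot W^{1,\fz}(\Omega)}$ (equivalently $\lip(u,\Omega)\le\lip^\ast(u,\Omega)$), which is exactly the content of Lemma \ref{sub}(ii) combined with Lemma \ref{lemast}: for $u\in\dot W^{1,\fz}(\Omega)=\lip^\ast(\Omega)$, applying \eqref{ast} with $V=\Omega$ (legitimate since $\Omega$ is convex) gives $|u(x)-u(y)|\le\lip^\ast(u,\Omega)|x-y|$ for all $x,y\in\Omega$, i.e.\ $u\in\lip(\Omega)$ with $\lip(u,\Omega)\le\lip^\ast(u,\Omega)$. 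Together with the reverse $\lip^\ast(u,\Omega)\le\lip(u,\Omega)$ from (i) this gives all three seminorms equal, finishing (ii).

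The only genuinely non-routine point — the ``main obstacle'' — is the standard but slightly technical fact that a locally Lipschitz function whose pointwise a.e.\ gradient (from Rademacher) is bounded actually lies in the Sobolev space $\dot W^{1,\fz}$ with the expected norm bound, i.e.\ that the a.e.\ classical derivative coincides with the distributional derivative; I would cite the corresponding statement in \cite{hkst} (used already in Remark \ref{d1}) or prove it directly by mollification, noting $\nabla(u*\eta_\dz)=(\nabla u)*\eta_\dz$ is uniformly bounded and passing to the limit against test functions. Everything else is bookkeeping with the inclusions already recorded in Lemmas \ref{lemast}, \ref{sub}.
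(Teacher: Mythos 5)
Your proposal is correct and follows essentially the same route as the paper: Rademacher's theorem on subdomains gives $\lip_\loc(\Omega)\subset\dot W^{1,\fz}_\loc(\Omega)$ and, via $\lip^\ast(\Omega)\subset\lip_\loc(\Omega)$ together with $|\nabla u(x)|=\lip u(x)\le\lip^\ast(u,\Omega)$ a.e., the inclusion $\lip^\ast(\Omega)\subset\dot W^{1,\fz}(\Omega)$ with the norm bound, while (ii) is exactly the paper's application of Lemma \ref{lemast} with $V=\Omega$. The only difference is that you explicitly flag the standard identification of the a.e.\ pointwise gradient with the distributional one, which the paper subsumes silently in its appeal to Theorem \ref{rade}; this is a presentational nicety, not a different argument.
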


\begin{proof}
(i)
If $u\in \lip_\loc(\Omega)$, applying Rademacher's theorem (Theorem \ref{rade}) to all subdomains  $V\Subset\Omega$, one has $u\in \dot W^{1,\fz}_\loc(\Omega)$
 and $ |\nabla u(x)|=\lip u(x)$ for almost all $x\in \Omega$
 (whenever $u$ is differentiable at $x$). Hence $\lip_\loc(\Omega) \subset W^{1,\fz}_\loc(\Omega)$. Combining Lemma \ref{sub}(i), we know
 $\lip_\loc(\Omega) = W^{1,\fz}_\loc(\Omega)$.

 If $u\in \lip^\ast(\Omega)$,   that is, $\lip^\ast(u,\Omega) = \sup_{x\in\Omega}\lip u(x)<\fz$.
 We have $u\in \lip_{\loc}(\Omega)$ and hence $u\in \dot W^{1,\fz}_\loc(\Omega)$
 and   $ |\nabla u(x)|= \lip u(x) \le  \lip^\ast(u,\Omega)<\fz$ for almost all $x\in \Omega$.
 Thus $u\in \dot W^{1,\fz}(\Omega)$.

By definition, it is obvious that $\lip( \Omega)\subset  \lip^\ast(\Omega)$. Hence Lemma \ref{coin} (i) holds.

(ii) By Lemma \ref{coin} (i), we only need to show $\lip^\ast(\Omega) \subset \lip( \Omega)$. Applying Lemma \ref{lemast} with $V=\Omega$ therein, \eqref{ast} becomes
$$   \frac{ |  u(x)-  u(y)|}{|x-y|} \le \|u\|_{\lip^\ast(\Omega ) } \quad\forall x,y\in \Omega.  $$
Taking supremum among all $x,y \in \Omega$ in the left hand side of the above inequality, we arrive at
$$   \|u\|_{\lip(\Omega ) }  \le \|u\|_{\lip^\ast(\Omega ) },  $$
which gives the desired result.
 \end{proof}

%
%
%

\begin{remapp} \label{eg2}\rm  (i) Lemma \ref{lemast} and Lemma \ref{coin} fail if we   relax $ \sup_{x\in\Omega}\lip u(x)$ in the definition  \eqref{suplip}  to be $\|\lip u\|_{L^\fz(\Omega)}=\esup_{x\in\Omega}\lip u(x)$.
 This is witted by  the standard Cantor function $w$ in $[0,1]$.
 Denote by  $E$ the standard Cantor set.
It is well-known that $w$ is continuous but not absolute continuous in $[0,1]$. Since  Lipschitz functions are always absolutely continuous, we know that
  $w$ is neither Lipschitz nor locally Lipschitz in $\Omega=(0,1)$,
   and hence $w\notin \lip_\loc(\Omega)$.
 On the other hand,  observe that $\Omega\setminus E$ consists of a sequence of open intervals which mutually disjoint, and $w$ is a constant in each such intervals and hence
  $\lip w(x)=0 $  therein. So we know that $\lip w(x)=0 $
  in $\Omega\setminus E$.
Since $|E|=0$, we have $\|\lip u\|_{L^\fz(\Omega)}=0$.

(ii) In general,  if $\Omega$ is not convex, one cannot expect that  $ \dot W^{1,\fz} (\Omega)\subset \lip (\Omega)$ with a norm bound.  Indeed, consider the planar domain
 \begin{equation}\label{domainu}U:=\{x=(x_1,x_2) \in \rr^2 \ | \ |x|<1 \} \setminus [0,1) \times \{0\}.
\end{equation}
Indeed,
in the polar coordinate $(r,\tz)$, let $w: U \to \rr$ be
$$w(r,\tz) := r\tz \mbox{ for all $0< r <1$ and $0< \tz< 2\pi$}.$$
 One can show that $w\in \dot W^{1,\fz}(U)$ so that $w(x_1,x_2)< \pi/3$ when $1/2\le x_1<1$ and $0<x_2<1/10$, and $w(x_1,x_2)> 5\pi/6$
when $1/2\le x_1<1$ and $-1/10<x_2<0$.  One has  $\lip (w,\Omega)=\fz$ and hence $w\notin \lip(\Omega)$.
 \end{remapp}

The example in Remark \ref{eg2} (ii) also indicates that
 the Euclidean distance does not match the geometry of domains and hence
 $\lip (\Omega)$ defined via Euclidean distance is not the prefect one to understand  $ \dot W^{1,\fz} (\Omega)$.

Instead of Euclidean distance,  for any domain $\Omega$, we consider the intrinsic distance
\begin{equation}\label{dual}
  d^\Omega_E(x,y)=\inf\{\ell(\gz) \ | \ \mbox{$\gz:[0,1]\to\Omega$ is absolute coninuous curve joining $x,y$}\},
\end{equation}
where $\ell(\gz):=\int_0^1|\dot\gz(t)|\,dt$ is the Euclidean length.
We have the dual formula.
\begin{lemapp}\label{la.4}
(i) For any $x,y\in\Omega$,
 \begin{equation}\label{intrinsic}
   d^\Omega_E(x,y)
=\sup\{u(y)-u(x) \ | \ u\in\dot W^{1,\fz}(\Omega),\ \|\nabla u\|_{L^\fz(\Omega)}\le 1\}.
 \end{equation}

(ii) If  $x,y\in\Omega$ with $|x-y|\le \dist(x,\partial\Omega)$, then  $d^\Omega_E(x,y)=|x-y|$.

(iii)  If $\Omega$ is convex, then  $d^\Omega_E(x,y)=|x-y|$ for all $x,y\in\Omega$.
\end{lemapp}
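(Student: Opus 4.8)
\emph{The plan.} I would prove (ii) and (iii) first, since the elementary ``straight-segment'' half of (ii) is exactly what makes the canonical test function admissible in (i), and then establish the dual formula (i) by the mollification argument already used in the proof of Lemma \ref{jerison}. (Indeed, in the Euclidean case $X=\{\partial/\partial x_i\}_{1\le i\le n}$ one has $d_{CC}=d^\Omega_E$ by Remark \ref{dccde}, so (i) is a special case of Lemma \ref{jerison}; but I would give the short direct argument for completeness.)

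For (ii) and (iii): every absolutely continuous curve $\gz:[0,1]\to\Omega$ joining $x$ to $y$ satisfies $\ell(\gz)=\int_0^1|\dot\gz(t)|\,dt\ge|x-y|$, so $d^\Omega_E(x,y)\ge|x-y|$ for all $x,y\in\Omega$. Conversely, if $|x-y|\le\dist(x,\partial\Omega)$, then each point $x+t(y-x)$ with $t\in[0,1)$ lies in the ball $B(x,\dist(x,\partial\Omega))\subset\Omega$, and the endpoint $y$ lies in $\Omega$ by assumption, so the segment $\sigma(t)=x+t(y-x)$ is a competitor in \eqref{dual} and $d^\Omega_E(x,y)\le\ell(\sigma)=|x-y|$; this proves (ii). Part (iii) is the special case in which convexity forces $\sigma\subset\Omega$ for \emph{all} $x,y\in\Omega$.

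For the ``$\le$'' half of (i), fix $x$ and put $u(z):=d^\Omega_E(x,z)$, which is finite since a domain is polygonally connected. By the triangle inequality for $d^\Omega_E$ and the segment bound just proved, $|u(z)-u(w)|\le d^\Omega_E(z,w)\le|z-w|$ whenever $|z-w|<\dist(z,\partial\Omega)$; hence $u\in\lip_\loc(\Omega)$ and $|\nabla u|=\lip u\le1$ a.e., so by Lemma \ref{coin} one gets $u\in\dot W^{1,\fz}_\loc(\Omega)$ with $\|\nabla u\|_{L^\fz(\Omega)}\le1$, i.e.\ $u\in\dot W^{1,\fz}(\Omega)$ is admissible in \eqref{intrinsic}; since $u(y)-u(x)=d^\Omega_E(x,y)$, the inequality follows. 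For the ``$\ge$'' half, let $u\in\dot W^{1,\fz}(\Omega)$ with $\|\nabla u\|_{L^\fz(\Omega)}\le1$ (continuous representative, Lemma \ref{sub}); given $x,y$ and $\ez>0$, pick an absolutely continuous $\gz$ joining them with $\ell(\gz)\le d^\Omega_E(x,y)+\ez$ and a subdomain $U\Subset\Omega$ with $\gz\subset U$. Then $u\ast\eta_t\to u$ uniformly on $\overline U$ as $t\to0$ and, since $\nabla(u\ast\eta_t)=(\nabla u)\ast\eta_t$ on $U$ for $t$ small, $|\nabla(u\ast\eta_t)|\le1$ there; hence
\begin{align*}
u\ast\eta_t(y)-u\ast\eta_t(x)&=\int_0^1\nabla(u\ast\eta_t)(\gz(s))\cdot\dot\gz(s)\,ds\\
&\le\int_0^1|\dot\gz(s)|\,ds=\ell(\gz)\le d^\Omega_E(x,y)+\ez,
\end{align*}
and letting $t\to0$, then $\ez\to0$, and taking the supremum over such $u$ finishes (i).

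I do not expect a genuine obstacle; the work is bookkeeping. The one point needing a little care is verifying that $u=d^\Omega_E(x,\cdot)$ is an \emph{admissible} competitor in \eqref{intrinsic} — which is exactly where part (ii) and Lemma \ref{coin} enter — together with the standard fact that mollification does not increase the sup-norm of the (Euclidean) gradient, which lets the ``$\ge$'' argument run without the mollification error term of Proposition \ref{mollif2} that is needed in the subelliptic Lemma \ref{jerison}.
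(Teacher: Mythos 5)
Your proposal is correct and follows essentially the same route as the paper's proof: the straight-segment argument for (ii)--(iii), admissibility of $d^\Omega_E(x,\cdot)$ as a test function (via the Lipschitz characterization in Lemma \ref{coin}) for the ``$\le$'' half of (i), and mollification plus the fundamental theorem of calculus along a curve, using that convolution does not increase $\|\nabla u\|_{L^\fz}$, for the ``$\ge$'' half. The only differences are cosmetic orderings (you prove (ii) first and feed it into (i), and you fix one near-optimal curve and one competitor $u$ rather than the paper's sequence of near-optimal $u_i$ against an arbitrary curve), which do not change the argument.
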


\begin{proof}
(i) Set
\begin{equation}\label{wzd}
  \wz d^\Omega_E(x,y)
=\sup\{u(y)-u(x) \ | \ u\in\dot W^{1,\fz}(\Omega),\ \|\nabla u\|_{L^\fz(\Omega)}\le 1\}.
\end{equation}
Notice that $d^\Omega_E(x, \cdot) \in \Lip^\ast(\Omega) = \dot W^{1,\fz}(\Omega)$ (Lemma \ref{coin} (i)) and $\|\nabla d^\Omega_E(x, \cdot)\|_{L^\fz(\Omega)}\le 1$ for all $x \in \Omega$. Hence letting $d^\Omega_E(x, \cdot)$ be the test function in \eqref{wzd}, we see
$$  d^\Omega_E(x,y) \le \wz d^\Omega_E(x,y) \ \forall x,y \in \Omega.$$

To see the contrary, fix $x,y \in \Omega$. Let $\{u_i\}_{i \in \nn}$ be a sequence of test functions in \eqref{wzd} such that
$$  \wz d^\Omega_E(x,y)
= \lim_{i \to \fz } (u_i(y)-u_i(x)).   $$
Let $\gz:[0,1] \to \Omega$ be an arbitrary absolute continuous curve joining $x$ and $y$. Then there exists a domain $U \Subset \Omega$ with $\gz \subset U$. Let $\{\eta_\dz\}_{\dz >0}$ be the standard mollifiers in $\rr^n$. For each $i \in \nn$, we know $u_i \ast \eta_\dz \in C^\fz(\Omega)$ and $\|\nabla (u_i \ast \eta_\dz)\|_{L^\fz(U)}\le \|\nabla u_i \|_{L^\fz(U)}  \le 1$.
Then we have
\begin{align*}
  \wz d^\Omega_E(x,y) & = \lim_{i \to \fz} [u_i(y)-u_i(x)]  \\
   & =\lim_{i \to \fz} \lim_{\dz \to 0}[u_i \ast \eta_\dz(y) - u_i \ast \eta_\dz(x)]  \\
   & =\lim_{i \to \fz} \lim_{\dz \to 0} \int_{0}^1 \nabla (u_i \ast \eta_\dz) (\gz(t)) \cdot \dot \gz (t) \, dt \\
   & \le \lim_{i \to \fz} \lim_{\dz \to 0} \int_{0}^1 |\nabla (u_i \ast \eta_\dz)(\gz(t))|  |\dot \gz (t)| \, dt \\
   & \le  \int_{0}^1  |\dot \gz (t)| \, dt \\
   & = \ell(\gz).
\end{align*}
Finally, taking infimum among all absolute continuous curves joining $x$ and $y$ in the above inequality, we conclude
$$  \wz d^\Omega_E(x,y) \le d^\Omega_E(x,y) \ \forall x,y \in \Omega.$$

(ii) If $|x-y|\le \dist(x,\partial\Omega)$, then the line-segment $\gz$ joining $x$ and $y$ is contained in $\Omega$. Letting $\gz$ be the absolute continuous curve in \eqref{dual},
$$  |x-y|  \le d^\Omega_E(x,y) \le  \ell(\gz) = |x-y| \ \forall x,y \in \Omega.$$

(iii) If $\Omega$ is convex, for all $x,y\in\Omega$, since the line-segment joining them is contained in $\Omega$,  similarly to (ii),  we have $d^\Omega_E(x,y)=|x-y|$.
The proof is complete.
\end{proof}

Note that if $\Omega$ is not convex, one cannot expect   $d^\Omega_E(x,y)=|x-y|$ for all $x,y\in\Omega$. Indeed, if $\Omega$ is given by the domain $U$ as in \eqref{domainu},
for points $(1/2,\ez)$ and $(1/2,-\ez)$ with $\ez\in(0,1/10)$, the Euclidean distance between them is $2\ez$.
However, note that any curve $\gz:[0,1]\to\Omega$ joining them must have intersection with $(-1,0)\times\{0\}$, which is call $z$. One then deduce
that $$\ell(\gz)\ge |(1/2,\ez)-z|+|(-1/2,\ez)-z|\ge 1/2+1/2=1+2\ez.$$
Thus the intrinsic distance between $(1/2,\ez)$ and $(1/2,-\ez)$ is always larger than or
equals to $1+2\ez$.

With in Lemma \ref{la.4}, we show that the Lipschitz spaces defined via the intrinsic distance perfectly match
with the Sobolev space $ \dot W^{1,\fz}(\Omega)$, see Lemma \ref{rade'} below.
Denote by $\lip_{d^\Omega_E}(\Omega) $  the collection of all Lipschitz functions $u$ in $\Omega$ with respect to $d^\Omega_E$, that is,
$$ \lip_{d^\Omega_E}(u,\Omega):=\sup\frac{|u(x)-u(y)|}{d^\Omega_E(x,y)}<\fz.$$
We also denote by $\lip^\ast_{d^\Omega_E}(\Omega)$ the  collection of all functions  $u$ in $\Omega$
  with
$$\lip^\ast_{d^\Omega_E}(u,\Omega):=\sup_{x \in\Omega }\lip_{d^\Omega_E} u(x)<\fz.$$

\begin{lemapp}\label{rade'}
We have    $ \lip_{d^\Omega_E}(\Omega)= \dot W^{1,\fz} (\Omega)= \lip^\ast(\Omega)$ and
 $$\|\nabla u\|_{L^\fz(\Omega)}=\lip_{d^\Omega_E} (u,\Omega)=\lip^\ast(u,\Omega)=\sup_{x\in\Omega}\lip_{d^\Omega_E} u(x).$$
 \end{lemapp}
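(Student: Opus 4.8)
The plan is to reduce the statement to the identity $\dot W^{1,\fz}(\Omega)=\lip^\ast(\Omega)$ with $\|\nabla u\|_{L^\fz(\Omega)}=\lip^\ast(u,\Omega)$, which is already available from Lemma \ref{coin}; so only the two quantities involving the intrinsic distance, namely $\lip_{d^\Omega_E}(u,\Omega)$ and $\sup_{x\in\Omega}\lip_{d^\Omega_E}u(x)$, remain to be pinned down. The two geometric inputs I would use are the dual formula \eqref{intrinsic} of Lemma \ref{la.4}(i) and the local coincidence $d^\Omega_E(x,y)=|x-y|$ for $|x-y|<\dist(x,\partial\Omega)$ from Lemma \ref{la.4}(ii).

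First I would prove $\dot W^{1,\fz}(\Omega)\subset\lip_{d^\Omega_E}(\Omega)$ with $\lip_{d^\Omega_E}(u,\Omega)\le\|\nabla u\|_{L^\fz(\Omega)}$. Given $u\in\dot W^{1,\fz}(\Omega)$, set $\lambda=\|\nabla u\|_{L^\fz(\Omega)}$. If $\lambda>0$, then both $u/\lambda$ and $-u/\lambda$ are admissible test functions in \eqref{intrinsic}, which gives $|u(y)-u(x)|\le\lambda\,d^\Omega_E(x,y)$ for all $x,y\in\Omega$; if $\lambda=0$, running the same argument with $u/\lambda'$ for an arbitrary $\lambda'>0$ forces $\lip_{d^\Omega_E}(u,\Omega)=0$. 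In all cases $\lip_{d^\Omega_E}(u,\Omega)\le\|\nabla u\|_{L^\fz(\Omega)}$.

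Next I would establish the reverse inclusion together with the pointwise identity. If $u$ is $L$-Lipschitz with respect to $d^\Omega_E$, then Lemma \ref{la.4}(ii) shows that for $y$ close to $x$ one has $|u(y)-u(x)|\le L\,d^\Omega_E(x,y)=L|x-y|$, so $u$ is locally Lipschitz with local constant at most $L$; applying Rademacher's theorem (Theorem \ref{rade}) on each subdomain $V\Subset\Omega$, or equivalently invoking $\dot W^{1,\fz}_\loc(\Omega)=\lip_\loc(\Omega)$ from Lemma \ref{coin}, gives that $u$ is a.e.\ differentiable with $|\nabla u(x)|=\lip u(x)\le L$; since $L<\fz$ this yields $u\in\dot W^{1,\fz}(\Omega)$ and $\|\nabla u\|_{L^\fz(\Omega)}\le L=\lip_{d^\Omega_E}(u,\Omega)$. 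The same local identity $d^\Omega_E(x,y)=|x-y|$ for $y$ near $x$ shows $\lip_{d^\Omega_E}u(x)=\lip u(x)$ at every $x\in\Omega$, whence $\sup_{x\in\Omega}\lip_{d^\Omega_E}u(x)=\sup_{x\in\Omega}\lip u(x)=\lip^\ast(u,\Omega)$.

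Combining the three inclusions with Lemma \ref{coin} gives $\lip_{d^\Omega_E}(\Omega)=\dot W^{1,\fz}(\Omega)=\lip^\ast(\Omega)$ and the chain $\|\nabla u\|_{L^\fz(\Omega)}=\lip^\ast(u,\Omega)=\sup_{x\in\Omega}\lip_{d^\Omega_E}u(x)\le\lip_{d^\Omega_E}(u,\Omega)\le\|\nabla u\|_{L^\fz(\Omega)}$, so all four quantities coincide. The argument is essentially routine once the dual formula and the Euclidean-geodesic facts of Lemma \ref{la.4} are in hand; the one point deserving care is that $d^\Omega_E$ agrees with the Euclidean distance only locally and not globally (as Remark \ref{eg2}(ii) illustrates), and it is precisely this local agreement that simultaneously forces $\lip_{d^\Omega_E}u(x)=\lip u(x)$ pointwise and allows the passage from Lipschitz-with-respect-to-$d^\Omega_E$ to $\dot W^{1,\fz}(\Omega)$ without any convexity assumption on $\Omega$.
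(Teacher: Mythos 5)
Your proposal is correct and follows essentially the same route as the paper: the dual formula of Lemma \ref{la.4}(i) gives $\lip_{d^\Omega_E}(u,\Omega)\le\|\nabla u\|_{L^\fz(\Omega)}$, the local identity $d^\Omega_E(x,y)=|x-y|$ from Lemma \ref{la.4}(ii) identifies the pointwise constants $\lip_{d^\Omega_E}u(x)=\lip u(x)$ and yields the reverse inclusion, and Lemma \ref{coin} closes the chain of equalities. The only cosmetic difference is that you pass from $\lip_{d^\Omega_E}(\Omega)$ to $\dot W^{1,\fz}(\Omega)$ by invoking Rademacher on subdomains directly, whereas the paper first records $\lip_{d^\Omega_E}(\Omega)\subset\lip^\ast(\Omega)$ and then cites Lemma \ref{coin}; these are the same argument.
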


 \begin{proof}
Recall that Lemma \ref{wzd} gives $d^\Omega_E(x,y)=|x-y|$ whenever $|x-y|\le \dist(x,\partial\Omega)$.
One then has   $\lip_{d^\Omega_E}(\Omega) \subset\lip_\loc(\Omega)$,
and moreover,
 $\lip u(x)= \lip_{d^\Omega_E} u(x)$ for all $x\in\Omega$, which gives $\lip^\ast_{d^\Omega_E}(\Omega)=\lip^\ast (\Omega)$.

Next, we show $\dot W^{1,\fz} (\Omega) \subset \lip_{d^\Omega_E}(\Omega)$ and $ \lip_{d^\Omega_E} (u,\Omega) \le \|\nabla u\|_{L^\fz(\Omega)}$. Let $u \in \dot W^{1,\fz} (\Omega)$.  Then $\|\nabla u\|_{L^\fz(\Omega)} =:\lz < \fz.$ If $ \lz >0$, then $\lz^{-1}u \in \dot W^{1,\fz} (\Omega)$ and $\|\nabla (\lz^{-1} u )\|_{L^\fz(\Omega)} =1$. Hence $\lz^{-1}u$ could be the test function in \eqref{intrinsic}, which implies
  $$    \lz^{-1} u(y)- \lz^{-1}u(x) \le  d^\Omega_E (x,y) \ \forall x,y \in \Omega,  $$
 or equivalently,
 $$  \frac{|u(y)- u(x) |}{\|\nabla u\|_{L^\fz(\Omega)}} \le  d^\Omega_E (x,y) \ \forall x,y \in \Omega.  $$
 Therefore, $u \in \lip_{d^\Omega_E}(\Omega)$ and $ \lip_{d^\Omega_E} (u,\Omega) \le \|\nabla u\|_{L^\fz(\Omega)}$. If $\lz=0$, then similar as the above discussion, we have for any $\lz'>0$
 $$  \frac{|u(y)- u(x) |}{\lz'} \le  d^\Omega_E (x,y) \ \forall x,y \in \Omega.  $$
 Therefore, $u \in \lip_{d^\Omega_E}(\Omega)$ and $ \lip_{d^\Omega_E} (u,\Omega) \le \lz'$ for any $\lz'>0$. Hence $ \lip_{d^\Omega_E} (u,\Omega) =0 = \|\nabla u\|_{L^\fz(\Omega)}$.

Moreover, we show $ \lip_{d^\Omega_E}(\Omega) \subset \lip^\ast(\Omega)$ and $ \lip^\ast(u,\Omega) \le \lip_{d^\Omega_E} (u,\Omega)$. Let $u \in \lip_{d^\Omega_E}(\Omega)$. Then $\lip_{d^\Omega_E} (u,\Omega)<\fz$. Since $\lip^\ast_{d^\Omega_E}(u,\Omega) \le \lip_{d^\Omega_E} (u,\Omega)$ and $\lip^\ast_{d^\Omega_E}(u,\Omega) = \lip (u,\Omega)$, we arrive at
 $$   \lip (u,\Omega) \le \lip_{d^\Omega_E} (u,\Omega) < \fz. $$
 Therefore, $u \in \lip^\ast(\Omega)$.

 Finally, recalling that $\dot W^{1,\fz} (\Omega) = \lip^\ast(\Omega)$ and $\|\nabla u\|_{L^\fz(\Omega)} = \lip (u,\Omega) $ in Lemma \ref{coin}, we finish the proof.
 \end{proof}

\bigskip
\noindent Jiayin Liu

\noindent
School of Mathematical Science, Beihang University, Changping District Shahe Higher Education Park
  South Third Street No. 9, Beijing 102206, P. R. China

{\it and}

\noindent Department of Mathematics and Statistics, University of Jyv\"{a}skyl\"{a}, P.O. Box 35
(MaD), FI-40014, Jyv\"{a}skyl\"{a}, Finland

\noindent{\it E-mail }:  \texttt{jiayin.mat.liu@jyu.fi}

\bigskip
\noindent Yuan Zhou

\noindent School of Mathematical Sciences, Beijing Normal University, Haidian District Xinjiekou Waidajie No.19, Beijing 100875, P. R. China

\noindent {\it E-mail }:
\texttt{yuan.zhou@bnu.edu.cn}


\end{document}